\newcommand\mapsfrom{\mathrel{\reflectbox{\ensuremath{\mapsto}}}}
\DeclareMathOperator\FreeProduct{{\hbox{\huge$\ast$}}}
\newcounter{enumitemp}
\newenvironment{enumeratecontinue}{
  \setcounter{enumitemp}{\value{enumi}}
  \begin{enumerate}
  \setcounter{enumi}{\value{enumitemp}}
}
{
  \end{enumerate}
}
\newcommand\displayqed[1]{
\smallskip
\centerline{\hphantom{\qed} \hfill $#1$ \hfill \qed}
}
\newcommand\pref[1]{(\ref{#1})}
\newcommand\ds\displaystyle
\theoremstyle{plain}
\newtheorem*{TheoremA}{Theorem A}
\newtheorem*{TheoremB}{Theorem B}
\newtheorem*{TheoremC}{Theorem C}
\newtheorem*{TwoOverAllThm}{Two Over All Theorem}
\newtheorem*{uniterated}{\TOAT\ (uniterated case)}
\newtheorem*{theorem*}{Theorem}
\newtheorem{theorem}{Theorem}[section]
\newtheorem{proposition}[theorem]{Proposition}
\newtheorem{lemma}[theorem]{Lemma}
\newtheorem{corollary}[theorem]{Corollary}
\newtheorem{fact}[theorem]{Fact}
\newtheorem{LemmaAndDefinition}[theorem]{Lemma and Definition}
\theoremstyle{definition}
\newtheorem{definition}[theorem]{Definition}
\newtheorem{claim}[theorem]{Claim}
\newtheorem{StepThreeFact}{Fact}
\DeclareMathOperator{\Out}{Out}
\DeclareMathOperator{\Aut}{Aut}
\DeclareMathOperator\Inn{Inn}
\DeclareMathOperator{\rank}{rank}
\DeclareMathOperator{\Length}{Length}
\DeclareMathOperator{\Stab}{Stab}
\DeclareMathOperator\diam{diam}
\DeclareMathOperator{\MCG}{\mathsf{MCG}}
\DeclareMathOperator\Isom{Isom}
\DeclareMathOperator\Axis{Axis}
\DeclareMathOperator\MaxEdges{MaxEdges}
\DeclareMathOperator\DFF{D_{\text{FF}}}
\newcommand\Act{{\mathcal A}}
\DeclareMathOperator\corank{corank}
\DeclareMathOperator\Krank{KR}
\newcommand\nat{{\text{nat}}}
\newcommand\Id{\mathrm{Id}}
\newcommand\tsp{\tau_{\text{sp}}}
\newcommand\reals{{\mathbf R}}
\newcommand\Z{{\mathbf Z}}
\newcommand{\bdy}{\partial}
\newcommand{\bdyinf}{\bdy_{\infty}}
\newcommand{\from}{\colon}
\newcommand\suchthat{\bigm|}
\newcommand\inv{{-1}}
\newcommand\union{\cup}
\newcommand\abs[1]{\left| #1 \right|}
\newcommand\intersect{\cap}
\newcommand\meet{\wedge}
\newcommand\restrict{\bigm|}
\newcommand\subgroup{\leqslant}
\renewcommand\L{\mathcal L}
\newcommand\Bline{\mathcal B}
\newcommand\C{\mathcal C}
\newcommand\V{\mathcal V}
\renewcommand\P{\mathcal P}
\newcommand\VG{\V G}
\newcommand\VS{\V S}
\newcommand\VT{\V T}
\newcommand\E{\mathcal E}
\newcommand\EG{\E G}
\newcommand\EH{\E H}
\newcommand\ET{\E T}
\newcommand\ES{\E S}
\newcommand\<\langle
\renewcommand\>\rangle
\newcommand\disjunion\sqcup
\DeclareMathOperator\interior{int}
\DeclareMathOperator\Lip{Lip}
\DeclareMathOperator\cx{cx}
\newcommand\act\curvearrowright
\newcommand\X{\mathcal{X}}
\newcommand\LymanRTTTag{Lyman:RTT}
\newcommand\LymanRTT{\cite{\LymanRTTTag}}
\newcommand\LymanCTTag{Lyman:CT}
\newcommand\LymanCT{\cite{\LymanCTTag}}
\newcommand\BHTag{BestvinaHandel:tt}
\newcommand\BH{\cite{\BHTag}}
\newcommand\BookOneTag{BFH:TitsOne}
\newcommand\BookOne{\cite{\BookOneTag}}
\newcommand\SubgroupsTag{HandelMosher:Subgroups}
\newcommand\FSHypTag{HandelMosher:FreeSplittingHyperbolic}
\newcommand\FSHyp{\cite{\FSHypTag}}
\newcommand\FSOneTag{HandelMosher:FreeSplittingHyperbolic}
\newcommand\FSTwoTag{HandelMosher:FreeSplittingLox}
\newcommand\FSTwo{\cite{\FSTwoTag}}
\newcommand\RelFSOneTag{HandelMosher:RelComplexHyp}
\newcommand\RelFSOne{\cite{\RelFSOneTag}}
\newcommand\RelFSThreeTag{HandelMosher:RelComplexHypIII}
\newcommand\RelFSThree{\cite{\RelFSThreeTag}}
\newcommand\wt\widetilde
\renewcommand\O{{\mathscr O}}
\newcommand\A{\mathscr A}
\newcommand\B{\mathscr B}
\newcommand\F{{\mathscr F}}
\newcommand\Fell[1]{\F_{\!ell} #1}
\newcommand\FellS{{\Fell  S}}
\newcommand\FellT{{\Fell \, T}}
\newcommand\CFFS{\mathcal{F\!F}}
\newcommand\FFC{{\mathcal F}}
\newcommand\FF{\mathcal{F\!F}}
\newcommand\FS{\mathcal{FS}}
\newcommand\PFS{\mathcal{PFS}}
\newcommand\FZ{\mathcal{FZ}}
\newcommand\collapsesto\succ
\newcommand\expandsto\prec
\newcommand\expands\expandsto
\newcommand\Vertices{{\mathcal V}}
\newcommand\Edges{{\mathcal E}}
\newcommand\orb[1]{{\mathcal O} #1}
\newcommand\spray{\text{spray}}
\newcommand\quadtext[1]{\quad\text{#1}\quad}
\newcommand\SetTwo{{\{2\}}}
\DeclareMathOperator\KR{KR}
\newcommand\TOAT{\emph{Two Over All Theorem}}
\newcommand\STOAT{\emph{Strong Two Over All Theorem}}
\newcommand\LPT{\emph{Lipschitz Projection Theorem}}
\newcommand\relA{\emph{rel}~$\A$}
\title{Relative Free Splitting and Free Factor Complexes II: \\
Stable Translation Lengths and the \\
Two Over All Theorem
}
\author{Michael Handel and Lee Mosher}
\begin{document}

\maketitle

\begin{abstract}
This is the second of a three part study of free splitting and free factor complexes of a group~$\Gamma$ relative to a free factor system~$\A$. Here and in Part~III, given a relative outer automorphism $\phi \in \text{Out}(\Gamma;\A)$, we study the stable translation length $\tau_\phi \ge 0$ of the simplicial isometry of the relative free splitting complex $\mathcal{FS}(\Gamma;\A)$ determined by~$\phi$, stating and proving quantitative generalizations of earlier theorems for $\text{Out}(F_n)$. The main tool developed here in Part~II is the \TOAT, which expresses a uniform exponential flaring property along arbitrary Stallings fold paths in $\mathcal{FS}(\Gamma;\A)$, a new result even for $\text{Out}(F_n)$. We apply this theorem to prove that if $\phi \in \text{Out}(\Gamma;\A)$ has a filling attracting lamination then, letting $\lambda_\phi > 1$ denote the corresponding expansion factor, $\tau_\phi$~has an upper bound of the form~$B \log(\lambda_\phi)$. We also apply it to show that the natural map from the relative outer space ${\mathscr O}(\Gamma;\A)$ to the relative free splitting complex $\mathcal{FS}(\Gamma;\A)$ is coarsely Lipschitz, with respect to the log-Lipschitz semimetric on~${\mathscr O}(\Gamma;\A)$. 
\end{abstract}

\section{Introduction to Part II}

\noindent\textbf{Stable translation lengths.} The \emph{stable translation length} of an isometry $f \from X \to X$ on a metric space $X$ is an asymptotic invariant of $f$ under iteration, a real number denoted $\tau_f \ge 0$ defined by the following limit:
$$\tau_f = \lim_{n \to \infty} \frac{d(f^n(x),x)}{n}
$$
Fekete's Lemma on subadditive functions gives existence of this limit \cite{Fekete}, and it is well-defined independent of~$x$. When $X$ is Gromov hyperbolic we have $\tau_f > 0$ if and only if $f$ is \emph{loxodromic}, meaning that for any $x$ the orbit map $\mathbb Z \to X$ defined by $n \mapsto f^n(x)$ is a quasi-isometric embedding. Also, $f$ is \emph{elliptic} if and only if orbits are bounded, hence $\tau_f=0$. Finally, $f$ is \emph{parabolic} if and only if $\tau_f=0$ and orbits are unbounded. 

In the course of proving Gromov hyperbolicity of the curve complex $\C(S)$ for $S$ a finite type, oriented surface, Masur and Minsky dynamically classified mapping classes $\phi \in \MCG(S)$ by the asymptotics of their actions on $\C(S)$: $\phi$ is either loxodromic or elliptic; and $\phi$ is loxodromic if and only if it is pseudo-Anosov \cite{MasurMinsky:complex1}. In the loxodromic case they find an upper bound, depending only on~$S$, for the ratio $\tau_\phi / \log(\lambda_\phi)$ where $\lambda_\phi > 1$ is the expansion factor of $\phi$ acting on its unstable measured foliation \cite[Section 4]{MasurMinsky:complex1}. Subsequently Bowditch found a positive lower bound for the case $\tau_\phi >0$, in fact he showed that $\tau_\phi$ is a rational number with a positive denominator bounded above by a constant depending only on~$S$ \cite[Corollary 1.5]{Bowditch:tight}. 

\medskip
\noindent\textbf{(Relative) free splitting and free factor complexes.} In studying the geometry of the outer automorphism group $\Out(F_n)$ of a rank~$n$ free group $F_n$ by analogy with $\MCG(S)$, the hyperbolic role of the curve complex $\C(S)$ is played by several different actors: 
the free factor complex $\FFC(F_n)$ \cite{BestvinaFeighn:FFCHyp}; the free splitting complex $\FS(F_n)$ \FSHyp; and the cyclic splitting complex $\FZ(F_n)$ \cite{Mann:Hyperbolicity}. Each individual outer automorphism $\phi \in \Out(F_n)$ acts on each of these complexes either loxodromically or elliptically, although the dichotomies vary amongst the complexes: $\phi$ acts loxodromically on $\FFC(F_n)$ if and only if $\phi$ is fully irreducible \cite{BestvinaFeighn:FFCHyp}; $\phi$~acts loxodromically on $\FS(F_n)$ if and only if $\phi$ has a filling attracting lamination \cite{HandelMosher:FreeSplittingLox}; and $\phi$~acts loxodromically on $\FZ(F_n)$ if and only if it has a $\Z$-filling attracting lamination \cite{GuptaWigglesworth:Centralizers}. 

In Part~I \RelFSOne, we relativized large scale geometric results about $\FS(F_n)$ and $\FFC(F_n)$ by proving hyperbolicity of the relative free splitting complex $\FS(\Gamma;\A)$ and of the complex of relative free factor systems
$\CFFS(\Gamma;\A)$,\footnote{For the natural quasi-isometric embedding of $\FFC(\Gamma;\A)$ as a subcomplex of $\CFFS(\Gamma;\A)$, see Proposition 6.3 of Part~I \RelFSOne\ .} 
given a group $\Gamma$ and a free factor system $\A$ of~$\Gamma$. 

Our main goal here and in Part III \RelFSThree\ is to establish a dynamical classification of elements of the natural action of $\Out(\Gamma;\A)$ on the relative free splitting complex $\FS(\Gamma;\A)$, generalizing the results for $\Out(F_n)$ but with mostly new proofs, and with strong new quantitative conclusions even for $\Out(F_n)$, thus finding new analogues of results from $\MCG(S)$.

\paragraph{Theorems A, B and C.}
These three theorems are each concerned with a group $\Gamma$ and a free factor system~$\A$ of $\Gamma$, and with dynamics of elements of $\Out(\Gamma;\A)$ acting on the relative free splitting complex $\FS(\Gamma;\A)$, including quantitative bounds on stable translation lengths in terms of constants that depend only on simple numerical invariants $\Gamma$ and $\A$. For the theory of filling laminations and their associated expansion factors see Section~\ref{SectionAnAxisInFS} for brief initial explanations, in particular Proposition~\ref{PropAxisInFS} and Definition~\ref{DefFillExpFact}; see also Section~\ref{SectionAxisConstruction} for more details on that theory, based on work of Lyman \cite{\LymanRTTTag,\LymanCTTag}. 

\begin{TheoremA}
There exist $A,B>0$ such that for any $\phi \in \Out(\Gamma;\A)$, if $\phi$ has a filling lamination with associated expansion factor $\lambda_\phi$ then \, $A \le \tau_\phi \le B \log(\lambda_\phi)$.
\end{TheoremA}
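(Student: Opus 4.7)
The strategy for the upper bound $\tau_\phi \le B \log \lambda_\phi$ (the only half of Theorem~A to be proved in Part~I) is to bound the log-Lipschitz displacement of $\phi$ on the outer space $\O(\Gamma;\A)$ and then transport the estimate to $\FS(\Gamma;\A)$ via the \LPT\ — the first of the two applications of the \TOAT\ advertised in the abstract.

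Concretely, I would first invoke the axis construction of Section~\ref{SectionAxisConstruction} (building on Lyman's work \LymanRTT, \LymanCT) to produce, after replacing $\phi$ by a positive power (which scales both $\tau_\phi$ and $\log\lambda_\phi$ by the same integer and therefore does not affect the existence of $B$), a marked graph $G \in \O(\Gamma;\A)$ together with a relative train track representative $f \from G \to G$ of $\phi$ whose top EG stratum carries the filling lamination with Perron--Frobenius eigenvalue $\lambda_\phi$. Standard PF estimates then yield a constant $C = C(G,f)$ with $\Lip(f^n) \le C \lambda_\phi^n$ for all $n \ge 1$. Since $f^n$ realizes the change-of-marking map from $G$ to $\phi^n \cdot G$, this gives
\[
d_{LL}(G, \phi^n \cdot G) \;\le\; \log \Lip(f^n) \;\le\; n \log \lambda_\phi + \log C.
\]

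Applying the \LPT\ one obtains constants $K_1, K_2 > 0$ depending only on $(\Gamma,\A)$ with $d_{FS}(G,\phi^n \cdot G) \le K_1 \cdot d_{LL}(G,\phi^n \cdot G) + K_2$. Combining, dividing by $n$, and letting $n \to \infty$ yields $\tau_\phi \le K_1 \log \lambda_\phi$, so one may take $B = K_1$.

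The main obstacle is not this reduction, which is essentially formal once the tools are in place, but the \LPT\ itself — and hence, upstream, the \TOAT, whose uniform exponential growth along Stallings fold paths is the real technical engine of Part~I. Two subtler points are worth noting. First, the constants $K_1, K_2$ must depend only on $(\Gamma, \A)$ and not on $\phi$ or its chosen train track representative, since $B$ is required to be uniform across $\Out(\Gamma;\A)$; this uniformity is exactly what the quantitative formulation of the \LPT\ must deliver. Second, the lower bound $A \le \tau_\phi$, deferred to later parts, is expected to be substantially deeper and to require a genuinely different argument, presumably analogous in spirit to Bowditch's work on tight geodesics in the curve complex.
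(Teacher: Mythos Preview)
Your route through outer space and the \LPT\ is not what the paper does: Section~\ref{SectionProofUpperBound} applies the \TOAT\ \emph{directly} to an EG-aperiodic fold axis in $\FS(\Gamma;\A)$, with no detour through~$\O(\Gamma;\A)$. One takes the axis from Proposition~\ref{PropAxisInFS}, applies the \TOAT\ to the foldable map $f^0_{np}\from T_0 \to T_{np}=T_0\cdot\phi^n$, and reads off that some column of $(MF)^n$ has every entry $\ge 2^{Q_n-1}$ where $Q_n\approx d_\FS(T_0,T_{np})/\Delta$; since $\lambda_\phi=\lim_n\sqrt[n]{N_n}$ for the column norm~$N_n$, one gets $\tau_\phi\le(\Delta/\log 2)\log\lambda_\phi$ at once. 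The \LPT\ is a separate, parallel application of the \TOAT, not a lemma feeding into Theorem~A.

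More seriously, your argument has a gap at the Lipschitz estimate. A \emph{relative} train track representative $f\from G\to G$ on a Grushko free splitting rel~$\A$ can have lower EG strata $H_r$ with expansion factors $\lambda_r>\lambda_\phi$; the spectral radius of the full block-upper-triangular transition matrix is $\max_r\lambda_r$, so ``standard PF estimates'' give only $\Lip(f^n)\le C(\max_r\lambda_r)^n$, and your inequality becomes $\tau_\phi\le K_1\log(\max_r\lambda_r)$ rather than $K_1\log\lambda_\phi$. The natural fix is to use the \emph{irreducible} train track representative $F\from T\to T$ with respect to the maximal $\phi$-invariant system~$\B$ (penultimate collapse, Proposition~\ref{PropCollapseTTMap}), for which there is a single stratum and $\Lip(F^n)\asymp\lambda_\phi^n$ genuinely holds. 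But then $\FFS T=\B\supsetneq\A$, so $T\notin\O(\Gamma;\A)$ and the \LPT\ for $\O(\Gamma;\A)$ does not apply. You would have to invoke the \LPT\ for $\O(\Gamma;\B)\to\FS(\Gamma;\B)$, compose with the $1$-Lipschitz inclusion $\FS(\Gamma;\B)\hookrightarrow\FS(\Gamma;\A)$, and then argue that the \TOAT\ constant $\Delta$ for~$\B$ is still controlled by $\abs{\A}$ and $\corank(\A)$ --- at which point you are essentially redoing the paper's direct argument with extra scaffolding.
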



\begin{TheoremB}
There exists $\Omega \ge 1$ such that for any $\phi \in \Out(\Gamma;\A)$ the following are equivalent:
\begin{enumerate}
\item\label{ItemFillingLamExists}
$\phi$ has a filling attracting lamination;
\item\label{ItemActLox}
$\phi$ acts loxodromically on $\FS(\Gamma;\A)$;
\item\label{ItemActUnbdd}
$\phi$ acts with unbounded orbits on $\FS(\Gamma;\A)$;
\item\label{ItemActLargeOrbit}
Every orbit of the action of $\phi$ on $\FS(\Gamma;\A)$ has diameter $\ge \Omega$.
\end{enumerate}
It follows that no element of $\Out(\Gamma;\A)$ acts parabolically on $\FS(\Gamma;\A)$.
\end{TheoremB}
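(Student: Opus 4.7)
The plan is to establish the cycle $(1) \Rightarrow (2) \Rightarrow (3) \Rightarrow (1)$, to extract the quantitative equivalence $(3) \Leftrightarrow (4)$ using the lower bound of Theorem~A together with the structural output of $(3) \Rightarrow (1)$, and to deduce the absence of parabolics as a corollary.

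The easy implications are $(1) \Rightarrow (2) \Rightarrow (3)$ and the trivial half $(3) \Rightarrow (4)$. The lower bound of Theorem~A, whose proof is deferred to a sequel, yields $\tau_\phi \ge A > 0$ whenever $\phi$ has a filling attracting lamination, so $\phi$ acts loxodromically; loxodromic isometries of a Gromov hyperbolic space have quasi-isometrically embedded $\Z$-orbits, which are unbounded; and an unbounded orbit has infinite diameter, so $(3) \Rightarrow (4)$ is automatic for every $\Omega$.

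The core content is the contrapositive of $(3) \Rightarrow (1)$: if $\phi$ has no filling attracting lamination, then $\phi$ fixes a vertex of $\FS(\Gamma;\A)$, so in particular all orbits of $\phi$ are bounded. The strategy parallels the absolute case of \FSTwo, now executed in the relative setting via Lyman's relative train track and CT theory \LymanRTT, \LymanCT. If $\phi$ has no attracting lamination at all, then $\phi$ is polynomially growing, and a suitable relative train track representative yields a $\phi$-fixed vertex of $\FS(\Gamma;\A)$. If instead $\phi$ has attracting laminations but none is filling, then all of them are carried by a proper $\phi$-invariant free factor system $\A' \supsetneq \A$; collapsing the edges of a relative train track representative that lie outside the subgraph carrying $\A'$ produces a $\phi$-fixed vertex realizing~$\A'$.

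Given a $\phi$-fixed vertex $v$, the implication $(4) \Rightarrow (3)$ follows by contrapositive: if $\phi$ is not loxodromic then by the cycle it is elliptic, and the orbit of $v$ has diameter $0 < \Omega$, contradicting $(4)$ for any $\Omega \ge 1$. The absence of parabolics is then automatic since the established equivalence partitions $\Out(\Gamma;\A)$ into loxodromic and elliptic classes. I expect the main obstacle to be the contrapositive of $(3) \Rightarrow (1)$: the absolute case in \FSTwo\ already required substantial machinery, and the relative version demands careful use of Lyman's CT theory to guarantee that the invariant free factor systems produced lift to actual fixed vertices of $\FS(\Gamma;\A)$, not merely bounded invariant subcomplexes.
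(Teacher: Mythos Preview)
The paper does not prove Theorem~B here; it explicitly defers the proof to Parts~2 and~3 of the series (see the organizational outline at the end of the Introduction). So comparison is against the paper's announced plan rather than a completed argument. That plan differs from yours in the key step: the paper intends to prove $(4)\Rightarrow(1)$ directly in Part~2 via the \emph{Strong Two Over All Theorem}, a geometric tool producing ``filling arcs,'' deliberately replacing what it calls the ``heavy relative train track machinery'' used for $(3)\Rightarrow(1)$ in the absolute case \FSTwo. Your route through ``not~$(1)\Rightarrow\phi$ fixes a vertex'' is exactly the approach the paper is setting aside; if it works it yields $\Omega=1$, but the authors are after a different, more geometric argument with only light train track input.

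Your sketch also has a genuine gap at its core. Theorem~C (proved in this Part~I) says $\phi$ fixes a vertex if and only if the \emph{set} $\L(\phi)$ does not fill; your hypothesis ``not~$(1)$'' says only that no \emph{individual} $\Lambda\in\L(\phi)$ fills. You assert without argument that these coincide (``all of them are carried by a proper $\phi$-invariant free factor system $\A'$''). Bridging this --- showing that if $\bigcup\L(\phi)$ fills then some single $\Lambda$ fills --- is not routine: one must pass to a maximal proper $\phi$-invariant $\B$, show the top stratum over $\B$ is EG, and then control periodicity of the aperiodic substrata to conclude the associated lamination fills rel~$\A$ and not merely rel~$\B$. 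Your collapse description is also inverted: in the paper's Theorem~C argument one collapses the penultimate filtration element $T_{R-1}$ (which \emph{contains} the subgraph realizing the relevant free factor system) and keeps the top-stratum edges, not the other way around.
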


\begin{TheoremC}
For any $\phi \in \Out(\Gamma;\A)$, $\phi$ fixes a point of $\FS(\Gamma;\A)$ if and only if the set of attracting laminations of $\phi$ does not fill $\Gamma$ rel~$\A$.
\end{TheoremC}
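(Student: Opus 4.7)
The proof splits into two implications. Write $\mathscr L(\phi)$ for the $\phi$-invariant set of attracting laminations of $\phi$.

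\emph{($\Leftarrow$)} Suppose $\mathscr L(\phi)$ does not fill $\Gamma$ rel $\A$. Then some free factor system $\mathscr B$ with $\A \sqsubset \mathscr B \sqsubsetneq \{[\Gamma]\}$ carries $\mathscr L(\phi)$. The collection of such $\mathscr B$ is closed under meets of free factor systems (a leaf lying in conjugates of elements of both $\mathscr B$ and $\mathscr B'$ lies in a conjugate of an element of $\mathscr B \wedge \mathscr B'$), and the bound on Kurosh rank yields a descending chain condition, so a minimum $\mathscr B$ exists. By the $\phi$-equivariance of the defining conditions together with uniqueness of the minimum, $\mathscr B$ is $\phi$-invariant. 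The Bass--Serre tree $T_{\mathscr B}$ of any Grushko-type decomposition of $\Gamma$ realizing the vertex groups $\mathscr B$ is then a nontrivial free splitting of $\Gamma$ rel $\A$, and the class $[T_{\mathscr B}] \in \FS(\Gamma;\A)$ is fixed by $\phi$.

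\emph{($\Rightarrow$)} Conversely, suppose $\phi$ fixes $[T] \in \FS(\Gamma;\A)$. Let $\mathscr B$ be the free factor system consisting of the conjugacy classes of nontrivial vertex stabilizers of $T$; this is a proper, $\phi$-invariant free factor system with $\A \sqsubset \mathscr B$, and it suffices to show every $\Lambda \in \mathscr L(\phi)$ is carried by $\mathscr B$. For this I would invoke the relative train track and CT constructions of Lyman \LymanRTT, \LymanCT\ applied to a suitable power of $\phi$, producing a topological representative $f \from G \to G$ whose filtration $\emptyset = G_0 \subset \cdots \subset G_N = G$ refines the chain $\A \sqsubset \mathscr B \sqsubset \{[\Gamma]\}$, with some $G_j$ realizing $\mathscr B$. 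The edges of $G$ outside $G_j$ then correspond bijectively to $\Gamma$-orbits of edges of $T$. Because $\phi$ fixes $[T]$, these orbits are permuted by $\phi$, and after passing to a further power and adjusting $f$ within its homotopy class one arranges that $f$ literally permutes the edges above $G_j$ rather than stretching them. The transition matrix of the top stratum is then a permutation matrix, so no EG stratum lies above $G_j$; hence every attracting lamination of $\phi$, arising as it does from an EG stratum of $f$, is supported in $G_j$ and therefore carried by $\mathscr B$.

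The principal difficulty is the compatibility step in the $(\Rightarrow)$ direction: producing a relative train track representative whose filtration realizes the $\phi$-invariant splitting $T$ and whose top stratum is genuinely permuted by $f$. Once this is in hand, the conclusion is immediate from the standard fact that a permutation stratum carries no attracting lamination, so the whole argument reduces to the existence of this compatible representative, which is where the technical content of Lyman's relative train track machinery (or, failing that, a direct Stallings-folding argument) in the setting $(\Gamma;\A)$ must be brought to bear.
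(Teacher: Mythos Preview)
Your $(\Leftarrow)$ direction has a genuine gap. From the $\phi$-invariance of a proper free factor system $\B$ carrying $\L(\phi)$ you conclude that ``the class $[T_{\B}]$ is fixed by $\phi$'' for \emph{any} Grushko-type free splitting $T_{\B}$ with elliptic system $\B$. This does not follow: invariance of the free factor system $\B$ only tells you that $\phi$ permutes the set of free splittings $T$ with $\FFS{T}=\B$, and that set is the entire outer space $\O(\Gamma;\B)$, on which $\phi$ certainly need not act trivially. For a stark illustration, if $\L(\phi)=\emptyset$ then your minimal $\B$ is $\A$ itself, and your argument asserts that $\phi$ fixes every Grushko free splitting rel~$\A$, i.e.\ every point of $\O(\Gamma;\A)$. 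The paper repairs this by using train track machinery on the $(\Leftarrow)$ side as well: it takes $\B$ to be a \emph{maximal} (not minimal) non-filling $\phi$-invariant free factor system containing the support of $\L(\phi)$, builds a relative train track representative with penultimate filtration element realizing $\B$ (Corollary~\ref{CorollaryMaximal}), observes that every EG stratum lies below that level since every attracting lamination is carried by $\B$, so the top stratum is NEG, and then penultimate collapse (Proposition~\ref{PropCollapseTTMap}~\pref{ItemUltimateNEG}) produces a \emph{specific} free splitting fixed by $\phi$. Maximality of $\B$ is what forces the top to be a single irreducible stratum.

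For $(\Rightarrow)$ your outline is in the right spirit and you correctly flag the compatibility step as the crux. The paper does not attempt to build a single representative in which the strata above the $\B$-level are literally permuted, as you propose; instead it proceeds in stages. First it notes that the fixed-splitting map $g\from S\to S$ is itself a relative train track representative \emph{with respect to}~$\B=\FFS{S}$ whose strata are all NEG, so $\phi$ has no attracting laminations rel~$\B$. It then bootstraps through a maximal $\phi$-invariant $\B'\sqsupset\B$, using two further relative train track representatives and two penultimate collapses, to conclude that the top stratum of a representative rel~$\A$ realizing $\B'$ at the penultimate level is again NEG, whence $\L(\phi)$ rel~$\A$ is carried by~$\B'$. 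This indirection sidesteps the ``graft a permutation on top'' construction you are gesturing at, which is not obviously available from Lyman's existence theorem alone.
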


Here in Part~II we prove the upper bound in Theorem~A, and we prove Theorem C. The proof of the lower bound in Theorem~A, and the entire proof of Theorem B, are found in Part III \RelFSThree. See also Part~III Section~5 for some partial results regarding analogues of Theorems~A and~B for the complex of relative free factor systems $\CFFS(\Gamma;\A)$.

In each of the above theorems one is given a group $\Gamma$ and a free factor system~$\A$ of~$\Gamma$. All constants will depend on $\Gamma$ and~$\A$, such as the constants $A$, $B$, $\Omega$ in Theorems A and B, and the constant $\Delta$ in the \TOAT\ below. More precisely, when we write a constant in the form $C=C(\Gamma;\A)$ we mean that $C$ depends only on two non-negative integer valued combinatorial invariants of the pair $(\Gamma,\A)$: the number of ``components'' of $\A$ denoted~$\abs{\A}$; and $\corank(\A) = \corank(\Gamma;\A) \ge 0$ which is the rank of a free ``cofactor'' of the free factor system $\A$ in the group $\Gamma$ (see Section~\ref{SectionROut}). Note that constants of this form are completely independent of the isomorphism types of the actual free factors whose conjugacy classes comprise the free factor system~$\A$. In the special case $\A=\emptyset$, the group $\Gamma \approx F_n$ is free of some finite rank $n \ge 2$, and so the constants depend only on $\corank(F_n;\emptyset)=n$.

As mentioned earlier, for the special case of $\Out(F_n)$, Theorem~C and the nonquantitative parts of Theorem~B (i.e.~the equivalencies \pref{ItemFillingLamExists}$\iff$\pref{ItemActLox}$\iff$\pref{ItemActUnbdd}) were first proved in our earlier work \FSTwo, using heavy relative train track machinery for the implication \pref{ItemActUnbdd}$\implies$\pref{ItemFillingLamExists}. Our intention in Parts~II and~III is to introduce new proofs based on new geometric tools augmented with lighter train track machinery, to develop these tools and proofs for the general case of $\Out(\Gamma;\A)$, and to give stronger, quantitative conclusions, namely: Theorem~A; and the additional equivalent statement~\pref{ItemActLargeOrbit} in Theorem~B. On the other hand our proof of Theorem~C in Section~\ref{SectionThmCProof} will be a straightforward generalization of the proof for $\Out(F_n)$ given in \FSTwo. 

%

\smallskip
We turn now to an outline of the contents of Part II.

\medskip\noindent
\textbf{Section~\ref{SectionTOAT}: Stating the \TOAT.} The main tool here in Part II is the \TOAT, which expresses a uniform exponential growth property along Stallings fold paths in $\FS(\Gamma;\A)$; this theorem is new even for $\FS(F_n)$. 

As shown in \RelFSOne, every Stallings fold path in $\FS(\Gamma;\A)$ can be naturally reparameterized to form a $K,C$-quasigeodesic path with constants $K \ge 1$, $C \ge 0$ independent of the path. Furthermore, any two vertices of $\FS(\Gamma;\A)$ can be uniformly perturbed to become endpoints $S,T$ of some Stallings fold path $S = S_0 \mapsto S_1 \mapsto \cdots \mapsto S_K = T$. To explain the notation a bit, each $S_i$ is a free splitting of $\Gamma$ rel~$\A$, each map $f_i \from S_{i-1} \to S_i$ is a ($\Gamma$-equivariant) fold map; and each composed map $f^i_j = f_j \circ\cdots\circ f_{i+1} \from S_i \to S_j$ is \emph{foldable}, a property guaranteeing that for each natural edge $E \subset S_i$ (Section~\ref{SectionRFSC}), each of its iterated images $f^i_j(E) \subset S_j$ is a path \emph{without backtracking} in the tree~$S_j$. 

The \TOAT\ is concerned with how edges grow along Stallings fold paths: Given natural edges $E \subset S_i$ and $E' \subset S_j$, how many times does the image path $f^i_j(E) \subset S_j$ cross natural edges of $S_j$ that are in the $\Gamma$-orbit of $E'$? The theorem gives a certain ``maxi-min'' exponential flaring inequality of this type:

\begin{TwoOverAllThm} (Uniterated Form) There exists an integer constant $\Delta=\Delta(\Gamma;\A) > 0$ such that for any free splittings $S,T$ of $\Gamma$ rel~$\A$, any foldable map $f \from S \to T$, 
if $d_\FS(S,T) \ge \Delta$ then there exist two natural edges $E_1, E_2 \subset S$ in different $\Gamma$-orbits, such that for each natural edge $E' \subset T$, each of the paths $f(E_1),f(E_2) \subset T$ crosses at least 
$2$ edges in the $\Gamma$-orbit of $E'$. 
\end{TwoOverAllThm}

This ``uniterated form'' is the $n=1$ case of the full iterated form of the \TOAT, stated in Section~\ref{SectionTOAStatement}, and having the stronger conclusion that the paths~$f(E_1),f(E_2)$ cross~$2^{n-1}$ edges in the $\Gamma$-orbit of $E'$, under the stronger assumption that $d_\FS(S,T) \ge n\Delta$. 

For a discussion the proof see the heading below on Section~\ref{SectionTwoOverAllProof}. In particular Section~\ref{SectionOneNatOverAll} contains the heart of the proof of exponential growth.

\subparagraph{Motivation for the \TOAT.} Our primary interest is the study of the large scale geometry of $\Out(F_n)$. Concepts developed by Masur and Minsky \cite{MasurMinsky:complex1,MasurMinsky:complex2} for studying the large scale geometry of mapping class groups of surfaces $\MCG(S)$, and generalizations of those concepts to the class of hierarchically hyperbolic groups by Behrstock, Hagen and Sisto \cite{BHS:Hierarchy,BHS:HierarchyII}, have proved surprisingly effective at studying $\Out(F_n)$, \emph{despite} the fact that $\Out(F_n)$ is not a hierarchically hyperbolic group; see for example \cite{HandelMosher:distortion,BestvinaFeighn:subfactor}.

With such motivations, and with the hopes of perhaps discovering new ``hierarchical'' concepts outside of the realm of hierarchically hyperbolic groups, we have engaged in a protracted investigation of the large scale geometry of the \emph{automorphism} group $\Aut(A)$ of a finite rank free group $A \approx F_k$, and of the subgroups $\Aut(A) < \Out(F_n)$ associated to a proper free factor $A < F_n$; see for example \cite{HandelMosher:distortion}, where these subgroups play a role in establishing an exponential lower bound to the Dehn function of $\Out(F_n)$. We suspect that these subgroups might play a deeper role in the large scale geometry of $\Out(F_n)$, analogous to the role played by subgroups $\MCG(F) < \MCG(S)$ when $F \subset S$ is a connected, essential subsurface.

We discovered the \emph{Two Over All Theorem} while studying the action of $\Aut(F_k)$ on the pointed free splitting complex $\PFS(F_k)$ on which it acts; see Section 6 of the \emph{Overview} \cite{HandelMosher:RelHypComplexIntro}, but here is a very brief account. The space $\PFS(F_k)$ may be thought of as the complex of free splittings equipped with a base point, up to base point preserving conjugation of $F_k$-actions; the complex $\PFS(F_k)$ stands in relation to $\FS(F_k)$ the \emph{autr\'e espace} of pointed marked graphs stands in relation to the outer space of unpointed marked graphs (see \cite[\S 1 Section 3]{Vogtmann:OuterSpaceSurvey}). The \TOAT, and its strengthened version in \RelFSThree, may be thought of as a description of flaring behavior, in the sense of \cite{BestvinaFeighn:combination}, within the space $\PFS(F_k)$. We intend in later works to explore applications of the \TOAT\ and the \STOAT\ to the large scale geometry of the \emph{autr\'e espace} of $F_n$, and of $\PFS(F_n)$, and of their $(\Gamma;\A)$ generalizations. One such application is upcoming work of Lyman and the second author \cite{LymanMosher:exponentialDehn} providing exponential lower bounds for the Dehn functions of many groups of the type $\Out(\Gamma;\A)$, including new proofs for the known case of $\Out(F_n)$ when $n \ge 3$ \cite{BridsonVogtmann:GeometryOfAut,HandelMosher:distortion}. 

Meanwhile, beyond these still unrealized motivations, recently discovered applications described below give us an opportunity to explain the \TOAT\ in a focussed setting. 

\medskip\noindent
\textbf{Section~\ref{SectionAppLip}: Application to the \LPT.} Masur and Minsky showed that the natural equivariant ``systole map'' $\mathcal T(S) \mapsto \C(S)$ defined on the Teichm\"uller space $\mathcal T(S)$ is coarsely Lipschitz with respect to the Teichm\"uller metric \cite{MasurMinsky:complex1}; this map assigns, to each hyperbolic structure on $S$, a simple closed geodesic of minimal length. Bestvina and Feighn proved the same coarse Lipschitz conclusion for the ``systole map'' $X(F_n) \mapsto \FFC(F_n)$ defined on Culler-Vogtmann outer space \cite{CullerVogtmann:moduli} equipped with the log-Lipschitz semimetric \cite{Bestvina:BersLike,FrancavigliaMartino:MetricOuterSpace}; this map associates, to each metric marked graph, the conjugacy class of a rank~$1$ free factor represented by a shortest closed circuit in that graph. The systole map $X(F_n) \mapsto \FFC(F_n)$ factors naturally as $X(F_n) \mapsto \FS(F_n) \mapsto \FFC(F_n)$ where the second factor is coarsely Lipschitz \cite{KapovichRafi:HypImpliesHyp}, which raises the question of whether the first factor $X(F_n) \mapsto \FS(F_n)$ is also coarsely Lipschitz. We prove this is so by applying the \TOAT, in the more general setting of the relative deformation space $\O(\Gamma;\A)$:

\begin{theorem*}[Lipschitz Projection Theorem]
The natural equivariant map $\O(\Gamma;\A) \to \FS(\Gamma;\A)$ is coarsely Lipschitz with respect to the log Lipschitz metric on $\O(\Gamma;\A)$ and the simplicial metric on $\FS(\Gamma;\A)$. 
\end{theorem*}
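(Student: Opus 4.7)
The plan is to argue the contrapositive using the \TOAT\ together with a length/volume comparison along a single natural edge. Fix $G, H \in \O(\Gamma;\A)$ with underlying free splittings $S_G, S_H \in \FS(\Gamma;\A)$, and let $\phi \from G \to H$ be any $\Gamma$-equivariant PL map; it suffices to show $d_\FS(S_G, S_H) \le (\Delta/\log 2)\log \Lip(\phi) + O(1)$ and then take the infimum over $\phi$. After rescaling we may assume the total natural edge-length of $G/\Gamma$ and of $H/\Gamma$ each equal $1$; the rescaling is absorbed into the additive constant.

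The first task is to pass from $\phi$ to a $\Gamma$-equivariant \emph{foldable} simplicial map $f \from S_G \to S_H$ that realizes $\phi$ on edges, meaning that $f(E)$ and $\phi(E)$ traverse the same reduced edge-path in $S_H$ for every natural edge $E \subset S_G$. One subdivides $G$ so that $\phi$ is simplicial (this does not change $S_G$ in $\FS$; at subdivision vertices foldability is automatic when $\phi$ is non-backtracking), and then applies a bounded number of Stallings folds to eliminate any original vertex of $S_G$ with only one $\phi$-gate. Since vertices of $\FS(\Gamma;\A)$ have combinatorial complexity bounded in terms of $\corank(\Gamma;\A)$ and $\abs{\A}$, the number of required folds is bounded by a uniform constant $K_0 = K_0(\Gamma;\A)$, so $S_G$ moves in $\FS$ by at most $K_0$ simplicial steps, again absorbable into the additive constant.

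Now apply the \TOAT: whenever $d_\FS(S_G, S_H) \ge n\Delta$, the theorem produces a natural edge $E \subset S_G$ such that, for each natural edge $E' \subset S_H$, the path $f(E) = \phi(E)$ crosses at least $2^{n-1}$ edges of the $\Gamma$-orbit of $E'$. Summing over the finitely many $\Gamma$-orbits of natural edges of $S_H$,
$$\ell_H(\phi(E)) \ \ge\ 2^{n-1} \sum_{[E']} \ell_H(E') \ =\ 2^{n-1}.$$
On the other hand $\ell_H(\phi(E)) \le \Lip(\phi) \cdot \ell_G(E) \le \Lip(\phi)$, since $\ell_G(E) \le \sum_{[E'']} \ell_G(E'') = 1$ by the normalization. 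Hence $\Lip(\phi) \ge 2^{n-1}$, and choosing $n$ maximal with $n\Delta \le d_\FS(S_G, S_H)$ yields the coarse-Lipschitz bound $d_\FS(S_G, S_H) \le (\Delta/\log 2) \log \Lip(\phi) + O(1)$.

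The main obstacle will be the foldability reduction of the second paragraph: one must verify that $\phi$ can be replaced by a foldable simplicial map $f$ combinatorially equivalent to $\phi$ on natural edges, and that the total perturbation of $S_G$ in $\FS$ required to do so is bounded by a constant depending only on $\corank(\Gamma;\A)$ and $\abs{\A}$. This relies on the fold-path technology developed for $\FS(\Gamma;\A)$ in \RelFSHyp\ and \LymanRTT, together with the bounded combinatorial complexity of vertices of $\FS(\Gamma;\A)$; it is the only place where deeper structure of $\O(\Gamma;\A)$ enters beyond the elementary length/volume estimate.
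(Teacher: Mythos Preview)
Your overall architecture matches the paper's: reduce to a foldable map, apply the \TOAT\ to get a natural edge $E$ whose image has combinatorial length $\ge 2^{n-1}$, and compare with $\Lip(\phi)\cdot\ell_G(E)$. The length/volume estimate in your third paragraph is exactly what the paper does.

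The gap is precisely where you suspect it, but your proposed fix is not the right one. You want to perturb $S_G$ by ``a bounded number of Stallings folds'' to eliminate one-gate vertices, with the bound coming from the bounded combinatorial complexity of a vertex of $\FS(\Gamma;\A)$. Two problems. First, folds do not eliminate one-gate vertices in any controlled way: folding two edges in the unique gate can create a new one-gate vertex at the fold point, and there is no a priori reason the process terminates in $O(1)$ steps. Second, and more seriously, after you have changed $S_G$ the natural edge $E$ produced by the \TOAT\ lives in the \emph{new} tree, and your inequality $\ell_H(\phi(E))\le\Lip(\phi)\cdot\ell_G(E)$ no longer parses: $\phi$ is defined on the old $G$, and a natural edge of the folded tree need not be the image of a single natural edge of $G$.

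The paper resolves this by working with an \emph{optimal} map $f\from S\to T$ from the start. Optimality guarantees that every one-gate vertex lies outside the tension forest $S_{\max}(f)$, so one can modify $f$ away from the tension forest without touching $\Lip(f)$. Concretely, the paper produces a single collapse map $\pi\from S\to U$ (hence $d_\FS(S,U)\le 1$) and a foldable $g\from U\to T$ with $\Lip(g)=\Lip(f)=\exp d_\O(S,T)$ and with $U$ subnormalized ($|U|\le 1$). The construction is an induction: first collapse the speed-zero edges; then, at each remaining one-gate vertex $v$ (necessarily outside $S_{\max}$), identify the maximal common initial segments of the incident edges and discard the resulting valence-one spray. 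Each step either decreases the number of one-gate vertex orbits or drops to a proper face of the simplex containing $U$, so the induction terminates. Once this Claim is in hand, the \TOAT\ applied to $g$ gives a natural edge $E\subset U$ with $\ell_U(E)\le 1$ and $\ell_T(g(E))\ge 2^{k-1}$, whence $\Lip(g)\ge 2^{k-1}$, and the coarse Lipschitz bound follows exactly as in your third paragraph.
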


\paragraph{Section~\ref{SectionAppUpperBoundInA}: Application to the upper bound of Theorem~A.} Besides the \TOAT, the proof of the upper bound of Theorem~A depends on the theory of relative train track maps, attracting laminations, and expansion factors, developed for $\Out(\Gamma;\A)$ by Lyman \cite{\LymanRTTTag,\LymanCTTag}, generalizing foundational results for $\Out(F_n)$ by Bestvina, Feighn and Handel \cite{\BookOneTag}. A capsule summary of what we need for this purpose is Proposition~\ref{PropAxisInFS}, stated at the end of Section~\ref{SectionAnAxisInFS}, regarding the existence of ``fold axes'' in $\FS(\Gamma;\A)$ for elements of $\Out(\Gamma;\A)$. The proof of the upper bound of Theorem~A is found in Section~\ref{SectionProofUpperBound}, by combining Proposition~\ref{PropAxisInFS} with the \TOAT. Proposition~\ref{PropAxisInFS} itself is proved in Section~\ref{SectionAxisConstruction}.

See also Section~\ref{SectionAxisConstruction} for the proof of Theorem~C.


\medskip\noindent
\textbf{Section~\ref{SectionTwoOverAllProof}: Proving the \TOAT.} In the opening pages of Section~\ref{SectionTwoOverAllProof} we sketch an early version of our proof of the uniterated version of the \TOAT\ for the special case of $\FS(F_n)$, expressed in terms of marked graphs representing vertices of $\FS(F_n)$, highlighting key ideas and a three step outline. After Section~\ref{SectionUniteratedReduction} which proves that the uniterated form of the \TOAT\ implies the iterated form, and Section~\ref{SectionDistanceBounds} which lays out distance bounds needed for later, in Section~\ref{SectionTwoOverAllOutline} we lay out the three step outline for proving the uniterated form of the \TOAT\ in the general case of $\FS(\Gamma;\A)$. Those three steps are subsequently carried out in Sections~\ref{SectionOneNatOverAll}, \ref{SectionOneNatOverAllNat} and~\ref{SectionTwoOverAllStepThree}. 

\paragraph{Acknowledgements:} We are grateful to Robbie Lyman for conversations and suggestions regarding several aspects of this work.


\setcounter{tocdepth}{3}
\tableofcontents


\section{Stating the \TOAT}
\label{SectionTOAT}

Sections~\ref{SectionROut}--\ref{SectionFolds} are mostly a review of basic theory from \cite[Sections 2 and 3]{\RelFSOneTag}, somewhat reorganized for our present needs. Sprinkled through this material are a few new facts that we will need including: 
\begin{description}
\item[Twisted equivariance principle (in Section~\ref{SectionROut}):] Applying uniqueness of twisted equivariant maps;
\item[Lemma~\ref{LemmaTwEqUnique}~\pref{ItemEqIsoUni}:] Proving uniqueness of a twisted equivariant simplicial isomorphism of free \hbox{splittings;} 
\item[From Lemma/Definition~\ref{LemmaExtension} to Lemma~\ref{LemmaElementary}:] Facts and notations regarding restriction of a free factor system to another free factor, deriving from the Kurosh subgroup theorem; 
\item[Definition~\ref{DefSewingNeedle} and Lemma~\ref{LemmaSewingNeedleFold}:] The role of ``sewing needle folds'' in Stallings fold paths.
\end{description}
The full statement of the \TOAT\ itself is found in Section~\ref{SectionTOAStatement}.

\subsection{Free factor systems and relative outer automorphism groups.}
\label{SectionROut}

\paragraph{Free factor systems.} 

Consider a nontrivial group $\Gamma$. The conjugacy class of a subgroup $A \subgroup \Gamma$ is denoted $[A]$. A \emph{free factor system} in $\Gamma$ is a set of the form $\A = \{[A_1],\ldots,[A_I]\}$ for which there exists a free factorization $\Gamma = A_1 * \cdots * A_I * B$ such that $A_1,\ldots,A_I$ are nontrivial subgroups and $B$ is a (possibly trivial) finite rank free subgroup called a \emph{cofactor} of $\A$; any such free factorization is called a \emph{realization} of~$\A$. An individual element $[A_i] \in \A$ is called a \emph{component} of $\A$. The \emph{external cofactor} of $\A$, denoted $\Gamma / \A$, is the quotient of $\Gamma$ modulo the subgroup normally generated by $A_1 \union\cdots\union A_I$.  The quotient homomorphism $\Gamma \mapsto \Gamma/\A$ restricts to an isomorphism $B \mapsto \Gamma / \A$, for any realization of $\A$ as above. In general, the cofactors of~$\A$ are simply the sections of the quotient homomorphism $\Gamma \mapsto \Gamma/\A$. The \emph{corank} of~$\A$ is well-defined by the formula $\corank(\A) = \rank(B) = \rank(\Gamma/\A)$. Like any set of subgroups forming terms of a free factorization of a group, the subgroups $A_1,\ldots,A_I,B$ are \emph{mutually malnormal} meaning that each one of them is malnormal and the intersection of any conjugates of any two of them is trivial (see e.g.~\cite[Lemma 2.1]{\RelFSOneTag}).

The partial order on free factor systems of $\Gamma$, denoted $\A_1 \sqsubset \A_2$ and variously referred to as \emph{nesting} or \emph{extension}, is defined by requiring that for each $A_1 \subgroup \Gamma$ such that $[A_1] \in \A_1$ there exists $A_2 \subgroup \Gamma$ such that $[A_2] \in \A_2$ and $A_1 \subgroup A_2$. The unique maximum of this partial order is the \emph{full} free factor system $\{[\Gamma]\}$; a \emph{nonfull} free factor system is one which is not full.\footnote{Elsewhere, instead of the ``nonfull'' and ``full'' terminology, we have used ``proper'' and ``improper''. But the latter terminology being overused, we have settled on this new terminology, which has the advantage of agreeing with ``filling lamination'' terminology.} 


\paragraph{Non-elementary groups.} Consider a nontrivial group~$\Gamma$. It is easy to show that $\Gamma$ possesses a non-full free factor system if and only if there exists a free splitting of $\Gamma$ (see Section~\ref{SectionFreeSplittings} below for details); as a special case, $\A=\emptyset$ is a free factor system of~$\Gamma$ if and only if $\Gamma$ is free of finite rank. For any free splitting $T$ of $\Gamma$ the following dichotomy is also easy to show: either $T$ has infinitely many ends, or $T$ has two ends. Furthermore the latter case holds if and only if $\Gamma$ is infinite cyclic or infinite dihedral, in which case the action of $\Gamma$ on $T$ is conjugate to the standard action on the real line of either $\Z$ or $D_\infty \approx \Z / 2 \Z * \Z / 2 \Z$ (in the language of \cite{CullerMorgan:Rtrees} the action of $\Gamma$ on $T$ is either \emph{a shift} or \emph{dihedral}). 

We say that $\Gamma$ is \emph{elementary} if it is either infinite cyclic or infinite dihedral; otherwise $\Gamma$ is \emph{nonelementary}.

Various statements and proofs would have exceptional cases if we allowed $\Gamma$ to be elementary. Also, various statements would be vacuous if $\Gamma$ had no free splittings at all, equivalently if $\Gamma$ has no non-full free factor systems. To avoid these situations we make the following assumption:
\begin{description}
\item[Global hypothesis:] $\Gamma$ is a nonelementary group having a non-full free factor system.
\end{description}
The elementary case does show up in certain contexts, for example when a free splitting action is restricted to the action of an elementary free factor; see for example Lemma~\ref{LemmaElementary}; see also item~\pref{ItemNotEFF} of Definition~\ref{DefinitionAttrLam} where attracting laminations are defined.

\medskip

\paragraph{The outer automorphism group relative to a free factor system.} 
Fixing a group $\Gamma$ satisfying the \emph{Global Hypothesis}, it follows that $\Gamma$ has trivial center and hence the standard homomorphism $\Gamma \mapsto \Inn(\Gamma)$ taking $\gamma \in \Gamma$ to the inner automorphism $i_\gamma \in \Inn(\Gamma)$ (defined by $i_\gamma(\delta) = \gamma \delta \gamma^\inv$) is an isomorphism. The group $\Inn(\Gamma)$ includes as a normal subgroup of $\Aut(\Gamma)$ with quotient group $\Out(\Gamma) = \Aut(\Gamma) / \Inn(\Gamma)$. The group $\Out(\Gamma)$ acts naturally on the set of conjugacy classes of subgroups, inducing a natural action on the set of free factor systems of $\Gamma$, preserving the nesting partial order and the meet operation; this action is well-defined by the formula 
$\phi \cdot \A = \{[\Phi(A)] \suchthat [A] \in \A\}$ for any $\Phi \in \Aut(\Gamma)$ that represents $\phi$. 

\medskip

Fixing a nonfull free factor system~$\A$ of $\Gamma$, the group $\Out(\Gamma;\A)$, called the \emph{outer automorphism group of $\Gamma$ relative to~$\A$}, is defined to be the subgroup of $\Out(\Gamma)$ that fixes~$\A$. Letting $\Aut(\Gamma;\A) \subgroup \Aut(\Gamma)$ be the pre-image of $\Out(\Gamma;\A)$ under the quotient homomorphism $\Aut(\Gamma) \mapsto \Out(\Gamma)$, it follows that $\Aut(\Gamma;\A)$ is the subgroup of all $\Phi \in \Aut(\Gamma)$ that permutes those subgroups $A \subgroup \Gamma$ for which $[A] \in \A$. Clearly $\Inn(\Gamma) < \Aut(\Gamma;\A)$ and $\Out(\Gamma;\A) = \Aut(\Gamma;\A) / \Inn(\Gamma)$. In the special case $\A = \emptyset$ where $\Gamma \approx F_n$ is free of finite rank \hbox{$n = \corank(\emptyset) \ge 2$} we obtain an isomorphism $\Out(\Gamma;\A) \approx \Out(F_n)$.

The class of groups $\Out(\Gamma;\A)$, and aspects of their large scale geometry and dynamics, are the central focus of this work.

\paragraph{Twisted equivariance and actions by $\Aut(\Gamma)$ and its subgroups.} We describe here some simple concepts and facts regarding group actions. For any group $\Gamma$ and any pair of actions $\Gamma \act X,Y$ in some category, and for any $\Phi \in \Aut(\Gamma)$, a morphism $f \from X \to Y$ is said to be \emph{$\Phi$-twisted equivariant} if $f(\gamma \cdot x) = \Phi(\gamma) \cdot x$ for all $\gamma \in \Gamma$ and $x \in X$. Note that for any $\Phi,\Psi \in \Aut(\Gamma)$ and for any composition $X \xrightarrow{f} Y \xrightarrow{g} Z$ of a $\Phi$-twisted equivariant morphism $f$ followed by a $\Psi$-twisted equivariant morphism $g$, the composed map $g \circ f$ is $\Psi \circ \Phi$-twisted equivariant. Note also that for any $\delta \in \Gamma$ with associated inner automorphism $i_\delta(\gamma) = \delta\gamma\delta^\inv$, the map $x \mapsto \delta \cdot x$ from $X$ to itself is $i_\delta$-twisted equivariant. 

Consider now any action $\Gamma \act X$ denoted $\gamma \cdot x$, and any subgroup $H \subgroup \Out(\Gamma)$ with inverse image $\wt H \subgroup \Aut(\Gamma)$, hence $\wt H$ contains every inner automorphism. An action $\wt H \act X$ denoted $\Phi \bullet x$ is said to be an \emph{extension} of the given action $\Gamma \act X$ if for each $\delta \in \Gamma$ and $x \in X$ we have $i_\delta \bullet x = \delta \cdot x$. If this is so then, applying the easily verified identity $\Phi \circ i_\gamma = i_{\Phi(\gamma)} \circ \Phi$ in the group $\Aut(\Gamma)$, it follows that for any $\Phi \in \wt H$, any $\gamma \in \Gamma$, and any $x \in X$, we have 
\begin{align*}
\Phi \bullet (\gamma \cdot x) &= \Phi \bullet (i_\gamma \bullet x) = (\Phi \circ i_\gamma) \bullet x = (i_{\Phi(\gamma)} \circ \Phi) \bullet x = i_{\Phi(\gamma)} \bullet (\Phi \bullet x) = \Phi(\gamma) \cdot (\Phi \bullet x)
\end{align*}
In brief, this shows that the action map $x \mapsto \Phi \bullet x$ is $\Phi$-twisted equivariant. The following is a kind of converse:
\begin{description}
\item[Twisted Equivariance Principle:] For any group action $\Gamma \act X$ and any $H \subgroup \Out(\Gamma)$ as above, if there exists a \emph{unique} $\Phi$-twisted equivariant map $f_\Phi \from X \mapsto X$ for each $\Phi \in \wt H$, then the formula $\Phi \bullet x = f_\Phi(x)$ defines an action $\wt H \act X$ that extends the given action $\Gamma \act X$.
\end{description} 
This follows for two reasons. First, the composed map $f_\Phi \circ f_\Psi$ is $\Phi \circ \Psi$-twisted equivariant, and so $f_\Phi \circ f_\Psi = f_{\Phi \circ \Psi}$ by uniqueness. The formula for $\Phi \bullet x$ therefore does indeed define an action $\wt H \act X$. Also, the maps $x \mapsto f_{i_\delta}(x)$ and $x \mapsto \delta \cdot x$ are both $i_\delta$-twisted equivariant and so these two maps are identical, proving that the action $\wt H \act X$ extends the given action $\Gamma \act X$.

In Lemma~\ref{LemmaTEqBdyMaps} we will see an application of the \emph{Twisted Equivariance Principle}, where the uniqueness hypothesis is verified by applying topological/dynamical properties of the given action $\Gamma \act X$.

\subsection{Relative free splitting complexes} 
\label{SectionRFSC}

\subsubsection{Graphs, paths, directions, and tight maps.} 
\label{SectionGraphsPathsEtc}
A \emph{graph} $G$ is a 1-dimensional $\Delta$-complex, with vertex and edge sets denoted $\Vertices(G)$, $\Edges(G)$ or just $\VG$, $\EG$. A \emph{subgraph} of a graph $G$ is a subcomplex $H \subset G$ of some subdivision of~$G$; its \emph{complementary subgraph} $G \setminus H$ is the closure of the set theoretic complement $G-H$. Two subgraphs \emph{overlap} if, with respect to some subdivision, they have an edge in common.

Without otherwise specifying we work in the PL category: every map between graphs is assumed to be piecewise linear or PL, meaning a map in the category of simplicial complexes with respect to some subdivision of the domain and range. Sometimes we will require a map to be simplicial, and if so this will always be made explicit. Often (but not always) we require that a map take the vertex set of its domain to the vertex set of its range; in particular this is required in the definition of topological representatives (Section~\ref{SectionRTT}). Now and then we will consider continuous maps that are not necessarily PL.  

An \emph{oriented edge} is an edge with an orientation assigned; if $e$ denotes an oriented edge then $\bar e$ denotes the same edge with opposite orientation. \emph{Paths} in a graph $G$ come in two types: either a \emph{trivial path} which takes constant value at some vertex; or a \emph{nontrivial path} which is a finite edge path without backtracking, expressed as a concatenation $\gamma = e_1 \cdots e_K$ of $K \ge 1$ oriented edges such that $\bar e_i \ne e_{i+1}$ for all $1 \le i < i+1 \le K$.

Given a graph $G$ and $v \in \VG$, the set of nontrivial paths in $G$ with initial vertex $v$ is partially ordered by inclusion, this partial ordering generates an equivalence relation, the equivalence classes are called \emph{directions of $G$ at $v$}, and the set of all these directions is denoted $D_v G$. Every $d \in D_v G$ is uniquely represented by a single oriented edge $e$ of $G$ with initial vertex $v$; given such an $e$, we generally abuse notation and use $e$ to denote the direction in $D_v G$ that it represents. A \emph{turn} at $v$ is an unordered pair of directions $\{e,e'\} \subset D_v G$; the turn is \emph{nondegenerate} if $e \ne e'$ and is \emph{degenerate} otherwise. Given a nontrivial path $\gamma = e_1 \cdots e_K$ in $G$, a vertex $v \in G$, and a nondegenerate turn $\{e,e'\}$ at~$v$, to say that \emph{$\gamma$ takes the turn $\{e,e'\}$} means that there exists $1 < i \le K$ such that $\{\bar e_{i-1},e_i\} = \{e,e'\}$. 

Consider two graphs $G,H$ and a map $f \from G \to H$. To say that $f$ is a \emph{tight} map means that $f(\Vertices(G)) \subset \Vertices(H)$ and for each edge $e \subset G$, the restriction $f \restrict e$ is a path. Assuming that $f$ is tight, it follows that for each path $\alpha \subset G$ the restriction $f \restrict \alpha$ is a concatenation of paths, and the unique path in $G$ obtained by straightening $f \restrict \alpha$ (by homotopy rel endpoints) is denoted $f_\sharp(\alpha)$. In this situation the process of \emph{subdividing $G$ so that $f$ is simplicial} refers to the following two step process:  first, $G$ is subdivided so that $\Vertices(G) = f^\inv(\Vertices(H))$, and hence each edge of $G$ now maps to an edge or vertex of $H$; second, the barycentric coordinates on each edge of $G$ are reparameterized so that $f$ becomes a simplicial map.

Given a tight map $f \from G \to H$, consider $v \in \VG$. If $f$ is nontrivial on each edge incident to $v$ then there is an induced \emph{derivative map} $D_v f \from D_v G \to D_{f(v)} H$ defined so that for each oriented edge $e$ with $e \in D_v G$, the direction $D_v f(d)$ is represented by (the first oriented edge in) $f(e)$. We shall regard $D_v f$ as undefined if there exists an edge incident to $v$ on which $f$ is trivial (although one might wish to instead think of $D_v f$ as being partially defined).

\subsubsection{Trees, and their paths, rays, and lines.}
\label{SectionPathsRaysLines}
A \emph{tree} $T$ is a contractible graph, equivalently a connected, 1-dimensional simplicial complex with no embedded circles. Nontrivial paths in $T$ correspond one-to-one with embedded, oriented arcs having endpoints at vertices, equivalently oriented subcomplexes homeomorphic to $[0,1]$. Every finite concatenation of paths may be uniquely \emph{straightened} to get a path with the same endpoints. One sometimes encounters a \emph{hare's path} \cite{Aesop:TortoiseAndHare} which is \emph{not} a path in our current sense (see Section~\ref{SectionGraphsPathsEtc}), but is instead a finite concatenation of paths, each term of that concatenation being either a trivial path or a nontrivial path, such that each edge of the tree is crossed by \emph{at most one} nontrivial term of the concatenation. To straighten a hare's path one simply removes the trivial subpaths, thus leaving the image of the path unchanged; we shall refer to this special case of straightening as \emph{tightening}. 

A \emph{ray} $\rho$ in a tree $T$ is a subcomplex homeomorphic to $[0,\infty)$; a ray in $T$ uniquely determines, and is uniquely determined by, a singly infinite edge path in $T$ without backtracking, expressed as a concatenation $e_0 \, e_1 \, e_2 \, e_3 \, \ldots.$ A \emph{line} $\ell \subset T$ is a subcomplex homeomorphic to~$\reals$; every line is the image of a doubly infinite edge path without backtracking expressed as $\ldots \, e_{-2} \, e_{-1} \, e_0 \, e_1 \, e_2 \, \ldots$; this expression is unique up to translation and reversal, and each such expression uniquely determines a line.

\smallskip

Here is a simple and useful fact about maps between trees, which is easily proved by induction on distance from a base vertex. Given a function $f \from X \to Y$ and a subset $A$, to say that \emph{$f$ is injective on $A$} means that the restricted function $f \restrict A$ is injective.

\begin{lemma}
\label{LemmaTreeMapInjective}
A map of trees $f \from S \to T$ is injective if and only if $f$ is injective on each edge of~$S$ and $f$ is locally injective at each vertex $v \in S$ (meaning that $v$ has a neighborhood on which $f$ is injective).
\qed\end{lemma}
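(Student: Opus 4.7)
The forward direction is immediate from the definition: a globally injective map is automatically injective when restricted to each edge and to any neighborhood of a vertex. So all the work is in the reverse direction, where I would argue by contrapositive.

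Assume $f$ is injective on each edge and locally injective at each vertex of $S$. Suppose for contradiction that $f(x)=f(y)$ for two distinct points $x,y \in S$. Since $S$ is a tree, there is a unique embedded arc $\alpha \subset S$ from $x$ to $y$, which is nontrivial. Decompose $\alpha$ into a concatenation $\alpha = \alpha_1 \cdots \alpha_k$ where each $\alpha_i$ lies in a single edge $e_i$ of $S$; the concatenation points between consecutive $\alpha_i$'s are vertices of $S$, and $\alpha_1$, $\alpha_k$ may be partial sub-arcs of their edges if $x$ or $y$ is an interior edge point. The hypothesis of injectivity on each edge guarantees that each $f(\alpha_i)$ is an embedded arc in $T$ with endpoints at (or in the interior of) edges of $T$.

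The heart of the proof is showing that the concatenation $f(\alpha_1)\cdots f(\alpha_k)$ is non-backtracking at each interior junction. Such a junction occurs at a vertex $v \in S$ crossed by $\alpha$; since $\alpha$ is a path, the incoming direction $d_i \in D_v S$ (the reversal of the terminal direction of $\alpha_i$) and the outgoing direction $d_{i+1} \in D_v S$ (the initial direction of $\alpha_{i+1}$) are distinct. Local injectivity of $f$ at $v$ (together with edge-injectivity, which prevents $f$ from collapsing any edge at $v$) implies the derivative map $D_v f$ is defined and injective, hence $D_v f(d_i) \neq D_v f(d_{i+1})$. This is exactly the statement that $f(\alpha_i)$ and $f(\alpha_{i+1})$ meet at the vertex $f(v)$ without cancellation, so the concatenation is non-backtracking.

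A non-backtracking edge concatenation in a tree is embedded (otherwise an embedded loop would exist in $T$), so $f(\alpha)$ is an embedded arc in $T$, and its endpoints $f(x)$ and $f(y)$ are distinct, contradicting our assumption. The induction on distance from a base vertex $v_0$ alluded to by the hint can be used to organize this as follows: prove by induction on $n$ that $f$ is injective on the combinatorial ball $B_n(v_0)$, the inductive step reducing any alleged collision in $B_{n+1}$ to the derivative-injectivity argument at the last vertex of the arc joining the colliding points. There is no serious obstacle; the only point requiring a moment of care is the case in which $x$ or $y$ lies in the interior of an edge rather than at a vertex, and this is dispatched entirely by edge-injectivity applied to $\alpha_1$ or $\alpha_k$.
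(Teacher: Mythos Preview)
Your proof is correct and matches the paper's approach: the paper gives no proof beyond the parenthetical hint ``easily proved by induction on distance from a base vertex,'' and you both supply the natural argument (following the arc from $x$ to $y$ and using local injectivity at each interior vertex to rule out backtracking) and explain how to reorganize it as the suggested induction. Nothing more is needed.
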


\subsubsection{Free splittings.} 
\label{SectionFreeSplittings}
We freely use the language of Bass-Serre theory \cite{Bass:covering,Serre:trees,ScottWall} as well as various simple facts about group actions on trees \cite{CullerMorgan:Rtrees}.
 
A \emph{free splitting} of a group $\Gamma$ consists of a nontrivial action $\Gamma \act T$ on a simplicial tree which has finitely many edge and vertex orbits, has trivial edge stabilizers, and is minimal meaning that the only $\Gamma$-invariant subtree of $T$ is $T$ itself (our global hypothesis that $\Gamma$ is not elementary implies that $T$ is not a line). We use $\gamma \cdot x \in T$ to denote the result of $\gamma \in \Gamma$ acting on $x \in T$. When $\Gamma$ and its action on $T$ are understood, $T$~itself is used as shorthand for the free splitting. On the other hand the action $\Gamma \act T$ will occasionally be denoted formally as an injective ``action homomorphism'' $\Act_T \from \Gamma \to \Aut(T)$ where $\Aut(T)$ denotes the simplicial automorphism group; with this notation we have $g \cdot x = \Act_T(g)(x)$. On $T$ there is a $\Gamma$-invariant \emph{natural} graph structure in which the \emph{natural} vertices are those vertices of $T$ which either have nontrivial stabilizer or have valence~$\ge 3$. The edges of the given graph structure on $T$ are sometimes called \emph{edgelets}, the intuition being that each natural edge $E \subset T$ is a union of edgelets which can be vary tiny in comparison with~$E$.

Consider a subgraph $\tau \subset T$ with respect to some $\Gamma$-invariant subdivision of~$T$. To say that $\tau$ is \emph{invariant} means that $\Gamma \cdot \tau = \tau$. To say that $\tau$ is \emph{nondegenerate} means that none of its components is a point.

For any free splitting $\Gamma \act T$, each nontrivial $\gamma \in \Gamma$ satisfies exactly one of the following: $\gamma$ is \emph{elliptic} meaning that $\gamma$ fixes a unique vertex of $T$; or $\gamma$ is \emph{loxodromic} meaning that $\gamma$ preserves a unique line $\Axis(\gamma) \subset T$, and the restriction of $\gamma$ to $\Axis(\gamma)$ is a nontrivial translation. Fixing $\Gamma \act T$, to say that a nontrivial subgroup $A \subgroup \Gamma$ is elliptic means that each of its elements is elliptic, implying that there is a unique vertex of $T$ fixed by each element of~$A$. An elliptic subgroup of $\Gamma$ is maximal amongst elliptic subgroups if and only if it is a vertex stabilizer. Every maximal elliptic subgroup is a free factor, and the collection of conjugacy classes of maximal nontrivial elliptic subgroups forms a nonfull free factor system of $\Gamma$ denoted $\FellT$, which may also be expressed as the set of conjugacy classes of nontrivial vertex stabilizers.

Conversely, associated to any nonfull free factor system~$\A$ of $\Gamma$ there exists a free splitting $T$ of $\Gamma$ such that $\A = \FellT$. Here is one such construction from \cite[Lemma 3.1]{\RelFSOneTag}. Start with a choice of free factorization $\Gamma = A_1 * \cdots * A_I * B$ that realizes $\A = \{[A_1],\ldots,[A_I]\}$ $(I \ge 0)$, and a choice of free basis of the cofactor $B = \<\beta_1,\ldots,\beta_L\>$ ($L \ge 0$). Using Lyman's terminology \cite[Section~1]{\LymanCTTag}, these choices determine a graph of groups presentation of $\Gamma$ called a \emph{thistle with $I$ prickles and $L$ petals}: it has a vertex $V$ of valence $I+2L$ with trivial vertex group; it has vertices $W_1,\ldots,W_I$ of valence~$1$ with associated vertex groups $A_1,\ldots,A_I$, each connected to $V$ by an edge with trivial edge group; and it has $L$ oriented edges with both endpoints on $V$, with trivial edge group and labelled by stable letters $\beta_1,\ldots,\beta_L$. The associated Bass-Serre tree $T$ of this graph of groups is a free splitting of $\Gamma$ rel~$\A$ that we call a \emph{thistle} free splitting, and it has the property that $\FellT = \A$.

For any free splitting $\Gamma \act T$, the given simplicial structure on the tree $T$ is a subdivision of a $\Gamma$-invariant \emph{natural} simplicial structure, with a uniquely determined set of \emph{natural} \hbox{vertices $v$} defined as those given vertices for which either $\Stab(v)$ is nontrivial or $v$ has valence~$\ge 3$. When further $\Gamma$-invariant subdivision of $T$ is needed, we use the terminology \emph{edgelets} to refer to the subdivision edges. The \emph{natural edges} of $T$ are, of course, the closures of the complementary components of the natural vertices. For any valence~2 vertex $v \in T$ of the given simplicial structure, to say that $v$ is \emph{essential} means that $v$ is a natural vertex, equivalently $\Stab(v)$ is nontrivial, equivalently $\Stab(v)$ is cyclic of order~$2$.

\subsubsection{Maps and relations amongst free splittings.} Given a map $f \from S \to T$ between free splittings of $\Gamma$, and given $\Phi \in \Aut(\Gamma)$, to say that $f \from S \to T$ is \emph{$\Phi$-twisted equivariant} means that $f(g \cdot x) = \Phi(g) \cdot f(x)$ for all $g \in \Gamma$, $x \in S$. In the special case that $\Phi$ is the identity automorphism, the terminology is abbreviated to say just that $f$ is \emph{equivariant}, meaning $f(g \cdot x) = g \cdot f(x)$. 

\begin{lemma}
\label{LemmaTwEqUnique}
Given free splittings $S,T,U$ and $\Phi \in \Aut(\Gamma)$
\begin{enumerate}
\item\label{ItemOtherAut}
If~$f \from S \mapsto T$ is $\Phi$-twisted equivariant, and if $\Phi' =  i_\gamma \circ \Phi$ for $\gamma \in \Gamma$, then the map $f' \from S \to T$ defined by $f'(x) = \gamma \cdot f(x)$ is $\Phi'$-twisted equivariant.
\item\label{ItemTwistComp}
If $f \from S \to T$ is $\Phi$-twisted equivariant and $g \from T \to U$ is $\Psi$-twisted equivariant then the composition $g \circ f \from S \to U$ is $\Psi \Phi$-twisted equivariant.
\item\label{ItemEqIsoUni}
For each $\Phi \in \Aut(\Gamma)$, a $\Phi$-twisted equivariant simplicial isomorphism $S \mapsto T$, if it exists, is unique. 
\end{enumerate}
\end{lemma}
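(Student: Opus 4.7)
The plan is to dispose of parts~\pref{ItemOtherAut} and~\pref{ItemTwistComp} by direct calculation from the definitions, and then to deduce part~\pref{ItemEqIsoUni} by combining them with a rigidity statement: any equivariant simplicial automorphism of a free splitting $S$ is the identity.

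For~\pref{ItemOtherAut}, with $f'(x) = \gamma \cdot f(x)$ and $\Phi' = i_\gamma \circ \Phi$, I would compute
$$f'(g \cdot x) = \gamma \cdot f(g \cdot x) = \gamma \cdot \Phi(g) \cdot f(x) = (\gamma \Phi(g) \gamma^\inv) \cdot \gamma \cdot f(x) = \Phi'(g) \cdot f'(x).$$
For~\pref{ItemTwistComp}, the computation is equally immediate:
$$(g \circ f)(h \cdot x) = g(\Phi(h) \cdot f(x)) = \Psi(\Phi(h)) \cdot g(f(x)) = (\Psi \Phi)(h) \cdot (g \circ f)(x).$$

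For~\pref{ItemEqIsoUni}, suppose $f_1, f_2 \from S \to T$ are two $\Phi$-twisted equivariant simplicial isomorphisms. I first observe that $f_2^\inv \from T \to S$ is $\Phi^\inv$-twisted equivariant: substituting $y = f_2^\inv(x)$ in the relation $f_2(g \cdot y) = \Phi(g) \cdot f_2(y)$ gives $g \cdot f_2^\inv(x) = f_2^\inv(\Phi(g) \cdot x)$, and replacing $g$ by $\Phi^\inv(g')$ yields $f_2^\inv(g' \cdot x) = \Phi^\inv(g') \cdot f_2^\inv(x)$. By~\pref{ItemTwistComp} the composition $h := f_2^\inv \circ f_1 \from S \to S$ is then $\Id$-twisted equivariant, i.e.\ an equivariant simplicial automorphism of $S$, and it suffices to show $h = \Id_S$. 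To do so, I would analyze the fixed set $F := \{x \in S \suchthat h(x) = x\}$. Since $h$ commutes with the $\Gamma$-action, $F$ is $\Gamma$-invariant; since $h$ is an isometry of the tree $S$ (with the path metric making each edge have length~$1$), $F$ is closed and convex. Three cases arise from the standard classification of simplicial tree automorphisms. If $F$ contains a nondegenerate arc, then $F$ is a nonempty $\Gamma$-invariant subtree of $S$, so by minimality $F = S$ and $h = \Id_S$. If $F$ is a single point $p$, then $\Gamma$ fixes $p$, which either contradicts minimality (when $p$ is a vertex, since a free splitting has at least one edge) or contradicts triviality of edge stabilizers (when $p$ is interior to an edge, since $\Gamma$ is nontrivial). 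If $F$ is empty, then $h$ is loxodromic with a unique axis $L$, which is $\Gamma$-invariant by uniqueness together with the commutation of $h$ and $\Gamma$, so minimality forces $L = S$ and $S$ is a line; by the paragraph preceding the global hypothesis in Section~\ref{SectionROut} this happens only when $\Gamma$ is elementary, contradicting our standing assumption.

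The main obstacle is this last case, where one must invoke the Culler--Morgan style dichotomy recalled just before the global hypothesis in order to exclude a $\Gamma$-invariant line in $S$. The other two subcases of the rigidity argument, as well as parts~\pref{ItemOtherAut} and~\pref{ItemTwistComp}, are routine algebraic manipulations of the twisted equivariance identity.
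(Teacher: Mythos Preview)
Your proof is correct. Parts~\pref{ItemOtherAut} and~\pref{ItemTwistComp} coincide with the paper's treatment, which simply records them as straightforward calculations. For part~\pref{ItemEqIsoUni} both you and the paper reduce, via composition with an inverse, to showing that an equivariant simplicial automorphism $h \from S \to S$ is the identity, but the two arguments then diverge. The paper uses a bounded-displacement argument: cocompactness of the action forces $x \mapsto d(h(x),x)$ to be bounded; if some natural vertex $v$ is moved, one finds a component of $S - v$ disjoint from its $h$-image and derives that $d(x,h(x))$ is unbounded along that component. You instead classify $h$ as a tree isometry via its fixed set $F$ and eliminate the cases $F=\{p\}$ and $F=\emptyset$ using minimality, trivial edge stabilizers, and the global non-elementary hypothesis. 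Your argument is more structural and makes the reliance on non-elementarity explicit (it is precisely what rules out $S$ being a line in the hyperbolic case); the paper's argument is more hands-on and bypasses the elliptic/hyperbolic dichotomy, instead exploiting the combinatorics of components at a valence-$\ge 3$ vertex.
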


\begin{proof} Items~\pref{ItemOtherAut} and~\pref{ItemTwistComp} are straightforward calculations, which actually show that \pref{ItemOtherAut} and~\pref{ItemTwistComp} are true for $\Gamma$-actions in any category. 

For proving~\pref{ItemEqIsoUni}, consider two $\Phi$-twisted equivariant simplicial isomorphisms $f,f' \from S \to T$. By postcomposing each with $(f')^\inv$ and applying item~\pref{ItemTwistComp}, we reduce to proving that if $f \from S \to S$ is an equivariant simplicial isomorphism then $f = \text{Id}$. Since $S$ has a fundamental domain that is bounded in the simplicial metric, it follows that $d(f(x),x)$ is uniformly bounded as $x \in S$ varies. Arguing by contradiction and using that $f$ is a simplicial isomorphism, if $f$ is not the identity then there exists a natural vertex $v$ such that $f(v) \ne v$. This vertex $v$ must have valence~$\ge 3$, for otherwise $\Stab(v)$ is nontrivial and $\Stab(f(v))=\Stab(v)$, implying that $f(v)=v$. Amongst the components of $S-v$ there is a unique one $C_+$ containing $f(v)$ and a unique one $C_-$ containing $f^\inv(v)$, and we choose a third component $C_0$ of $S-v$ that is distinct from $C_-$ and $C_+$. It follows that $f(C_0) \subset C_+$ which is disjoint from~$C_0$. For each $x \in C_0$ the path from $x$ to $f(x)$ therefore passes through both $v$ and $f(v)$ and so
$$d(x,f(x)) = d(x,v) + d(v,f(v)) + d(f(v),f(x)) = d(v,f(v)) + 2 \, d(x,v)
$$ 
Since $d(x,v)$ can be arbitrarily large as $x \in C_0$ varies, this contradicts uniform boundedness.
\end{proof}


\textbf{Equivalence, and action by $\Out(\Gamma)$.} Consider two free splittings $S$ and~$T$ of $\Gamma$. To say that $S$ and $T$ are \emph{equivalent}, written $S \approx T$, means that there exists an equivariant homeomorphism $S \mapsto T$; this is equivalent to saying that there exists an equivariant simplicial isomorphism with respect to $\Gamma$-invariant \emph{natural} simplicial structures on $S$ and $T$. We use $[S]$ to denote the equivalence class of~$S$, but we often abuse notation by letting $S$ stand for its own equivalence class. 

The group $\Out(\Gamma)$ acts from the right on the set of equivalence classes of free splittings of~$\Gamma$: for each $\phi \in \Out(\Gamma)$ represented by $\Phi \in \Aut(\Gamma)$, and for each free splitting $S$ of $\Gamma$, the class $[S] \cdot \phi$ is represented by precomposing the action homomorphism $\Act \from \Gamma \to \Aut(S)$ with the automorphism~$\Phi$, yielding the action $\Act \circ \Phi \from \Gamma \to \Aut(S)$ defined by $((\Act \circ \Phi)(\gamma))(x) = (\Act(\Phi(\gamma)))(x)$. Equivalently, given free splittings $S,T$ and $\phi \in \Out(\Gamma)$, the equation $[S] = [T] \cdot \phi$ holds if and only if there exists $\Phi \in \Aut(\Gamma)$ representing $\phi$, and there exists a $\Phi$-twisted equivariant homeomorphism $f \from S \mapsto T$ (Lemma~\ref{LemmaTwEqUnique}~\pref{ItemOtherAut}, this equivalence also holds with the phrase ``there exists $\Phi$'' replaced by ``for all $\Phi$''). The ``right action'' equation is proved by noticing that if $\Phi$ and $f$ witness $[S]=[T] \cdot \phi$ as above, and if $\Psi$ and $g$ witness $[T]=[U] \cdot \psi$, then $\Psi \circ \Phi$ and $g \circ f$ witness $[S]=[U] \cdot \psi\phi$. 

\smallskip\textbf{Collapse.}  A map $\pi \from S \to T$ of free splittings is a \emph{collapse} if the inverse image $\pi^\inv(v)$ of each vertex $v \in T$ is connected. Let $\sigma$ be the union of $\pi^\inv(v)$ over all vertices $v \in T$ such that $\pi^\inv(v)$ does not degenerate to a single vertex of $S$. After subdividing $S$ and~$T$ so that $\pi$ is simplicial, $\sigma$ becomes a nondegenerate subgraph of $S$ called the \emph{collapse forest}, we incorporate it into the notation by writing $\pi \from S \xrightarrow{\langle\sigma\rangle} S / \sigma \approx T$; we note that $S \setminus \sigma$ is a nondegenerate subgraph and that $f$ induces a bijection between the edges of $S \setminus \sigma$ and the edges of the quotient tree~$T$. We note also that a collapse map $S \xrightarrow{\langle\sigma\rangle} T$ is a homeomorphism if and only if it is a simplicial isomorphism if and only if $\sigma = \emptyset$. More generally we have equivalence $S \approx T$ if and only if the collapse map $\pi$ is \emph{trivial}, meaning that each component of $\sigma$ contains at most one natural vertex, equivalently no natural edge of $S$ is contained in~$\sigma$. If $\pi$ is trivial then $\pi$ induces a bijections of natural vertices and of natural edges, and $\pi$ can be tightened to an equivariant homeomorphism by equivariantly tightening its restriction to each natural edge of~$S$.

A key fact about a collapse map $\pi \from S \xrightarrow{\langle \sigma \rangle} T$ is that for each path $\alpha \subset S$ its projection $\pi \circ \alpha \subset T$ is a hare's path: the trivial subpaths of $\pi \circ \alpha$ are the images under $\pi$ of the nondegenerate components of $\alpha \intersect \sigma$; and the nontrivial subpaths of $\pi \circ \alpha$ are the images of the nondegenerate components of $\alpha \intersect S \setminus \sigma$. We write $\pi(\alpha)$ for the path in $T$ with the same image as $\pi \circ\alpha$, obtained from $\pi \circ \alpha$ by tightening. Similarly, for each ray $\rho \subset S$ its image $\pi(\rho)$ is either a ray or a path in $T$ obtained by tightening $\pi \circ \rho$; whether it is a ray or a path depends on whether $\rho$ contains infinitely many edges of $S \setminus \sigma$ or only finitely many edges. Also, for each line $\ell \subset S$ its image $\pi(\ell)$ is either a line, a ray, or a path: writing $\ell$ as a union of two rays $\ell=\rho_1 \union \rho_2$, the three possibilities ``line, ray, path'' depend on \emph{how many} of the two rays $\rho_1,\rho_2$ contain infinitely many edges of $S \setminus \sigma$.

The first sentence of the following lemma can be found in \cite[Lemma 3.2 (1)]{\RelFSOneTag}; the rest is straightforward.

\begin{lemma}\label{LemmaCollapseFFNest}
For each collapse map $S \xrightarrow{\langle\sigma\rangle} T$ we have $\FellS \sqsubset \FellT$. The free factor system $\FellT = \FS{(S / \sigma)}$ is identical to the set of conjugacy classes of nontrivial subgroups of $\Gamma$ that are stabilizers of components of the subforest $\sigma \union \VS$, i.e.\ $\CFFS(\sigma \union \VS)$. Indexing the components of $\sigma$ as $\sigma = \bigcup_{i \in I} \sigma_i$, for each $i \in I$ exactly one of the following holds:
\begin{description}
\item[trivial and finite:] $\Stab(\sigma_i)$ is trivial; this holds if and only if $\sigma_i$ is a finite subtree and every vertex of $\sigma_i$ has trivial stabilizer.
\item[nontrivial and bounded:] $\Stab(\sigma_i)$ is nontrivial and $\sigma_i$ is a bounded subtree; this holds if and only if there is a vertex $V \in \sigma_i$ such that $\Stab(\sigma_i) = \Stab(V)$ and $[\Stab(\sigma_i)] \in \FellS$.
\item[nontrivial and unbounded:] $\sigma_i$ is an unbounded subtree; this holds if and only if $\Stab(\sigma_i)$ is nontrivial and $[\Stab(\sigma_i)] \in \FellT - \FellS$.  \qed
\end{description}
\end{lemma}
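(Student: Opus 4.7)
The first sentence is cited from \cite[Lemma 3.2(1)]{\RelFSHypTag}. For the identification $\FFST = \FF(\sigma \union \VS)$ I would observe that under $\pi \from S \to T$ the vertices of $T$ are in $\Gamma$-equivariant bijection with the components of $\sigma \union \VS$: each vertex of $\VS$ outside $\sigma$ is an isolated component, and each component $\sigma_i$ of $\sigma$ collapses to a single vertex of $T$. Equivariance of $\pi$ gives $\Stab_T(\pi(\sigma_i)) = \Stab_\Gamma(\sigma_i)$, and restricting to nontrivial stabilizers yields the equality. The three cases of the trichotomy are mutually exclusive and jointly exhaustive by inspection, so only the biconditional in each item needs proof. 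A standing observation: for any vertex $v \in \sigma_i$, every $g \in \Stab(v)$ satisfies $g \cdot \sigma_i \ni v$, and since components of $\sigma$ are disjoint this forces $g \cdot \sigma_i = \sigma_i$; hence $\Stab(v) \subset \Stab(\sigma_i)$.

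For Cases 1 and 2 I would invoke the classical fact that a group acting on a finite simplicial tree without edge inversions fixes a vertex. In Case 1, the forward direction uses that $S$ has finitely many $\Gamma$-orbits of edges with trivial edge stabilizers, so $\sigma$ has $n \cdot |\Gamma|$ edges for some finite $n$; if $\Stab(\sigma_i) = 1$ then the $\Gamma$-orbit of $\sigma_i$ consists of $|\Gamma|$ disjoint isomorphic copies contributing $|\Gamma| \cdot |\Edges(\sigma_i)|$ edges in total, forcing $|\Edges(\sigma_i)| \le n$; the triviality of vertex stabilizers is immediate from the standing observation. Conversely, finiteness of $\sigma_i$ gives $\Stab(\sigma_i)$ a fixed vertex whose stabilizer is trivial by hypothesis. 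In Case 2, boundedness of $\sigma_i$ again produces a fixed vertex $V$ of $\Stab(\sigma_i)$, and the standing observation combined with $\Stab(\sigma_i) \subset \Stab(V)$ (because $V$ is fixed) yields $\Stab(V) = \Stab(\sigma_i)$, nontrivial, so $[\Stab(V)] \in \FFSS$. The reverse implication in Case 2 follows by elimination once Case 3 is established.

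Case 3 is the core. Let $H = \Stab(\sigma_i)$. Each $\Gamma$-orbit of edges of $S$ meets $\sigma_i$ in at most one $H$-orbit: if $e$ and $g \cdot e$ both lie in $\sigma_i$ then $g \cdot \sigma_i$ shares the edge $g \cdot e$ with $\sigma_i$, forcing $g \cdot \sigma_i = \sigma_i$ and hence $g \in H$. It follows that $H \act \sigma_i$ has finitely many edge orbits with trivial edge stabilizers; if $\sigma_i$ is unbounded it has infinitely many edges, so $H$ is infinite, in particular nontrivial. To show $[H] \notin \FFSS$ I would argue by contradiction: if $H$ fixed a vertex $V_0 \in S$, then since $H$ preserves the subtree $\sigma_i$ the nearest-point projection yields an $H$-fixed vertex $V_1 \in \sigma_i$; picking $H$-orbit representatives $e_1, \ldots, e_k$ of the edges of $\sigma_i$, every edge of $\sigma_i$ is an $H$-translate of some $e_j$ and so lies within distance $\max_j d(V_1, e_j)$ of $V_1$, bounding $\sigma_i$ and contradicting its unboundedness. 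Hence $[H] \in \FFST \setminus \FFSS$.

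The only real obstacle is the non-ellipticity step in Case 3; everything else is routine bookkeeping around the fixed-vertex theorem and the standing observation. The key trick in Case 3 is converting finiteness of $\Gamma$-edge orbits in $S$ into finiteness of $H$-edge orbits in $\sigma_i$, which then forces any hypothetical $H$-fixed vertex to bound $\sigma_i$.
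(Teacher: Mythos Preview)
The paper does not actually prove this lemma: it cites the first sentence and declares the rest ``straightforward.'' Your proposal is therefore supplying a genuine proof where the paper gives none, and the overall architecture you describe is sound. There is, however, one step that fails as written.

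In Case~1 (forward direction) you argue by cardinality: $\sigma$ has $n\cdot|\Gamma|$ edges, the $\Gamma$-orbit of $\sigma_i$ contributes $|\Gamma|\cdot|\Edges(\sigma_i)|$ edges, hence $|\Edges(\sigma_i)|\le n$. But the paper's Global Hypothesis makes $\Gamma$ infinite, and with infinite cardinals the inequality $|\Gamma|\cdot|\Edges(\sigma_i)| \le n\cdot|\Gamma|$ tells you nothing about $|\Edges(\sigma_i)|$. The correct argument is the one you already give in Case~3: if $\Stab(\sigma_i)=1$ and $e,e'\subset\sigma_i$ lie in the same $\Gamma$-orbit, say $e'=g\cdot e$, then $g\cdot\sigma_i$ meets $\sigma_i$, so $g\in\Stab(\sigma_i)=1$ and $e=e'$. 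Thus the map $\Edges(\sigma_i)\to\Edges(S)/\Gamma$ is injective and the target is finite, so $\sigma_i$ is finite. You should replace the cardinality paragraph with this orbit argument.

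Two smaller remarks. First, your claim that the three headers are ``mutually exclusive by inspection'' is not quite right: the disjointness of ``$\Stab(\sigma_i)$ trivial'' and ``$\sigma_i$ unbounded'' is exactly what your Case~3 argument establishes (unbounded forces $H$ infinite), so the trichotomy follows \emph{from} your biconditionals rather than being prior to them. Second, in Case~2 your appeal to a fixed vertex uses that a bounded subtree has a canonical centre; you should note explicitly that if the centre were an edge then triviality of edge stabilizers (and the absence of inversions, which follows from trivial edge stabilizers) would force $\Stab(\sigma_i)=1$, contradicting the Case~2 hypothesis. With the Case~1 fix in place the argument is complete.
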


Given two free splittings $S,T$ of $\Gamma$, if a collapse map $S \mapsto T$ exists then we say that \emph{$S$ collapses to $T$} denoted $S \collapsesto T$, or equivalently that \emph{$T$ expands to $S$} denoted $T \expands S$. The relation $S \collapsesto T$ induces a partial ordering on the set of equivalence classes of free splittings, formally denoted $[S] \collapsesto [T]$. The action of $\Out(F_n)$ on equivalence classes of free splittings preserves the ``collapse'' relation $[S] \collapsesto [T]$.


\subsubsection{Relative free factor systems and relative free splittings.} Consider a group $\Gamma$ and a nonfull free factor system $\A$ of $\Gamma$. A \emph{free factor system of $\Gamma$ rel~$\A$} is simply a free factor system $\B$ of $\Gamma$ such that $\A \sqsubset \B$. The action of $\Out(\Gamma)$ on the set of all free factor systems of $\Gamma$ restricts to an action of the subgroup $\Out(\Gamma;\A)$ on the set of free factor systems rel~$\A$, and on this subset (the restriction of) the nesting partial order is preserved by the action of $\Out(\Gamma;\A)$. Note that that $\A$ itself is the unique minimum of this restricted partial order; see Part~I \cite[Proposition 2.12]{HandelMosher:RelComplexHyp}. We sometimes emphasize this uniqueness by referring to $\A$ the \emph{(relative) Grushko free factor system}, motivated the consequence of the Kurosh Subgroup Theorem saying that in any finitely generated group there exists a unique minimal free factor system with respect to nesting; see Part~I \cite[Proposition 2.13]{HandelMosher:RelComplexHyp} where this minimum is referred to as the (absolute) Grushko free factor system.

The \emph{meet} of free factor systems of $\Gamma$ is initially defined as a binary operation $\A_1 \meet \A_2$; see 
\cite[Corollaries 2.7 and 2.8]{HandelMosher:RelComplexHyp}.
For any fixed free factor system~$\A$, when the binary meet operation on \emph{all} free factor systems of $\Gamma$ is restricted to the set of free factor systems rel~$\A$, that restriction can then be extended to a multivariate operation $\meet \F_i$ defined for any indexed family $\{\F_i\}$ of free factor systems of $\Gamma$ rel~$\A$, and producing another free factor system of~$\Gamma$ rel~$\A$ by the following formula (see \cite[Corollary 2.15]{HandelMosher:RelComplexHyp}):
\begin{align*}
\meet \F_i &= \{\, [\cap F_i] \, \suchthat \, [F_i] \in \F_i \,\text{for all $i$, and} \, \cap F_i \, \text{is not trivial}\} \\
&=  \text{the unique maximum with respect to $\sqsubset$ of all free factor systems $\B$ rel $\A$} \\
  &\qquad\text{such that $\B \sqsubset \F_i$ for all $i$}
\end{align*}

Given a free splitting $T$ of $\Gamma$, to say that $T$ is a \emph{free splitting rel~$\A$} means that $\A \sqsubset \FellT$; equivalently, $\FellT$ is a free factor system rel~$\A$; equivalently, for each subgroup $A \subgroup \Gamma$ such that $[A] \in \A$, the action of $A$ on $T$ is elliptic. If in addition $\A = \FellT$ then we say that $T$ is a \emph{Grushko free splitting} of $\Gamma$ rel~$\A$; for a specific example see the earlier construction of a thistle free splitting of $\Gamma$ relative to~$\A$.


\begin{definition}[Free factor support of an invariant subforest.]
\label{DefinitionFFS}
Consider a free splitting $T$. Associated to each $\Gamma$-invariant subforest $\tau \subset T$ there is a free factor system denoted $\F[\tau]$, which we sometimes call the \emph{free factor support} of $\tau$, defined to be the set of conjugacy classes of nontrivial stabilizers of components of the subforest $\VT \union \tau$; it follows that $\FellT \sqsubset \F[\tau]$. By Lemma~\ref{LemmaCollapseFFNest} this definition of $\F[\tau]$ is equivalent to the formula 
$$\F[\tau] =  \begin{cases}
\Fell{(T/\tau)} & \quad\text{if $\tau \ne T$} \\
\{[\Gamma]\} &\quad\text{if $\tau=T$}
\end{cases}
$$
\end{definition}

\begin{definition}[Visibility of a free factor system in a free splitting] 
Consider a free splitting $T$ of $\Gamma$. Given a subgroup $F \subgroup \Gamma$, to say that $F$ is \emph{visible} in $T$ means that there is a $\Gamma$-invariant subforest $\tau \subset T$ with one orbit of components such that $F$ is the stabilizer of some component of $T$; it follows that $F$ is a free factor of $\Gamma$. More generally, given a free factor system~$\B$ of $\Gamma$, to say that $\B$ is \emph{visible} in $T$ means that there exists a $\Gamma$-invariant subforest $\tau \subset T$ such that $\B = \F[\tau] = \FS(T/\tau)$. Note that if $T$ is a Grushko free splitting relative to a free factor system $\A$ and if the free factor system $\B$ is visible in $T$ then $\A \sqsubset \B$. Conversely, given a free factor system~$\A$ of $\Gamma$ such that $\A \sqsubset \B$ there exists a Grushko free splitting $T$ of $\Gamma$ rel~$\A$ such that $\B$ is visible in $T$ \cite[Lemma 3.2 (3)]{\RelFSOneTag}. 
\end{definition}

\begin{definition}[Relative free factors]
\label{DefRelativeFreeFactor}
Consider a subgroup $F \subgroup \Gamma$. To say that $F$ is a \emph{free factor rel~$\A$}, or just a \emph{relative free factor} when $\A$ is understood, means that either $F$ is trivial or its conjugacy class $[F]$ is an element of some free factor system rel~$\A$. Assuming that $F$ is a nontrivial free factor rel~$\A$, if $[F] \in \A$ then we say that $F$ and $[F]$ are \emph{atomic}, otherwise $F$ and $[F]$ are \emph{nonatomic}. Also, given a free splitting $T$ rel~$\A$, to say that $F$ is \emph{visible} in $T$ means that the $F$-minimal subtree $T_F$ is disjoint from all of its translates by elements of $\Gamma - F$; equivalently, there exists a free factor system~$\B$ rel~$\A$ such that $\B$ is visible in $T$ and $[F] \in \B$. For example if $F$ stabilizes some vertex $V \in T$ then $V=T^F$ and $F$ is visible. 
\end{definition}

The following lemma regarding free factors rel~$\A$ is based on \cite[Extension Lemma 2.11]{\RelFSOneTag}, which itself is an application of the Kurosh subgroup theorem. The notation defined in items~(\ref{ItemPartitionsA}, \ref{ItemRelFFBottomFactorization})
of this lemma will be used throughout this work.


\begin{LemmaAndDefinition}[Restricting $\A$ to a free factor of $\Gamma$ rel~$\A$]
\label{LemmaExtension} \quad\\
Given a proper, nontrivial subgroup $F \subgroup \Gamma$, the following are equivalent:
\begin{enumerate}
\item\label{ItemIsFFRelA}
$F$ is a free factor rel~$\A$.
\item\label{ItemIsVisibleSomewhere}
There exists a Grushko free splitting $T$ of $\Gamma$ rel~$\A$ in which $F$ is visible.
\item\label{ItemPartitionsA}
There exists a unique partition $\A = \A_F \union \bar\A_F$ such that $\{[F]\} \union \bar\A_F$ is a free factor system rel~$\A$ (either of $\A_F$ or $\bar\A_F$ can be empty).
\end{enumerate}
If these hold then for any cofactor $\bar B_F$ of $\{[F]\} \union \bar\A_F$ the following hold, where $I = \abs{\A_F} \ge 0$ and $J = \abs{\bar\A_F} \ge 0$:
\begin{enumeratecontinue}
\item\label{ItemRelFFBottomFactorization}
There exists a free factorization $\ds\Gamma = \underbrace{\left( \FreeProduct_{i=1}^I A_{i} \right) * B_F}_{= \, F} * \left( \FreeProduct_{j=1}^J A_{I+j} \right) * \bar B_F$
$$\A = \{ \, \underbrace{[A_1],\ldots,[A_I]}_{\A_F} \, , \,\underbrace{[A_{I+1}],\ldots,[A_{I+J}]}_{\bar\A_F} \, \}
$$
It follows that $B = B_F * \bar B_F$ is a cofactor of $\A$. Furthermore,
\begin{enumerate}
\item The subgroups $A_i$ with $1 \le i \le I$ are unique up to conjugacy within~$F$. Denote their conjugacy classes in $F$ by $[A_i]^F$.
\item We obtain a well-defined free factor system $\A \restrict F$ \emph{of the group $F$} having $B_F$ as a cofactor, called the \emph{restriction of $\A$ to~$F$}, together a natural one-to-one correspondence as follows:
$$\A \restrict F =  \{[A_1]^F,\ldots,[A_I]^F\} \leftrightarrow \{[A_1],\ldots,[A_I]\}=\A_F
$$ 
We often abuse notation by using this bijection to \emph{identify} $\A \restrict F \approx \A_F$. 
\end{enumerate}
\end{enumeratecontinue}
\end{LemmaAndDefinition}

\begin{proof} The implication \pref{ItemPartitionsA}$\implies$\pref{ItemIsFFRelA} is immediate, and \pref{ItemIsFFRelA}$\implies$\pref{ItemIsVisibleSomewhere} follows by applying \cite[Lemma 3.2 (3)]{\RelFSOneTag} to any free factor system $\B$ such that $[F] \in \B$. 

Assuming \pref{ItemIsVisibleSomewhere}, to prove~\pref{ItemPartitionsA} note that for any vertex $V \in \VT$, if $\Stab(V)$ is not the identity subgroup of $\Gamma$ then $V \in T^F$ if and only if $\Stab(V) \subgroup F$: for the ``only if'' direction, if $V \in \VT^F$ but $\Stab(V) \not\subgroup F$ then, choosing $g \in \Stab(V)-F$, it follows that $T^F \intersect g \cdot T^F$ contains $V$ and hence is not empty, which contradicts~\pref{ItemIsVisibleSomewhere}. The existence part of~\pref{ItemPartitionsA} now follows by letting $\A_F$ be the set of conjugacy classes of non-identity vertex stabilizers $\Stab(V)$ such that $V \in \VT^F$, and letting $\bar\A_F = \A - \A_F$, and then noting that $\{[F]\} \union \bar\A_F$ is the free factor system rel~$\A$ associated to the $\Gamma$-invariant forest $(\Gamma \cdot T^F) \union \VT$. For the uniqueness part of~\pref{ItemPartitionsA}, first note that \pref{ItemIsVisibleSomewhere} implies malnormality of $F$, because if $g \in \Gamma-F$ then the tree $T^{g F g^\inv}=gT^F$ is disjoint from $T^F$ hence $gFg^\inv$ intersects $F$ trivially. Combining this with the already proved \emph{existence} part of \pref{ItemPartitionsA}, it follows that for each $A \subgroup \Gamma$ such that $[A] \in \A$, exactly one of two alternatives holds: $A$ is conjugate to a subgroup of $F$; every conjugate of $A$ intersects $F$ trivially. The first alternative determines the inclusion $[A] \in \A_F$, and the second determines $[A] \in \bar\A_F$.

Assuming~\pref{ItemPartitionsA}, the existence of a free factorization of the following form is immediate:
$$\Gamma = F * \left( \FreeProduct_{j=1}^J A_{I+j} \right) * \bar B_F, \quad \bar\A_F = \{[A_{I+1}],\ldots,[A_{I+J}]\}
$$
To obtain the further free factorization of $F$, apply \cite[Extension Lemma 2.11]{\RelFSOneTag} to the nested pair of free factor systems $\A \sqsubset \{[F]\} \union \bar\A_F$. All statements after ``In addition\ldots'' are straightforward consequences.
\end{proof}

\begin{definition}[Kurosh rank, following \cite{CollinsTurner:efficient}]
\label{DefKRank}
Consider any nontrivial group $\Gamma$. For any free factor system~$\A$ of $\Gamma$ its \emph{Kurosh rank} is defined to be the integer
$$\Krank(\A) = \Krank(\Gamma;\A) = \abs{\A} + \corank(\A) 
$$
In the setting of Lemma/Definition~\ref{LemmaExtension}, for any relative free factor $F$ of $\Gamma$ rel~$\A$ with restricted free factor system $\A \restrict F$, the corresponding Kurosh rank is given by 
$$\Krank(F) = \Krank(F;\A \restrict F) = \abs{\A_F} + \corank(F;\A_F) = I + \rank(B_F)
$$
As a consequence: $\Krank(F)=0$ if and only if $F$ is the trivial subgroup; also, $\Krank(F)=1$ if and only if either $[F] \in \A$ or $[F] \not\in \A$ and $F$ is infinite cyclic.
\end{definition}
We note the following facts about Kurosh rank, in particular the bound in item~\pref{ItemKRankLength} which will be applied in the proof of the \TOAT\ in Section~\ref{SectionDistanceBounds}.

\subparagraph{Remark.} Because all three of these quantities $\abs{\A}$, $\corank(\A)$, $\Krank(\Gamma;\A)$ are non-negative, and because of the formula $\Krank(\Gamma;\A) = \abs{\A} + \corank(\A)$, it follows that any bounds on $\Krank(\Gamma;\A)$ determine bounds on $\abs{\A}$ and $\corank(\A)$ \emph{and} any bounds on $\abs{\A}$ and $\corank(\A)$ determine bounds on $\Krank(\Gamma;\A)$. We shall usually express things in terms of bounds on $\abs{\A}$ and $\corank(\A)$, because that keeps track of more information.


\begin{lemma}[Corollary of \protect{\cite[Proposition 2.14]{\RelFSOneTag}}]
\label{LemmaKrank}
Given relative free factors $F,F'$ of $\Gamma$~rel~$\A$,
\begin{enumerate}
\item\label{ItemKRankIneq}
If $F' \le F$ then 
\begin{enumerate}
\item\label{ItemKRankIneqStat} $\Krank(F') \le \Krank(F)$. 
\item\label{ItemKRankIneqEq} $\Krank(F')=\Krank(F)$ if and only if $F'=F$.
\end{enumerate}
\item\label{ItemKRankLength}
For any strictly increasing sequence $F_0 < F_1 < \cdots < F_L$ of relative free factors of $\Gamma$ rel~$\A$, its length satisfies the bound $L \le \KR(\Gamma) = \abs{\A} + \corank(\A)$.
\end{enumerate} 
\end{lemma}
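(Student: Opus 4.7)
The plan is to derive this as a direct corollary of \cite[Proposition 2.14]{\RelFSHypTag}, which concerns strict monotonicity of Kurosh rank along nested pairs in the lattice of relative free factors.

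For item~\pref{ItemKRankIneq}, I would first make two reductions. The first is to check that if $F'\le F$ are both relative free factors of $\Gamma$ rel~$\A$, then $F'$ is a free factor of the group $F$ with respect to the restricted free factor system $\A\restrict F$: this follows by applying Lemma/Definition~\ref{LemmaExtension}~\pref{ItemPartitionsA} to $F'$ inside $\Gamma$ and comparing the resulting partition $\A=\A_{F'}\union\bar\A_{F'}$ with $\A=\A_F\union\bar\A_F$, using the fact that $F'\le F$ forces $\A_{F'}\sqsubset\A_F$. The second reduction is the transitivity of restriction: the restriction of $\A\restrict F$ to $F'$ (viewed as a free factor system of the group $F$) coincides with $\A\restrict F'$. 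This again follows from the uniqueness clause in Lemma/Definition~\ref{LemmaExtension}~\pref{ItemPartitionsA}, which characterizes the restriction by the induced partition of atoms; the correspondence is just bookkeeping through the bijections $\A\restrict F\leftrightarrow\A_F$ and $\A\restrict F'\leftrightarrow\A_{F'}$. With these two reductions, both the inequality $\Krank(F')\le\Krank(F)$ and the equality case $\Krank(F')=\Krank(F)\iff F'=F$ are immediate applications of \cite[Proposition 2.14]{\RelFSHypTag} to the triple $(F,\A\restrict F,F')$.

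For item~\pref{ItemKRankLength}, I would iterate item~\pref{ItemKRankIneq} along the chain $F_0<F_1<\cdots<F_L$: each strict inclusion $F_{i-1}<F_i$ yields a strict inequality $\Krank(F_{i-1})<\Krank(F_i)$ by~\pref{ItemKRankIneqEq}. Applying~\pref{ItemKRankIneqStat} once more to $F_L\le\Gamma$ gives $\Krank(F_L)\le\Krank(\Gamma)=\abs{\A}+\corank(\A)$. The sequence $\Krank(F_0)<\Krank(F_1)<\cdots<\Krank(F_L)$ thus consists of $L+1$ distinct nonnegative integers in the set $\{0,1,\ldots,\Krank(\Gamma)\}$, which forces $L\le\Krank(\Gamma)$.

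The only slightly subtle step is the transitivity of restriction $(\A\restrict F)\restrict F'=\A\restrict F'$, but it is essentially a bookkeeping exercise once the uniqueness portion of Lemma/Definition~\ref{LemmaExtension}~\pref{ItemPartitionsA} is invoked; the remainder of the argument is a direct appeal to the cited proposition together with an elementary counting on integers.
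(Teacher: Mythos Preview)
Your approach is correct in outline but takes a different route from the paper's, and your first reduction is under-justified as written. The paper does \emph{not} invoke Proposition~2.14 as a black box inside a restricted group; instead it works directly in~$\Gamma$ using the Extension Lemma~2.11 of \cite{\RelFSHypTag}: given $F'\le F$, one first writes down the free factorization $(*)$ of $\Gamma$ associated to~$F$ as in Lemma/Definition~\ref{LemmaExtension}~\pref{ItemRelFFBottomFactorization}, and then applies the Extension Lemma to choose the analogous factorization $(*)'$ for $F'$ \emph{compatibly}, so that $\{A'_i\}_{i=1}^{I'}\subset\{A_i\}_{i=1}^I$ and $B'_{F'}\le B_F$. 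The inequality $\Krank(F')=I'+\rank(B'_{F'})\le I+\rank(B_F)=\Krank(F)$ is then immediate, and the equality case follows because equal-rank nested finite-rank free groups coincide.

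Your reduction ``$F'$ is a free factor of $F$ rel $\A\restrict F$'' is true, but it does not follow from the partition comparison in Lemma/Definition~\ref{LemmaExtension}~\pref{ItemPartitionsA} alone: knowing $\A_{F'}\subset\A_F$ tells you which atoms lie in $F'$, but not that $F'$ sits inside $F$ as a free factor. Justifying that step requires the Kurosh subgroup theorem or, equivalently, exactly the Extension Lemma that the paper uses directly. So your route ultimately needs the same tool, plus the extra bookkeeping of transitivity of restriction, whereas the paper's argument reads off both \pref{ItemKRankIneqStat} and \pref{ItemKRankIneqEq} in one stroke from the compatible factorizations. Your argument for item~\pref{ItemKRankLength} matches the paper's.
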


\begin{proof}
Item~\pref{ItemKRankLength} is an immediate consequence of~\pref{ItemKRankIneq}. To prove~\pref{ItemKRankIneq}, consider nested free factors $F' \le F$ of $\Gamma$ rel~$\A$. Associated to $F$, consider also the free factorization of $\Gamma$ that is described in Lemma/Definition~\ref{LemmaExtension}~\pref{ItemRelFFBottomFactorization}:
$$(*) \qquad \ds\Gamma = \underbrace{\left( \FreeProduct_{i=1}^I A_{i} \right) * B_F}_F * \left( \FreeProduct_{j=1}^J \bar A_{I+j} \right) * \bar B_F
$$
There is a similar free factorization associated to $F'$ of the form
$$(*)' \qquad \ds\Gamma = \underbrace{\left( \FreeProduct_{i=1}^{I'} A'_{i} \right) * B'_{F'}}_{F'} * \left( \FreeProduct_{j=1}^{J'} \bar A'_{I'+j} \right) * \bar B'_{F'}
$$
but furthermore, by applying \cite[Extension Lemma 2.11]{\RelFSOneTag}, once the free factorization $(*)$ is chosen we may then choose the free factorization $(*)'$ so that $\{A'_i\}_{i=1}^{I'} \subset \{A_i\}_{i=1}^{I}$ and that $B'_{F'} \subgroup B_F$, hence $\Krank(F') = I' + \rank(B'_{F'}) \le I + \rank(B_{F}) = \Krank(F)$ which gives conclusion~\pref{ItemKRankIneqStat}. Furthermore equality of those Kurosh ranks implies $I'=I$ and $\rank(B'_{F'})=\rank(B_{F})$. From $I'=I$ it follows that $\{A'_i\}_{i=1}^{I'} = \{A_i\}_{i=1}^{I}$. And since $B'_{F'}$ and $B_{F}$ are nested finite rank free groups, from equality of their ranks it follows that $B'_{F'}=B'_{F'}$. Thus $F'=F$, proving conclusion~\pref{ItemKRankIneqEq}.
\end{proof}

Recalling the terminology \emph{elementary} for an infinite cyclic or infinite dihedral group, the following is an immediate consequence of Lemma/Definition~\ref{LemmaExtension}~\pref{ItemRelFFBottomFactorization}:

\begin{lemma}
\label{LemmaElementary}
For any free factor $F$ of $\Gamma$ rel~$\A$, if $[F] \not\in\A$ then $F$ is elementary if and only if one of the following holds:
\begin{enumerate}
\item $F$ is an infinite cyclic free factor of some cofactor of~$\A$;
\item $F$ is an infinite dihedral group of the form $F = A * A'$ for order 2 cyclic subgroups $A,A' \subgroup \Gamma$ such that $[A] \ne [A']$ and $[A],[A'] \in \A$. \qed
\end{enumerate}
\end{lemma}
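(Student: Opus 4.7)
The plan is to derive both directions from the free factorization $F = (\FreeProduct_{i=1}^{I} A_i) * B_F$ provided by Lemma/Definition~\ref{LemmaExtension}~\pref{ItemRelFFBottomFactorization}, in which each $[A_i] \in \A$ and $B_F$ is a finite rank free group, combined with the recalled classification of elementary groups as either $\Z$ or $D_\infty$.

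The converse direction is by inspection: in case~(1), $F$ is infinite cyclic, hence elementary; in case~(2), $F \approx \Z/2\Z * \Z/2\Z \approx D_\infty$, likewise elementary.

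For the forward direction I assume $F$ is elementary with $[F] \notin \A$ and view the factorization $F = (\FreeProduct_{i=1}^I A_i) * B_F$ as a free product decomposition of~$F$. If $F \approx \Z$, then the only possibilities are $I=1$ with $B_F$ trivial (which would force $F = A_1$ and hence $[F]=[A_1] \in \A$, contradicting the hypothesis), or $I=0$ with $B_F \approx \Z$; the latter identifies $F = B_F$ as an infinite cyclic free factor of the cofactor $B = B_F * \bar B_F$ of $\A$, giving case~(1). If $F \approx D_\infty$, then $B_F$ must be trivial (otherwise $F$ would have an infinite cyclic free factor, contradicting that the abelianization of $D_\infty$ is $(\Z/2\Z)^2$ with no $\Z$ summand), and $I=1$ again produces $[F] = [A_1] \in \A$, so $I=2$ with $B_F=1$. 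Kurosh's theorem applied to the decomposition $F = A_1 * A_2 \approx \Z/2\Z * \Z/2\Z$ then forces $A_1, A_2 \approx \Z/2\Z$, while distinctness $[A_1] \ne [A_2]$ in $\Gamma$ is automatic since $\A$ is a set of distinct conjugacy classes. This is case~(2).

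The only subtle point is ruling out spurious decompositions of $D_\infty$ such as $\Z/2\Z * \Z$ or $\Z/2\Z * \Z/2\Z * \Z$; this is dispatched either by the abelianization argument above or by direct appeal to Kurosh's subgroup theorem applied within~$F$. After that the argument is a formal unpacking of Lemma/Definition~\ref{LemmaExtension}.
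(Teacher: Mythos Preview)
Your proposal is correct and follows exactly the route the paper intends: the paper states this lemma as ``an immediate consequence of Lemma/Definition~\ref{LemmaExtension}~\pref{ItemRelFFBottomFactorization}'' and gives no further proof, and your argument is precisely the unpacking of that free factorization $F = (\FreeProduct_{i=1}^I A_i) * B_F$ together with the classification of elementary groups. One small expository point: when you pass from ``$I \ne 1$'' to ``$I=2$'' in the $D_\infty$ case you silently skip $I \ge 3$ with $B_F$ trivial, but this is handled by the same abelianization or Grushko-rank reasoning you invoke at the end, so there is no real gap.
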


\subsubsection{Relative free splitting complexes.} 
The \emph{free splitting complex} of $\Gamma$ relative to $\A$ is the (directed) simplicial complex denoted $\FS(\Gamma;\A)$, defined to be the geometric realization of the ``collapse'' partial ordering on the set of equivalence classes of free splittings rel~$\A$. In more detail, there is one $0$-simplex of $\FS(\Gamma;\A)$ associated to each equivalence classes of free splittings rel $\A$; we use $[S]$ to denote the $0$-simplex corresponding to a free splitting $S$ rel~$\A$. There is one directed $1$-simplex of $\FS(\Gamma;\A)$ associated to each pair of distinct $0$-simplices $[S]$, $[T]$ for which there exists a collapse map $S \collapsesto T$; we denote this $1$-simplex $[S \collapsesto T]$. In general, associated to each $n+1$-tuple of distinct $0$ simplices $[S_0]$, $[S_1]$, \ldots, $[S_n]$ for which there exists a sequence collapse maps $S_0 \collapsesto S_1 \collapsesto \cdots\collapsesto S_n$, there is an associated $n$-simplex denoted $[S_0 \collapsesto S_1 \collapsesto \cdots\collapsesto S_n]$. 

The action of $\Out(\Gamma)$ on the set of equivalence classes of free splittings preserves the collapse relation $\collapsesto$. Furthermore, that action restricts to an action of $\Out(\Gamma;\A)$ on the set of equivalence classes of free splittings rel~$\A$, also preserving the collapse relation, and thereby inducing a right action of $\Out(\Gamma;\A)$ on $\FS(\Gamma;\A)$. 

\begin{theorem}[\protect{\cite{HandelMosher:RelComplexHyp}}] 
\label{TheoremFSHyp}
The complex $\FS(\Gamma;\A)$, with its simplicial metric, is Gromov hyperbolic.
\end{theorem}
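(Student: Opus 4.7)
The plan is to prove Gromov hyperbolicity by applying a guessing-geodesics criterion (of Masur--Minsky/Bowditch type) to a preferred family of paths constructed from Stallings fold sequences, adapting the strategy we used for the non-relative case $\FS(F_n)$ in \FSHyp\ to the relative setting.

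First, I would construct the preferred family of paths. Given any two vertices $[S],[T]\in\FS(\Gamma;\A)$, after uniformly small modifications (subdivisions and equivariant adjustments of $S$ and $T$) one may arrange a foldable simplicial $\Gamma$-equivariant map $f\from S\to T$, and then perform a Stallings fold sequence $S = S_0 \collapsesto S_1 \collapsesto\cdots\collapsesto S_K = T$ made up of elementary folds and collapses. Suitably reparameterized in the simplicial metric on $\FS(\Gamma;\A)$, this gives a family of paths whose endpoints are uniformly close to $[S]$ and $[T]$.

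Second, I would verify the two ingredients of the guessing-geodesics criterion: (a) any two fold paths with common endpoints are at bounded Hausdorff distance; and (b) triangles of such fold paths are uniformly thin. The central technical device is a \emph{projection diagram}: given a fold path from $S$ to $T$ and any auxiliary free splitting $U$ rel~$\A$, one defines a coarse projection of $U$ onto the fold path by identifying the last splitting $S_j$ in the sequence for which the $\Gamma$-invariant subforest of $S_j$ consisting of edges that ``survive'' in $U$ is sufficiently rich (e.g., its free factor support, as in Definition~\ref{DefinitionFFS}, still captures enough of $\FFSS$). Fold paths are uniform quasigeodesics because the survival behavior along the fold sequence is monotone in a controlled way, and projections to competing fold paths are proved to agree up to bounded error. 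With (a) and (b) in hand, the guessing-geodesics criterion yields Gromov hyperbolicity of $\FS(\Gamma;\A)$ and, as a bonus, that reparameterized Stallings fold paths are uniform quasigeodesics.

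The main obstacle is carrying out the surviving-edge/projection analysis in the presence of nontrivial vertex stabilizers from $\A$. In the $\FS(F_n)$ case every vertex stabilizer is trivial, so one can freely track images of edges and subforests. In the relative setting, folds that occur in neighborhoods of vertices with stabilizer $[A]\in\A$ must be isolated and handled separately, using malnormality of the factors in $\A$ and the bookkeeping afforded by Lemma/Definition~\ref{LemmaExtension} and the Kurosh-rank bound of Lemma~\ref{LemmaKrank}\pref{ItemKRankLength}. Designing the preferred paths so that these ``elementary'' folds are controlled, and showing the projection diagrams still converge up to bounded error independently of the isomorphism types of factors in~$\A$, is the delicate heart of the argument.
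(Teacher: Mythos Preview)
The paper does not prove this theorem; it is quoted with attribution to \cite{HandelMosher:RelComplexHyp} and used as a black box. Your sketch is broadly faithful to the strategy of that cited paper (fold paths as preferred paths, projection diagrams, a Masur--Minsky/Bowditch thin-triangles criterion), but for the purposes of \emph{this} paper no proof is required or given.
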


The simplicial structure on $\FS(\Gamma;\A)$ described above can also be viewed as the first barycentric subdivision of a simplicial structure which assigns one $k$-simplex $\sigma(S)$ to each equivalence class of free splittings $S$ of $\Gamma$ rel~$\A$ such that $S$ has exactly $k+1$ orbits of natural edges; \hbox{for details} in the case of $\FS(F_n)$ see \cite[Corollary 1.4]{\FSHypTag} and following passages. Letting vertices of $\sigma(S)$ correspond to edge orbits of $S$, the barycentric coordinates of each point in $\sigma(S)$ correspond to a $\Gamma$-invariant geodesic metric on~$S$ in which each edge is assigned a length equal to corresponding barycentric coordinate value, and an edge length of $0$ is interpreted by collapsing the orbit of that edge. When this is done the individual points of $\FS(\Gamma;\A)$ correspond to equivalence classes \emph{up to equivariant isometry} of free splittings $S$ of $\Gamma$ rel~$\A$ equipped with a geodesic metric that is normalized by requiring that $\Length(S)=1$; here we let $\Length(S)$ denote the sum of edge lengths over one representative edge in each edge orbit. 


\subsection{Fold paths and foldable maps}
\label{SectionFolds}

We review here basic concepts of foldable maps and Stallings fold paths in the context of $\FS(\Gamma;\A)$. We shall emphasize features of the construction that will be used later in Section~\ref{SectionDistanceBounds} for bounding distances in $\FS(\Gamma;\A)$. We shall also formalize ``fold priority'' arguments which exploits the non-uniqueness of the Stallings fold path construction; such arguments can be found in the proof of hyperbolicity of $\FS(F_n)$ in \cite{} and of $\FS(\Gamma;\A)$ in \cite{}. For application very late in the proof of the \TOAT\ we describe a special kind of fold called a \emph{sewing needle fold}, and in the Sewing Needle Lemma~\ref{LemmaSewingNeedleFold} which analyzes the occurrence of sewing needle folds in Stallings fold paths.

\subsubsection{Gates.} Given Grushko free splittings $S,T$ of $\Gamma$ rel~$\A$, an equivariant tight map $S \mapsto T$ always exists; in fact, for any $\Phi \in \Aut(\Gamma;\A)$ a $\Phi$-twisted equivariant tight map $f \from S \to T$ always exists. First, the Grushko requirement implies that $S$ and $T$ have the exact same nontrivial vertex stabilizers, and $\Phi$-twisted equivariance uniquely determines the restriction of $f$ as a bijection from vertices of $S$ with nontrivial stabilizer to vertices of $T$ with nontrivial stabilizer. Next, for every vertex orbit with trivial stabilizers, that requirement also determines $f$ uniquely on the orbit \emph{after} first choosing the image of one representative of that orbit. Finally, having defined $f$ on vertices, it then extends naturally (and tightly) over all edges in a $\Phi$-twisted equivariant manner.

Consider any tight, twisted equivariant map $f \from S \to T$ between free splittings of $\Gamma$ rel~$\A$. Given $v \in \VS$ such that $f$ is injective on each edge incident to $v$, and hence the derivative map $D_v f \from D_v S \to D_{f(v)} T$ is defined, the set $D_v S$ is partitioned into \emph{gates} of $f$ at $v$, where $e,e' \in D_v S$ are in the same gate if and only if $D_v f(e)=D_v f(e')$. A nondegenerate turn $\{e,e'\}$ at $v \in \VS$ is \emph{foldable (with respect to $f$)} if $\{e,e'\}$ is contained in some gate of $f$ at $v$; otherwise $\{e,e'\}$ is \emph{unfoldable}. Consider a path $\gamma$ and a vertex $v$ in the interior of $\gamma$, and let $e,e'$ be the oriented edges contained in $\gamma$ having initial edge $v$; the orientation on one of $e$ or $e'$ agrees with the orientation on $\gamma$, and the other disagrees. If $\{e,e'\}$ is foldable then we say that $\gamma$ \emph{takes a foldable turn at $v$}, otherwise $\gamma$ \emph{takes an unfoldable turn at $v$}.\footnote{We avoid referring to foldable turns as ``illegal'', reserving that terminology for a more restrictive property in the context of a relative train track map; see Section~\ref{SectionRTT}.}


\subsubsection{Fold maps.} Consider an equivariant map $f \from S \to T$ that is tight and is injective on each edge. To say that $f$ is a \emph{foldable map} means that $f$ has at least two gates at each vertex of~$S$, equivalently at each vertex of $S$ there exists an unfoldable turn. We collect here some results about foldable maps that are useful for constructing fold sequences. 


\begin{proposition}[\protect{\cite[Lemma 4.2]{\RelFSOneTag}}]
\label{PropFoldableProps}
For any free splittings $S,T$ of $\Gamma$ the following hold:
\begin{enumerate}
\item\label{ItemFoldableExists}
There exist maps of free splittings $S \mapsfrom S' \mapsto U \mapsto T$ such that $S' \mapsto S$ and $S' \mapsto U$ are collapse maps, $U \mapsto T$ is foldable, and the composition $S' \mapsto U \mapsto T$ is tight; if furthermore $\FellS \sqsubset \FellT$ then we may take $S=S' \mapsto S$ to be the identity. 
\item\label{ItemFoldableFactorization}
For any free splitting $W$ and any maps $S \mapsto W \mapsto T$ each taking vertices to vertices, if the composed map $S \mapsto T$ is foldable then the factor maps $S \mapsto W$ and $W \mapsto T$ are both foldable. \qed
\end{enumerate}
\end{proposition}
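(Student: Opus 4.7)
My plan is to dispatch item~(2) first as a direct chain-rule computation, and then to handle item~(1) by reducing to the nested case $\FFSS \sqsubset \FFST$ before bootstrapping to the general case via a common blow-up.

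For item~(2), given $h \from S \to W$ and $g \from W \to T$ whose composition $g \circ h$ is foldable, I would first observe that $h$ is surjective: the image $h(S) \subset W$ is a connected, $\Gamma$-invariant subcomplex, and by minimality of $\Gamma \act W$ it must equal $W$; the same argument gives surjectivity of $g$. Tightness and edge-injectivity of each factor are then inherited from the composition, since any failure of either property in $h$ or $g$ would propagate to a corresponding failure in $g \circ h$ on some edge of $S$. With these properties in hand, the chain rule
$$D_u(g \circ h) = D_{h(u)} g \circ D_u h, \quad u \in \VS,$$
does the rest. If $h$ had a single gate at some $u \in \VS$, then $D_u h$ would be constant and so would $D_u(g \circ h)$, contradicting foldability of the composition at $u$. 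If $g$ had a single gate at some $w \in \VW$, then picking $u \in h^\inv(w) \cap \VS$ using surjectivity yields $D_u(g \circ h) = D_w g \circ D_u h$ constant, again a contradiction.

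For the nested case $\FFSS \sqsubset \FFST$ of item~(1), compatibility of vertex stabilizers permits the direct construction of an equivariant tight map $f \from S \to T$: assign images to vertices equivariantly so that each nontrivial elliptic subgroup of $S$ maps into a containing elliptic subgroup of $T$, and then extend tightly over edges. If $f$ is already foldable, set $S' = U = S$. Otherwise some $v \in \VS$ has a single gate, and I would perform an equivariant local homotopy pushing $f$ near $v$ along its unique outgoing direction. This strictly decreases a suitable complexity function — for instance the total edge length of the image measured over one fundamental domain — and iterating terminates at a foldable map.

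In the general case, I would choose an auxiliary Grushko free splitting $U$ of $\Gamma$ rel~$\FFST$, so that $\FFS U = \FFST$, and apply the nested case to obtain a foldable tight map $U \to T$. The common refinement $S'$ would be constructed by equivariantly blowing up each vertex $V \in \V U$ into the $\Stab(V)$-minimal subtree of $S$ (a single point if $\Stab(V)$ is already elliptic in $S$); the resulting $S'$ admits a collapse to $U$ (collapsing the inserted subtrees) and a collapse to $S$ (collapsing the edges of $U$ now appearing as subdivisions of $S'$). A subdivision of $S'$ renders the composition $S' \to U \to T$ simplicial, hence tight. The main obstacle lies precisely here: ensuring that the blow-up has the correct vertex-stabilizer structure on both sides so that both collapses exist simultaneously while preserving tightness of the composition. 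The meet operation on free factor systems supplies the combinatorial data $\FFSS \meet \FFST$ that controls this compatibility, with the detailed construction carried out in \cite[Lemma 4.2]{\RelFSHypTag}.
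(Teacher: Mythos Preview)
The paper does not actually prove this proposition: it is stated with a citation to \cite[Lemma 4.2]{\RelFSHypTag} and closed with \qed. So there is no proof in this paper to compare against; your proposal is effectively an attempt to reconstruct the cited argument.

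Your treatment of item~(2) is essentially the standard one and is correct in spirit, but one step is glossed over. When you write ``picking $u \in h^\inv(w) \cap \VS$ using surjectivity,'' surjectivity of $h$ only guarantees $h^\inv(w)$ is nonempty; it need not contain a \emph{vertex} of $S$ (e.g.\ $w$ could lie in the interior of the $h$-image of a natural edge of $S$). The fix is routine --- subdivide $S$ at $h^\inv(\V W)$ so that $h$ becomes simplicial, and check that the new valence-$2$ vertices still have two gates under $g\circ h$ by edge-injectivity --- but you should say so. The inheritance of tightness and edge-injectivity by $h$ and $g$ from the composition also deserves a line of justification (edge-injectivity of $h$ is immediate; for $g$ use that every edge of $W$ is crossed by some $h(e)$).

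For item~(1) in the nested case your iterative ``push along the unique gate'' sketch is the right idea and matches how such arguments go (compare the claim inside the paper's proof of the Lipschitz Projection Theorem, which carries out exactly this kind of procedure). For the general case, however, your specific construction is not quite right as stated: blowing up each vertex $V$ of $U$ by the $\Stab(V)$-minimal subtree of $S$ does give a collapse $S' \to U$, but the assertion that collapsing ``the edges of $U$ now appearing as subdivisions of $S'$'' yields $S$ is not justified --- those $U$-edges are new edges joining the inserted subtrees, not subdivisions of anything in $S$, and there is no a~priori reason the quotient is $S$. You rightly flag this as the main obstacle and defer to the cited reference, which is honest; but as written the construction would need to be reorganized (typically one builds $S'$ starting from $S$, not from $U$) before it works.
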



\begin{definition}[Folds] 
\label{DefinitionFolds}
As a special case of a foldable map $f \from S \to T$, to say that $f$ is a \emph{fold} means that there exists a vertex $v \in S$, two oriented natural edges $E \ne E'$ with initial vertex~$v$, and initial segments $e \subset E$ and $e' \subset E'$ respectively, such that $e,e'$ are identified by $f$; what this means is that there exists an orientation preserving homeomorphism $h \from e \to e'$, such that the equivalence relation on $S$ that is defined by $f(x)=f(x')$ for $x,x' \in S$ is generated by the relation $\gamma \cdot x \sim \gamma \cdot h(x)$ for all $x \in e$, $\gamma \in \Gamma$. 
\end{definition}

\subparagraph{Remark.} As shown in \cite[Section 4.1]{\RelFSOneTag}, all intersections of the closed arc $e \union e'$ with any of its nontrivial translates are contained in the set $\{w,v,w'\}$ where $w \in e$, $w' \in e'$ are the terminal endpoints. Using this fact one may classify folds using a simplified version of the fold classification scheme found in \cite[Section~2]{BestvinaFeighn:bounding}, which we shall sometimes do later on as needed.

\begin{definition}[Fold paths in the free splitting complex]
\label{DefFoldPaths}
A \emph{foldable sequence} in $\FS(\Gamma;\A)$ is a sequence of free splittings and nontrivial foldable maps
$$\cdots \xrightarrow{f_{l-1}} S_{l-1} \xrightarrow{f_l} S_l \xrightarrow{f_{l+1}} S_{l+1} \xrightarrow{f_{l+2}} \cdots
$$
where $S_l$ is defined for all $l$ on some subinterval of the integers, such that each composition $f^k_l = f_l \circ\cdots\circ f_{k+1} \from S_k \to S_l$ is a foldable map. If in addition each $f_l$ is a fold map then the sequence is called a \emph{(Stallings) fold sequence} or \emph{fold path} in $\FS(\Gamma;\A)$.\footnote{In \protect{\cite[Section 4.2]{\RelFSOneTag}} a technical distinction is drawn between ``fold sequence'' and ``fold path'' which we ignore here.} In particular a finite fold sequence defined for $K \le l \le L$ is called a \emph{fold path from $S_K$ to $S_L$}. 
\end{definition}

The distance bound in the following lemma is used repeatedly:

\begin{lemma}[\protect{\cite[Lemma 4.4]{\FSOneTag}}]
\label{LemmaFoldBound}
Given $S,T \in \FS(\Gamma;\A)$, if there exists a fold $f \from S \to T$ then $d(S,T) \le 2$. In more detail, in $\FS(\Gamma;\A)$ there is a diagram of the form $$\xymatrix{
S \ar@/^1pc/[r]^{g} & U \ar@/^1pc/[r]^h \ar@/^1pc/[l]_{\bar g}^{\<\sigma\>} \ar@/_1pc/[r]^{\hat h}_{\langle\tau\rangle} & T
}$$
in which the upper arrows give a fold factorization $f = h \circ g$, and the lower arrows $\bar g$ and $\hat h$ are collaps maps giving an ``expand--collapse'' path $S \expandsto U \collapsesto T$. It follows that if $d(S,T)=2$ then $d(S,U)=d(U,T)=1$.
\qed
\end{lemma}

A \emph{length $k$ fold} is a fold map $f \from S \to T$ such that $d(S,T)=k$; see the table following the proof of \cite[Lemma 4.4]{\FSOneTag} for a complete classification of length~$0$ folds, length~$1$ folds, and length~$2$ folds.

\subsubsection{Stallings fold theorem.} The version of Stallings fold theorem that we use is taken from \cite[Lemma 4.3]{\RelFSOneTag} and it is a straightforward generalization of the version in \FSHyp. But we shall include more information in the statement of the theorem, by making explicit the choices in the construction, expressed in the language of gates and foldable turns. 

\begin{definition}[Partial fold factorizations. Maximal fold factors]
\label{DefPartialFoldFactorization}
Consider a foldable map $f \from S \to T$ between two free splittings of $\Gamma$ rel~$\A$. A \emph{partial fold factorization} (of $f$, in $\FS(\Gamma;\A)$) is a sequence of maps of free splittings of $\Gamma$ rel~$\A$, each map taking vertices to vertices, and having the form 
$$(*) \qquad f \from S=T_0 \xrightarrow{g_1} \cdots \xrightarrow{g_J} T_J \xrightarrow{h^J} T
$$
such that each $g_j$ is a fold. It follows from Proposition~\ref{PropFoldableProps}~\pref{ItemFoldableFactorization} that the sequence is foldable, and hence $h^J$ and all of the maps $h^j = h^J \circ g_J \circ \cdots \circ g_{j+1} \from T_j \to T$ ($j=0,\ldots,J-1$) are foldable (note that $f=h^0$). The integer $J \ge 0$ is the \emph{length} of the partial fold factorization. If furthermore $h^J$ is a homeomorphism then $(*)$ is a \emph{fold factorization} of $f$.

For each length~$1$ partial fold factorization in $\FS(\Gamma;\A)$ of the form
$$f \from S \xrightarrow{g} U \xrightarrow{h} T
$$
to say that $g$ is a \emph{maximal fold factor (of $f$)} means that if $e,e' \subset S$ are oriented natural edges such $\{e,e'\}$ is a foldable turn with respect to $f$, and if $g$ folds the turn $\{e,e'\}$, and if $\eta \subset e$, $\eta' \subset e'$ are the maximal initial segments that are identified by $f$, then $\eta,\eta'$ are also identified by $g$. We also say that $g$ is \emph{the maximal fold of the turn $\{e,e'\}$} (with respect to $f$). 

Returning to the above partial fold factorization $(*)$ of length~$J$, to say that $(*)$ \emph{has maximal folds} means that for each $j=1,\ldots,J$ the map $g_j \from T_{j-1} \to T_j$ is a maximal fold factor of the foldable map $h^{j-1} \from T_{j-1} \to T$.
\end{definition}

\begin{theorem}[Stallings fold theorem; see \protect{\cite[Lemma 4.3]{\RelFSOneTag}}] 
\label{TheoremStallings}
Any foldable map $f \from S \to T$ has a fold factorization with maximal folds, and a fold factorization with length~$1$ folds. More precisely, 
\begin{enumerate}
\item\label{ItemOneMoarFold}
Given a partial fold factorization $(*)$ of length $J \ge 0$ as in Definition~\ref{DefPartialFoldFactorization},
\begin{enumerate}
\item\label{ItemMoarFolds}
The sequence $(*)$ is a fold factorization if and only if $h^J$ is locally injective, if and only if $h^J$ has no foldable turns (see Lemma~\ref{LemmaTreeMapInjective}).
\item\label{ItemArbitraryFolds}
For each foldable turn $\{e^{\hphantom\prime}_J,e'_J\}$ of $T_J$ with respect to $h^J$, the factorization $(*)$ extends to a partial fold factorization of length $J+1$, by factoring $h^J$ as
$$h^J \from T_J \xrightarrow{g_{J+1}} T_{J+1} \xrightarrow{h^{J+1}} T
$$
where $g_{J+1} \from T_J \to T_{J+1}$ is the maximal fold of the edges $e^{\hphantom\prime}_J,e'_J$ with respect to $h^J$. It follows that if $(*)$ has maximal folds then the extended partial fold factorization also has maximal folds.
\end{enumerate}
\item\label{ItemFoldSequenceBound}
There is an upper bound to the lengths of all partial fold factorizations of $f$ with maximal folds.
\item\label{ItemMaxFoldFactors}
A sequence of foldable turns as in \pref{ItemArbitraryFolds} can be chosen arbitrarily. The result will be a fold factorization of $f$ with maximal folds (by combining \pref{ItemOneMoarFold} and \pref{ItemFoldSequenceBound}).
\item\label{ItemDistOneFoldFactors}
Starting with any fold factorization of $f$ with maximal folds, each maximal fold factor $g_j \from T_{j-1} \to T_j$ of length $2$ can be refactored as two length~$1$ folds. The result will be a fold factorization of $f$ with folds of length~$\le 1$.  \qed
\end{enumerate}
\end{theorem}

\subsubsection{Sewing needle folds.} 
\label{SectionSewingNeedle}
As an addendum to Theorem~\ref{TheoremStallings} we describe the special class of ``sewing needle'' folds, and we prove the \emph{Sewing Needle Lemma}~\ref{LemmaSewingNeedleFold}. This lemma says, roughly, that when constructing a fold factorization of a foldable map using Theorem~\ref{TheoremStallings}, sewing needle folds can always be avoided until the very last stages of the factorization (see the closing remark of this section). 

The Sewing Needle Lemma and concepts of sewing needle folds will not be needed until two places somewhat later here in Part II: in the proof of Theorem~\ref{TheoremStrataLamCorr}~\pref{ItemCorollaryThreeTwoTwo}; and in the third and final step of the proof of the \TOAT\ in Section~\ref{SectionTwoOverAllStepThree}. We place our discussion of sewing needle folds here because it fits quite naturally into the setting of Theorem~\ref{TheoremStallings}.

\begin{definition}[Sewing needle folds and multifolds]
\label{DefSewingNeedle}
Consider a fold map $f \from S \to T$ between two free splittings of $\Gamma$ such that $f$ is not a homeomorphism. To say that $f$ is a \emph{sewing needle fold} means that there exists a natural edge $E$ such that for every turn $\{d,d'\}$ that is folded by~$f$, that turn lies in the subforest $\Gamma \cdot E$; we also say that $f$ is a \emph{sewing needle fold of the natural edge~$E$}. To say that $f$ is a \emph{sewing needle multifold} means that every first fold factor of $f$ is a sewing needle fold.
\end{definition}

\begin{figure}
$$\xymatrix{
& & & & v_{-1} \ar[ddr]_{E_{-1}} & & v_1 \ar[ddr]_{E_1} 
\\
& & & & & & & & & & 
\\
& & & v_{-2} \ar[uur]^{E_{-2}} & \ar[ddll]_f & v_0 \ar[uur]^{E_0} & \ar[ddr]^f & v_2 & 
\\
& & & & & & & & & \\
%
%
%
& v'_{-1} \ar@{-}[d] & & v'_1 \ar@{-}[d] & & 
& v'_{-1} \ar@{-}[ddr] & & v'_1 \ar@{-}[ddl]
\\
& z_{-1} \ar[ddr]^{E'_{-1}} & & z_1  \ar[ddr]^{E'_1} & & & & & \\
& & & & & & & z
\\ 
z_{-2} \ar[uur]_{E'_{-2}} & & z_0 \ar[uur]_{E'_0} & & z_2  & 
\\
v'_{-2} \ar@{-}[u]   & & v'_0 \ar@{-}[u] & & v'_2 \ar@{-}[u] & 
v'_{-2} \ar@{-}[uurr] &&  v'_0 \ar@{-}[uu] && v'_2\ar@{-}[uull]
} 
$$
\caption{Sewing needle folds $f \from S \to T$, with $v_n = \gamma^n \cdot v_0$, and $v'_n = f(v_n) = \gamma^n \cdot v'_0$, and $E_n = \gamma^n E_0$.
The upper portion of the diagram depicts the axis of $\gamma$ in $S$ with fundamental domain $E=E_0$ having initial vertex $v_0$ and terminal vertex $v_1 = \gamma \cdot v_0$. The ``lower left'' version of~$f$ and $T$ shows a nondegenerate sewing needle fold, in which $f(E_0)$ is expressed as the concatenated path $\overline{v'_0 \, z_0 \, z_1 \, v'_1}$, and the action of $\gamma$ of $T$ has an axis with fundamental domain $E'_0 = \overline{z_0 z_1}$. The ``lower right'' version of $f$ and $T$ shows a degenerate sewing needle fold, in which $f(E_0)$ is $\overline{v'_0 \, z \, v'_1}$, and the subgroup $\<\gamma\>$ is the stabilizer of the vertex~$z$.}
\label{FigureSewingNeedle}
\end{figure}

\noindent

We next use the definition to develop a more intuitive and practical understanding of sewing needle folds (see Figure~\ref{FigureSewingNeedle}). Suppose that $f \from S \to T$ is a sewing needle fold of the natural edge $E_0 \subset S$. Choose an orientation of $E_0$ with initial vertex $v_0$, and choose a turn $\{d_0,d'_0\}$ folded by $f$ such that $d_0$ is the initial direction of $E_0$. Let $d'_1$ denote the terminal direction of $E_0$, located at the terminal vertex denoted~$v_1$. Every direction in $\Gamma \cdot E_0$ at a natural vertex of $S$ is in the orbit of either~$d_0$ or~$d'_1$. It follows that there exists a group element $\gamma \in \Gamma$ such that $\gamma \cdot d'_0 \in \{d_0,d'_1\}$. This $\gamma$ is not trivial, because $d'_0 \not\in \{d_0,d'_1\}$. But $\gamma \cdot d'_0 \ne d_0$, because otherwise $\gamma$ would fix the direction $Df(d'_0)=Df(d_0)$ at $f(v_0) \in T$, contradicting the fact that $T$ is a free splitting. We thus have $\gamma \cdot d'_0 = d'_1$ and $\gamma \cdot v_0 = v_1$. Note that $d_0$, $d'_0$ and $\gamma$ are all \emph{uniquely determined} by the choice of $E_0$ in its orbit and the choice of orientation of $E_0$: $d_0$ is the initial direction of $E_0$; $d'_0$ is the unique direction identified with $d_0$ by the fold map $f$; and the equation $\gamma \cdot d'_0 = d'_1$ determines $\gamma$ uniquely because $S$ is a free splitting.

It follows from the above description that the edge $E_0$ is a \emph{loop edge} of $S$, meaning a natural edge whose endpoints are in the same $\Gamma$ orbit; equivalently, the image of $E_0$ in the quotient graph of groups $S/\Gamma$ is a loop. 
Also, the group element $\gamma$ acts loxodromically on the tree $S$, with an axis of the form $\bigcup_n E_n$ having fundamental domain $E_0$, where $E_n = \gamma^n \cdot E_0$; the initial and terminal vertices of $E_n$ are $v_n = \gamma^n \cdot v_0$ and $v_{n+1} = \gamma^{n+1} \cdot v_0$, and its initial and terminal directions are $d_n = \gamma^n \cdot d_0$ and $d'_{n+1} = \gamma^{n+1} \cdot d'_0$. 

There are two types of sewing needle fold, \emph{nondegenerate} and \emph{degenerate} (see again Figure~\ref{FigureSewingNeedle}). To describe them, let $\eta \subset E_0$ and $\eta' \subset \bar E_{-1}$ be the initial segments representing $d_0,d'_0$ that are identified by $f$ (Definition~\ref{DefinitionFolds}), and let $w \in \eta$, $w' \in \eta'$ denote the endpoints opposite $v$, and so $f(w)=f(w')$ is a natural vertex of $T$. Note that the segments $\eta$ and $\gamma \cdot \eta'$ represent the initial and terminal directions $d_0$ and $d'_1$ of $E_0$ respectively, and these two segments are either disjoint or they intersect at their common endpoint $w = \gamma \cdot w'$ in the interior of~$E_0$. In the case that $\eta$ and $\gamma \cdot \eta'$ are disjoint, we say that $f$ is a \emph{nondegenerate} sewing needle fold, and in this situation $\gamma$ acts loxodromically on $T$, with axis $\bigcup_n E'_n$ having fundamental domain $E'_0 = g(E_0 \setminus (\eta \union \gamma \cdot \eta'))$, and with $E'_n = \gamma^n \cdot E'_0$. But if $\eta$ and $\gamma \cdot \eta'$ are not disjoint then $\eta \intersect (\gamma \cdot \eta') = w = \gamma \cdot w'$, and we say that $f$ is a \emph{degenerate} sewing needle fold; in this case $\gamma$ acts elliptically on $T$, and furthermore the infinite cyclic group $\<\gamma\>$ is the stabilizer group of the point $f(w)=f(\gamma \cdot w') \in T$. 

The reason for the ``sewing needle'' terminology is the appearance of the quotient subgraph \break \hbox{$N=(\Gamma \cdot f(E))/\Gamma$} in the quotient graph of groups $T/\Gamma$. For any free splitting $T$ of $\Gamma$, to say that a connected subgraph $N \subset T/\Gamma$ is a \emph{sewing needle} means that there exists a vertex $V$ of valence~$1$ in $N$ such that the following hold: $N-V$ is an open subset of $T/\Gamma$; there exists a vertex $W \ne V \in N$ such that every vertex of $N$ other than $V$ or $W$ is of valence~$2$ and is labelled by the trivial group; and $W$ itself is either of valence~$3$ labelled by the trivial group or of valence~$1$ labelled by an infinite cyclic group. When $W$ satisfies the former then $N$ is a \emph{nondegenerate} sewing needle, and otherwise $N$ is a \emph{degenerate} sewing needle. The ``needle'' itself is the unique path of $N$ with endpoints $V,W$. If $N$ is nondegenerate then the ``eye'' of the sewing needle is the unique loop in $N$ with both endpoints at $W$; and if $N$ is degenerate then the ``eye'' degenerates to the vertex $W$~itself. 

To summarize, given a fold map $f \from S \to T$, to say that $f$ is a sewing needle fold of a natural edge $E$ is equivalent to each of the following statements:
\begin{description}
\item[Axis Characterization:]
For any orientation of $E$ with initial vertex $v$ and initial direction $d$, there exists a unique direction $d' \ne d$ at $v$ and $\gamma \in \Gamma$ such that the turn $\{d,d'\}$ is folded by $f$ and $\gamma \cdot d'$ is the terminal direction of $E$. Furthermore, $\gamma$ acts loxodromically on $S$, and $E$ is a fundamental domain for the action of $\gamma$ on its axis $\bigcup_n \gamma^\cdot E$.
\item[Quotient Characterization:]
In the quotient graphs of groups $S/\Gamma$ and $T/\Gamma$, the subgraph $(\Gamma \cdot E) / \Gamma \subset S / \Gamma$ is a loop, and the subgraph $N = (\Gamma \cdot f(E))/\Gamma \subset T/\Gamma$ is a sewing needle. Furthermore, $f$ is nondegenerate if and only if $N$ is nondegenerate. 
\end{description}

With this understanding of sewing needle folds in hand, we next turn to an understanding of a sewing needle multifold $f \from S \to T$. Since $f$ is not a homeomorphism, it has at least one foldable turn. Choose natural edges $E_1,\ldots,E_K$ in distinct orbits ($K \ge 1$) such that every turn folded by $f$ involves a direction in one of the subforests $\Gamma \cdot E_1,\ldots,\Gamma \cdot E_K$. For each $1 \le i \le K$, choose an orientation of~$E_i$ and a turn $\{d_i,d'_i\}$ that is folded by $f$ such that $d_i$ is the initial direction of~$E_i$. The map $f$ has a maximal first fold factor that folds the turn $\{d_i,d'_i\}$, and that maximal first fold factor must be a sewing needle fold, by definition of a sewing needle multifold. It follows, from item the \emph{Axis Characterization}, that both of $d_i,d'_i$ are directions in the subforest $\Gamma \cdot E_i$, and that there is a loxodromic $\gamma_i \in \Gamma$ such that $\gamma_i \cdot d'_i$ is the terminal direction of~$E_i$. Also, using uniqueness in the \emph{Axis Characterization} for each $i$, it follows that \emph{every} turn folded by $f$ is in the orbit of one of the turns $\{d_i,d'_i\}$, $1 \le i \le K$. Notice from this description that a sewing needle fold is the same thing as a sewing needle multifold with $K=1$.
%
%
%

For further understanding of $f$, we adapt the proof of \cite[Lemma 4.4]{\RelFSOneTag}. Choose maximal initial segments $e_i$ of $E_i$ and $e'_i$ of $\gamma_i^\inv \overline E_i$ with the same images under $f$. Next, choose proper initial subsegments $\eta_i \subset e_i$ and $\eta'_i$ of $e'_i$ with the same images under $f$. We obtain a factorization 
$$f \from S \xrightarrow{g} U \xrightarrow{h} T
$$
such that $g$ is the ``partial multifold'' defined by simultaneously folding $\eta_i$ and $\eta'_i$ for $1 \le i \le K$, meaning that for all $x \ne y \in S$, $g(x)=g(y)$ if and only if $f(x)=f(y)$ and there exists $\delta \in \Gamma$ such that $\delta \cdot x \in \eta_i$ and $\delta \cdot y \in \eta'_i$. In the free splitting $U$ define subforests 
$$\sigma = \Gamma \cdot \bigcup_{i=1}^K g(\eta_i) = \Gamma \cdot \bigcup_{i=1}^K g(\eta'_i) \qquad\text{and}\qquad \tau = \Gamma \cdot \bigcup_{i=1}^K \bigl( g(e_i \setminus \eta_i) \union g(e'_i \setminus \eta'_i) \bigr)
$$
By the same method as in \cite[Lemma 4.4]{\RelFSOneTag}, the above factorization of $f$ fits into a diagram of the form
$$\xymatrix{
S \ar@/^1pc/[r]^{g} & U \ar@/^1pc/[r]^h \ar@/^1pc/[l]_{\bar g}^{\<\sigma\>} \ar@/_1pc/[r]^{\hat h}_{\langle\tau\rangle} & T
}$$
where $\bar g$ and $\hat h$ are collapse maps with respective collapse forests $\sigma$ and $\tau$, thus demonstrating that $d(S,T) \le 2$.

Putting this altogether, we have proved conclusions~\pref{ItemNoSewingToDo} and~\pref{ItemMaxFactorNeedle} of the following:

\begin{lemma}[The Sewing Needle Lemma]
\label{LemmaSewingNeedleFold}
For any foldable map $f \from S \to T$ between free splittings $S,T$ of $\Gamma$ rel~$\A$ that is not a homeomorphism, one of two cases holds:
\begin{enumerate}
\item\label{ItemNoSewingToDo}
There exists a maximal first fold factor of $f$ that is not a sewing needle fold --- equivalently, there exists a turn $\{\delta,\delta'\}$ of $S$ that is foldable with respect to $f$ such that the natural edges $E,E' \subset S$ containing $\delta,\delta'$ (resp.) are in distinct $\Gamma$-orbits;
\,\, or
\item\label{ItemMaxFactorNeedle} Every maximal first fold factor of $f$ is a sewing needle fold --- equivalently, $f$ is a sewing needle multifold. It follows that $d(S,T) \le 2$.
\end{enumerate}
As a special case:
\begin{enumeratecontinue}
\item\label{ItemOneSewingFold}
If $S$ has exactly one orbit of natural edges then item~\pref{ItemMaxFactorNeedle} holds. Furthermore, $f$ is a sewing needle fold, and $S,T$ are in different $\Gamma$-orbits of vertices of $\FS(\Gamma;\A)$.
\qed 
\end{enumeratecontinue}
\end{lemma}

\begin{proof} Only~\pref{ItemOneSewingFold} remains to be proved. Since all turns of $S$ are in the orbit of a single natural edge $E$, conclusion~\pref{ItemMaxFactorNeedle} must hold, and $f$ is a sewing needle multifold with $K=1$; it follows that $f$ is a sewing needle fold. It also follows that, in the setting of the \emph{Quotient Characterization} above, we have
$$N = (\Gamma \cdot f(E))/\Gamma = f(\Gamma \cdot E)/\Gamma = f(S) / \Gamma = T/\Gamma
$$
If $N$ is nondegenerate then it follows that $T$ has two orbits of natural edges, whereas if $N$ is degenerate then it follows $\FellT$ is stictly larger than $\FellS$; in either case $S,T$ represent different $\Gamma$-orbits of vertices.
\end{proof}

\paragraph{Remark.} Using the analysis of the sewing needle multifold $f \from S \to T$ given above, one can also prove that $f$ factors as a composition of $K$ individual sewing needle folds,
$$S=T_0 \xrightarrow{f_1} \cdots \xrightarrow{f_K} T_K = T
$$
such that for each $i=1,\ldots,K$ the image $f^0_{i-1}(E_i) \subset T_{i-1}$ is a natural edge and $f_i \from T_{i-1} \to T_i$ is a sewing needle fold of that edge. By combining this with the Stallings Fold Theorem~\ref{TheoremStallings}, it follows that a general foldable map $U \to W$ factors into two terms $U \mapsto V \mapsto W$, such that the first term $U \mapsto V$ factors as a product of folds \emph{none} of which is a sewing needle fold, and the second term $V \mapsto W$ is a sewing needle multifold (and hence $d(V,W) \le 2$) which itself factors as a product of a uniformly bounded number of sewing needle folds. 

\subsection{Statement of the theorem}
\label{SectionTOAStatement}

\begin{definition}[Crossing numbers; fully crossing paths]
\label{DefCrossing}
Given two paths $\mu,\nu$ in a graph $G$, to say that \emph{$\mu$ crosses $\nu$} means that $\mu$ can be written as a concatenation of subpaths in one of two forms: $\mu = \alpha * \nu * \beta$ or $\mu = \alpha * \nu^\inv * \beta$. The number of distinct concatenation expressions of one of these forms is denoted $\<\mu,\nu\>$, and is called \emph{the number of times that $\mu$ crosses $\nu$}. Note that if $G$ is a tree then such a concatenation expression, if it exists, is unique, and hence $\<\mu,\nu\> \le 1$. More generally, given $\mu$ a single path and $N$ a collection of paths in $G$, we define $\<\mu,N\> = \sum_{\nu \in N} \<\mu,\nu\>$; this is called the \emph{number of times that $\mu$ crosses $N$}. To say that \emph{$\mu$ crosses~$N$} means simply that $\<\mu,N\> \ge 1$. In the statement of the \TOAT\ and other places to follow, we apply these concepts in the situation where $G=T$ is a free splitting, $\nu=e$ is either an edge of $T$ or a natural edge of~$T$, and $N = \Gamma \cdot e$ is the orbit of $e$. In such a situation $\langle \mu, \Gamma \cdot e\>$ is the number of times that $\mu$ crosses (edges in) the orbit of~$e$, and it is also the number of paths in the orbit of $\mu$ that cross~$e$. To say that $\mu$ \emph{fully crosses $T$} means that $\<\mu,E\> \ge 1$ for each natural edge $E \subset T$; in other words, $\mu$ crosses some translate of every natural edge of $T$.
\end{definition}

\begin{theorem*}[\TOAT\ (iterated form)]
For any group $\Gamma$ and any free factor system~$\A$ of $\Gamma$ there exists a integer constant $\Delta=\Delta(\Gamma;\A)$ such that for any free splittings $S,T$ of $\Gamma$ rel~$\A$, any foldable map $f \from S \to T$, and any integer $n \ge 1$, if the distance between $S,T$ in the free splitting complex $\FS(\Gamma;\A)$ satisfies the lower bound $d_\FS(S,T) \ge n\Delta$ then there exist two natural edges $E_1,E_2 \subset S$ in distinct $\Gamma$-orbits such that each path $f(E_i)$ contains $2^{n-1}$ nonoverlapping subpaths each of which fully crosses~$T$. More precisely, there is a decomposition 
$$f(E_i) = \nu_1 \, \mu_1 \, \cdots \, \nu_{N} \, \mu_{N} \, \nu_{N+1} \qquad N = 2^{n-1}
$$ 
such that each path $\mu_i$ fully crosses $T$; the paths $\nu_i$ are allowed to be trivial. As a consequence, $\<f(E_i),E'\> \ge 2^{n-1}$ for every natural edge $E' \subset T$. 
\end{theorem*}

\section{Application to the \LPT}
\label{SectionAppLip}

In their proof that the curve complex $\C(S)$ of a finite type, orientable surface $S$ is hyperbolic \cite{MasurMinsky:complex1}, Masur and Minsky considered the natural systole map $\mathcal T(S) \mapsto \C(S)$ defined on the Teichm\"uller space $\mathcal T(S)$, and they showed that map to be coarsely Lipschitz. For the special case that $S=S_g$ is closed and of genus $g \ge 2$, asymptotic estimates for the optimal Lipschitz constant $\kappa(g)$ of the systole map $\mathcal T(S_g) \mapsto \C(S_g)$ were obtained by Gadre, Hironaka, Kent and Leininger \cite{GHKL:Lipschitz}, who proved that $\frac{1}{K} \log(g) \le \kappa(g) \le K \log(g)$ for some universal constant~$K$. 

In the setting of a rank~$n$ free group $F_n$, with the Culler--Vogtmann outer space $X(F_n)$ \cite{CullerVogtmann:moduli} taking over the role of $\mathcal T(S)$, Bestvina and Feighn proved hyperbolicity of the free factor complex 
$\FFC(F_n)$ \cite{BestvinaFeighn:FFCHyp}, and their work shows that the natural systole map $X(F_n) \mapsto \FFC(F_n)$ is coarsely Lipschitz with respect to the log-Lipschitz semimetric on $X(F_n)$. This semimetric has its origins in the thesis of Tad White \cite{White:GeometryOfOuterSpace}, which was followed up by Bestvina's treatment \cite{Bestvina:BersLike} and an in-depth study by Francaviglia and Martino \cite{FrancavigliaMartino:MetricOuterSpace}. 

But when working in $X(F_n)$ by analogy with $\mathcal T(S)$, the role played by the curve complex $\C(S)$ forks into (at least) two roles: depending on the application, sometimes one uses the free factor complex 
$\FF(F_n)$, and other times one uses the free splitting complex $\FS(F_n)$. As shown by Kapovich and Rafi \cite{KapovichRafi:HypImpliesHyp} there is a natural coarse Lipschitz map $\FS(F_n) \mapsto \CFFS(F_n)$. This fact, in conjunction with the existence of the coarse Lipschitz systole map $X(F_n) \mapsto \CFFS(F_n)$, invites the following question: Does there exist a natural coarse Lipschitz map $X(F_n) \mapsto \FS(F_n)$? Our Lipschitz Projection Theorem below shows that this is so, even in the more general context of relative free splitting complexes.

Given any group $\Gamma$ equipped with a free factor system~$\A$, we follow \cite{GuirardelHorbez:Laminations} in using the notation $\O(\Gamma;\A)$ and the terminology \emph{outer space} for the deformation space of Grushko free splittings of $\Gamma$ with respect to $\A$; see below for a review. These spaces $\O(\Gamma;\A)$ are special cases of the deformation spaces of group actions on simplicial trees introduced by Forester \cite{Forester:Deformation} and studied in depth by Guirardel and Levitt \cite{GuirardelLevitt:outer}, and they generalize the McCullough--Miller outer spaces \cite{McCulloughMiller:symmetric} which themselves generalize the foundational Culler--Vogtmann outer spaces \cite{CullerVogtmann:moduli}.

The log-Lipschitz semimetric on $\O(\Gamma;\A)$ was studied by Francaviglia and Martino \cite{FrancavigliaMartino:TrainTracks} and by Meinert \cite{Meinert:TrainTrackMaps}; for more recent work on $\O(\Gamma;\A)$ by Guirardel and Horbez see \cite{GuirardelHorbez:Laminations}. The outer space $\O(\Gamma;\A)$ can be identified with a dense open subset of the free splitting complex $\FS(\Gamma;\A)$ (in fact the complement of a nowhere dense subcomplex), namely the equivalence classes of all Grushko free splittings of $\Gamma$ rel~$\A$; the inclusion therefore gives us a natural $\Out(\Gamma;\A)$-equivariant map $\iota \from \O(\Gamma;\A) \mapsto \FS(\Gamma;\A)$. Our next theorem shows that this inclusion is coarsely Lipschitz, with a certain amount of control over the optimal Lipschitz constant, although the control we obtain is not --- on the face of it --- as strong as that of \cite{GHKL:Lipschitz}:

\begin{theorem*}[Lipschitz Projection Theorem ($\FS$ version)]
The inclusion map $\iota \from \O(\Gamma;A) \to \FS(\Gamma;\A)$ is coarsely Lipschitz with respect to the log Lipschitz semimetric $d_\O$ on $\O(\Gamma;A)$ and the simplicial metric $d_\FS$ on $\FS(\Gamma;\A)$. More specifically, for each $S,T \in \O(\Gamma;A)$ we have
$$d_\FS(S,T) \le  \frac{\Delta}{\log 2} \, d_\O(S,T) + 2\Delta + 1
$$
using the constant $\Delta=\Delta(\Gamma;\A)$ from the \TOAT.
\end{theorem*}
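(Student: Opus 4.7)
Set $\lambda := \exp d_\O(S,T)$. By definition of the log-Lipschitz semimetric, for each $\varepsilon > 0$ there is an equivariant tight $(\lambda+\varepsilon)$-Lipschitz map $h \from S \to T$ in the normalized metrics (with $\Length(S) = \Length(T) = 1$). The plan is to use $h$ to produce a foldable map $f \from U \to T$ with $U \in \O(\Gamma;\A)$, $d_\FS(S,U) \le 1$, and $\Lip(f) \le \lambda + O(\varepsilon)$; the \TOAT\ then bounds $d_\FS(U,T)$.

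To construct $U$, I would collapse a $\Gamma$-invariant subforest $\sigma \subset S$ of edges on which $h$ is trivial, chosen so that every nontrivial stabilizer of a component of $\sigma \cup \VS$ has conjugacy class in $\A$; by Lemma~\ref{LemmaCollapseFFNest} this yields $\FFS{U} = \A$, so $U \in \O(\Gamma;\A)$ and $d_\FS(S,U) \le 1$. The natural choice is the maximal such $\sigma$, obtained from the full constant subforest of $h$ by discarding any exceptional components whose stabilizers are proper free-factor subgroups of elements of $\A$; such exceptional components, when they arise, can be handled by a further equivariant local modification of $h$ near the relevant non-free vertices of~$T$. Equipped with the pushforward metric (so $\Length(U) \le 1$), $h$ descends to a tight $(\lambda+\varepsilon)$-Lipschitz map $\bar h \from U \to T$ that is injective on every edge. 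If $\bar h$ is already foldable I take $f := \bar h$; otherwise the only obstruction is a vertex $v \in U$ at which all directions of $U$ lie in a single gate of $\bar h$, which I repair by a small equivariant perturbation of $\bar h$ on a $\delta$-neighborhood of $v$ that redirects identical initial directions to distinct initial edges of $T$. For $\delta$ sufficiently small, this produces a foldable $f \from U \to T$ with $\Lip(f) \le \lambda + 2\varepsilon$.

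Now apply the \TOAT\ to $f$. Let $n$ be the largest integer with $n\Delta \le d_\FS(U,T)$; if $n = 0$ the claimed inequality is immediate, so assume $n \ge 1$. The theorem yields a natural edge $E \subset U$ with $\langle f(E), \Gamma \cdot E' \rangle \ge 2^{n-1}$ for every natural edge $E' \subset T$. Summing over $\Gamma$-orbits of natural edges of $T$, weighted by edge length, gives
$$\ell_T(f(E)) = \sum_{[E']} \langle f(E), \Gamma \cdot E' \rangle \cdot \ell_T(E') \ge 2^{n-1} \Length(T) = 2^{n-1}.$$
The Lipschitz bound combined with $\ell_U(E) \le \Length(U) \le 1$ gives $\ell_T(f(E)) \le \Lip(f) \cdot \ell_U(E) \le \lambda + 2\varepsilon$. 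Letting $\varepsilon \to 0$ forces $2^{n-1} \le \lambda$, so $n \le \log_2 \lambda + 1$ and hence $d_\FS(U,T) \le (n+1)\Delta \le \frac{\Delta}{\log 2} \, d_\O(S,T) + 2\Delta$. Adding $d_\FS(S,U) \le 1$ yields the claimed inequality.

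The main obstacle I expect is the two-part construction in the second paragraph: first, choosing the collapse forest $\sigma$ so that $U = S/\sigma$ remains Grushko rel~$\A$ (which requires controlling components of the constant subforest of $h$ whose stabilizers might be proper free-factor subgroups of elements of $\A$), and second, ensuring that the descended map $\bar h$ is foldable or can be made so by an equivariant local perturbation of controlled Lipschitz cost. An alternative route that bypasses both issues is to combine Proposition~\ref{PropFoldableProps}~\pref{ItemFoldableExists} (applicable with $S=S'$ since $\FFS{S} = \A = \FFS{T}$) with an optimality argument for the log-Lipschitz semimetric on $\O(\Gamma;\A)$, producing the desired foldable $(\lambda+\varepsilon)$-Lipschitz map $f \from U \to T$ directly.
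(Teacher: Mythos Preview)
Your overall strategy and your final computation match the paper's proof: collapse $S$ to a Grushko $U$ at distance $\le 1$, produce a foldable map $U \to T$ with Lipschitz constant $\le \lambda$, then apply the \TOAT\ exactly as you do. The issue is entirely in your second paragraph, where you construct the foldable map.

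First, a red herring: the ``exceptional components'' you worry about cannot occur. Since $S$ and $T$ are both Grushko rel~$\A$, they have the same elliptic subgroups. If $h$ is constant on a component $\sigma_i$ of the constant subforest, then $\Stab(\sigma_i)$ is elliptic in $T$, hence elliptic in $S$; since $\sigma_i$ has trivial edge stabilizers this forces $\Stab(\sigma_i)$ to fix a vertex of $\sigma_i$, so $[\Stab(\sigma_i)] \in \A$ or is trivial. Thus $U = S/\sigma$ is automatically Grushko rel~$\A$ (this is Lemma~\ref{LemmaCollapseFFNest}, case ``nontrivial and unbounded'' being ruled out since $\FFS S = \FFS T$).

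The genuine gap is your repair of one-gate vertices by ``a small equivariant perturbation \ldots\ that redirects identical initial directions to distinct initial edges of~$T$''. As described this introduces backtracking on the perturbed edges, destroying tightness; and any honest local modification that separates gates at $v$ must move $\bar h(v)$, which can increase the Lipschitz constant unless you know $v$ is \emph{not} in the tension forest. You have no such guarantee, because you did not start from an optimal map. The paper's fix is precisely this: start from an \emph{optimal} $f \from S \to T$, for which Definition~\ref{DefOptimal}~\pref{ItemOptGates} guarantees $\ge 2$ gates at every vertex of the tension forest, so every one-gate vertex $v$ lies outside it. One then iteratively processes each such $v$: since all edges at $v$ share a common initial $g$-direction, one identifies their maximal common initial segments (the ``spray'' at $v$), collapses the resulting valence-$1$ edge, and observes that this strictly decreases a complexity (either the number of edge orbits or the number of one-gate vertex orbits) while leaving the tension forest and the Lipschitz constant untouched. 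This inductive argument is the content of the Claim in the paper's proof; your proposal does not supply it, and the alternative route via Proposition~\ref{PropFoldableProps}~\pref{ItemFoldableExists} gives foldability without Lipschitz control, so it does not bypass the difficulty.
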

\noindent
The proof of this theorem will be a reasonably quick application of the Two-Over-All Theorem, once the basic definitions have been carefully reviewed.

\medskip

In \cite[Proposition~6.5]{\RelFSOneTag} we used the Kapovich--Rafi method \cite{KapovichRafi:HypImpliesHyp} to prove that the natural systole map $\FS(\Gamma;\A) \mapsto \CFFS(\Gamma;\A)$ is coarse Lipschitz. Combining this with the above theorem we immediately obtain the following generalization of the result of Bestvina and Feighn mentioned earlier:

\begin{theorem*}[Lipschitz Projection Theorem ($\CFFS$ version)]
The natural systole map $\O(\Gamma;A) \to \CFFS(\Gamma;\A)$ is coarse Lipschitz. \qed
\end{theorem*}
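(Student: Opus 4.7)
The plan is to obtain this $\FF$ version as an essentially immediate consequence of the already established $\FS$ version combined with the cited result \cite[Proposition~6.5]{\RelFSHypTag} that the natural map $\FS(\Gamma;\A) \to \FF(\Gamma;\A)$ is coarsely Lipschitz. The key observation is that the natural systole map $\O(\Gamma;\A) \to \FF(\Gamma;\A)$ admits a canonical $\Out(\Gamma;\A)$-equivariant factorization
$$
\O(\Gamma;\A) \xrightarrow{\iota} \FS(\Gamma;\A) \xrightarrow{\sigma} \FF(\Gamma;\A),
$$
where $\iota$ is the inclusion of outer space as the subset of Grushko free splittings, and $\sigma$ is the systole map of \cite[Proposition~6.5]{\RelFSHypTag}. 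So the composition of two coarsely Lipschitz maps will be coarsely Lipschitz, with additive and multiplicative constants obtained by straightforward bookkeeping.

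First I would make explicit the fact that the map named in the theorem statement really is this composition. On the level of a point $[S]\in\O(\Gamma;\A)$ represented by a Grushko free splitting $S$ with a geodesic metric, the inclusion $\iota$ just regards $[S]$ as a vertex of $\FS(\Gamma;\A)$, and the systole map $\sigma$ assigns to $[S]$ the conjugacy class of a free factor carrying a shortest visible circuit in $S/\Gamma$ (or the appropriate relative analogue). Since the composition $\sigma \circ \iota$ assigns to $[S]$ exactly the same free factor system data as the ``natural systole map'' $\O(\Gamma;\A)\to\FF(\Gamma;\A)$ described in the theorem, the factorization is literal, not merely up to bounded error.

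Second, I would apply the $\FS$ version of the Lipschitz Projection Theorem just proved, giving constants $K_1 \ge 1,\, C_1 \ge 0$ (read off from $\frac{\Delta}{\log 2}$ and $2\Delta + 1$) such that
$$
d_\FS(\iota(S),\iota(T)) \le K_1\, d_\O(S,T) + C_1 \quad\text{for all } S,T\in\O(\Gamma;\A).
$$
Then I would quote \cite[Proposition~6.5]{\RelFSHypTag} to obtain constants $K_2\ge 1,\, C_2\ge 0$ with
$$
d_\FF(\sigma(x),\sigma(y)) \le K_2\, d_\FS(x,y) + C_2 \quad\text{for all } x,y\in\FS(\Gamma;\A).
$$
Composing these two estimates yields
$$
d_\FF\bigl(\sigma\iota(S),\sigma\iota(T)\bigr) \le K_1 K_2\, d_\O(S,T) + (K_2 C_1 + C_2),
$$
which is precisely the coarse Lipschitz statement to be proved.

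There is no real obstacle here; the work has already been done in the $\FS$ version and in \cite[Proposition~6.5]{\RelFSHypTag}. The only point requiring any care is checking that the ``natural systole map'' named in the theorem actually coincides with $\sigma\circ\iota$, and that both maps in the factorization are genuinely $\Out(\Gamma;\A)$-equivariant with respect to the appropriate actions. Once that identification is in place the proof consists of composing two known coarse Lipschitz bounds, and one could even record the explicit constants obtained from the $\FS$-version bound $\frac{\Delta}{\log 2}\, d_\O(S,T) + 2\Delta + 1$ together with the Kapovich--Rafi style constants from \RelFSHyp.
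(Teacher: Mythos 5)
Your argument is exactly the paper's: the paper notes that \cite[Proposition~6.5]{\RelFSHypTag} gives coarse Lipschitzness of $\FS(\Gamma;\A)\to\FF(\Gamma;\A)$, composes it with the $\FS$ version of the Lipschitz Projection Theorem, and declares the $\FF$ version immediate. Your additional care about verifying the literal factorization and equivariance is reasonable but matches the spirit of the one-line proof.
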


\paragraph{The outer space $\O(\Gamma;\A)$ and its log-Lipschitz semimetric.} Consider a metric Grushko free splitting $S$ of $\Gamma$ rel~$\A$, meaning a Grushko free splitting equipped with a $\Gamma$-equivariant geodesic metric. Let $\abs{S}$ denote the sum of lengths of a choice of edges, one in each $\Gamma$-orbit; equivalently $\abs{S}$ is the total length of the quotient graph of groups~$S / \Gamma$. To say that $S$ is \emph{normalized} means that $\abs{S}=1$. The equivalence class of $S$ up to topological conjugacy corresponds to a certain open simplex in $\FS(\Gamma;\A)$, using the (coarse) simplicial structure on $\FS(\Gamma;\A)$ described at the end of Section~\ref{SectionFolds}. If the metric on $S$ is normalized then the metric conjugacy class of $S$ corresponds to a particular point in that open simplex, namely the point which assigns to each natural edge orbit of $S$ a barycentric coordinate equal to the length of edges in that orbit; but even if $S$ is not normalized, there is still a point in that open simplex corresponding to its normalization $\frac{1}{\abs{S}}S$. In summary, up to the equivalence relation of $\Gamma$-equivariant homothety, the outer space $\O(\Gamma;A)$ is the subset of the free splitting complex $\FS(\Gamma;\A)$ consisting of all homothetic equivalence classes of metric Grushko free splittings $S$ of $\Gamma$ rel~$\A$. To be formal we let $[S] \in \O(\Gamma;A)$ denote the homothetic equivalence class of $S$ rel~$\A$, although we very often abuse notation by letting $S$ stand for $[S]$. 

\newcommand\Smax{S_{\text{max}}}
\newcommand\Umax{U_{\text{max}}}
 
For any two metric Grushko free splittings $S,T$, there exist $\Gamma$-equivariant quasi-isometries in both directions $S \mapsto T$ and $T \mapsto S$. Also, for each $\gamma \in \Gamma$, $\gamma$ is loxodromic in $S$ if and only if it is loxodromic in $T$, if and only if $\gamma \not\in A$ for all subgroups $A \subgroup \Gamma$ such that $[A] \in \A$. For both of these statements see \cite[Theorem 1.1]{Forester:Deformation} or \cite[Theorem 3.8]{GuirardelLevitt:outer}.


\begin{definition}[Optimal maps \protect{\cite[Definition 4.11]{Meinert:TrainTrackMaps} or \cite[Definition 6.4]{FrancavigliaMartino:TrainTracks}}]
\label{DefOptimal}
Consider an equivariant map $f \from S \to T$ between two metric Grushko free splittings equipped with their natural cell structure, such that the restriction of $f$ to each edge of $S$ has constant speed. The maximum of those speeds over all edge orbits of $S$ is equal to the Lipschitz constant $\Lip(f)$, and the union of all natural edges of $S$ which achieve that maximum is called the \emph{tension forest} $\Smax(f) \subset S$. To say that $f$ is \emph{optimal} means that 
\begin{enumerate}
\item\label{ItemOptNorm}
$S$ and $T$ are normalized;
\item\label{ItemOptMin}
$\Lip(f) = \min_{f'} \Lip(f')$ where $f' \from S \to T$ varies over all equivariant Lipschitz maps;
\item\label{ItemOptGates}
At~each vertex $v \in \Smax(f)$ there exist two directions $e,e'$ in $\Smax(f)$ such that the paths $f(e), f(e') \subset T$ represent distinct directions at $f(v)$.
\end{enumerate}
\end{definition}

\subparagraph{Remarks.}
The tension forest $S_{\max}(f)$ is always nonempty, because otherwise \hbox{$\Lip(f)=0$} and $f$ is a constant map, which contradicts equivariance. Optimal maps always exist; see~Fact~\ref{OptimalFacts} below for references. 

We shall refer to item~\pref{ItemOptGates} informally by saying that \emph{$f$ has at least two gates in $\Smax(f)$}, although formally our earlier definition of gates only applies when $f$ is injective on each edge of~$S$, i.e.\ when $f$ has positive speed on every edge.
 
%
%

\begin{definition}[The log Lipschitz semimetric on $\O(\Gamma;\A$) and its witnesses.]
The \emph{log Lipschitz semimetric} on $\O(\Gamma;A)$ is the asymmetric metric (satisfying the separation axiom and the triangle inequality, but not necessarily the symmetry axiom) which is defined by
$$d_\O([S],[T]) = \log \Lip(f) \quad\text{for any optimal $f \from S \to T$, where $S,T$ are both normalized}
$$
Given a loxodromic $\gamma \in \Gamma$, to say that $\gamma$ is a \emph{witness} (relative to $S,T$) means that the following equations hold
$$\log\left( \frac{\ell_T(\gamma)}{\ell_S(\gamma)} \right) = \max_\delta \log\left( \frac{\ell_T(\delta)}{\ell_S(\delta)} \right) = d_\O(S,T)
$$
where $\delta$ varies over the loxodromic elements of $\Gamma$. 
\end{definition}

\begin{fact} \label{OptimalFacts}
For each normalized $S,T \in \O(\Gamma;A)$:
\begin{enumerate}
\item An optimal map $f \from S \to T$ exists. \hfill \textup{\cite[Theorem 4.6]{Meinert:TrainTrackMaps}, \cite[Corollary 6.8]{FrancavigliaMartino:TrainTracks}}
\item A witness $\gamma \in \Gamma$ exists. \hfill \textup{\cite[Theorem 4.14]{Meinert:TrainTrackMaps},  \cite[Theorem 6.11]{FrancavigliaMartino:TrainTracks}}
\item For every optimal map $f \from S \to T$, and for every witness $\gamma$ relative to $S,T$ we have $\Axis(\gamma) \subset \Smax(f)$.  \hfill \textup{\cite[Lemma 4.15]{Meinert:TrainTrackMaps}}
\end{enumerate}
\qed\end{fact}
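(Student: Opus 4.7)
Each of the three items is cited to standard results of Meinert and of Francaviglia–Martino; my proposal is to sketch the arguments adapted to $\O(\Gamma;\A)$.

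For item (1), first produce any equivariant Lipschitz map $f_0 \from S \to T$: the bijection on vertices with nontrivial stabilizer is forced by equivariance (both $S$ and $T$ are Grushko rel~$\A$), then pick images of one vertex per free $\Gamma$-orbit arbitrarily and extend linearly on each edge. Hence $\lambda_0 := \inf_{f'}\Lip(f')$ is finite. Take a minimizing sequence $f_n$; after adjusting each $f_n$ by a suitable deck transformation we may assume their images on a fixed fundamental domain $D \subset S$ stay in a bounded region of $T$. The $f_n\restrict D$ are uniformly Lipschitz, so Arzel\`a–Ascoli extracts a uniformly convergent subsequence, and equivariance extends the limit to a Lipschitz map $f \from S \to T$ with $\Lip(f) = \lambda_0$, giving condition~\pref{ItemOptMin}. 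To upgrade $f$ to satisfy~\pref{ItemOptGates}, argue by contradiction: if at some $v \in \Smax(f)$ every direction of $\Smax(f)$ at $v$ maps under $D_v f$ to a single direction at $f(v)$, sliding $f(v)$ a small distance $\varepsilon$ along that common direction strictly reduces the speed on every tension edge at $v$ while perturbing the speed on non-tension edges at $v$ by $O(\varepsilon)$, keeping the latter below $\lambda_0$ for small $\varepsilon$. This strictly lowers $\Lip(f)$, contradicting minimality. Carry out the perturbation equivariantly across a finite set of $\Gamma$-orbits of bad vertices.

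For item (2), the key inequality is that for any equivariant Lipschitz $f'$ and any loxodromic $\delta \in \Gamma$, the parameterized image $f'(\Axis(\delta) \subset S)$ covers $\Axis(\delta) \subset T$, whence $\ell_T(\delta) \le \Lip(f')\,\ell_S(\delta)$. Taking $\sup$ over $\delta$ and $\inf$ over $f'$ yields $\sup_\delta \ell_T(\delta)/\ell_S(\delta) \le d_\O(S,T)$; the content of the witness statement is that this $\sup$ is attained. To show attainment, choose loxodromics $\gamma_n$ with $\ell_T(\gamma_n)/\ell_S(\gamma_n)\to d_\O(S,T)$, conjugate each so that $\Axis_S(\gamma_n)$ meets a fixed fundamental domain in $S$, and use the upper bound on the ratios together with $\abs{S}=1$ to bound $\ell_S(\gamma_n)$ uniformly. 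Since $S/\Gamma$ has only finitely many reduced loxodromic cyclic words of bounded length, infinitely many $\gamma_n$ represent a common conjugacy class, which furnishes the witness $\gamma$.

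For item (3), suppose $\gamma$ is a witness relative to the optimal map $f$, and let $I \subset \Axis(\gamma) \subset S$ be a fundamental domain of $\ell_S$-length $\ell_S(\gamma)$. The parameterized $f$-length of $I$ is at most $\Lip(f)\,\ell_S(\gamma)$, with equality if and only if $f$ has speed $\Lip(f)$ almost everywhere on $I$. On the other hand the parameterized length is at least $\ell_T(\gamma)$, because the straightened image of $I$ is a fundamental domain for $\Axis(\gamma) \subset T$ and straightening only decreases length. By the witness equality $\Lip(f)\,\ell_S(\gamma) = \ell_T(\gamma)$, both inequalities are equalities, so $f$ has speed $\Lip(f)$ on every edge of $I$; $\gamma$-equivariance then gives $\Axis(\gamma) \subset \Smax(f)$.

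The main obstacle is the gates condition in item~(1). Producing a Lipschitz minimizer by equivariant Arzel\`a–Ascoli is routine, but strengthening it to satisfy condition~\pref{ItemOptGates} requires the local perturbation analysis at tension vertices together with a careful simultaneous, equivariant application across $\Gamma$-orbits of bad vertices, and depends on finiteness of these orbits modulo $\Gamma$. Once that is in place, item~(3) drops out directly from the witness equality, and the one delicate point in item~(2) is the attainment argument, which uses the finite-graph-of-groups structure of $S/\Gamma$ to promote a projective limit to an honest conjugacy class of loxodromics.
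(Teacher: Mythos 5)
Items (1) and (3) are essentially correct and follow the standard route of the cited references: an Arzel\`a–Ascoli minimization followed by an equivariant local perturbation to obtain the two-gate condition, and for (3) the squeeze between $\ell_T(\gamma) \le \Lip(f)\,\ell_S(\gamma)$ and the witness equality. (In (1) you should note the perturbation cannot be performed at vertices with nontrivial stabilizer, but at such vertices the free action of the stabilizer on directions at $f(v)$ forces the two-gate condition automatically, so this is a non-issue.)

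Item (2) has a genuine gap. Your ``key inequality'' only gives one direction, $\sup_\delta \ell_T(\delta)/\ell_S(\delta) \le e^{d_\O(S,T)}$, whereas the definition of a witness requires the two-sided equality $\max_\delta \ell_T(\delta)/\ell_S(\delta) = e^{d_\O(S,T)}$; you never address the reverse inequality, which is the entire content of the theorem. Moreover, the attainment argument as written does not work: the inequality $\ell_T(\gamma_n) \le \Lip(f)\,\ell_S(\gamma_n)$ bounds $\ell_T$ in terms of $\ell_S$ but gives no bound on $\ell_S(\gamma_n)$ at all, so the appeal to finitely many loxodromic conjugacy classes of bounded word length has no foundation. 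Both the reverse inequality and attainment are established in the cited references via a train-track (``candidates'' or ``sausage'') argument: given an optimal map $f$ satisfying the two-gate property on $\Smax(f)$, one produces inside $\Smax(f)$ a legal loop of uniformly bounded combinatorial complexity --- an embedded circle, figure-eight, or barbell in the quotient graph of groups --- on which $f$ has constant speed $\Lip(f)$ and makes only legal turns, and therefore achieves stretch ratio exactly $\Lip(f)$. This single step simultaneously supplies the missing reverse inequality, the attainment, and the length bound on the witness; your sketch must incorporate it for (2) to go through.
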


With respect to the coarse simplicial structure on $\FS(\Gamma;\A)$ described at the end of Section~\ref{SectionFolds}, the deformation space $\O(\Gamma;A)$ is identified with the complement of the subcomplex of non-Grushko free splittings, and in particular $\O(\Gamma;A)$ is a union of open simplices of $\FS(\Gamma;\A)$. At the barycenter of each open simplex of $\O(\Gamma;A)$ there sits a vertex of $\FS(\Gamma;\A)$, the point of that simplex where all natural edge lengths are equal. Let $\beta \from \O(\Gamma;A) \to \FS(\Gamma;\A)$ be the $\Out(\Gamma;\A)$ equivariant map that takes an open simplex to its barycenter; with respect to the simplicial metric on $\FS(\Gamma;\A)$, this map $\beta$ has sup-distance~$\le 1$ from the inclusion map $\iota \from \O(\Gamma;\A) \to \FS(\Gamma;\A)$. It follows that one of $\beta$ or $\iota$ is coarse Lipschitz if and only if both are. 

In the statement of the \LPT\ and the proof to follow, given $S,T \in \O(\Gamma;A)$ we use $d_\FS(S,T)$ as a shorthand for $d_\FS(\beta(S),\beta(T))$ (at the expense of an additive error $\le 1$). More generally we re-use the notation $S$ as an abbreviation for the vertex $\beta(S) \in \FS(\Gamma;\A)$; it should be clear by context whether we are using the metric on $S$ and hence thinking of $S \in \O(\Gamma;A)$, or whether we are working with $S$ combinatorially and hence thinking of $\beta(S) \in \FS(\Gamma;\A)$.

\begin{proof}[Proof of the Lipschitz Projection Theorem] Given any normalized $S,T \in \Out(\Gamma;\A)$, for the proof we need a single map $S \mapsto T$ that combines features of optimal maps and foldable maps. One could try to obtain such an ``optimal foldable map'' by starting with a foldable map and then altering it using the standard methods for constructing optimal maps, or by starting with an optimal map and then altering it using the standard methods for constructing foldable maps. We shall follow the latter strategy, using for example the methods of \cite[Lemma 4.2~(1)]{\FSHypTag}.

\smallskip

Given a metric free splitting $U$ of $\Gamma$ rel~$\A$, to say that $U$ is \emph{subnormalized} means that $\abs{U} \le 1$. 

\begin{claim} \quad For any normalized $S,T \in \O(\Gamma;A)$ equipped with their natural cell structures, and for any optimal map $f \from S \to T$, there exists a subnormalized $U \in \O(\Gamma;A)$ equipped with its natural cell structure, and there exist maps 
$$\xymatrix{
S \ar[r]^\pi_{\langle\sigma\rangle} & U \ar[r]^{g} & T
}$$ 
such that the following hold
\begin{enumerate}
\item\label{ItemClaimCollapse}
$\pi$ is a collapse map. It follows that \hbox{$d_\FS(S,U) \le 1$.}
\item\label{ItemClaimConstant}
For each edge $E \subset S \setminus \sigma$,
\begin{enumerate}
\item\label{ItemEdgeRestriction}
The restriction $\pi \restrict E$ is an isometry, 
\item\label{ItemRestrictionEquation}
We have an equation of restrictions $f \restrict E = (g \, \circ \, \pi) \restrict E$. 
\item\label{ItemTensionNotcollapsed}
$\Smax(f) \subset S \setminus \sigma$.
\end{enumerate}
Furthermore, $\Umax(g) = \pi(\Smax(f))$ and that $\log \Lip(g) = \log \Lip(f) = d_\O(S,T)$.
%
\item\label{ItemClaimFoldable}
$g$ is a foldable map.
\end{enumerate}
\end{claim}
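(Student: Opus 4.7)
The plan is to construct the collapse forest $\sigma \subset S$ in two stages: a ``speed-zero'' collapse followed by a ``foldability fix-up.'' After subdividing~$S$ so that $f$ maps vertices to vertices (hence each edge to either an edge of~$T$ or a single vertex of~$T$), let $\sigma_0 \subset S$ be the $\Gamma$-invariant subforest consisting of all edges on which $f$ has speed~$0$. Since $\lambda := \Lip(f) > 0$, edges of the tension forest $\Smax(f)$ have speed~$\lambda$ and hence $\sigma_0 \cap \Smax(f) = \emptyset$. Each component of $\sigma_0$ lies in a single fiber of~$f$, so the induced map $f_0 \from S_0 := S / \sigma_0 \to T$ is well-defined, has positive speed on every edge of $S_0$, and is therefore injective on each edge.

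If $f_0$ is already foldable then the proof is complete on taking $U = S_0$, $\pi$ the collapse $S \to S_0$, $g = f_0$, and $\sigma = \sigma_0$. Otherwise one further collapses~$S_0$ to achieve foldability by invoking \cite[Lemma~4.2~(1)]{\FSHypTag}, equivalently by applying Proposition~\ref{PropFoldableProps}~(1) in the current setting, where $\FFS{S_0} = \A = \FFS{T}$ since $S_0$ and $T$ are both Grushko free splittings of $\Gamma$ rel~$\A$. This yields a collapse $\pi_1 \from S_0 \to U$ and a foldable map $g \from U \to T$; setting $\pi = \pi_1 \circ \pi_0 \from S \to U$ gives the required collapse of free splittings, with associated collapse forest $\sigma \subset S$, and by construction $g \circ \pi$ agrees with $f$ on the complement of~$\sigma$.

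Verification of the listed properties is largely routine: property~(1) follows from $S \collapsesto U$; properties~(2a) and~(2b) hold because $\pi$ is an isometry on its complementary subforest and the induced map on uncollapsed edges matches~$f$ via~$\pi$; subnormalization $\abs{U} \le 1$ is automatic since~$\pi$ preserves edge lengths off~$\sigma$ and so $\abs{U} \le \abs{S} = 1$; and the equalities $\Lip(g) = \Lip(f)$ together with $\Umax(g) = \pi(\Smax(f))$ follow once one knows $\Smax(f) \subset S \setminus \sigma$, because then the maximum-speed edges of~$f$ pass intact through~$\pi$ to provide edges of~$U$ of speed~$\lambda$, while no edge of $U$ can carry speed exceeding~$\lambda$ since it corresponds to an edge of $S \setminus \sigma$ of speed at most~$\lambda$ under~$f$.

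The main obstacle is verifying property~(2c), i.e.\ that the foldability fix-up can be arranged so as not to collapse any edge of maximum speed. The key point is that non-foldability arises only at a vertex whose incident positive-speed directions all have the same derivative under~$f_0$, and the two-gate optimality condition Definition~\ref{DefOptimal}~(3) rules this out at every vertex incident to~$\Smax(f)$: any such vertex already has two directions in $\Smax(f) \subset S \setminus \sigma_0$ with distinct derivatives, which survive as two gates of~$f_0$. Consequently the fix-up collapse is naturally confined to the complement of~$\Smax(f_0)$ in~$S_0$, so that the combined collapse forest $\sigma$ avoids $\Smax(f)$. Making this inspection precise --- verifying that the construction underlying \cite[Lemma~4.2~(1)]{\FSHypTag} respects the tension forest --- is the one step requiring attention beyond routine manipulation.
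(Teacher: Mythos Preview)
Your approach is essentially the paper's: first collapse the speed-zero edges, then repair foldability while using the two-gate condition of optimality to ensure that tension edges survive. The paper carries out explicitly what you defer to ``inspecting the construction underlying \cite[Lemma~4.2~(1)]{\FSHypTag}'': rather than citing that lemma as a black box, the paper runs an induction in which at each stage one locates a gate-$1$ vertex $v$ of the current $U$, shows $v \notin \Umax(g)$ via exactly your two-gate observation, and then performs an explicit ``spray collapse'' at~$v$ (identifying the maximal initial segments of all edges at~$v$ that share the same image, then discarding the resulting valence-$1$ stub), verifying that this reduces a suitable complexity. Your final paragraph correctly identifies both the obstacle and the reason it can be overcome; what is missing from your writeup is only this explicit execution, which the paper supplies. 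One caution: treating the cited lemma purely as a black box is not enough, since its statement says nothing about metrics or tension forests --- you really do need to open it up, as you acknowledge.
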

\noindent
Note that the claim does not assert optimality of~$g$; see remarks after the ``basis step'' of the proof of the claim. Note also that in item~\pref{ItemClaimConstant} of the claim, from~\pref{ItemEdgeRestriction} and~\pref{ItemRestrictionEquation} it follows that $g \restrict \pi(E)$ is an isometry, and combined with~\pref{ItemTensionNotcollapsed} the ``Furthermore'' clause follows immediately.

\medskip

Accepting this claim for the moment, we apply it as follows. There exists an integer $k \ge 0$ so that 
$$k \Delta \le d_\FS(U,T) < (k+1)\Delta
$$
If $k=0$ then $d_\FS(S,T) \le \Delta+1$ and we are done. We may therefore assume that $k \ge 1$. Choosing an optimal map $f \from S \to T$, applying the claim, and then applying the \TOAT\ to the foldable map $g$, we obtain a natural edge $E \subset U$ such that $g(E)$ crosses the orbit of each natural edge of $T$ at least $2^{k-1} \ge 1$ times. Using that $U$ is subnormalized it follows that $\Length(E) \le 1$. Since $T$ is normalized, so $\Length(T)=1$, it follows that $\Lip(g) \ge 2^{k-1}$, and so
$$d_\O(S,T) = \log(\Lip(g)) \ge (k-1) \log 2 \ge \left(\frac{d_\FS(U,T)}{\Delta}  - 1\right) \log 2 
$$
$$d_\FS(S,T) \le d_\FS(S,U) + d_\FS(U,T) \le 1 + \left(\frac{\Delta}{\log 2} \, d_\O(S,T) + \Delta\right)
$$
which proves the desired conclusion of the Lipschitz Projection Theorem.

\medskip

For proving the claim we set up an induction argument as follows:
\begin{description}
\item[Induction hypothesis:] There exists a subnormalized $U \in \O(\Gamma;A)$ equipped with its natural cell structure, and there exist maps 
$$\xymatrix{
S \ar[r]^\pi_{\langle\sigma\rangle} & U \ar[r]^{g} & T
}$$ 
such that items~\pref{ItemClaimCollapse} and~\pref{ItemClaimConstant} of the claim hold, and such that $g$ is tight and nonconstant on each edge.
\end{description}
For the basis step of the induction we simply take $\sigma$ to be the subforest of natural edges of $S$ on which $f$ has speed zero, equivalently those edges on which $f$ is constant, we use $\sigma$ to define the collapse $S \mapsto U$, and we let $g \from U \to T$ be the induced map. We note in this case that $g$ is still optimal hence the induction hypothesis holds. 

\smallskip
\textbf{Remarks:} After the basis step is carried out by collapsing the speed zero edges, if the resulting induced map $g$ is already foldable then the proof is done \emph{and} $g$ is optimal. But if $g$ is not already foldable then, as the induction proceeds, we will lose the property that $g$ is optimal for the reason that we will lose that $U$ is normalized, but we will retain that $U$ is subnormalized. Otherwise the induction hypothesis is designed so as to carry along as much optimality of $g$ as is possible. 

\smallskip

Assuming the induction hypothesis, we may also assume that $g$ is not already foldable. Since $g$ is tight, its gates are defined at every vertex of $U$, but since $g$ is not foldable there is a vertex of $U$ with only $1$ gate. Using these facts, the goal of the induction step is to find a factored map $U \xrightarrow{\<\sigma'\>} U' \xrightarrow{g'} T$, and hence a new collapse map $\pi' \from S \xrightarrow{\<\sigma\>} U \xrightarrow{\<\sigma'\>} U'$, so that the new factorization $S \xrightarrow{\pi'} U' \xrightarrow{g'} T$ still satisfies the induction hypothesis but has smaller complexity in the following sense: either $U'$ is in the same open simplex as $U$ and hence has the same number of natural edge orbits, but $g'$ has fewer gate~$1$ vertex orbits than $g$; or $U'$ is contained in a proper face of the open simplex that contains $U$ and hence $U'$ has fewer natural edge orbits. The obvious induction then completes the proof. 

To carry out the induction step, choose a vertex $v \in U$ at which $g$ has only $1$ gate. Note that $v$ has trivial stabilizer, for if $v$ is fixed by some nontrivial $\gamma \in \Gamma$ then $\gamma$ also fixes the common $g$-image in $T$ of two oriented edges with initial vertex $v$, contradicting that $T$ is a free splitting. Note also that $v \not\in \Umax(g)$ for otherwise, using that item~\pref{ItemClaimConstant} of the Claim holds, there would exist $w \in \Smax(f)$ such that $\pi(w)=v$, but $f$ is optimal hence by Definition~\ref{DefOptimal}~\pref{ItemOptGates} the vertex $w$ has at least $2$ gates with respect to $f$; it would follow that $v$ has at least $2$ gates with respect to $g$, a contradiction. 

Enumerate the oriented edges of $U$ with initial endpoint $v$ as $\eta_1,\ldots,\eta_K$; there are only finitely many, given that $v$ has trivial stabilizer. Since $v \not\in \Umax(g)$, also $\eta_k \not\subset \Umax(g)$ for each $k=1,\ldots,K$. Let $e_k \subset \eta_k$ (for $k=1,\ldots,K$) be the collection of maximal initial segments such that $g$ maps each $e_k$ to the same path $g(e_k) \subset T$ independent of $k$. Let $e_k$ inherit its orientation from $\eta_k$ hence $v$ is the initial endpoint of each $e_k$. Consider $\spray(v) = e_1 \union\cdots\union e_K$. The frontier of $\spray(v)$ in $U$ is the set of terminal endpoints $w_1,\ldots,w_K$ of $e_1,\ldots,e_K$ (respectively). 

For any nontrivial $\gamma \in \Gamma$ we verify that $\spray(v) \intersect \spray(\gamma \cdot v) \subset \{w_1,\ldots,w_K\}$. Arguing by contradiction, if this does not hold then there exist $i,j \in \{1,\ldots,K\}$ such that the intersection $\gamma(e_i) \intersect e_j$ contains interior points of both $\gamma(e_i)$ and $e_j$. From this it follows that $\gamma(\eta_i) = \eta_j$, reversing orientation (since $\gamma(v) \ne v$). If $i=j$ then $\gamma$ fixes the midpoint of $\eta_i$, contradicting that $\eta_i$ is a natural edge of~$U$. If $i \ne j$ then $\gamma(w_i)=v$ and $\gamma(v)=w_j$, the element $\gamma$ acts loxodromically on $U$, and the natural edges $\eta_i$, $\eta_j$ are consecutive fundamental domains for the action of $\gamma$ on its axis in~$U$. Since $U$ and $T$ are both Grushko free splittings of $\Gamma$ rel~$\A$, they have the same loxodromic elements, hence $\gamma$ also acts loxodromically on $T$. But $\gamma$ maps the midpoint of $\eta_i$ to the midpoint of $\eta_j$, and $g$ identifies those midpoints to the same point of $T$, hence $\gamma$ fixes that point in $T$, contradicting that $\gamma$ acts loxodromically on~$T$.

Now form the quotient $q \from U \to \hat U$ by identifying the edges $e_1,\ldots,e_K$ of $\spray(v)$ to a single oriented edge $\hat e \subset \hat U$ with initial vertex $\hat v = q(v)$ and terminal vertex $\hat w=q(w_i)$ $i=1,\ldots,K$. Of course we also equivariantly identify each of the translates $\spray(\gamma \cdot v) = \gamma \cdot \spray(v)$, to an edge $\gamma \cdot \hat e \subset \hat U$. Each of these new edges has a valence~$1$ vertex $\gamma \cdot \hat v$, so the action $\Gamma \act \hat U$ is not minimal. Let $U' \subset \hat U$ be the minimal subtree of that action, obtained from $\hat U$ by removing each vertex $\gamma \cdot \hat v$ and the interior of each edge $\gamma \cdot \hat e$. Evidently the restricted action $\Gamma \act U'$ is a Grushko free splitting action. Let $U \mapsto U'$ be the collapse map whose restriction to $U - \bigcup_\gamma \spray(\gamma \cdot v)$ is equal to the restriction of $q$, and whose restriction to each $\spray(\gamma \cdot v)$ has constant value $\gamma \cdot \hat w$. Clearly $\abs{U'} < \abs{U} \le 1$ and so $U'$ is subnormalized. The composition $\pi' \from S \mapsto U \mapsto U'$ is the desired collapse map. The natural graph structure on $U'$ with respect to the action of $\Gamma$ is evidently the graph structure that is induced from the natural graph structure on $U$ as follows: each of the new vertices $\gamma \cdot \hat w$ is a natural vertex of~$U'$; and the collapse map $U \mapsto U'$ induces a bijection from those natural vertices of $U$ not in the set $\Gamma \cdot \{v,w_1,\ldots,w_K\}$ to those natural vertices of $U'$ not in the set $\Gamma \cdot \{\hat w\}$. The map $g \from U \to T$ induces a map $\hat g \from \hat U \to T$ which restricts to the desired map $g' \from U' \to T$ which is tight and nonconstant on edges. This completes the construction of the new factorization $S \xrightarrow{\pi'} U' \xrightarrow{g'} T$, together with verification of the required item~\pref{ItemClaimCollapse} from the Claim. The remaining requirement of the induction hypothesis, namely item~\pref{ItemClaimConstant}, is easily verified starting from the fact that $\eta_1,\ldots,\eta_K$ are not in the tension forest~$\Umax(g)$.

It remains to check that the new factored map $S \xrightarrow{\pi'} U' \xrightarrow{g'} T$ has smaller complexity than the given one $S \xrightarrow{\pi} U \xrightarrow{g} T$, and there are two cases. If there exists $i \in 1,\ldots,K$ such that $\eta_i = e_i$ then the entire natural edge $\eta_i$ is collapsed by the map $S \mapsto U'$ hence $U'$ is contained in a proper face of the open simplex containing~$S$. In the other case where no such $i$ exists, each inclusion $e_i \subset \eta_i$ is proper for all $i=1,\ldots,K$. It follows that $U'$ is contained in the same open simplex of $\O(\Gamma;A)$ as $S$, and in fact the collapse map $U \mapsto U'$ tightens to a simplicial isomorphism taking $v$ to $\hat w$. Also, for each vertex $u \in U \setminus \Gamma \cdot v$, its image $u' \in U'$ has the same number of gates with respect to $g'$ as $u$ has with respect to $g$; indeed, the collapse map $U \mapsto U'$ takes the gates of $g$ at $u$ bijectively to the gates of $g'$ at $u'$.  However, $v$ has only $1$ gate with respect to $g$, whereas by maximality of the choice of $e_1,\ldots,e_K$ it follows that $\hat w$ has $\ge 2$ gates with respect to $g'$.

This completes the proof of the Lipschitz Projection Theorem.
\end{proof}

\section{Application to the upper bound on translation lengths in Theorem~A}
\label{SectionAppUpperBoundInA}

In this section we carry out one portion of the proof of Theorem~A as outlined in the introduction:
\begin{description}
\item[The Upper Bound in Theorem A:] There is a constant $B > 0$ depending only on $\corank(\A)$ and $\abs{\A}$ such that for each $\phi \in \Out(\Gamma;\A)$ the following holds: If $\phi$ has a filling attracting lamination $\Lambda \in \L(\phi)$ with expansion factor $\lambda_\phi = \lambda(\phi,\Lambda)$  (see Definition~\ref{DefFillExpFact}), then the translation length $\tau_\phi$ for the action of $\phi$ on $\FS(\Gamma;\A)$ satisfies the upper bound
$$\tau_\phi \le B \log \lambda_\phi
$$
\end{description}
The proof is carried out over the next three subsections. 

In Section~\ref{SectionAnAxisInFS} we review concepts of boundaries and of attracting laminations, and we state Proposition~\ref{PropAxisInFS} which describes a certain ``train track axis'' in $\FS(\Gamma;\A)$ for any $\phi \in \FS(\Gamma;\A)$ that possesses a filling lamination: this axis is a $\phi$-periodic, bi-infinite fold path that encodes the expansion factor $\lambda_\phi$. An important feature of Proposition~\ref{PropAxisInFS} is the assertion that the number $\lambda_\phi$ is a well-defined invariant of $\phi$, independent of the choice of train track axis; see Definition~\ref{DefFillExpFact}.

In Section~\ref{SectionProofUpperBound} we prove the upper bound of Theorem A by applying the \TOAT\ to the axis described in Proposition~\ref{PropAxisInFS}.

In Section~\ref{SectionAxisConstruction} we prove Proposition~\ref{PropAxisInFS}, constructing the desired train track axis by applying the relative train track theory for $\Out(\Gamma;\A)$ developed by Lyman \cite{\LymanRTTTag}. We actually prove a broader version of the proposition, stated in Proposition~\ref{PropAxisInFSEmbellished}, which incorporates further details about attracting laminations that will be needed in Part~III (as explained in the ``Organizational Outline'' at the end of the Introduction). 

\medskip
\emph{Remark on constructions of train track maps.} Consider the action on the relative outer space $\O(\Gamma;\A)$ of an element $\phi \in \Out(\Gamma;\A)$ which is fully irreducible \relA. In the special case of $\Out(F_n)$ fold axes of $\phi$ in $\O(F_n)$ were constructed in \cite{HandelMosher:axes} by suspending train track representatives of~$\phi$; the proof of Proposition~\ref{PropAxisInFS} found in Section~\ref{SectionAxisConstruction} can be regarded as a generalization of this suspension construction. For the general case of $\Out(\Gamma;\A)$ fold axes were constructed in \cite{FrancavigliaMartino:TrainTracks} using train track maps constructed by an optimization procedure using the log-Lipschitz semimetric; this optimization construction can also be used as the basis for proving Proposition~\ref{PropAxisInFS}. But for our present purposes, particularly for Proposition~\ref{PropAxisInFSEmbellished} as mentioned just above, we will need the suspension construction.

\medskip

\emph{Remark on relative train track theory.} The theories of relative train maps and attracting laminations presented in \cite{\LymanRTTTag} and \cite{\LymanCTTag} are set mostly downstairs in graphs of groups, whereas we work mostly upstairs in Bass-Serre trees; the translation between the two settings is straightforward, using the concepts of Bass-Serre theory.

\subsection{Describing fold axes in $\FS(\Gamma;\A)$}
\label{SectionAnAxisInFS}

Our goal in this section is to state Proposition~\ref{PropAxisInFS} which, if $\phi \in \Out(\Gamma;\A)$ has a filling lamination, produces a bi-infinite $\phi$-periodic fold line in $\FS(\Gamma;\A)$ --- a \emph{fold axis} for $\phi$ --- such that the transition matrix encoded in that fold axis is a Perron-Frobenius matrix. Furthermore, the proposition tells us that for all such fold axes of $\phi$, the corresponding Perron-Frobenius eigenvalues are all equal. This allows us to formulate Definition~\ref{DefFillExpFact}, in which the filling expansion factor $\lambda_\phi$ is well-defined as the expansion factor of any EG-aperiodic fold axis for~$\phi$.

The statement of Proposition~\ref{PropAxisInFS} comes in Section~\ref{SectionFoldAxes}, after a review of Gromov boundaries and Bowditch boundaries in Section~\ref{SectionBoundariesRelFFSs} following \cite{GuirardelLevitt:DefSpaces,GuirardelHorbez:Laminations}, and a review of attracting laminations in Section~\ref{SectionLamsAndExpFact} following \cite{\LymanCTTag}. These will be extensive reviews, incorporating some new relations and notations needed here and in the sequel \cite{HandelMosher:RelComplexHypIII}.


\subsubsection{Ends of free splittings and boundaries of groups rel free factor systems}
\label{SectionBoundariesRelFFSs}



\paragraph{Gromov boundaries and Bowditch boundaries.} In \BookOne, attracting laminations for $\Out(F_n)$ are expressed in terms of the action of $\Aut(F_n)$ on the Gromov \hbox{boundary $\bdy F_n$.} In the presence of a free factor system $\A$ on a group~$\Gamma$, the role played by $\bdy F_n$ splits into roles played by two actors: the Bowditch boundary $\bdy(\Gamma;\A)$ \cite{GuirardelLevitt:DefSpaces}; and the (relative) Gromov boundary $\bdyinf(\Gamma;\A)$ \cite{GuirardelHorbez:Laminations}, the latter of which is included as a dense subspace of the former:
$$\xymatrix{
\bdyinf(\Gamma;\A) \,\, \ar@{^{(}->}[rr] && \,\, \bdy(\Gamma;\A)
}
$$
Here and the sequel \cite{HandelMosher:RelComplexHypIII} we shall need both of these boundaries: here we use the Bowditch boundary $\bdy(\Gamma;\A)$ as the setting for Lyman's theory of attracting laminations; in the sequel we focus more on the Gromov boundary $\bdyinf(\Gamma;\A)$, in order to apply the subboundary correspondence of Dowdall and Taylor \cite{DowdallTaylor:cosurface} to generic leaves of attracting laminations. 

As explained in \cite{GuirardelHorbez:Laminations}  and \cite{GuirardelLevitt:DefSpaces} in respective cases, the ``canonical boundaries'' $\bdy(\Gamma;\A)$ and $\bdyinf(\Gamma;\A)$ are represented, respectively, by the boundaries $\bdy T$, $\bdyinf T$ that are associated to each Grushko free splitting $T$ rel~$\A$. It has become common to \emph{identify} the canonical boundaries with their representatives (see e.g.\ \cite{GuirardelHorbez:Laminations,\LymanCTTag}. But for use here and in \cite{HandelMosher:RelComplexHypIII} we set up notation for distinguishing these various boundaries, for various maps amongst them, and for their actions by the group~$\Aut(\Gamma;\A)$; in Section~\ref{SectionLamsAndExpFact} to follow we will do the same for line spaces and attracting laminations.

\smallskip

Any simplicial tree~$T$ has an \emph{end space} $\bdyinf T$ and a \emph{Bowditch boundary} $\bdy T$; the ``end space'' may be identified canonically with the ``Gromov boundary'' of $T$ with respect to the simplicial metric on $T$, and we use those terms as synonyms. The elements of $\bdyinf T$, called \emph{ends of~$T$}, are the asymptotic classes of rays in~$T$ (see Section~\ref{SectionPathsRaysLines}), two rays being asymptotic (by definition) if their intersection is a ray. The subset of infinite valence vertices is denoted $V_\infty T \subset T$. Setting $\overline T = T \union \bdyinf T$, we have $\bdy T = V_\infty T \sqcup \bdyinf T \subset \overline T$. A~\emph{half tree in~$T$} is a component $\tau$ of $T-x$ for some $x \in T$, and corresponding to $\tau$ is a \emph{half tree in~$\overline T$} denoted $\overline\tau = \tau \union \bdyinf \tau \subset \overline T$ where $\bdyinf\tau \subset \bdyinf T$ denotes those ends of $T$ that are represented by some ray contained in~$\tau$. The \emph{observer's topology} on $\overline T = T \disjunion \bdyinf T$ is generated by the sub-basis of all half trees in $\overline T$. The \emph{end topology} on $\overline T$ (equivalent to the Gromov topology when working in the metric category) is the refinement of the observer's topology obtained by augmenting the observer's sub-basis with all open subsets of~$T$ itself. In $\overline T$ the Bowditch boundary $\bdy T$ is the closure of the end space $\bdyinf T$ with respect to the observers topology. On $\bdyinf T$ the observer's topology and the end topology on $\overline T$ induce the exact same subspace topology on the end space $\bdyinf T$. An individual ray in $T$, based at $v \in T$ and representing $\xi \in \bdyinf T$, is denoted $[v,\xi) \subset T$; the end $\xi$ is called the \emph{ideal endpoint} of the ray $[v,\xi)$, and the closure of $[v,\xi)$ in either topology on $\overline T$ is well-defined as $[v,\xi] = [v,\xi) \union \{\xi\}$.



We specialize now to the case that $T$ is a Grushko free splitting of $\Gamma$ rel~$\A$. Noting that $V_\infty T = \{v \in T \suchthat \, \text{$\Stab(v)$ is an infinite subgroup of $\Gamma$}\}$, we have a bijection $v \leftrightarrow \Stab(v)$ between the set $V_\infty T$ and the set of infinite subgroups $A \subgroup \Gamma$ such that the conjugacy class $[A]$ is an element of the free factor system $\A$. The action $\Gamma \act T$ extends naturally to an action $\Gamma \act \overline T$ (by homeomorphisms with respect to either topology), which then restricts to \emph{boundary actions} $\Gamma \act \bdy T$ and~$\Gamma \act \bdyinf T$.

Consider a nontrivial $\gamma \in \Gamma$. For any Grushko free splitting $T$ of $\Gamma$ rel~$\A$, recall that either $\gamma$ is \emph{elliptic} meaning it fixes a vertex of $T$, or $\gamma$ is \emph{loxodromic} meaning it translates along some line of $T$ (see the discussion just prior to Definition~\ref{DefOptimal}). Furthermore, $\gamma$ is elliptic if and only if $\gamma \in A$ for some free factor $A \subgroup \Gamma$ such that $[A] \in \A$. It follows that ellipticity and loxodromicity of $\gamma$ rel~$\A$ are well-defined properties, independent of the choice of a Grushko free splitting rel~$\A$, depending only on $\gamma$ itself and on~$\A$. In the case that $\gamma$ acts loxodromically on $T$: its axis $\Axis(\gamma) \subset T$ has exactly two limit points in $\bdyinf T$, represented by any pair of opposite rays in $\Axis(\gamma)$. These two points are the only points of $\bdyinf T$ fixed by the action of $\gamma$ on $\bdyinf T$; exactly one of those fixed points, denoted $\bdy^T_+\gamma$ is an attractor; and the other one, denoted $\bdy^T_-\gamma$, is a repelling fixed point. As $\gamma$ varies over all loxodromic elements, the set $\bigcup_\gamma \{\bdy^T_-\gamma,\bdy^T_+\gamma\}$ is dense in $\bdy T$ and in $\bdyinf T$. Even more, if we fix one loxodromic $\gamma$ and we vary over its set of conjugates $\gamma^\delta = \delta^\inv\gamma\delta$, each of the sets $\{\bdy^T_+\gamma^\delta \suchthat \delta \in \Gamma\}$ and $\{\bdy^T_-\gamma^\delta \suchthat \delta \in \Gamma\}$ is dense in $\bdy T$ and in $\bdyinf T$. These density statements both follow by observing that for $\tau$ any half tree of~$T$, there exists $\delta \in \Gamma$ such that $\delta \cdot \Axis(\gamma) \subset \tau$. In the case that $\gamma$ acts elliptically on~$T$ we let $v^T_\gamma$ denote the unique point of $T$ fixed by $\gamma$, that point being a natural vertex of~$T$.

\smallskip

The following lemma includes a compilation of results regarding Bowditch boundaries and Gromov boundaries, but re-expressed so as to emphasize the unique nature of actions by the group $\Aut(\Gamma;\A)$ as extensions of actions of~$\Gamma$.

\begin{lemma}
\label{LemmaTEqBdyMaps} 
For any automorphism $\Phi \in \Aut(\Gamma;\A)$, and for any two Grushko free splittings $S$ and $T$ of $\Gamma$ rel~$\A$ there exists a $\Phi$-twisted equivariant map $g \from S \mapsto T$. Furthermore,~for any such $g$ the following hold:
\begin{enumerate}
\item\label{ItemTwistedExistence}
\protect{\cite[Lemma 2.2]{GuirardelHorbez:Laminations}}
$g$ has a $\Phi$-twisted equivariant continuous extension \hbox{$\overline g \from \overline S \to \overline T$} which restricts to $\Phi$-twisted equivariant homeomorphisms $\bdy g \from \bdy S \to \bdy T$ and \hbox{$\bdyinf g \from \bdyinf S \to \bdyinf T$.}
\item\label{ItemTwistedUniqueness}
The homeomorphisms $\bdy g$ and $\bdyinf g$ are well-defined independent of $g$ and depending only on~$\Phi$, and are characterized as the unique $\Phi$-twisted equivariant homeomorphisms \break
$\bdy S \mapsto \bdy T$ and $\bdyinf S \mapsto \bdyinf T
$.
\end{enumerate}
\begin{enumeratecontinue}
\item\label{ItemTwistedActions}
For every Grushko free splitting $T$, the boundary actions of $\Gamma$ on $\bdy T$ and~$\bdyinf T$ extend uniquely to actions of $\Aut(\Gamma;\A)$. Furthermore:
\begin{enumerate}
\item\label{ItemTAWD} The actions of each $\Phi \in \Aut(\Gamma;\A)$ are the unique $\Phi$-twisted equivariant homeomorphisms $\bdy T \mapsto \bdy T$ and $\bdyinf T \mapsto \bdyinf T$. 
\item\label{ItemTAnat}
For every equivariant map \hbox{$f \from S \to T$} between Grushko free splittings the induced homeomorphisms $\bdy f \from \bdy S \to \bdy T$ and $\bdyinf f \from \bdyinf S \to \bdyinf T$ are $\Aut(\Gamma;\A)$ equivariant. 
\end{enumerate}
%
%
\end{enumeratecontinue}
\end{lemma}

\begin{proof} The construction of a $\Phi$-twisted equivariant map $g \from S \to T$ follows well-known ideas. First, $g$ takes a vertex $v$ with nontrivial stabilizer $\Stab(v)$ to the unique vertex of $T$ stabilized by $\Phi(\Stab(v))$. Next, on each orbit of vertices having trivial stabilizer, $g$ is defined arbitrarily on one vertex of that orbit, and $g$ then extends uniquely over the whole orbit so as to be $\Phi$-twisted equivariance. Finally, $g$ extends tightly over each edge.

For item~\pref{ItemTwistedExistence}, the equivariant case (i.e.\ $\Phi = \Id$) is found in \cite[Lemma 2.2]{GuirardelHorbez:Laminations}. The $\Phi$-twisted equivariant case reduces to the equivariant case as follows. Let $\Gamma \xrightarrow{\A} \Isom(S)$ be the homomorphism representing the given free splitting action represented as a group of simplicial isomorphisms of~$S$, so $\gamma \cdot x = \A(\gamma)(x)$. By composition we obtain a homomorphism $\Gamma \xrightarrow{\Phi^\inv} \Gamma \xrightarrow{\A} \Isom(S)$ defining a new free splitting action $\gamma \bullet x = \A(\Phi^\inv(\gamma))(x)$. The given $\Phi$-twisted equivariant map $g \from S \to T$ is then equivariant with respect to the new action on $S$ and the given action on $T$:
$$g(\gamma \bullet x) = g(\A(\Phi^\inv(\gamma))(x)) = g(\Phi^\inv(\gamma) \cdot x) = \Phi(\Phi^\inv(\gamma)) \cdot g(x) = \gamma \cdot g(x)
$$
The conclusions of~\pref{ItemTwistedExistence} in the equivariant case may now be applied using the action $\gamma \bullet x$ on $S$ and the given action on $T$; these imply, in turn, the desired conclusions for the $\Phi$-equivariant case with respect to the given actions on~$S$ and~$T$.

For item~\pref{ItemTwistedUniqueness}, for each nontrivial elliptic $\gamma \in \Gamma$, by combining uniqueness of elliptic fixed points with $\Phi$-twisted equivariance it follows that $\bdy g(v^S_\gamma) = g(v^S_\gamma)=v^T_{\Phi(\gamma)}$. Also, for each loxodromic $\gamma \in \Gamma$, by combining uniqueness of attracting/repelling fixed points with $\Phi$-twisted equivariance it follows that $\bdyinf g(\bdy^S_+\gamma) = \bdy^T_+\Phi(\gamma)$ and $\bdyinf g(\bdy^S_-\gamma)=\bdy^T_-\Phi(\gamma)$. The restriction of $\bdy g$ to the subset of points of $\bdy S$ that are fixed by some nontrivial element of $\Gamma$ is therefore well-defined independent of~$g$. That subset is dense in~$\bdy S$, proving uniqueness of the homeomorphism $\bdy g \from \bdy S \to \bdy S$. Similarly, the subset of points of~$\bdyinf S$ fixed by some loxodromic element of $\Gamma$ is dense in $\bdyinf S$, proving uniqueness of $\bdyinf g \from \bdyinf S \to \bdyinf S$.

For item~\pref{ItemTwistedActions}, the existence and uniqueness of the extended actions, and their characterizations in item~\pref{ItemTAWD}, follow by applying the \emph{Twisted Equivariance Principle} of Section~\ref{SectionROut} in conjunction with the existence (item~\pref{ItemTwistedExistence}) and uniqueness (item~\pref{ItemTwistedUniqueness}) of the appropriate twisted equivariant homeomorphisms.
%
%
%
%
%
For item~\pref{ItemTAnat}, one chooses $\Phi$-twisted equivariant self maps $g \from S \to S$ and $h \from T \to T$ and then applies the uniqueness conclusion~\pref{ItemTwistedUniqueness} to the two maps 
$$\bdy f \circ \bdy g, \,\, \bdy h \circ \bdy f \,\, \from \,\, \bdy S \to \bdy T
$$
using that both of those maps are twisted equivariant with respect to $\text{Id} \circ \Phi = \Phi = \Phi \circ \text{Id}$. 
\end{proof}

\paragraph{The canonical boundaries $\bdy(\Gamma;\A)$ and $\bdyinf(\Gamma;\A)$.} 
%
Following Guirardel and Horbez \cite[Section 2]{GuirardelHorbez:Laminations}, we describe these boundaries and their identifications with Bowditch and Gromov boundaries (resp.) of Grushko free splittings of $\Gamma$ rel~$\A$. See Figure~\ref{FigureBoundaryDiagrams} for summaries, including: details regarding actions by $\Gamma$ and extended actions by $\Aut(\Gamma;\A)$; notations for distinguishing between canonical boundaries and their representations as boundaries of free splittings. 

\begin{figure}
$$\xymatrix{
& \bdy(\Gamma;\A) \ar@2{~>}[dl]_{I_S} \ar@2{~>}[dr]^{I_T} \\
\bdy S \ar[rr]^{\bdy f} & & \bdy T
}
\qquad\qquad\qquad
\xymatrix{
\bdy(\Gamma;\A) \ar@2{~>}[d]_{I_S} \ar[rr]^{\bdy\Phi}
& & 
\bdy(\Gamma;\A) \ar@2{~>}[d]^{I_T} \\
\bdy S \ar[rr]^{\bdy g} & & \bdy T
}
$$
$$\xymatrix{
& \bdyinf(\Gamma;\A) \ar@2{~>}[dl]_{I_S} \ar@2{~>}[dr]^{I_T} \\
\bdyinf S \ar[rr]^{\bdyinf f} & & \bdyinf T
}
\qquad\qquad
\xymatrix{
\bdyinf(\Gamma;\A) \ar@2{~>}[d]_{I_S} \ar[rr]^{\bdyinf\Phi}
& & 
\bdyinf(\Gamma;\A) \ar@2{~>}[d]^{I_T} \\
\bdyinf S \ar[rr]^{\bdyinf g} & & \bdyinf T
}
$$
\caption{Various commutative diagrams involving canonical boundaries $\bdy(\Gamma;\A)$ and $\bdyinf(\Gamma;\A)$. For each Grushko free splitting $T$ of $\Gamma$ rel~$\A$, a squiggly arrow labelled $I_T$ denotes an equivariant homeomorphism (``identification'') of a canonical boundary with the corresponding boundary of~$T$. On the left are diagrams associated to each $\Gamma$-equivariant simplicial map $f \from S \to T$ between Grushko free splittings of $\Gamma$ rel~$\A$. On the right are diagrams associated to each choice of $\Phi \in \Aut(\Gamma;\A)$ and of a $\Phi$-twisted equivariant simplicial map $g \from S \to T$ between free splittings of $\Gamma$ rel~$\A$; the maps $\bdy\Phi$ and $\bdyinf\Phi$ are $\Phi$-twisted equivariant self-homeomorphims. The two diagrams on the left fit into a single commutative diagram with upward pointing ``inclusion'' arrows (not shown) from the lower diagram to the upper diagram, and similarly for the two on the right.
}
\label{FigureBoundaryDiagrams}
\end{figure}

Here are some details regarding $\bdy(\Gamma;\A)$ (after appropriately sprinkling in the symbol~$\infty$, these details apply as well to~$\bdy(\Gamma;\A)$). Consider the category whose objects are the induced boundary actions $\Gamma \act \bdy T$, one for every Grushko free splitting $T$ of $\Gamma$ rel~$\A$, and with a unique morphism $\bdy S \mapsto \bdy T$ for every pair of Grushko free splittings $S,T$ of $\Gamma$ rel~$\A$, that morphism being the unique $\Gamma$-equivariant homeomorphism that extends every $\Gamma$-equivariant map $S \mapsto T$ (see Lemma~\ref{LemmaTEqBdyMaps}~\pref{ItemTwistedUniqueness}, applied to $\Phi = \Id$); note in particular that each object $\Gamma \act T$ has a unique automorphism, namely its identity morphism. Using this category one may canonically identify \emph{all} of the spaces $\bdy T$ to a single space $\bdy(\Gamma;\A)$ on which $\Gamma$ acts, together with a family of $\Gamma$-equivariant homeomorphisms $I_T \from \bdy(\Gamma;\A) \approx \bdy T$, and with commutative diagrams as depicted on the left side of Figure~\ref{FigureBoundaryDiagrams}. For every free splitting of $\Gamma$ rel~$\A$ the action $\Gamma \act \bdy T$ has a unique extended action $\Aut(\Gamma;\A) \act \bdy T$ (by Lemma \ref{LemmaTEqBdyMaps}~\pref{ItemTwistedActions}). By transport of structure via $I_T$ it follows that the action $\Gamma \act \bdy(\Gamma;\A)$ has a unique extended action $\Aut(\Gamma;\A) \act \bdy(\Gamma;\A)$. As a consequence of uniqueness, this extended action is well-defined independent of~$T$. The action of each $\Phi \in \Aut(\Gamma;\A)$ on $\bdy(\Gamma;\A)$ is a $\Phi$-twisted equivariant self-homeomorphism denoted $\bdy\Phi$, with commutative diagrams as depicted on the right side of Figure~\ref{FigureBoundaryDiagrams}. 

Recall from before Lemma~\ref{LemmaTEqBdyMaps} that loxodromicity of $\gamma \in \Gamma$ is well-defined independent of the choice of Grushko free splitting $T$ rel~$\A$. By using $I_T$ to transport the defining property of ``loxodromic'' from $\bdy T$ and $\bdyinf T$ over to $\bdy(\Gamma;\A)$ and $\bdyinf(\Gamma;\A)$, we obtain the following: 
\begin{description}
\item[Canonical loxodromicity:] For all $\gamma \in \Gamma$, $\gamma$ is loxodromic if and only its action on $\bdy(\Gamma;\A)$ has exactly two fixed points $\bdy_- \gamma, \bdy_+\gamma \in \bdy(\Gamma;\A)$, a repeller and an attractor respectively, and these points lie in $\bdyinf(\Gamma;\A)$. Furthermore, as $\delta \in \Gamma$ varies each of the sets $\{\bdy_- \gamma^\delta\}$ and $\{\bdy_+\gamma^\delta\}$ is dense in $\bdy(\Gamma;\A)$ and in $\bdyinf(\Gamma;\A)$.
\end{description}

%

\paragraph{Embedding canonical boundaries of relative free factors.} Consider a proper, nonatomic free factor $F$ of $\Gamma$ rel~$\A$. Its free factor system~$\A \restrict F$ is nonfull, and so the canonical boundaries of $F$ rel~$\A \restrict F$ are nonempty, and these boundaries fit into the following $F$-equivariant commutative diagrams with upward pointing ``inclusion arrows'':
$$\xymatrix{
\bdy(F,\A \restrict F) \,\,
	\ar[rrr]
&&& \,\,\bdy(\Gamma;\A) \\
\bdyinf(F,\A \restrict F) \,\,
	\ar[rrr]
	\ar@{^{(}->}[u]
&&& \,\, \bdyinf(\Gamma;\A)  
	\ar@{^{(}->}[u]
}$$
Following \cite[Section]{GuirardelHorbez:Laminations} and~\cite[Lemma 3.3]{\LymanCTTag}, these embeddings 
are described by first choosing any Grushko free splitting $T$ of $\Gamma$ rel~$\A$ and letting $T_F \subset T$ be the minimal subtree of the restricted action $F \act T$.\footnote{As discussed in the sources cited, these boundary embeddings exist for any subgroup $F$ of $\Gamma$ having ``finite Kurosh rank rel~$\A$''. We need only the case of a free factor of $\Gamma$ rel~$\A$.} The inclusion $T_F \hookrightarrow T$ extends to continuously to an $F$-equivariant embedding $\overline T_F \hookrightarrow \overline T$, and then restricts to $F$-equivariant boundary embeddings depicted in the diagram below. The horizontal arrows of the above diagram are then well-defined, independent of~$T$, as the unique dashed arrows that make the diagram below commute:
$$\xymatrix{
	& \bdy T_F \,\, 
		\ar[r] 
	& \,\, \bdy T 
		\\
\bdy(F,\A \restrict F) 
		\ar@2{~>}[ur]^{I_{T_F}}
		\ar@{-->}[rrr] 
	&&& \bdy(\Gamma;\A) 
		\ar@2{~>}[ul]_{I_T}  \\
\bdyinf(F,\A \restrict F) 
		\ar@2{~>}[dr]_{I_{T_F}}
		\ar@{^{(}->}[u]
		\ar@{-->}[rrr] 
	&&& \bdyinf(\Gamma;\A) 
		 \ar@2{~>}[dl]^{I_T}
		\ar@{^{(}->}[u] \\	
	& \bdyinf T_F  \,\,  
		\ar@{^{(}->}[uuu]
		\ar[r] 
	& \,\, \bdyinf T 
		\ar@{^{(}->}[uuu]
}$$

\renewcommand\line[3]{{\overline{#1#2}}{\,}^{#3}}

\medskip
\emph{An intrinsic construction of canonical boundaries.} Here is a construction of $\bdyinf(\Gamma;\A)$ that avoids ZFC paradoxes that are encountered when identifying the boundaries of the proper class of \emph{all} Grushko free splittings of $\Gamma$ rel~$\A$. Define a \emph{free basis of $\Gamma$ rel~$\A$} to be a subset of $\Gamma$ of the form $\Sigma = A_1 \union\cdots\union A_K \union \{b_1,\ldots,b_n\} \subset \Gamma$ where $\Gamma$ has a free factorization $\Gamma = A_1 * \cdots * A_K * B$ such that $\A=\{[A_1],\ldots,[A_K]\}$, and where $\{b_1,\ldots,b_n\}$ is a free basis of~the cofactor~$B$. The set $\Sigma$ generates $\Gamma$, its word metric is called a \emph{word metric rel~$\A$}, and the Milnor-Svarc lemma holds: the identity map on $\Gamma$ is a quasi-isometry between any two word metrics rel~$\A$; also, for any Grushko free splitting $T$ of $\Gamma$ rel~$\A$ and any base point $x \in T$, the orbit map $\gamma \mapsto \gamma \cdot x$ is a quasi-isometry $\Gamma \mapsto T$ from a word metric rel~$\A$ to the simplicial metric on~$T$. Since~$T$ is Gromov hyperbolic, it follows that $\Gamma$ is word hyperbolic with respect to any word metric rel~$\A$. These word metrics all determine the same class of quasigeodesic rays in~$\Gamma$, the same Hausdorff equivalence relation amongst such rays, and the same Gromov topology on the set of Hausdorff classes of rays, thus defining $\bdyinf(\Gamma;\A)$. The orbit map $\Gamma \mapsto T$ to a Grushko free splitting of $\Gamma$ rel~$\A$ induces a well-defined $\Gamma$-equivariant homeomorphism $I_T \from \bdyinf(\Gamma;\A) \to \bdyinf T$, and the diagrams above (for $\bdyinf$) commute for all choices of $S$ and~$T$. The group $\Aut(\Gamma;\A)$ acts on $\Gamma$ by quasi-isometries with respect to word metrics rel~$\A$, thus inducing an action $\Aut(\Gamma;\A) \act \bdyinf(\Gamma;\A)$, and the identifications $I_T$ are all $\Aut(\Gamma;\A)$ equivariant. Furthermore, if $F \subgroup \Gamma$ is a free factor rel~$\A$ (or, more generally, a subgroup of finite Kurosh rank rel~$\A$) then one can verify that the inclusion map $F \hookrightarrow \Gamma$ is a quasi-isometric embedding from word metrics on $F$ rel~$\A \restrict F$ to word metrics on $\Gamma$ rel~$\A$, using the fact that for any Grushko free splitting $T$ of $\Gamma$ rel~$\A$ the inclusion into $T$ of its $F$-minimal subtree is a quasi-isometric embedding with respect to simplicial metrics; this defines the canonical $F$-equivariant embedding $\bdy(F;\A \restrict F) \hookrightarrow \bdy(\Gamma;\A).$

It is also interesting to ponder how one could extend the above description of $\bdyinf(\Gamma;\A)$ to obtain an intrinsic definition of the observer's topology on $\bdy(\Gamma;\A)$, perhaps by incorporating a point $v_A \in \bdy(\Gamma;\A)$ for each $A \subgroup \Gamma$ with $[A] \in \A$, and using some kind of canonical ``observer's neighborhood basis'' of~$v_A$ in $\bdyinf(\Gamma;\A)$. 

\subsubsection{Line spaces and attracting laminations}
\label{SectionLamsAndExpFact}

\paragraph{The space of lines $\wt\Bline(\Gamma;\A)$ and its $\Gamma$-quotient $\Bline(\Gamma;\A)$.} For any Hausdorff space $X$, its double space $X^\SetTwo$ may be described in two more-or-less equivalent ways. One description (arising in \cite{\BookOneTag}) is the orbit space
$$X^\SetTwo = (X \times X - \Delta) \bigm/ \mathbb Z / 2 \mathbb Z
$$
where $\Delta \subset X \times X$ denotes the diagonal, $X \times X - \Delta$ has the subspace topology relative to the product topology, and $\mathbb Z / 2 \mathbb Z$ acts on $X \times X$ by transposing coordinates. Another convenient description of $X^\SetTwo$ is the set of two point subsets $\{\xi,\eta\} \subset X$ with topology generated by basis elements $B\{U,V\}$, one for each unordered pair of disjoint open subsets \hbox{$U,V \subset X$,} defined by the formula
$$B\{U,V\} = \bigl\{\{\xi,\eta\} \subset X \suchthat  \{\xi,\eta\} \intersect U \ne \emptyset \text{ and } \{\xi,\eta\} \intersect V \ne \emptyset\bigr\}
$$
We note that the natural bijection between these two descriptions of $X^\SetTwo$ is a homeomorphism. From the second description it follows also that for each subset $Y \subset X$, the double space topology on the subset $Y^\SetTwo \subset X^\SetTwo$ coincides with the subspace topology induced from the double space topology on $X^\SetTwo$.

The \emph{space of lines} of $\Gamma$ rel~$\A$ is $\wt\Bline(\Gamma;\A) = \bdy(\Gamma;\A)^\SetTwo$. Elements of $\wt\Bline(\Gamma;\A)$ are sometimes called \emph{(abstract) lines}.
For any Grushko free splitting $T$ of $\Gamma$ rel~$\A$ there is an identification $\wt\Bline(\Gamma;\A) \approx \wt\Bline T = \bdy T^\SetTwo$ that is induced by the identification $I_T \from \bdy(\Gamma;\A) \approx \bdy T$. Given an abstract line $\ell = \{\xi,\eta\} \in \wt\Bline T \approx \wt\Bline(\Gamma;\A)$, the associated path in $T$ whose closure has endpoints $\xi$ and $\eta$ in $\overline T$ is called the \emph{(concrete) realization of $\ell$ in $T$} and is denoted variously as $\ell(T)$ or $\line\xi\eta T$ or $(\xi,\eta)^T$. We say that $\ell(T)$ is \emph{bi-infinite}, \emph{singly infinite}, of \emph{finite} depending on whether $\{\xi,\eta\} \intersect \bdyinf T$ has cardinality $2$, $1$ or $0$, respectively;
while this terminology should be clear by context, in the setting of laminations we shall try to avoid confusion by using the terminology ``bi-infinite line'' or ``bi-infinite leaf''.


The natural action $\Gamma \act \bdy(\Gamma;\A) \approx \bdy T$ induces an action $\Gamma \act \wt\Bline(\Gamma;\A) \approx \wt\Bline T$ whose orbit space is denoted $\Bline(\Gamma;\A) = \Bline T$. One may therefore identify $\Bline(\Gamma;\A)$ pointwise with the set of $\Gamma$-orbits of concrete lines in~$T$. Recall from \BookOne\ that in the classical context of $\Out(F_n)$ one may also visualize and study elements of $\Bline(F_n)$ as bi-infinite edge paths without cancellation in any marked graph; see \LymanRTT\ for the similar point of view using lines in graphs of groups. Given an element $\beta \in \Bline(\Gamma;\A)$, we shall for the most part express $\beta$ with a choice of orbit representative $\ell = \{\xi,\eta\} \in \wt\Bline(\Gamma;\A)$ and/or with the realization of $\ell$ as a concrete line $L = \line\xi\eta T \in \wt\Bline T$ in some Grushko free splitting~$T$. In such a situation we occasionally use the operator $\orb$ to denote the ``orbit'' of an abstract or concrete line, leading to the notation like $\beta = \orb\{\xi,\eta\} = \orb \line\xi\eta T = \orb L$. We also refer to this situation with such language as ``\emph{$\ell$ is an orbit representative of $\beta$}'' or an ``\emph{abstract line representing $\beta$}'', or ``\emph{$L$ is a concrete line in $T$ representing~$\beta$}''.

A \emph{periodic} line is one of the form $\{\bdy_-\gamma,\bdy_+\gamma\} \in \wt\Bline(\Gamma;\A)$, for some loxodromic~$\gamma \in \Gamma$. Periodic lines are dense in $\wt\Bline(\Gamma;\A)$, equivalently they are dense in $\wt\Bline T$ for any Grushko free splitting $T$ of $\Gamma$ rel~$\A$; the latter holds because any finite path in $T$ is a subpath of some finite path whose first and last oriented edges are in the same orbit under the action of $\Gamma$ on oriented edges of~$T$. Note that if $F \subgroup \Gamma$ is an elementary free factor rel~$\A$ (see~Lemma~\ref{LemmaElementary}) then there exists a loxodromic $\gamma \in F$, and the subset $\wt\Bline(F;\A \restrict F) \subset \wt\Bline(\Gamma;\A)$ is identical to the periodic line $\{\bdy_-\gamma,\bdy_+\gamma\}$ for any choice of loxodromic $\gamma \in F$. 

\paragraph{Laminations.} A \emph{lamination}\footnote{The terminology in \cite{GuirardelHorbez:Laminations} is ``algebraic lamination''.} of $\Gamma$ rel~$\A$ is \emph{either} a closed $\Gamma$-invariant subset upstairs in $\wt\Bline(\Gamma;\A)$ \emph{or} a closed subset downstairs in $\Bline(\Gamma;\A)$. Note that the quotient map $\wt\Bline(\Gamma;\A) \mapsto \Bline(\Gamma;\A)$ induces a bijection between the ``upstairs'' and ``downstairs'' versions of laminations.  

The natural action $\Aut(\Gamma;\A) \act \bdy(\Gamma;\A)$, described following Lemma~\ref{LemmaTEqBdyMaps}, induces a natural action $\Aut(\Gamma;\A) \act \wt\Bline(\Gamma;\A)$; for each $\Phi \in \Aut(\Gamma;\A)$ we denote the resulting homeomorphism as $\Phi_\sharp \from \wt\Bline(\Gamma;\A) \to \wt\Bline(\Gamma;\A)$. The restricted action by inner automorphisms $\Inn(\Gamma;\A) \act\wt\Bline(\Gamma;\A)$ preserves each individual $\Gamma$ orbit, and so there is a natural induced action $\Out(\Gamma;\A) \act \Bline(\Gamma;\A)$; for each $\phi \in \Out(\Gamma;\A)$ the resulting homeomorphism is denoted $\phi_\sharp \from \Bline(\Gamma;\A) \to \Bline(\Gamma;\A)$.


\paragraph{The free factor support of a lamination.} On this topic we follow \cite[Corollary 4.12]{\LymanCTTag}, with some equivalent reformulations, particularly in the definition of ``carrying'' and in Lemma~\ref{LemmaFFSupportLambda}). 

For any free factor $F$ of $\Gamma$ rel~$\A$ with restricted free factor system $\A \restrict F$, the $F$-equivariant embedding $\bdy(F;\A \restrict F) \mapsto \bdy(\Gamma;\A)$ induces an $F$-equivariant embedding $\wt\Bline(F;\A \restrict F) \mapsto \wt\Bline(\Gamma;\A)$; we identify $\wt\Bline(F;\A \restrict F)$ with the image of this embedding. 

Consider a line $\ell \in \wt\Bline(\Gamma;\A)$ representing $\beta  \in \Bline(\Gamma;\A)$. Given a free factor system~$\F$ of $\Gamma$ rel~$\A$, to say that $\F$ \emph{carries} or \emph{supports} $\beta$ (or $\ell$) means that there exists a free factor $F$ of $\Gamma$ rel~$\A$ such that $[F] \in \F$ and such that $\ell \in \wt\Bline(F;\A \restrict F)$. Since periodic lines in $\wt\Bline(F;\A \restrict F)$ form a dense subset of $\wt\Bline(F;\A \restrict F)$, this definition of ``carries'' is equivalent to the one in \cite{\LymanCTTag} saying that $\beta$ is in the closure of the periodic lines in $\wt\Bline(F;\A \restrict F)$. Also, this definition of ``carries'' depends only on $\beta$, independent of the choice of orbit representative~$\ell$. Finally, only a nonatomic free factor system rel~$\A$ can carry a line; clearly $\A$ itself carries no line at all.

Given a free factor system~$\F$ of $\Gamma$ rel~$\A$, to say that $\F$ \emph{carries} or \emph{supports} $\Lambda$ means that $\F$ carries every $\beta \in \Lambda$. To say that $\Lambda$ \emph{fills $\Gamma$ rel~$\A$} means that the only free factor system rel~$\A$ that carries $\Lambda$ is the full free factor system $\{[\Gamma]\}$. More generally the \emph{free factor support} of $\Lambda$ rel~$\A$, denoted $\F\Lambda$, will be defined by the equivalent conditions in Lemma~\ref{LemmaFFSupportLambda} to follow, after reviewing some notation.

Consider any free factor system~$\F$ of $\Gamma$ rel~$\A$.\footnote{Free factor systems rel~$\A$ are defined slightly differently in \cite{Lyman:CT}: they do \emph{not} include what we call ``atomic components'', i.e.\ elements of~$\A$ itself. This difference is unimportant, because the atomic components are determined by the nonatomic ones.} 
We recall from \cite[Section 2.5]{HandelMosher:RelComplexHyp} its \emph{free factor system depth} $\DFF(\F)$, a numerical invariant also know as its \emph{complexity} \cite[Proposition 2.11]{Lyman:RTT}. Writing $\F = \{[F_1],\ldots,[F_K]\}$ for some free factorization $\Gamma = F_1 * \cdots * F_K * B$ with cofactor~$B$, this invariant is
$$\DFF(\F) = K + 2 \rank(B) - 1
$$
For any nested pair of free factor systems $\A \sqsubset \F$ of $\Gamma$ the \emph{complexity of $\F$ rel~$\A$}, denoted $\cx(\F;\A)$, is defined as follows \cite[Section 4]{Lyman:CT}: letting $\{[F_1],\ldots,[F_J]\}$ denote the nonatomic components of $\F$, the complexity is the numerical sequence $\DFF(\A \restrict F_j)$ rewritten by an index permutation so as to be in nonincreasing order:
$$ \cx(\F;\A) = \bigl(\DFF(\A \restrict F_1) \ge \cdots \ge \DFF(\A \restrict F_J)\bigr)
$$
Complexities of free factor systems rel~$\A$ are well-ordered lexicographically. In the following lemma, the first conclusion is Corollary 4.12 of \LymanCT:

\begin{lemma}
\label{LemmaFFSupportLambda}
For every subset $\Lambda \subset \Bline(\Gamma;\A)$ there exists a unique free factor system~$\F\Lambda$ of $\Gamma$ rel~$\A$ such that $\F\Lambda$ carries $\Lambda$ and such that, amongst the set of \emph{all} free factor systems of $\Gamma$ rel~$\A$ that carry $\Lambda$, any of the following equivalent conditions holds:
\begin{enumerate}
\item\label{ItemSuppLambdaCX}
$\F\Lambda$ has minimal value of the complexity $\cx(\cdot)$;
\item\label{ItemSuppLambdaSqsubset}
$\F\Lambda$ is minimal with respect to the partial order $\sqsubset$;
\item\label{ItemSuppLambdaMeet}
$\F\Lambda$ is the meet of all free factor systems that carry $\Lambda$.
\end{enumerate}
\end{lemma}

\subparagraph{Remark.} Note that $\Lambda$ fills~$\Gamma$ rel~$\A$ if and only if $\F\Lambda=\{[\Gamma]\}$.

\begin{proof} Let $\{\F_i\}_{i \in I}$ be an indexing of the free factor systems of $\Gamma$ rel~$\A$ that carry~$\Lambda$. Knowing already from \cite[Corollary 4.12]{\LymanCTTag} the existence and uniqueness of $\F\Lambda$ having minimal complexity (item~\pref{ItemSuppLambdaCX}), we choose the notation so that $0 \in I$ and $\F_0 = \F\Lambda$. 

To prove conclusion~\pref{ItemSuppLambdaSqsubset} we will apply the relation $\F_i \meet \F_j \sqsubset \F_i$ which holds for all $i,j \in I$ \cite[Corollary 2.7]{HandelMosher:RelComplexHyp}. As shown in the proof of \cite[Corollary 4.12]{\LymanCTTag}, if $\F_i \sqsubset \F_j$ then $\cx(\F_i) \le \cx(\F_j)$ with equality if and only if $i=j$. For all $j \in I$, we therefore have $\cx(\F_0 \meet \F_j) \le \cx(\F_0)$; it then follows by minimality of $\cx(\F_0)$ that $\F_0 \meet \F_j = \F_0$. Using that $\F_0 \meet \F_j \sqsubset \F_j$ it follows that $\F_0 \sqsubset \F_j$ for all $j \in I$, proving that $\F_0$ is a minimal with respect to $\sqsubset$. If there was some different $\F_k$ that is also minimal with respect to $\sqsubset$ then at least one of the nesting relations $\F_0 \meet \F_k \sqsubset \F_0$ or $\F_0 \meet \F_k \sqsubset \F_k$ would be proper, contradicting minimality of $\cx$.

For conclusion~\pref{ItemSuppLambdaMeet} recall that $\meet \F_i$ is the unique $\sqsubset$-maximal free factor system of $\Gamma$ rel~$\A$ having the property that $\meet\F_i \sqsubset \F_j$ for all $j \in I$ (see \cite[Corollary 2.15]{HandelMosher:RelComplexHyp}, and see Section~\ref{SectionROut}). Taking $j=0$ it follows that $\meet\F_i \sqsubset \F_0$, and then by maximality of $\meet\F_i$ it follows that $\meet\F_i = \F_0$, proving~\pref{ItemSuppLambdaMeet}.
\end{proof}

\subparagraph{Remark.} We note that $\F\Lambda$ is also the unique free factor system carrying $\Lambda$ with maximum value of the numerical invariant~$\DFF(\cdot)$: this follows by using $\F_i \meet \F_j \sqsubset \F_i$ together with \cite[Lemma 2.14]{HandelMosher:RelComplexHyp} to conclude that that $\DFF(\F_i \meet \F_j) \ge \DFF(\F_i)$, with equality if and only if $\F_i \meet \F_j = \F_i$.

\bigskip

For any subset $\Lambda \subset \Bline(\Gamma;\A)$ and any Grushko free splitting $T$ of $\Gamma$ rel~$\A$, we define the \emph{concrete support} of $\Lambda$ in $T$ to be the $\Gamma$-invariant subforest $\tsp(\Lambda;T) \subset T$ equal to the union of concrete lines in $T$ representing elements of $\Lambda$. By tracing through the definitions and using the identification $\wt\Bline(\Gamma;\A) \approx \wt\Bline T$ one obtains the following fact, which is a translation into the language of Bass-Serre trees of a corresponding graphs-of-groups statement found in \cite[Section 4]{\LymanCTTag} under the heading \emph{Laminations}.

\begin{lemma}
\label{LemmaFillingLamFillsT}
For any $\Lambda \subset \Bline(\Gamma;\A)$, any Grushko free splitting $T$ of $\Gamma$ rel~$\A$, and any $\Gamma$-invariant subforest $\tau \subset T$, we have \, $\F\Lambda \sqsubset \F\tau \iff \tsp(\Lambda;T) \subset \tau$.
\qed
\end{lemma}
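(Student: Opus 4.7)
The plan is to establish a structural claim about components of $\tau \cup VT$ and then deduce both directions from it. The key structural claim I aim to prove is: for every component $C$ of $\tau \cup VT$ with non-trivial stabilizer $F := \Stab(C)$ in the ``nontrivial unbounded'' case of Lemma \ref{LemmaCollapseFFNest}, the restricted action $F \curvearrowright C$ is a Grushko free splitting of $F$ rel $\A_F$, and in particular $T^F = C$. The argument I plan to use is Bass--Serre theoretic: $C$ is the fiber of the collapse $\pi : T \to T/\tau$ over the vertex $\pi(C) \in T/\tau$, that vertex has stabilizer $F$ by Lemma \ref{LemmaCollapseFFNest}, and the restriction of a free splitting of $\Gamma$ to the action of a vertex stabilizer on its fiber is again a (minimal) free splitting. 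A companion observation I will use repeatedly is that $\Stab_\Gamma(v) \subseteq F$ for every $v \in C$, since any $\gamma$ fixing $v$ must preserve the unique component of $\tau \cup VT$ containing $v$.

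Granting this, for $(\Leftarrow)$ I would assume $\tsp(\Lambda;T) \subset \tau$ and, by Lemma \ref{LemmaFFSupportLambda}, reduce to showing $\FF\tau$ carries $\Lambda$. Fix $\beta \in \Lambda$ with representative $\ell = \{\xi,\eta\}$ and concrete realization $L \subset \tau$, lying in a unique component $C$ of $\tau \cup VT$. Using $\xi \ne \eta$ together with Lemma \ref{LemmaCollapseFFNest}, I rule out the ``trivial and finite'' case (incompatible with an endpoint of infinite stabilizer or with $C$ being unbounded) and the ``nontrivial bounded'' case (which would force $T^F$ to be a single vertex, making $\xi = \eta$). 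Hence $C$ is in the nontrivial unbounded case and the structural claim gives $T^F = C$. I then verify $\{\xi,\eta\} \subset V_\infty T^F \cup \bdy_\infty T^F = \bdy(F;\A_F)$ directly: for $\xi \in V_\infty T$ we have $\Stab_F(\xi) = \Stab_\Gamma(\xi)$ infinite; for $\xi \in \bdy_\infty T$ the subray of $L$ toward $\xi$ lies in $T^F$. This shows $F$ carries $\beta$.

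For $(\Rightarrow)$ I would assume $\FF\Lambda \sqsubset \FF\tau$, fix $\beta \in \Lambda$, and choose a representative $\ell$ with $\ell \subset \bdy(F_1;\A_{F_1})$ for some $[F_1] \in \FF\Lambda$. The nesting assumption provides $F_2 = \Stab(C)$ with $[F_2] \in \FF\tau$ such that some conjugate of $F_1$ is contained in $F_2$; after replacing $\ell$ by a corresponding $\Gamma$-translate (harmless since $\tsp(\Lambda;T)$ is $\Gamma$-invariant) I may assume $F_1 \subseteq F_2$. The inclusion $T^{F_1} \subseteq T^{F_2}$ gives $\bdy(F_1;\A_{F_1}) \subseteq \bdy(F_2;\A_{F_2})$, so $\ell \subset \bdy(F_2;\A_{F_2})$. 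As before, $\xi \ne \eta$ forces $C$ to be unbounded, and the structural claim gives $T^{F_2} = C$, whence the concrete realization $L \subset T^{F_2} = C \subset \tau$.

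The main obstacle I anticipate is justifying the structural claim $T^F = C$. The cleanest route should use that $\tau/\Gamma$ is a finite subgraph of $T/\Gamma$, making $C/F$ finite and so the $F$-action on $C$ cocompact; together with unboundedness of $C$ (no global $F$-fixed vertex) and connectedness, this should yield minimality of the $F$-action on $C$ and the identification $C = T^F$. Verifying that this action is Grushko rel $\A_F$ will require invoking Lemma/Definition \ref{LemmaExtension} to match the non-trivial vertex stabilizers of the $F$-action on $C$ with the free factor system $\A_F$.
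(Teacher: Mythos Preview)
The paper does not supply a detailed proof of this lemma; it simply asserts that the statement follows by ``tracing through the definitions'' and cites Lyman's paper for the graphs-of-groups version. Your proposal is therefore considerably more detailed than anything in the paper, and your overall strategy is sound.

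However, your structural claim $T^F = C$ is false in general, and your proposed justification---that a cocompact action on a tree with no global fixed point must be minimal---is incorrect. For a concrete counterexample take $\Gamma = F_2 = \langle a,b\rangle$ with $\A = \emptyset$, let $T$ be the Cayley tree, pick an edge $e$ not on the axis of $a$ but incident to it, and let $\tau$ be the $\Gamma$-orbit of $(\text{axis of }a) \cup e$. A component $C$ of $\tau$ is then $(\text{axis of }a) \cup (\langle a\rangle \cdot e)$, with $F = \Stab(C) = \langle a\rangle$ and $T^F = (\text{axis of }a) \subsetneq C$. Here $F$ acts cocompactly on $C$ with no global fixed vertex, yet the action is not minimal.

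The repair is easy. For the $(\Rightarrow)$ direction you only actually use the inclusion $T^{F_2} \subseteq C$, which holds because $C$ is $F_2$-invariant; so that direction already goes through. For the $(\Leftarrow)$ direction you need not $T^F = C$ but only that each component $D$ of $C \setminus T^F$ is bounded. This holds because $H := \Stab_F(D)$ must fix the unique attaching point $p \in T^F$ (any loxodromic $h \in H$ would have its axis in $T^F$, but $h \cdot D = D$ forces $h \cdot p = p$), so $H$ preserves distance to $p$; since $D/H$ embeds in the finite graph $C/F$, there are only finitely many $H$-orbits of edges in $D$, each at a fixed distance from $p$, hence $D$ is bounded. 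With this in hand your argument for $\xi \in \bdy_\infty T$ works (any ray in $C$ must eventually leave every bounded component of $C \setminus T^F$, hence is eventually in $T^F$), and your argument for $\xi \in V_\infty T$ works once you add the observation that a vertex with nontrivial $F$-stabilizer must lie in $T^F$ (otherwise that stabilizer would fix the first edge on the path toward $T^F$, contradicting that $T$ is a free splitting).
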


Consider an abstract line $\ell \in \wt\Bline(\Gamma;\A)$ representing $\beta = \orb\ell = \Bline(\Gamma;\A)$, and its concrete realization $L \subset T$ in some Grushko free splitting $T$ of $\Gamma$ rel~$\A$. To say that $\beta$ (or~$\ell$, or $L$) is \emph{birecurrent} means that $L$ is bi-infinite and for any finite subpath $\alpha \subset L$ and any subray $R \subset L$ there exists $g \in \Gamma$ such that $g \cdot \alpha \subset R$; that is, $L$ contains infinitely many translates of $\alpha$ approaching each of its ends. This property depends only on~$\beta$, independent of $\ell$ and $L$, and this property is invariant under the action $\Out(\Gamma;\A) \act \Bline(\Gamma;\A)$ (see \cite[Lemma 4.1]{\LymanCTTag}).

\begin{definition}[Attracting laminations]
\label{DefinitionAttrLam}
Given $\phi \in \Out(\Gamma;\A)$ and a lamination $\Lambda \subset \Bline(\Gamma;\A)$, to say that $\Lambda$ is an \emph{attracting lamination of $\phi$} means that there exists $\ell \in \wt\B(\Gamma;\A)$ representing $\beta = \orb\ell$ such that the following hold:
\begin{enumerate}
\item $\Lambda$ is the closure of the subset $\{\beta\} \subset \Bline(\Gamma;\A)$;
\item $\beta$ is birecurrent;
\item\label{ItemNotEFF}
For any loxodromic $\gamma \in \Gamma$ that is contained in an elementary free factor of $\Gamma$ rel~$\A$ we have $\ell \ne \{\bdy_-\gamma,\bdy_+\gamma\}$ (see~Lemma~\ref{LemmaElementary});
\item There exists an \emph{attracting neighorhood} $U \subset \Bline(\Gamma;\A)$ of $\beta$ with respect to some iterate $\phi^p$ (for some $p=p(\Lambda) \ge 1$), meaning that 
\begin{enumerate}
\item $\phi_\sharp^p(U) \subset U$;
\item $\{\phi_\sharp^{pj}(U) \suchthat j \ge 0\}$ is a neighborhood basis of $\beta$ in $\Bline(\Gamma;\A)$.
\end{enumerate}
\end{enumerate}
Any such $\beta$ (or $\ell$) is called a \emph{generic leaf} of~$\Lambda$. The set of all attracting laminations of $\phi \in \Out(\Gamma;\A)$ is denoted $\L(\phi)$. 
\end{definition}

\begin{lemma}[\protect{\cite{\LymanCTTag} Lemma 4.2; and see Theorem~\ref{TheoremStrataLamCorr} below}]
For each $\phi \in \Out(\Gamma;\A)$ the set $\L(\phi)$ is finite, and it is $\phi$-invariant under the natural action of $\Out(\Gamma;\A)$ on subsets of $\Bline(\Gamma;\A)$.
\qed\end{lemma}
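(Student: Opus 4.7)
The lemma has two independent claims; I would treat them separately.

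For $\phi$-invariance, the plan is to verify Definition~\ref{DefinitionAttrLam} directly for $\phi_\sharp(\Lambda)$. Given $\Lambda \in \L(\phi)$ with generic leaf $\beta = \orb\ell$ and attracting neighborhood $U$ with respect to $\phi^p$, I would claim that $\phi_\sharp(\beta)$ is a generic leaf of $\phi_\sharp(\Lambda)$. The first clause is immediate since the homeomorphism $\phi_\sharp \from \Bline(\Gamma;\A) \to \Bline(\Gamma;\A)$ takes closures to closures, giving $\phi_\sharp(\Lambda) = \overline{\{\phi_\sharp(\beta)\}}$. Birecurrence is preserved by the $\Out(\Gamma;\A)$-action on $\Bline(\Gamma;\A)$. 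Condition~\pref{ItemNotEFF} is preserved because, as recorded in Lemma~\ref{LemmaElementary}, the elementary relative free factors are characterized by $\A$-data that is permuted by $\Aut(\Gamma;\A)$, so their boundary pairs in $\wt\Bline(\Gamma;\A)$ are permuted by any $\Phi_\sharp$ representing $\phi_\sharp$. Finally, $\phi_\sharp(U)$ is an attracting neighborhood of $\phi_\sharp(\beta)$ with respect to the same iterate $\phi^p$: commutativity of powers gives $\phi^p_\sharp(\phi_\sharp(U)) = \phi_\sharp(\phi^p_\sharp(U)) \subset \phi_\sharp(U)$, and the neighborhood-basis property descends because $\phi_\sharp$ is a homeomorphism.

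For finiteness, the plan is to invoke the forthcoming Strata--Lamination Correspondence (Theorem~\ref{TheoremStrataLamCorr}) and thereby reduce to finiteness of the strata of a relative train track representative. First I would observe that $\L(\phi) = \L(\phi^k)$ for every $k \ge 1$: the attracting-neighborhood condition in Definition~\ref{DefinitionAttrLam} is already stated up to passage to an iterate, and a nested neighborhood basis remains a neighborhood basis under passage to a subsequence. Then I would choose $k$ large enough that $\phi^k$ admits a CT representative $f \from G \to G$ in the sense of~\LymanCT. The Strata--Lamination Correspondence provides an injective function from $\L(\phi^k)$ to the finite set of EG strata of $f$, and since a CT map has only finitely many strata, this yields finiteness of $\L(\phi^k) = \L(\phi)$.

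The main obstacle is Theorem~\ref{TheoremStrataLamCorr} itself, whose proof is not carried out in Part~I. Its proof requires the full relative train track and CT technology of~\LymanRTT\ and~\LymanCT: producing a generic leaf attached to each EG stratum as a birecurrent limit of iterated $f_\sharp$-images of an edge of that stratum, applying the Perron--Frobenius spectral theory of the stratum's aperiodic transition matrix to extract the attracting dynamics, and conversely recovering a distinguished EG stratum from any given lamination via the free factor support machinery of Lemma~\ref{LemmaFFSupportLambda}. In Part~I I would develop only the fragment of this correspondence needed for Proposition~\ref{PropAxisInFS} on fold axes in $\FS(\Gamma;\A)$, and otherwise treat Theorem~\ref{TheoremStrataLamCorr} as a black box provided by Section~\ref{SectionAxisConstruction} and by~\LymanCT.
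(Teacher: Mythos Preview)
The paper does not supply its own proof of this lemma: it is stated with a \qed and attributed to \cite{\LymanCTTag}, Lemma~4.2, together with a forward reference to Theorem~\ref{TheoremStrataLamCorr}. Your proposal is consistent with that treatment, and your direct verification of $\phi$-invariance from Definition~\ref{DefinitionAttrLam} is correct and is the natural argument.

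One small inefficiency in your finiteness argument: you pass to an iterate $\phi^k$ in order to obtain a CT representative, but this detour is unnecessary. Theorem~\ref{TheoremStrataLamCorr} as stated in the paper applies to \emph{any} relative train track representative of $\phi$ itself (these exist by Theorem~\ref{TheoremLyman}), and gives a bijection between $\L(\phi)$ and the finite set of EG-aperiodic \emph{substrata} of that representative. So you can skip the $\L(\phi) = \L(\phi^k)$ step and the CT machinery, and appeal directly to Theorem~\ref{TheoremStrataLamCorr} for $\phi$. Your observation that $\L(\phi) = \L(\phi^k)$ is correct and harmless, just not needed here.
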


The following lemma describes the special form of $\F\Lambda$ in the case that $\Lambda$ is an attracting lamination:

\begin{lemma}[\protect{\cite[Lemma 4.11]{\LymanCTTag}}]
\label{LemmaAttLamSupp}
For any $\phi \in \Out(\Gamma;\A)$ and any $\Lambda \in \L(\phi)$ there exists a nonatomic free factor $F$ of $\Gamma$ rel~$\A$ with associated decomposition $\A=\A_F \union \bar\A_F$ (see Lemma~\ref{LemmaExtension}~\pref{ItemPartitionsA}) such that for any generic leaf $\beta$ of $\Lambda$ we have 

\displayqed{\F\Lambda=\F\{\beta\} = \{[F]\} \union \bar\A_F}
%
\end{lemma}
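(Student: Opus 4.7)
The plan is to prove the two equalities of the lemma successively: first $\FF\Lambda = \FF\{\beta\}$, and then $\FF\{\beta\} = \{[F]\} \union \bar\A_F$ for an appropriate nonatomic free factor $F$.

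To prove $\FF\Lambda = \FF\{\beta\}$, I would fix a Grushko free splitting $T$ of $\Gamma$ rel $\A$ and let $L \subset T$ be the concrete realization of a representative $\ell = \{\xi,\eta\}$ of $\beta$. The containment $\FF\{\beta\} \sqsubset \FF\Lambda$ is immediate since $\beta \in \Lambda$. For the reverse, I would show that for every $\alpha \in \Lambda = \overline{\{\beta\}}$ the concrete realization $L'$ of $\alpha$ in $T$ satisfies $L' \subset \Gamma \cdot L$: for any compact subpath $\alpha_0 \subset L'$, unpacking convergence in $\Bline(\Gamma;\A)$ yields $\gamma_n \in \Gamma$ with $\alpha_0 \subset \gamma_n \cdot L$ for all sufficiently large $n$, hence $\alpha_0 \subset \Gamma \cdot L$, and taking a union over compact subpaths gives $L' \subset \Gamma \cdot L$. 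Therefore $\tsp(\Lambda;T) = \tsp(\{\beta\};T)$, and applying Lemma~\ref{LemmaFillingLamFillsT} to both $\Lambda$ and $\{\beta\}$ yields $\FF\Lambda = \FF\{\beta\}$.

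Next, to identify this common support, I would choose among all nontrivial free factors $F$ of $\Gamma$ rel $\A$ (possibly $F = \Gamma$) such that $\beta$ is carried by $F$, one of minimum Kurosh rank; such an $F$ exists by Lemma~\ref{LemmaKrank}~\pref{ItemKRankLength}. Two structural claims about $F$ need verification. First, $F$ is nonatomic, since if $[F] \in \A$ then $\A \restrict F = \{[F]^F\}$, a Grushko free splitting of $F$ rel~$\A \restrict F$ is a single vertex with stabilizer $F$, and $\bdy(F;\A_F)$ reduces to a single point, which cannot contain the two-point set $\ell$ representing $\beta$. Second, $F$ is nonelementary, since by Lemma~\ref{LemmaElementary} and inspection of the two elementary types, an elementary relative free factor has $\bdy(F;\A_F)$ consisting of exactly two points, so $\ell \subset \bdy(F;\A_F)$ would force $\ell = \bdy(F;\A_F)$, violating condition~\pref{ItemNotEFF} of Definition~\ref{DefinitionAttrLam}.

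The main technical step is to show $\FF\{\beta\} = \{[F]\} \union \bar\A_F$. One direction is clear: the right side is a free factor system rel~$\A$ carrying $\beta$ via $[F]$, so $\FF\{\beta\} \sqsubset \{[F]\} \union \bar\A_F$. For the reverse, $\FF\{\beta\}$ carries $\beta$ through some element $[G]$, and minimality of $\Krank(F)$ gives $\Krank(G) \ge \Krank(F)$. Using $\FF\{\beta\} \sqsubset \{[F]\} \union \bar\A_F$, the subgroup $G$ is conjugate into either $F$ or some atomic $A' \in \bar\A_F$; the latter forces $\Krank(G) = 1$ and hence $F$ elementary or atomic, contradicting the previous step. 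So $G$ is conjugate into $F$ with equal Kurosh ranks, and Lemma~\ref{LemmaKrank}~\pref{ItemKRankIneqEq} gives $[G] = [F]$, so $[F] \in \FF\{\beta\}$. Finally, for each $[A] \in \bar\A_F \subset \A$, the nesting $\A \sqsubset \FF\{\beta\} \sqsubset \{[F]\} \union \bar\A_F$ combined with mutual malnormality (both between $[F]$ and other elements of $\FF\{\beta\}$, and among atomic factors of $\A$) forces $[A] \in \FF\{\beta\}$. This yields $\{[F]\} \union \bar\A_F \subseteq \FF\{\beta\}$, and equality then follows from the opposite nesting together with mutual malnormality of elements of $\FF\{\beta\}$.
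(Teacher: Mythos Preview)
The paper does not supply its own proof of this lemma: it is quoted from Lyman \cite[Lemma~4.11]{\LymanCTTag} and closed with a \textsc{qed} immediately after the displayed equation, with no argument given. So there is no proof in the paper to compare yours against.

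Your reconstruction is essentially correct, but one step needs tightening. In deducing $\FF\Lambda = \FF\{\beta\}$ from the equality of concrete supports $\tsp(\Lambda;T) = \tsp(\{\beta\};T)$, you invoke Lemma~\ref{LemmaFillingLamFillsT}, but that lemma only compares $\FF\Lambda$ against free factor systems of the form $\FF\tau$ for subforests $\tau \subset T$, not against an arbitrary free factor system. As written, your argument shows only that $\FF\Lambda \sqsubset \FF\tau \iff \FF\{\beta\} \sqsubset \FF\tau$ for all such~$\tau$, which is not yet the desired equality. The fix is to choose $T$ at the outset so that $\FF\{\beta\}$ is \emph{visible} in~$T$ (possible by \cite[Lemma~3.2~(3)]{\RelFSHypTag}, as noted just after the definition of visibility). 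Writing $\FF\{\beta\} = \FF\tau$ for some subforest $\tau$, Lemma~\ref{LemmaFillingLamFillsT} applied to $\{\beta\}$ gives $\tsp(\{\beta\};T) \subset \tau$; your concrete-support argument then yields $\tsp(\Lambda;T) \subset \tau$; and Lemma~\ref{LemmaFillingLamFillsT} applied to $\Lambda$ gives $\FF\Lambda \sqsubset \FF\tau = \FF\{\beta\}$. The reverse nesting is immediate from $\beta \in \Lambda$.

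The remainder of your argument --- choosing $F$ of minimal Kurosh rank carrying $\beta$, ruling out the atomic and elementary cases via Definition~\ref{DefinitionAttrLam}~\pref{ItemNotEFF}, and then using Lemma~\ref{LemmaKrank} and mutual malnormality to pin down $\FF\{\beta\}$ as $\{[F]\} \cup \bar\A_F$ --- is sound. One small streamlining: in the case where $[G] \in \FF\{\beta\}$ carries $\beta$ and is conjugate into some atomic $A'$ with $[A'] \in \bar\A_F$, Lemma~\ref{LemmaKrank}~\pref{ItemKRankIneqEq} directly forces $[G]=[A']$, so $A'$ itself would carry $\beta$, which you have already excluded; there is no need to route through $\Krank(F) \le 1$.
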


The following result generalizes \cite[Lemma 3.2.4]{\BookOneTag}, and follows the same lines of proof once enough tools from relative train track theory are established. We put off the proof until Section~\ref{SectionAxisConstruction}, just following the statement of Theorem~\ref{TheoremStrataLamCorr}, which is a summary statement of results from \cite{} that supply the requisite tools.

\begin{lemma} 
\label{LemmaSupportDistinction}
Attracting laminations are distinguished by their free factor supports: for any $\phi \in \Out(\Gamma;\A)$ and any $\Lambda,\Lambda' \in \L(\phi)$, if $\F\Lambda=\F\Lambda'$ then $\Lambda=\Lambda'$. In particular a filling attracting lamination, if it exists, is unique: there is at most one $\Lambda \in \L(\phi)$ such that $\F\Lambda=\{[\Gamma]\}$. Also, there is a natural bijection $\L(\phi) \leftrightarrow \L(\phi^\inv)$ defined by $\Lambda \leftrightarrow \Lambda'$ if and only if $\F\Lambda=\F\Lambda'$.
\end{lemma}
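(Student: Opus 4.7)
The plan is to reduce the lemma to the bijective correspondence between the set $\L(\phi)$ of attracting laminations and the set of EG strata of a relative train track representative, which will be supplied by the upcoming Theorem~\ref{TheoremStrataLamCorr}. First I would pass to an iterate $\phi^p$ such that $\L(\phi)=\L(\phi^p)$ and every element of $\L(\phi)$ is $\phi^p$-invariant, then choose a Grushko free splitting $T$ of $\Gamma$ rel~$\A$ together with an RTT representative $f \from T \to T$ of $\phi^p$ and its filtration by $\Gamma$-invariant subforests $\emptyset = T_0 \subset T_1 \subset \cdots \subset T_K = T$.

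Under Theorem~\ref{TheoremStrataLamCorr} each $\Lambda \in \L(\phi)$ is paired with a unique EG stratum $H_r = T_r \setminus T_{r-1}$, and a generic leaf of $\Lambda$ has a concrete realization in $T$ produced by iterated tightening of $f$ on an edge of $H_r$. Birecurrence of that generic leaf, together with the fact that $f$ preserves the filtration and expands $H_r$ exponentially, forces the concrete support to satisfy $\tsp(\Lambda; T) \subset T_r$ and $\tsp(\Lambda; T) \not\subset T_{r-1}$. By Lemma~\ref{LemmaFillingLamFillsT} this gives $\FF\Lambda \sqsubset \FF T_r$ and $\FF\Lambda \not\sqsubset \FF T_{r-1}$; comparing against the minimality clause of Lemma~\ref{LemmaFFSupportLambda} and the description of $\FF\Lambda$ from Lemma~\ref{LemmaAttLamSupp}, one identifies $\FF\Lambda$ with $\FF T_r$ exactly, i.e.\ the free factor system read off the $r$-th filtration level.

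Now if $\Lambda,\Lambda' \in \L(\phi)$ correspond to EG strata $H_r, H_{r'}$ with $r \le r'$ and $\FF\Lambda = \FF\Lambda'$, then $\FF T_r = \FF T_{r'}$. Since $H_{r'}$ is an EG stratum sitting strictly above $T_{r'-1}$, its generic leaf crosses edges of $H_{r'}$, so $\tsp(\Lambda';T) \not\subset T_{r'-1}$; applying Lemma~\ref{LemmaFillingLamFillsT} once more forces $r = r'$, and then the bijection of Theorem~\ref{TheoremStrataLamCorr} concludes $\Lambda = \Lambda'$. The uniqueness of a filling attracting lamination is the special case in which $r = r' = K$ and $\FF T_K = \{[\Gamma]\}$.

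The main hurdle I anticipate is bookkeeping rather than conceptual: verifying carefully that Lyman's graph-of-groups descriptions of EG strata, their generic leaves, and their filtrations translate into the Bass-Serre tree picture used above, and in particular that the association $r \mapsto \FF T_r$ is strictly monotone across EG levels so that distinct EG strata cannot accidentally share the same free factor support. This strictness should follow directly from the structure of an RTT filtration in \LymanRTT, where each new EG stratum contributes edge orbits whose $\Gamma$-stabilizer data strictly enlarges the ambient free factor system on the way from $T_{r-1}$ up to $T_r$.
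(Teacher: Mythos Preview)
Your overall strategy---pass to an iterate, choose an RTT representative, and use the bijection of Theorem~\ref{TheoremStrataLamCorr} between attracting laminations and EG strata---is the right one, and matches the paper's approach. But there is a genuine gap in the middle: the claim that ``one identifies $\FF\Lambda$ with $\FF T_r$ exactly'' is false for an arbitrary RTT filtration. A concrete counterexample: take $\Gamma=F_4=\langle a,b,c,d\rangle$, $\A=\emptyset$, and let $\phi$ act as one fully irreducible automorphism on $\langle a,b\rangle$ and an unrelated one on $\langle c,d\rangle$. With the natural filtration $T_1$ (the $\langle a,b\rangle$ part) $\subset T_2=T$, the top lamination $\Lambda_2$ has $\FF\Lambda_2=\{[\langle c,d\rangle]\}$ while $\FF T_2=\{[F_4]\}$. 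The lemmas you cite (\ref{LemmaFFSupportLambda} and~\ref{LemmaAttLamSupp}) only give $\FF\Lambda\sqsubset\FF T_r$, never equality.

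The good news is that you don't need that equality. You already have the two facts that do the work: $\FF\Lambda\sqsubset\FF T_r$ and $\FF\Lambda\not\sqsubset\FF T_{r-1}$. Now if $\Lambda,\Lambda'$ correspond to $H_r,H_{r'}$ with $r<r'$, then $\FF\Lambda\sqsubset\FF T_r\sqsubset\FF T_{r'-1}$, while $\FF\Lambda'\not\sqsubset\FF T_{r'-1}$; hence $\FF\Lambda\ne\FF\Lambda'$. Contrapositively, $\FF\Lambda=\FF\Lambda'$ forces $r=r'$ and the bijection finishes. This bypasses both the false identification and the ``strict monotonicity across EG levels'' issue you flagged at the end. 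The paper's own proof takes a slightly different route: it invokes Theorem~\ref{TheoremLyman} to choose the RTT so that the common support $\B=\FF\Lambda=\FF\Lambda'$ actually appears as $\FF T_s$ for some filtration index~$s$, then argues via minimality of~$s$ that both strata land at level~$s$. Your corrected argument is arguably more direct, since it works with any RTT representative.
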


\subsubsection{Fold axes (statement of Proposition~\ref{PropAxisInFS})}
\label{SectionFoldAxes}
In this section we define fold axes and their transition matrices, we define EG-aperiodicity of a fold axis, and we define the expansion factor of an EG-aperiodic fold axis. We also state Proposition~\ref{PropAxisInFS} asserting existence of EG-aperiodic fold axes and well-definedness of expansion factors under appropriate hypotheses.



\begin{definition}[Fold axes in $\FS(\Gamma;\A)$]
\label{DefFoldAxes}
Given $\phi \in \Out(\Gamma;\A)$, a \emph{fold axis} of $\phi$ rel~$\A$ is a bi-infinite fold path in $\FS(\Gamma;\A)$ of the form
$$\cdots \xrightarrow{f_{-2}} T_{-2} \xrightarrow{f_{-1}} T_{-1} \xrightarrow{f_0} T_0 \xrightarrow{f_1} T_1 \xrightarrow{f_2} T_2 \xrightarrow{f_3} \cdots
$$
such that $f_i(\Vertices T_{i-1}) \subset \Vertices T_i$ for each $i \in \Z$, and such that for some integer $p \ge 1$ called the \emph{period} of the axis, and for some automorphism $\Phi \in \Aut(\Gamma;\A)$ representing $\phi$, there exists a sequence of $\Phi$-twisted equivariant simplicial isomorphisms \hbox{$h^l_{l-p} \from T_{l} \to T_{l-p}$} $(l \in \Z)$ making the following diagram commute:
$$\xymatrix{
& \cdots \ar[r]  & T_{l-1} \ar[rr]^{f_{l}} \ar[dl]^<<<<{h^{l-1}_{l-p-1}} && T_{l} \ar[rr]^{f_{l+1}}  \ar[dl]^<<<<{h^l_{l-p}} && T_{l+1} \ar[r]  \ar[dl]^<<<<{h^{l+1}_{l-p+1}}& \cdots\\
\cdots \ar[r]  & T_{l-p-1} \ar[rr]_{f_{l-p}}  && T_{l-p} \ar[rr]_{f_{l-p+1}} && T_{l-p+1}  \ar[r] & \cdots
}$$
It follows from this definition that $T_l = T_{l-p} \cdot \phi$ for all $l \in \Z$. It also follows that the free factor system $\B = \FellT_i$ is well-defined independent of~$i$ and $\phi$-invariant, and that $\A \sqsubset \B$; we shall say that this is a fold axis \emph{of $\phi$, rel~$\A$, with respect to $\B$}.

For each $l \in \Z$ the \emph{first return map} on $T_l$ is the $\Phi$-equivariant map $F_l \from T_l \to T_l$ that is defined by composing arrows in the following commutative diagram:
$$\xymatrix{
                         && T_{l+p} \ar[drr]^{h^{l+p}_l} \\
T_l \ar@{-->}[rrrr]^{F_l} \ar[urr]^{f^l_{l+p}} \ar[drr]_{h^l_{l-p}} &&&& T_l \\
                         &&T_{l-p} \ar[urr]_{f^{l-p}_l}
}$$
\end{definition}

\subparagraph{General properties of fold axes.} 
In Definition~\ref{DefFoldAxes}, the quantifier ``for some representative~$\Phi$'' may be changed to ``for any representative $\Phi$'': when $\Phi \from \Gamma \to \Gamma$ is replaced by $i_g \circ \Phi$ with $i_g \in \Inn(\Gamma)$, each map $h^l_{l-p} \from T_l \to T_{l-p}$ is changed by precomposing it with the translation of $T_l$ corresponding to~$g$. The maps $h^l_{l-p}$ therefore induce well-defined bijections, independent of $\Phi$, between the orbits of the induced edge actions $\Gamma \act \Edges(T_{l})$ and $\Gamma \act \Edges(T_{l-p})$.

For all $l,d \in \Z$ we have $T_l = T_{l-dp} \cdot \phi^d$. In fact we have a system of $\Phi^d$-twisted equivariant simplicial isomorphisms $h^l_{l-dp} \from T_l \to T_{l-dp}$, satisfying the following index contraction formula whenever $d=d_1+d_2$:
$$h^l_{l-dp} : T_l \xrightarrow{h^l_{l-d_1p}} T_{l-d_1p} \xrightarrow{h^{l-d_1p}_{l-(d_1+d_2)p}} T_{l-(d_1+d_2)p} = T_{l-dp}
$$
To define these maps, for $d \ge 1$ we may use this index contraction formula to give a definition by induction on $d$ starting from the given maps $h^l_{l-p}$; whereas for $d \le -1$ we first define $h^l_{l+p} = (h^{l+p}_l)^\inv$ and then we use the formula to define $h^l_{l-dp}$ by induction on $\abs{d}$.

If $d \ge 1$ then any fold axis of $\phi$ of period $p$ is also a fold axis of $\phi^d$ of period $dp$, with first return map $F^d_l = (F_l)^d \from T_l \to T_l$; this follows from foldability of the map $f^{l-dp}_{l} \from T_{l-dp} \to T_{l}$ together with commutativity of the following diagram:
$$\xymatrix{
                         && T_{l+dp} \ar[drr]^{h^{l+dp}_l} \\
T_l \ar[rrrr]^{F_l^d} \ar[urr]^{f^l_{l+dp}} \ar[drr]_{h^l_{l-dp}} &&&& T_l \\
                         &&T_{l-dp} \ar[urr]_{f^{l-dp}_l}
}$$
Furthermore, from this diagram it is clear that for each $d \ge 1$ the $\Phi^d$-twisted equivariant map $F^d_l \from T_l \to T_l$ is injective on each edge $E \subset T_l$ for each $d \ge 1$. Applying this to all multiples of a fixed $d \ge 1$, this says that $F^d_l$ is a train track representative of $\phi^d$.

\bigskip

We turn now to a discussion of transition matrices, progressing from the general setting of foldable maps to the specific setting of fold axes.

\subparagraph{The transition matrix of a foldable map.} Consider a foldable map $f \from S \to T$ such that $f(\Vertices S) \subset \Vertices T$ (for example $f$ could be any foldable map along any fold axis as defined using Definition~\ref{DefFoldAxes}). Let $J,I \ge 1$ be the numbers of edge orbits of $S,T$ respectively, and suppose that we have chosen enumerated edge orbit representatives $e_1,\ldots,e_J$ in $S$ and $e'_1,\ldots,e'_I$ in $T$. From these choices we obtain an $I \times J$ \emph{transition matrix} $Mf$, defined in terms of crossing numbers (Definition~\ref{DefCrossing}): 
$$M_{ij}f = \<f(e_j), \Gamma \cdot e'_i\> = \<f(e),\Gamma \cdot e'_i\>
$$ 
namely the number of times that the path $f(e_j) \subset T$ crosses edges in the orbit of the edge $e'_i \subset T$. The matrix $Mf$ is unchanged if $e_j$ and $e'_i$ are rechosen \emph{within the same orbits}.

\subparagraph{Transition matrices along fold paths.} Consider now a fold path denoted as in Definition~\ref{DefFoldPaths},
$$\cdots \xrightarrow{f_{l-1}} T_{l-1} \xrightarrow{f_l} T_l \xrightarrow{f_{l+1}} T_{l+1} \xrightarrow{f_{l+2}} \cdots
$$
and satisfying the property that $f_l(\Vertices T_{i-1}) \subset \Vertices T_l$ (for example, this fold path could be a bi-infinite fold axis as defined using Definition~\ref{DefFoldAxes}). We suppose also that edge orbit representatives are chosen for each free splitting $T_k$ along the sequence. For each $k \le l$ we therefore have a transition matrix $M f^k_l$. For each $e \in \Edges(T_k)$ its image path in $T_l$ can be written as a concatenation of edges 
$$f^k_l(e) = e_{i(1)} \ldots e_{i(A)}
$$
By foldability, for each $l \le m$ the path $f^k_m(e)$ can be written as a concatenation, without backtracking, of the form
$$f^k_m(e) = f^l_m(e_{i(1)}) \ldots f^l_m(e_{i(A)})
$$
Combining this with a simple counting argument, it follows that

\begin{lemma}
\label{LemmaTransMatMult} 
For each $k \le l \le m$ the transition matrix of the composition $f^k_m = f^l_m \circ f^k_l$ equals the product of the transition matrices of $f^l_m$ and $f^k_l$. That is, $M\!f^k_m \, = \, M\!f^l_m \,\, M\!f^k_l$.
\qed\end{lemma}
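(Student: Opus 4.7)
The plan is to unpack the definitions of the transition matrices on both sides and exploit the non-backtracking property supplied by foldability. Fix enumerated edge-orbit representatives: $e_1,\ldots,e_J$ in $S_k$, intermediate representatives $e''_1,\ldots,e''_L$ in $S_l$, and $e'_1,\ldots,e'_I$ in $S_m$. Then $Mf^k_l$ is $L\times J$, $Mf^l_m$ is $I \times L$, so the product $Mf^l_m\,Mf^k_l$ is $I\times J$, matching the shape of $Mf^k_m$; it remains to verify the $(i,j)$ entry.

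First I would write out $f^k_l(e_j)$ as an explicit edge concatenation $\tilde e_1 \cdots \tilde e_A$ in $S_l$, where each $\tilde e_\beta$ lies in the orbit of some $e''_{\alpha(\beta)}$. For each $\alpha$, the number of indices $\beta$ with $\alpha(\beta)=\alpha$ is exactly $\langle f^k_l(e_j),\Gamma\cdot e''_\alpha\rangle = (Mf^k_l)_{\alpha j}$, just by definition of crossing number (Definition~\ref{DefCrossing}). Next, invoking foldability of the composition $f^k_m = f^l_m \circ f^k_l$, the image $f^k_m(e_j) = f^l_m(\tilde e_1)\cdots f^l_m(\tilde e_A)$ is a concatenation without backtracking in $S_m$; in particular the crossing count over the orbit of $e'_i$ is additive across the concatenation factors.

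Now apply equivariance of $f^l_m$: since $\tilde e_\beta$ is in the $\Gamma$-orbit of $e''_{\alpha(\beta)}$, the path $f^l_m(\tilde e_\beta)$ crosses the orbit of $e'_i$ exactly $(Mf^l_m)_{i,\alpha(\beta)}$ times. Summing over $\beta$ and then regrouping the sum by the value of $\alpha(\beta)$ yields
\[
(Mf^k_m)_{ij} \;=\; \sum_{\beta=1}^A (Mf^l_m)_{i,\alpha(\beta)} \;=\; \sum_{\alpha=1}^L (Mf^k_l)_{\alpha j}\,(Mf^l_m)_{i\alpha} \;=\; (Mf^l_m\,Mf^k_l)_{ij},
\]
which is exactly the desired matrix equation. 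The only substantive step is the appeal to foldability to guarantee no backtracking and hence additivity of crossing numbers; everything else is bookkeeping with $\Gamma$-equivariance of the crossing function. I do not anticipate a genuine obstacle, as the non-backtracking statement is already built into the definition of a foldable composition and has been invoked in the sentence immediately preceding the lemma.
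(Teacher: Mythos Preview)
Your proof is correct and follows essentially the same approach as the paper: write $f^k_l(e_j)$ as an edge concatenation, use foldability to obtain the non-backtracking concatenation $f^k_m(e_j)=f^l_m(\tilde e_1)\cdots f^l_m(\tilde e_A)$, then count crossings using equivariance. You have simply spelled out the ``simple counting argument'' that the paper leaves implicit.
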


\subparagraph{Transition matrices along a fold axis.} Consider now a fold axis for some $\phi \in \Out(\Gamma;\A)$ as denoted as in Definition~\ref{DefFoldAxes}. We may choose enumerated edge orbit representatives for each free splitting $T_l$ along the fold axis, subject to the following constraint:
\begin{description}
\item[Edge Orbit Periodicity:]
Each simplicial isomorphism $h^{l}_{l-dp} \from T_l \mapsto T_{l-p}$ maps each edge orbit representative to an edge orbit representative, preserving enumeration. 
\end{description}
This criterion is met by first choosing enumerated edge orbit representatives arbitrarily for each of $T_0,\ldots,T_{p-1}$, and then for each $l$ outside that range, letting $l = qp+r$ with $q \in \Z$ and $r \in \{0,\ldots,p-1\}$, using $h^l_r \from T_l \to T_r$ pull back the enumerated edge orbit representatives from $T_r$ to $T_l$. Having chosen such representatives, the transition matrices along the fold axis are now defined. We may also use the enumerated orbit representative $\{e_i\}$ of $T_l$ to define the transition matrix $MF_l$ of the first return map $F_l \from T_l \to T_l$, whose entries  are $M_{ij} F_l = \<F_l(e_j), \Gamma \cdot e_i\>$; note that if the enumeration indices are permuted then $MF_l$ is changed by conjugation with the associated permutation matrix.

The following is a quick consequence of these definitions:

\begin{lemma}\label{LemmaTransMatSame}
Given a fold axis for $\phi \in \Out(\Gamma;\A)$ with enumerated orbit representatives satisfying \emph{Edge Orbit Periodicity}, we have the equations of transition matrixes $MF_l = M f^l_{l+p}$ for any $l \in \Z$. Furthermore, for any $d \ge 1$, we have $(MF_l)^d = M f^l_{l+dp}$.
\qed\end{lemma}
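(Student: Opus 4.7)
The plan is to exploit the factorization $F_l = h^{l+p}_l \circ f^l_{l+p}$ together with two basic properties of the $\Phi$-twisted equivariant simplicial isomorphisms $h$: they preserve $\Gamma$-orbits of edges (because $\Phi$ is an automorphism of $\Gamma$, so $h(\Gamma\cdot e) = \Phi(\Gamma)\cdot h(e) = \Gamma\cdot h(e)$), and they preserve crossing numbers (because they are simplicial bijections on the edge set). Once these two facts are in hand, the entire lemma is essentially a matter of confirming that edge-orbit representatives line up correctly, which is exactly what Edge Orbit Periodicity guarantees.

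First I would prove $MF_l = Mf^l_{l+p}$ by writing the $(i,j)$ entry as a crossing number, substituting the factorization of $F_l$, and using Edge Orbit Periodicity to identify $\Gamma\cdot e_i^{T_l}$ with $h^{l+p}_l(\Gamma\cdot e_i^{T_{l+p}})$. Pulling the crossing number back through $h^{l+p}_l$ then turns $\langle F_l(e_j^{T_l}), \Gamma\cdot e_i^{T_l}\rangle$ into $\langle f^l_{l+p}(e_j^{T_l}), \Gamma\cdot e_i^{T_{l+p}}\rangle$, which is exactly $M_{ij}f^l_{l+p}$.

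For the second equation $(MF_l)^d = Mf^l_{l+dp}$ I would combine two ingredients. The observation recorded just after Definition~\ref{DefFoldAxes} says $(F_l)^d = h^{l+dp}_l \circ f^l_{l+dp}$ (obtained by iterating the commuting squares in the defining diagram of a fold axis). Repeating the preceding step verbatim with $p$ replaced by $dp$, and invoking Edge Orbit Periodicity for $h^{l+dp}_l$, yields $M((F_l)^d) = Mf^l_{l+dp}$. Separately, the train track property stated in the paragraph preceding Lemma~\ref{LemmaTransMatSame} says each iterate $F_l^d$ is injective on edges; hence the successive expansions $F_l^d(e) = F_l(e_{i_1})\cdots F_l(e_{i_A})$ exhibit no backtracking, which is precisely the hypothesis the counting argument for Lemma~\ref{LemmaTransMatMult} needs to give $M((F_l)^d) = (MF_l)^d$ by induction on $d$. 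Combining the two identifications yields $(MF_l)^d = Mf^l_{l+dp}$.

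The main obstacle, such as it is, is purely bookkeeping: checking that $\Phi$-twisted equivariance genuinely sends $\Gamma$-orbits to $\Gamma$-orbits (and not to some $\Phi$-twisted analogue), and that Edge Orbit Periodicity consistently aligns the enumerations in $T_l$ and $T_{l+dp}$ so that the matrix indices refer to compatible orbits. The substance of the lemma really sits in having set up Definition~\ref{DefFoldAxes} and the Edge Orbit Periodicity convention in such a way that transition matrices of first return maps and transition matrices along the fold axis coincide exactly, rather than merely up to conjugation by a permutation.
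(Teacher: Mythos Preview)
Your proposal is correct and is exactly the natural elaboration of what the paper intends: the paper presents this lemma as ``a quick consequence of these definitions'' and terminates the statement with \qed, giving no further argument. Your write-up supplies the expected details---factoring $F_l$ through $h^{l+p}_l$, using Edge Orbit Periodicity to match indices, and invoking the train track property together with the counting argument behind Lemma~\ref{LemmaTransMatMult} for the iterated version---and there is no alternative route to speak of.
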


\subparagraph{EG-aperiodic fold axes and their expansion factors.} Recall that a square matrix $M$ of non-negative integers is \emph{irreducible} if for each $i,j$ there exists $k$ such that $M^k_{ij} \ge 1$. If furthermore $k$ can be chosen independent of $i,j$ then $M$ is \emph{aperiodic}, also known as \emph{Perron-Frobenius}. If $M$ is irreducible then, by the Perron-Frobenius theorem, there is a unique $\lambda \ge 1$ which is an eigenvalue for some non-negative eigenvector of~$M$. To say that $M$ is \emph{EG-aperiodic} means it is aperiodic and $\lambda > 1$.

Consider now $\phi \in \Out(\Gamma;\A)$, and a fold axis of $\phi$ in $\FS(\Gamma;\A)$ (with respect to some $\phi$-invariant free factor system~$\B$), denoting that fold axis as in Definition~\ref{DefFoldAxes}. Consider also a choice of enumerated edge orbit representatives satisfying \emph{Edge Orbit Periodicity}, hence all transition matrices discussed earlier are defined. 

To say that this axis is \emph{EG-aperiodic} means that there exists $l \ge 1$ such that the transition matrix $M F_l$ is EG-aperiodic. The existential quantifier ``\emph{there exists $l \ge 1$ such that}'' may be replaced by the universal quantifier ``\emph{for all $l \ge 1$}'' for the following reasons. Suppose that $M F_l$ is EG-aperiodic, and hence every entry of the square matrix $(M F_l)^d = Mf^{l}_{l+dp}$ is positive, for some $d \ge 1$. Besides the $\phi$-orbit $(T_{l+ip})_{i \in \Z}$, any other $\phi$-orbit along the given axis may be written uniquely as $(T_{k+ip})_{i \in \Z}$ with $l-p < k < l$, hence $l+dp < k+(d+1)p < l + (d+1)p$. The transition matrix $MF_k$ therefore has $d+1^{\text{st}}$ iterate equal to
$$(*) \qquad (MF_k)^{d+1} = \underbrace{Mf^k_l}_A \circ Mf^l_{l+dp} \circ \underbrace{Mf^{l+dp}_{k+(d+1)p}}_B = A \circ \, (MF_l)^d \circ B
$$
But every row of $A$ has a positive entry, because every edge of $T_l$ is crossed by the image of \emph{some} edge of $T_k$, under the map $f^k_l$. Also every column of $B$ has a positive entry, because the image of every edge of $T_{l+dp}$ crosses \emph{some} edge of $T_{k+(d+1)p}$, under the map $f^{l+dp}_{k+(d+1)p}$. It follows that every entry of $(MF_k)^{d+1}$ is positive. 

Assuming that the given fold axis is EG-aperiodic, it follows furthermore from equation $(*)$ that the matrices $M F_k$ and $M F_l$ have the same Perron-Frobenius eigenvalue, because the matrices $A$ and $B$ are independent of the exponent $d$ and so the exponential growth rates of entries of $(MF_k)^{d+1}$ and of $(MF_l)^d$ are identical; but these rates are equal to the Perron-Frobenius eigenvalues of $MF_k$ and $MF_l$ respectively. We shall refer to this number as the \emph{expansion factor} of the given EG-aperiodic fold axis, and we record its well-definedness here:

\begin{lemma}
\label{ItemAxisExpFactorWD}
For each $\phi \in \Out(\Gamma;\A)$ and each EG-aperiodic fold axis of $\phi$ in $\FS(\Gamma;\A)$, associated to that fold axis is a well-defined number $>1$ called the \emph{expansion factor} of that axis, equal to the expansion factor of the EG-aperiodic first return train track map $F_i \from T_i \to T_i$ for each $i \in \Z$. \qed
\end{lemma}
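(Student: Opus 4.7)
The well-definedness assertion has two independence components: independence from the choice of base index $l$ along the fold axis, and independence from the choice of enumerated edge orbit representatives. The second is easy to dispatch: reindexing orbit representatives at $T_l$ conjugates $MF_l$ by a permutation matrix, which preserves both EG-aperiodicity and the Perron-Frobenius eigenvalue. So the genuine content is independence from $l$, and my plan is to formalize the matrix identity $(*)$ that appears in the paragraphs preceding the statement.

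Fix enumerations satisfying Edge Orbit Periodicity. For two indices $k, l$ lying in distinct $\phi$-orbits of residues mod $p$ (the same-orbit case reduces to the permutation argument above via the twisted equivariant simplicial isomorphism $h^l_k$, which by Edge Orbit Periodicity matches enumerated orbit representatives), I may arrange $l - p < k < l$ and then invoke Lemmas~\ref{LemmaTransMatMult} and~\ref{LemmaTransMatSame} to write, for each $d \ge 1$,
$$(MF_k)^{d+1} \;=\; A \cdot (MF_l)^d \cdot B, \qquad A := Mf^k_l, \quad B := Mf^{l+dp}_{k+(d+1)p}$$
where $A$ and $B$ are fixed nonnegative integer matrices independent of $d$. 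The key nontrivial point is that every row of $A$ and every column of $B$ has a positive entry. I would verify this directly from foldability: since $f^k_l \from T_k \to T_l$ is a foldable (hence surjective on edges) map of minimal $\Gamma$-trees, every edge orbit of $T_l$ is crossed by the image of some edge orbit of $T_k$, giving the row condition on $A$; the column condition on $B$ is symmetric.

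With the identity and the positivity properties of $A,B$ established, both conclusions follow quickly. For the propagation of aperiodicity: if all entries of $(MF_l)^d$ are positive for some $d$, then left-multiplication by $A$ with nonzero rows yields a strictly positive matrix, and right-multiplication by $B$ with nonzero columns preserves strict positivity, so $(MF_k)^{d+1}$ has all entries positive and $MF_k$ is aperiodic. For equality of Perron-Frobenius eigenvalues $\lambda_l$ and $\lambda_k$: the entries of $(MF_l)^d$ grow as $\Theta(\lambda_l^d)$ and those of $(MF_k)^{d+1}$ as $\Theta(\lambda_k^{d+1})$, and sandwiching $A \cdot (\cdot) \cdot B$ with fixed factors $A,B$ having nontrivial projection onto Perron-Frobenius eigendirections forces these two asymptotic rates to agree, i.e.\ $\lambda_l = \lambda_k$. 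I anticipate no serious obstacle: the only delicate point is the positivity of row/column sums of $A$ and $B$, and that reduces immediately to surjectivity on edges for foldable maps between minimal $\Gamma$-trees.
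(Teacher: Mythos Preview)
Your proposal is correct and follows essentially the same approach as the paper: the lemma is recorded with a \qed\ precisely because its proof is the discussion in the paragraphs immediately preceding it, centered on the identity $(*)$, the row/column positivity of $A$ and $B$ coming from foldability, and the growth-rate comparison. One small imprecision: the column condition on $B$ is not literally ``symmetric'' to the row condition on $A$ --- rows of $A$ need surjectivity on edge orbits, while columns of $B$ need that foldable maps do not collapse edges --- but both are immediate from foldability, so this does not affect correctness.
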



The following proposition which will be applied in the proof of the upper bound of Theorem~A in Section~\ref{SectionProofUpperBound} to follow.

\begin{proposition}
\label{PropAxisInFS}
For each $\phi \in \Out(\Gamma;\A)$ which has a filling attracting lamination $\Lambda \in \L(\phi)$ the following hold:
\begin{enumerate}
\item\label{ItemFoldAxisExists}
For each non-filling, $\phi$-invariant free factor system~$\B$ rel~$\A$ that is maximal with respect to these properties, there exists an EG aperiodic fold axis of $\phi$ in $\FS(\Gamma;\A)$ with respect to~$\B$.
\item\label{ItemExpFactWD}
As one varies over all $\B$ as in item~\pref{ItemFoldAxisExists}, and over all EG-aperiodic fold axes of $\phi$ in $\FS(\Gamma;\A)$ with respect to~$\B$, the expansion factor of the axis is well-defined independent of $\B$ and of the axis. 
\end{enumerate}
\end{proposition}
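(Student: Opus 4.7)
The plan is to invoke Lyman's relative train track (RTT) theory for $\Out(\Gamma;\A)$ from \LymanRTT, \LymanCT\ to build the axis, and then to identify its expansion factor with the intrinsic expansion factor of the (unique) filling lamination $\Lambda$. The hypothesis that $\phi$ has a filling attracting lamination $\Lambda$ implies via the stratum/lamination correspondence Theorem~\ref{TheoremStrataLamCorr} (to be set up in Section~\ref{SectionAxisConstruction}) that any RTT representative $f \from G \to G$ of $\phi$ (or of a suitable power) has a top EG stratum $H_N$ whose associated attracting lamination is $\Lambda$, and whose complementary filtration element $G_{N-1}$ realizes some non-filling $\phi$-invariant free factor system of $\Gamma$ rel~$\A$.

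For part~\pref{ItemFoldAxisExists}, given the maximal non-filling $\phi$-invariant $\B$ rel~$\A$, I will arrange --- via standard RTT refinements that collapse intermediate strata below $H_N$ --- that $G_{N-1}$ realizes exactly $\B$; any filtration element strictly between $\B$ and $\{[\Gamma]\}$ would contradict the maximality of~$\B$. Let $T$ be the Bass--Serre tree of the graph of groups obtained by collapsing each component of $G_{N-1}$ in $G$ to a vertex (carrying the induced vertex group), so $T$ is a free splitting of $\Gamma$ rel~$\A$ with $\FFST = \B$. The map $f$ descends to the collapsed graph of groups and lifts to a $\Phi$-twisted equivariant tight map $\wt f \from T \to T$ representing $\phi$. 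After a preliminary subdivide-and-collapse to achieve foldability (Proposition~\ref{PropFoldableProps}~\pref{ItemFoldableExists}), Stallings' Theorem~\ref{TheoremStallings} factors $\wt f$ as a finite fold path $T = T_0 \to T_1 \to \cdots \to T_K$ followed by a $\Phi$-twisted equivariant simplicial isomorphism $T_K \to T_0$ (unique by Lemma~\ref{LemmaTwEqUnique}~\pref{ItemEqIsoUni}). Periodic translation then produces a bi-infinite fold axis of $\phi$ of period $K$. With edge orbit enumerations chosen from the top stratum $H_N$, Lemma~\ref{LemmaTransMatSame} identifies the transition matrix of the first return map with the top-stratum transition matrix of $f$, which is EG-aperiodic by the train track property and the EG hypothesis on $H_N$.

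For part~\pref{ItemExpFactWD}, the first return map $F_l \from T_l \to T_l$ of any EG-aperiodic fold axis of $\phi$ is by its defining commutative diagram a $\Phi$-twisted equivariant self-map representing $\phi$, and foldability of the composed maps $f^l_{l+dp}$ forces each iterate of $F_l$ to be edge-injective, so $F_l$ is a train track representative of $\phi$ whose transition matrix $M F_l$ is EG-aperiodic on the entire edge set of $T_l$. The associated attracting lamination $\Lambda'$ of $\phi$, generated by iterating $F_l$ on a typical edge, has concrete support in $T_l$ equal to the whole tree, so $\FF \Lambda' \not\sqsubset \B = \FFST_l$ by Lemma~\ref{LemmaFillingLamFillsT}; since $\FF \Lambda'$ is $\phi$-invariant and $\B$ is maximal among $\phi$-invariant non-filling free factor systems, this forces $\FF \Lambda' = \{[\Gamma]\}$, whence $\Lambda' = \Lambda$ by Lemma~\ref{LemmaSupportDistinction}. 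The Perron--Frobenius eigenvalue of $M F_l$ therefore equals the intrinsic expansion factor $\lambda(\phi, \Lambda)$, independent of the choice of $\B$ or of the axis.

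The main obstacle is the first step: arranging Lyman's RTT to yield a representative of $\phi$ itself (not merely of a power $\phi^p$) whose filtration realizes the specified maximal $\B$. Lyman's construction typically produces a representative of a rotationless power, so extracting a $\Phi$-twisted (as opposed to $\Phi^p$-twisted) equivariant foldable map on $T$ may require a descent argument that exploits the natural $\phi$-action on the $\phi^p$-axis to recover sub-$\phi$-periodicity, or a refinement of Lyman's construction tailored to the presence of a filling lamination; I expect this to be the most delicate step of the proof, while the alignment of the filtration with the specified $\B$ is a secondary but routine refinement.
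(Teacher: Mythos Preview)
Your outline for part~\pref{ItemFoldAxisExists} is essentially the paper's proof: one invokes Lyman's relative train track existence theorem (Theorem~\ref{TheoremLyman}) to realize the chosen maximal $\B$ as the free factor system of the penultimate filtration element (Corollary~\ref{CorollaryMaximal}), applies penultimate collapse (Proposition~\ref{PropCollapseTTMap}) to obtain an EG-aperiodic irreducible train track representative $F \from T \to T$ with respect to~$\B$, and suspends $F$ via a Stallings fold factorization (Lemma~\ref{LemmaFoldAxisConstruction}). Your worry in the final paragraph is misplaced: Theorem~\ref{TheoremLyman} produces a relative train track representative of $\phi$ itself, not merely of a power; it is the stronger \emph{CT} existence theorems that require passage to a rotationless iterate, and those are not needed here. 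Also, the extra invocation of Proposition~\ref{PropFoldableProps}~\pref{ItemFoldableExists} is unnecessary and potentially harmful, since the collapse it performs could alter $\FFST$; foldability of the suspended maps follows instead from the train track property of $F$, as in the proof of Lemma~\ref{LemmaFoldAxisConstruction}.

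Your argument for part~\pref{ItemExpFactWD} has a genuine gap. The first return map $F_l \from T_l \to T_l$ is a train track representative of $\phi$ \emph{relative to~$\B$}, since $T_l$ is Grushko rel~$\B$ but not rel~$\A$. The attracting lamination $\Lambda'$ it generates therefore lives in $\Bline(\Gamma;\B)$, not in $\Bline(\Gamma;\A)$, and Lemma~\ref{LemmaSupportDistinction} --- which compares elements of $\L(\phi) \subset \Bline(\Gamma;\A)$ --- does not directly give $\Lambda'=\Lambda$. Even your intermediate step is shaky: from $\FF\Lambda' \not\sqsubset \B$ and maximality of~$\B$ you cannot conclude $\FF\Lambda'=\{[\Gamma]\}$, since there may be other maximal non-filling $\phi$-invariant free factor systems incomparable with~$\B$. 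The paper avoids this issue entirely. It never identifies $\Lambda'$ with $\Lambda$; instead it argues that (i) the explicit representative $F$ built by penultimate collapse has the same transition matrix, hence the same expansion factor, as the top stratum of the original rel-$\A$ relative train track map (Proposition~\ref{PropCollapseTTMap}~\pref{ItemTransMatSame},\pref{ItemUltimateEGFactors}), which equals $\lambda(\phi;\Lambda)$ by Corollary~\ref{CorollaryTopWhenFillingExists}; and (ii) by Theorem~\ref{TheoremLamExpFac} applied \emph{relative to~$\B$}, every EG-aperiodic train track representative of $\phi$ rel~$\B$ --- in particular every first return map $F_l$ --- has this same expansion factor. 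The bridge between the rel-$\A$ and rel-$\B$ worlds is thus provided by the single concretely constructed representative $F$, not by an identification of laminations. (The lamination identification you attempt is in fact the content of Proposition~\ref{PropAxisInFSEmbellished}~\pref{ItemLamUnderCollapse}, proved separately via a bounded cancellation argument.)
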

\noindent
The proof of this proposition can be found in Section~\ref{SectionAxisConstruction}, following methods of \cite{HandelMosher:axes}. In extreme brevity, the existence clause~\pref{ItemFoldAxisExists} is proved by starting with any appropriate train track representative of~$\phi$, and then suspending that representative using a Stallings fold factorization.

\begin{definition}\label{DefFillExpFact}
Applying Proposition~\ref{PropAxisInFS}, associated to each $\phi \in \Out(\Gamma;\A)$ that has a filling lamination there is an invariant $\lambda_\phi > 1$ that we call the \emph{(filling) expansion factor} of $\phi$, namely the unique real number given in Proposition~\ref{PropAxisInFS}~\pref{ItemExpFactWD}. 
\end{definition}


\subsubsection{Bounded cancellation} 
Here and for application in the sequel we need Lemma~\ref{LemmaLineBddCancellation} below, a version of the \emph{bounded cancellation lemma}. 

The original bounded cancellation lemma, describing the cancellation effects of applying an automorphism $\Phi \in \Aut(F_n)$ to a reduced word in $F_n$, is found in work of Cooper \cite{Cooper:automorphisms} who attributes it to W.~Thurston and M.~Grayson.  When $\Phi$ is applied letter-by-letter to a reduced finite word in the generators of $F_n$, the resulting finite word may not be reduced. But as one subsequently cancels letters to obtain a reduced word, the lemma provides a quantitative bound, depending only on $\Phi$, to the ``size of cancellation''.

A more general bounded cancellation lemma can be found in \cite[Section~3]{BFH:laminations}, applying to any (twisted) equivariant continuous map $f \from S \to T$ from any free, minimal, simplicial tree action $F_n \act S$ to any minimal $\mathbb R$-tree action $F_n \act T$ that represents a point in the compactified outer space of $F_n$. The conclusion of bounded cancellation is an expression of an efficient bound, depending only on~$f$, to the ``size of cancellation'' of $f$ applied to an arbitrary finite path in~$S$; see Lemma~\ref{LemmaLineBddCancellation}~\pref{ItemBCCPaths} below for one formulation of this expression. The proof in \cite[Section~3]{BFH:laminations} uses an argument with Stallings fold paths that works in all cases where $T$ is simplicial (an additional approximation argument is needed for the remaining cases). Lyman, in \cite[Lemma 1.8]{Lyman:CT}, applied that same Stallings fold argument to the class of equivariant maps $f \from S \to T$ between Grushko free splittings of a group $\Gamma$ relative to a free factor system~$\A$. In the special case that the map $f$ is simplicial and foldable, this class of arguments gives a very nice expression for the cancellation constant which one can observe in both \cite[Section~3]{BFH:laminations} and \cite[Lemma 1.8]{Lyman:CT}; we record this expression below. 

We note also that the equivariant case of bounded cancellation \emph{implies} the $\Phi$-twisted equivariant case: for any $\Phi \in \Aut(\Gamma;\A)$ and for any $\Phi$-twisted equivariant map $f \from S \to T$ between Grushko free splittings of $\Gamma$ rel~$\A$, simply precompose the action homomorphism $\Gamma \mapsto \Isom(S)$ by the automorphism $\Phi^\inv$ to convert $f$ into an equivariant map. The conclusion of this argument is also recorded in item~\pref{ItemBCCPaths} of Lemma~\ref{LemmaLineBddCancellation} below.

Additional versions of bounded cancellation regarding rays and lines are also given in \cite[Lemma 1.8]{Lyman:CT}, again for equivariant maps $f \from S \to T$ between Grushko free splittings relative to a given free factor system. In those situations, one can use the end space identification $\bdyinf S \approx \bdyinf T$ (see Section~\ref{SectionBoundariesRelFFSs}) to given an exact description of the appropriate ray or line in~$T$ that is obtained from a given ray or line in~$S$ by applying $f$ and cancelling; we incorporate this description into Lemma~\ref{LemmaLineBddCancellation}.

The one improvement that we offer to the above methods is to note that they work without assuming the ``Grushko'' hypothesis: indeed, they work for (twisted) equivariant maps $f \from S \to T$ between any two free splittings of a group $\Gamma$. In Lemma~\ref{LemmaLineBddCancellation} to follow we do this for bounded cancellation of paths. Starting with a ray or line in $S$, the Dowdall--Taylor subboundary correspondence \cite{DowdallTaylor:cosurface} is needed in order to describe the appropriate ray or line in~$T$. We shall put this off for the sequel: the Dowdall--Taylor correspondence is studied in \cite[Section 3.2]{\RelFSThreeTag}; and bounded cancellation for rays and lines is covered in \cite[Lemma 3.1 and Lemma/Definition~3.4]{\RelFSThreeTag} respectively.

To state the bounded cancellation lemma, a \emph{metric free splitting} is a free splitting $\Gamma \act T$ equipped with a $\Gamma$-invariant geodesic metric; in this situation $\Length(T)$ is defined as the total length of the metric quotient graph $T / \Gamma$, equal to the sum of lengths of a set of edge-orbit representatives of $T$.  


\begin{lemma}[Bounded Cancellation]
\label{LemmaLineBddCancellation}
For any group $\Gamma$, any $\Phi \in \Aut(\Gamma)$, and any $\Phi$-twisted equivariant map $f \from S \to T$ between metric free splittings of $\Gamma$ rel~$\A$, there exists a constant $C$ with the following properties:
\begin{enumerate}
\item\label{ItemBCCPaths} \textbf{Bounded Cancellation for Paths:} For any finite path $\overline{pq} \subset S$ we have $[f(p),f(q)] \subset f[p,q] \subset N_C[f(p),f(q)]$.
\end{enumerate}
Furthermore, if there exists a factorization $f \from S \xrightarrow{g} S' \xrightarrow{h} T$ such that $g$ is a collapse map that restricts to an isometry on each uncollapsed edge of $S$, and such that $h$ is a foldable map that restricts to an isometry on each edge of $S'$, then for the cancellation constant one may take 
$$C = \Length(S') - \Length(T)
$$
Assuming equality of elliptic subgroup systems $\Fell S=\Fell T$, thus providing an induced $\Phi$-twisted equivariant homeomorphism $\bdyinf f \from \bdyinf S \to \bdyinf T$ (Lemma~\ref{LemmaTEqBdyMaps}), and  using the same cancellation constant $C$ that was used for finite paths, the following also hold: 
\begin{enumeratecontinue}
\item\label{ItemBCCRays} For any ray $[p,\eta) \subset S$ we have 
$$[f(p),\bdyinf f(\eta)) \subset f[p,\eta) \subset N_C[f(p),\bdyinf f(\eta))
$$
and $\bdyinf f(\xi)$ is the unique point in $\bdyinf T$ satisfying this condition.
\item\label{ItemBCCLines} For any line $(\xi,\eta) \subset S$ we have 
$$(\bdyinf f(\xi), \bdyinf f(\eta)) \subset f(\xi,\eta) \subset N_C(\bdyinf f(\xi), \bdyinf f(\eta))
$$
and $\bdyinf f(\xi),\bdyinf f(\eta)$ is the unique pair of points in $\bdyinf T$ satisfying this condition.
\end{enumeratecontinue}
\end{lemma}

\begin{proof} As stated above, the twisted equivariant case reduces to the equivariant case. We assume henceforth that $f$ is equivariant. 

We may also assume that there exists a factorization $f \from S \xrightarrow{g} S' \xrightarrow{h} T$ such that $g$ is a collapse map and $h$ is a foldable map. To see why, from the existence of an equivariant map $S \mapsto T$ it follows that $\FS(S) \sqsubset \FS(T)$ and hence Proposition~\ref{PropFoldableProps}~\pref{ItemFoldableExists} applies, from which we obtain another equivariant map $f' \from S \mapsto T$ having a factorization as just described. Since the distances $d(f(x),f'(x))$ are uniformly bounded over $x \in S$, conclusions~\pref{ItemBCCPaths}, \pref{ItemBCCRays} and~\pref{ItemBCCLines} for the two maps $f$ and $f'$ are equivalent (albeit with different cancellation constants), and so we may replace $f$ with $f'$. 

We may also assume that $g$ restricts to an isometry on each uncollapsed edge of $S$ and that $h$ restricts to an isometry on each edge of $S'$, for the following reasons. If this does not already hold, we replace the metric on each edge of $S'$ using the pullback metric via $h$; and then we replace the metric on each uncollapsed edge of $S$ using the pullback metric via $g$. This operation does not change any distances $d(f(x),f(y))$, and so conclusions~\pref{ItemBCCPaths}, \pref{ItemBCCRays} and~\pref{ItemBCCLines} for the two metrics on $S$ are equivalent.

Item~\pref{ItemBCCPaths} with cancellation constant $C = \Length(S')-\Length(T)$ is now proved exactly as in \cite[Section~3]{BFH:laminations} and~\cite[Lemma 1.8]{Lyman:CT}. First factor $h$ into Stallings folds, thereby factoring the map $f$ as 
$$\xymatrix{
S \ar[r]_<<<<<{g} \ar@/^2pc/[rrrrr]^{f}
& S' = S_0 \ar[r]_>>>>>{h_1} & S_1 \ar[r]_<<<<{h_2} & \cdots  \ar[r]_<<<<{h_{K-1}} & S_{K-1} \ar[r]_<<<<<{h_K} & S_K = T
}$$
Next, pull metrics back from $S_K$ so that each map $h_k$ restricts to an isometry on each edge of~$S_{k-1}$. Next, prove that the collapse map $g$ has cancellation constant $0$. Next, prove that each fold map $h_k \from S_{K-1} \to S_K$ has cancellation constant equal to $\Length(S_{K-1}) - \Length(S_K)$, which is equal to the common length of a pair of segments that are folded together by $h_k$. Finally, prove additivity of cancellation constants along the factorization of $f$, giving a cancellation constant for $f$ equal to $0 + \sum_{k=1}^K (\Length(S_{k-1}) - \Length(S_{k)} = \Length(S') - \Length(T)$.

\smallskip

What's left is to prove \pref{ItemBCCRays} and~\pref{ItemBCCLines} assuming that $f$ is equivariant and \hbox{$\FS(S)=\FS(T)$}. These proofs are standard (see e.g.\ \cite[Lemma 1.8]{\LymanCTTag}), but we shall spell out details that will be re-used in the sequel in a context where things are somewhat less standard (see \cite[Lemma 3.1 and Lemma/Definition 3.4]{\RelFSThreeTag}).

For the proof of~\pref{ItemBCCRays} we use that $f \from S \to T$ is a quasi-isometry of Gromov hyperbolic spaces that extends equivariantly to a map of Gromov bordifications $\bar f \from \overline S \to \overline T$ which then restricts to the unique equivariant homeomorphism of Gromov boundaries (a.k.a.\ end spaces) $\bdy f \from \bdyinf S \to \bdyinf T$ (\cite{GuirardelLevitt:DefSpaces} and see Lemma~\ref{LemmaTEqBdyMaps}). Also, when $f$ is applied to a geodesic ray $[p,\eta) \subset S$, the resulting continuous quasigeodesic ray has finite Hausdorff distance from the geodesic ray $[f(p),\bdyinf f(\eta))$, and so we have the following conclusion:
\begin{description}
\item[$(*)$] There exists a sequence $w_0,w_1,w_2,w_3,\ldots$ in $f[p,\eta)$ that converges in $\overline T$ to $\bdyinf f(\eta)$.
\end{description}
We can of course just choose $w_i=f(v_i)$ where $p=v_0,v_1,v_2,\ldots$ is the ordered sequence of vertices along $[p,\xi)$. But for use in the sequel \RelFSThree\ we couch the rest of the proof of~\pref{ItemBCCRays} so as to apply using any sequence $(w_i)$ that witnesses~$(*)$. 

In $T$, let $q_0 = f(p)$, and for $i \ge 1$ let $q_i$ be the point on the ray $[f(p),\bdyinf f(\eta))$ that is closest to the point $w_i$, hence $q_i$ also converges in $\overline T$ to $\bdyinf f(\eta)$. It follows that $\bigcup_{i \ge 1}[q_0,q_i]=[q_0,\bdyinf f(\eta))$. Starting with $p_0=p$, for each $i \ge 1$ we may choose $p_i \in [p,\eta)$ such that $f(p_i)=q_i$: if $q_i=q_0$ we choose $p_i=p$; whereas if $q_i \ne q_0$, in $\overline T$ the two points $q_0 = f(p)$ and $\bdyinf f(\eta)$ are in separate path components of the subspace $\overline T - \{q_i\}$, and the path connected subset $f[p,\eta] \subset \overline T$ contains both of those points, hence $q_i \in f[p,\eta)$. Since $q_i$ converges in $\overline T$ to $\bdyinf f(\eta)$, it follows that the sequence $p_i$ has no bounded subsequences, and therefore $p_i$ converges in $\overline S$ to $\eta$. It follows that $\bigcup_{i \ge 1} [p_0,p_i] = [p_0,\eta)$. From the first inclusion of~\pref{ItemBCCPaths} we obtain $[q_0,q_i] \subset f[p_0,p_i]$ for all $i \ge 1$ and hence 
$$[f(p),\bdyinf f(\eta)) = [q_0,\bdyinf f(\eta)) = \bigcup_{i \ge 1} [q_0,q_i] \subset \bigcup_{i \ge 1} f[p_0,p_i] = f \biggl( \,\bigcup_{i \ge 1} [p_0,p_i] \biggr) = f[p,\eta)
$$
In the other direction, applying the second inclusion of~\pref{ItemBCCPaths} we obtain $f[p_0,p_i] \subset N_C[q_0,q_i]$ for all $i \ge 1$ and hence
\begin{align*}
f[p,\eta) = \bigcup_{i \ge 1}  f[p_0,p_i] &\subset \bigcup_{i \ge 1} N_C [f(p_0),f(p_i)] = \bigcup_{i \ge 1} N_C [q_0,q_i] = N_C  \bigcup_{i \ge 1} [q_0,q_i] = N_C[f(p),\bdyinf f(\eta))
\end{align*}
To prove uniqueness of $\bdyinf f(\xi)$ in conclusion~\pref{ItemBCCRays}, if $\zeta \in \bdyinf T$ and if \pref{ItemBCCRays} is satisfied with $[f(v),\zeta)$ in place of $[f(v),\bdyinf f(\xi))$ then in $T$ the two rays $[f(v),\zeta)$ and $[f(v),\bdyinf f(\eta))$ both have finite Hausdorff distance from the set $f[v,\eta)$, hence they have finite Hausdorff distance from each other, hence $\zeta = \bdyinf f(\eta)$. 

The proof of~\pref{ItemBCCLines} follows similar lines. To start, pick any bi-infinite sequence $(w_i)_{i \in \Z}$ in \hbox{$f(\xi,\eta)\subset T$} so that in $\overline T$ we have $w_i \to \bdyinf f(\xi)$ as $i \to -\infty$, and $w_i  \to \bdyinf f(\eta)$ as $i \to +\infty$. Next, let $q_i$ be the point on the line $(\bdyinf f(\xi),\bdyinf f(\eta)) \subset T$ which is closest to $f(p_i)$, and so $q_i \to \bdyinf f(\xi)$ as $i \to -\infty$ and $q_i \to \bdyinf f(\eta)$ as $i \to +\infty$. Next, choose $p_i \in (\xi,\eta)$ so that $f(p_i)=q_i$; this choice is possible since $q_i$ separates $\bdyinf f(\xi)$ from $\bdyinf f(\eta)$ in $\overline T$. Using the first inclusion of~\pref{ItemBCCPaths} we have $[q_{-i},q_i] \subset f[p_{-i},p_i]$ for all $i \ge 1$ and hence $(\bdyinf f(\xi),\bdyinf f(\eta)) \subset f(\xi,\eta)$. Using the second inclusion of~\pref{ItemBCCPaths} we have $f[p_{-i},p_i] \subset N_C [f(q_{-i},q_i)]$ for all $i \ge 1$ and hence $f(\xi,\eta) \subset N_C(\bdyinf f(\xi),\bdyinf f(\eta))$.
\end{proof}

\subsection{Proof of the upper bound (application of Proposition~\ref{PropAxisInFS})}
\label{SectionProofUpperBound}
Recall (from the opening paragraph of the introduction) that for any self-isometry $f \from X \to X$ of a metric space $X$ its \emph{asymptotic translation length} is defined by the limit formula
$$\tau_f = \lim_{n \to \infty} \frac{d(f^n(x),x)}{n}
$$
This limit exists, is independent of $x \in X$, and depends only on the conjugacy class of $f$ in the group $\Isom(X)$. If in addition $X$ is a Gromov hyperbolic geodesic metric space, for example if $X = \FS(\Gamma;\A)$, then the inequality $\tau_f > 0$ is equivalent to the statement that for some (any) $p \in X$ the orbit map $\Z \mapsto X$ defined by $n \mapsto f^n(p)$ is a quasi-isometric embedding; this is the very meaning of the \emph{loxodromic} property of an isometry of~$X$.

We prove \emph{The Upper Bound} using the constant $B = \Delta/\log 2$, where $\Delta$ is the constant from the ``Two Over All'' Theorem, which itself depends only on $\corank(\A)$ and $\abs{\A}$.  

Let $\phi \in \Out(\Gamma;\A)$ and $\Lambda \in \L(\phi)$ be a filling lamination. 
\emph{The Upper Bound} clearly holds if $\tau_\phi=0$.\footnote{From the lower bound of Theorem A, it follows that $\tau_\phi=0$ is impossible when a filling lamination exists.} Henceforth we may therefore assume that
$$\tau_\phi > 0
$$
Since $\Lambda$ fills, by applying Proposition~\ref{PropAxisInFS} we obtain an EG-aperiodic fold axis for $\phi$, which we express using the notation of Definition~\ref{DefFoldAxes}, such that the filling expansion factor $\lambda_\phi$ of $\phi$ is equal to the expansion factor of this axis. Let $p$ be the period of this fold axis, and let $F$ denote the first return map at $T_0$, namely the composition 
$$F : T_0 \xrightarrow{f_1} \cdots \xrightarrow{f_p} T_p = T_0 \cdot \Phi \xrightarrow{h^p_0} T_0
$$
Choosing enumerated orbit representatives according to the \emph{Edge Orbit Periodicity} criterion of Section~\ref{SectionFoldAxes}, and applying Lemma~\ref{LemmaTransMatSame}, for all $n \ge 1$ we have an equation of transition matrices $(MF)^n=M f^0_{np}$ where $M f^0_{np}$ is the transition matrix of the following portion of the fold axis:
$$f^0_{np} \from T_0 \xrightarrow{f_1} T_1 \xrightarrow{f_2} \cdots T_2 \xrightarrow{f_3} \cdots \xrightarrow{f_{pn}} T_{pn} = T_0 \cdot \phi^n
$$
and therefore $\lambda_\phi$ is the Perron-Frobenius eigenvalue of $MF$.

Knowing that $\lim_{n \to \infty} \frac{1}{n} d(T_0,T_{pn}) = \tau_\phi > 0$, we may assume that $n$ is sufficiently large so that $d(T_0,T_{pn}) = d(T_0,T_0 \cdot \phi^n) \ge \Delta$, and so there exists a unique integer $Q_n \ge 1$ such that
$$(*) \qquad Q_n \Delta \le d(T_0,T_{pn}) \le (Q_n+1) \Delta
$$
From the right hand inequality of $(*)$ it follows that
$$(**) \qquad Q_n \ge \frac{d(T_0,T_{pn})}{\Delta} - 1
$$
Applying the \emph{Two Over All Theorem} together with the left hand inequality of $(*)$ (and using that $Q_n \ge 1$) there exist natural edges $E_1,E_2 \subset T_{0}$ in two different $\Gamma$-orbits such that for each natural edge $E$ of $T_{pn}$, each of the paths $f^0_{pn}(E_1)$ and $f^0_{pn}(E_2)$ crosses at least $2^{Q_n-1}$ natural edges in the $\Gamma$-orbit of $E$. Focussing on $E_1$, it follows that in the matrix $(MF)^{n} = M f^0_{np}$, letting $N_n$ denote the norm of the column of $(MF)^{n}$ corresponding to $E_1$, we have
$$N_n \ge 2^{Q_n-1}
$$
Applying the Perron-Frobenius Theorem to $MF$ it follows that
\begin{align*}
\lambda_\phi &= \lim_{n \to \infty} \sqrt[n]{N_n} \\
\log\lambda_\phi  &= \lim_{n \to \infty} \frac{\log N_n}{n} \ge \log 2 \cdot \limsup_{n \to \infty} \frac{(Q_n-1) }{n} \\
\intertext{Applying $(**)$ we get}
\log \lambda_\phi    &\ge \log 2 \cdot \limsup_{n \to \infty} \frac{1}{n}  
\biggl(
\frac{d(T_0,T_{pn})}{\Delta} - 2\biggr)  \\
    &\ge \frac{\log 2}{\Delta} \cdot \limsup_{n \to \infty}  \frac{1}{n} \cdot d(T_0,T_{pn})  \\
    &= \frac{\log 2}{\Delta} \cdot \tau_\phi \\
\tau_\phi &\le B \, \lambda_\phi \quadtext{with} B = \frac{\Delta}{\log 2}
\end{align*}

\subsection{Constructing fold axes in $\FS(\Gamma;\A)$}
\label{SectionAxisConstruction}

%

In this section we employ relative train track theory to prove Proposition~\ref{PropAxisInFS} (which was itself already applied in Section~\ref{SectionProofUpperBound} to prove the upper bound of Theorem~A), and to prove Theorem~C. We also formulate and prove Proposition~\ref{PropAxisInFSEmbellished}, a stronger version of Proposition~\ref{PropAxisInFS} which incorporates more information about attracting laminations, and will be applied in Part~III to prove the implication \pref{ItemFillingLamExists}$\implies$\pref{ItemActLox} of Theorem~B. 

The original construction of relative train track representatives for elements of $\Out(F_n)$, given by Bestvina and Handel \BH, was first generalized to $\Out(\Gamma)$ for groups $\Gamma$ of finite Kurosh rank by Collins and Turner \cite{CollinsTurner:efficient}, and was generalized to our still more general current setting of $\Out(\Gamma;\A)$ in more recent work of Lyman \LymanRTT, recounted here in Section~\ref{SectionRTT}, Theorem~\ref{TheoremLyman}. 

The tight relationship between attracting laminations and relative train track representatives, originally described for $\Out(F_n)$ by Bestvina, Feighn and Handel \BookOne, was generalized to $\Out(\Gamma;\A)$ by Lyman in \LymanCT\ (recounted here in Section~\ref{SectionLamStratumCorr}, Theorem~\ref{TheoremStrataLamCorr}).

Given $\phi \in \Out(\Gamma;\A)$, consider a $\phi$-invariant, nonfull free factor system~$\B$ satisfying $\A \sqsubset \B$ which is \emph{maximal} subject to those constraints (with respect to the partial order $\sqsubset$). In that case $\phi$ is irreducible relative to~$\B$, and the representative $f \from T \to T$ of $\phi$ whose existence is asserted in Proposition~\ref{PropAxisInFS}~\pref{ItemFoldAxisExists} is an irreducible \emph{train track} representative of $\phi$ \emph{with respect to~$\B$}. If that was all we wanted then we could apply constructions of irreducible train track representatives found in \cite{FrancavigliaMartino:TrainTracks,Meinert:TrainTrackMaps}. But under the additional assumption that $\phi$ has a filling lamination, in order to obtain an expansion factor $\lambda_\phi$ that is independent of the choice of $\B$ and $f$ (as asserted in Proposition~\ref{PropAxisInFS}~\pref{ItemExpFactWD}), we need a further result of \cite{\LymanCTTag} regarding a well-defined expansion factor associated to every attracting lamination of $\phi$, namely Theorem~\ref{TheoremLamExpFac} in Section~\ref{SectionLamStratumCorr}.

\subsubsection{Relative train track representatives}
\label{SectionRTT}

We follow Lyman's treatment \cite{Lyman:RTT,Lyman:CT} with the following change: rather than using graphs-of-groups language which is how most of \cite{Lyman:RTT} is expressed, we use groups-acting-on-trees language. The translation between these languages is a straightforward application of Bass-Serre theory, although we will have to avoid certain clashes of terminology which arise; see the \emph{Remark on ``invariant subforests''} just below.

\subparagraph{Tight representatives and topological representatives.} Recall from Section~\ref{SectionRFSC} the concept of a tight map between graphs. Given $\phi \in \Out(\Gamma;\A)$, a \emph{tight representative} of $\phi$ is a tight map $f \from T \to T$ defined on a Grushko free splitting $T$ rel~$\A$ such that $f$ is $\Phi$-twisted equivariant for some $\Phi \in \Aut(\Gamma;\A)$ that represents~$\phi$. Using the definition of ``tight'' and the fact that $T$ is a tree, it follows that $f$ takes each vertex to a vertex, and for each edge $e \subset T$ the restriction $f \restrict e$ is either a trivial path taking constant value at some vertex, or an injective edge path. 

A \emph{topological representative} of $\phi$ is a tight representative $f \from T \to T$ which is injective on each edge. The induced direction maps $D_v f \from D_v T \to D_{f(v)} T$ are therefore defined for all $v \in \VT$. Given $v \in \VT$, a nondegenerate turn $\{e,e'\} \subset D_v T$ is \emph{illegal} if some iterate $\{D_v f^n(e), D_v f^n(e')\} \subset D_{f^n(v)} T$ is degenerate ($n \ge 1$); otherwise $\{e,e'\}$ is \emph{legal}. Note that every iterated image of a legal turn is nonfoldable. 

In the context of a Grushko free splitting $T$ of $\Gamma$ rel~$\A$, given a $\Gamma$-invariant subforest $\tau \subset T$ such that each component of $\tau$ contains at least one edge, to say that $\tau$ is \emph{collapsible} means that the collapsed tree $T/\tau$ is also a Grushko free splitting; equivalently, the stabilizer of each component of $\tau$ is either trivial or equal to the stabilizer of some vertex of $T$ contained in $\tau$. 

Consider a topological representative $f \from T \to T$ of~$\phi$. An \emph{invariant subforest} of $f \from T \to T$ is a subforest $\tau \subset T$ which is both $\Gamma$-invariant and $f$-invariant. 

\smallskip
\emph{Remark on ``invariant subforests''.} The terminologies of \emph{invariant subgraphs} and \emph{invariant subforests} refer to two distinct concepts in \BH\ and in the more general setting of \LymanRTT. But in our current setting there is no distinction between \emph{subgraph} and \emph{subforest}, and we are thereby forced to make a terminological choice. Given a Grushko free splitting $T$ of $\Gamma$ rel~$\A$ with quotient graph of groups $T/\Gamma$, to say that $f \from T \to T$ is topological representative in our current setting is equivalent to saying that the induced map $\hat f \from T/\Gamma \to T/\Gamma$ is a topological representative in the setting of \LymanRTT. Given also a $\Gamma$-invariant subforest $\tau \subset T$ with quotient $\hat\tau \subset T/\Gamma$, we have the following equivalences: $\tau \subset T$ is an invariant subforest of $f$ (in our setting) if and only if $\hat\tau \subset T/\Gamma$ is an invariant subgraph of $\hat f$ (in \LymanRTT); and $\tau$ is a collapsible invariant subforest of $f$ (in our setting) if and only if $\hat\tau$ is an invariant subforest of $\hat f$ (in \LymanRTT). 

\subparagraph{Collapsing a maximal pretrivial forest.} In a tight representative $f \from T \to T$ of $\phi$, for each edge $e$ of $T$ the path $f \restrict e$ is either injective or it is a trivial path. A standard construction shows that for any $\phi \in \Out(\Gamma;\A)$, any automorphism $\Phi \in \Aut(\Gamma;\A)$ representing~$\phi$, and any Grushko free splitting $T$ of $\Gamma$ rel~$\A$, a $\Phi$-twisted equivariant tight representative $f \from T \to T$ of~$\phi$ exists, as follows. The image of every vertex $v \in T$ with nontrivial stabilizer is uniquely determined by requiring $\Stab(f(v))=\Phi(\Stab(v))$. For every orbit of vertices with trivial stabilizer, choosing one representative $v$ and its image $f(v)$ arbitrarily, one uses $\Phi$-twisted equivariance to extend $f$ over the whole orbit of $v$, defining $f(\gamma \cdot v) = \Phi(\gamma) \cdot f(v)$. Having defined $f$ on the whole vertex set, the image of every edge is determined as the unique path connecting the images of its endpoints. 

From any tight representative $f \from T \to T$ of $\phi$ one obtains a topological representative $g \from S \to S$ by the operation of \emph{collapsing the maximal pretrivial subforest} $\sigma \subset S$, as follows. That subforest~$\sigma$ is the union of all edges $e \subset T$ such that some iterated image $f^n(e)$ is contained in the vertex set of $S$. It follows that $\sigma$ is collapsible, resulting in a Grushko free splitting $T = S / \sigma$. Letting $\hat g \from T \to T$ be the map that is induced by~$f$, for each edge $e \subset E$ the restriction $\hat g \restrict e$ is a hare's path that crosses at least one edge. By equivariantly tightening $\hat g$ on each edge, one obtains the desired topological representative $g \from T \to T$.

\subparagraph{Filtrations and strata.} 
%
Given $\phi \in \Out(\Gamma;\A)$, a topological representative $f \from T \to T$ of $\phi$, and enumerated edge orbit representatives $e_1,\ldots,e_K$ of $T$. The associated \emph{transition matrix} is the $K \times K$ matrix $M$ whose entry $M_{ij}$ counts the number of times that injective edge path $f(e_j)$ crosses translates of $e_i$. This matrix is unchanged if each $e_i$ is replaced by another edge in the same orbit; and if the enumeration indices are permuted then $M$ is conjugated by the associated permutation matrix.

An \emph{invariant filtration} for $f$ is a strictly nested sequence of invariant forests $\emptyset = T_0 \subsetneq T_1 \subsetneq \cdots \subsetneq T_R=T$, the \emph{length} of the filtration is~$R$. The \emph{strata} of this filtration are the $\Gamma$-invariant subforests $H_r = T_r \setminus T_{r-1}$ (strata need not be $f$-invariant). The associated \emph{filtration by free factor systems} is the nested sequence of $\phi$-invariant free factor systems $\A = \F{[T_0]} \sqsubset \F{[T_1]} \sqsubset \cdots \sqsubset \F{[T_R]} = \{[\Gamma]\}$ (this sequence need not be strictly nested). Choose enumerated edge orbit representatives of $T$ so that each edge orbit of $H_{r-1}$ comes before each edge orbit of $H_{r}$ ($1 \le r \le R$). With this choice, the transition matrix $M=Mf$ has a block upper triangular structure with one diagonal block $M_r$ per stratum $H_r$. 
To say that the invariant filtration is \emph{maximal} means that for each 
%
%
$r=1,\ldots,R$ one of the following occurs:
\begin{description}
\item[$H_r$ is a zero stratum,] meaning that $M_r$ is the zero matrix; or
\item[$H_r$ is an irreducible stratum,] meaning $M_r$ is an irreducible matrix. Denoting the Perron-Frobenius eigenvalue of $M_r$ by $\lambda_r \ge 1$, one of the following occurs:

\textbf{$H_r$ is an NEG stratum,} meaning that $\lambda_r = 1$; or

\textbf{$H_r$ is an EG stratum,} meaning that $\lambda_r > 1$, in which case $\lambda_r$ is called the \emph{expansion factor} of $H_r$.

Furthermore, to say that an irreducible stratum $H_r$ is \emph{aperiodic} means that $M_r$ is an aperiodic matrix.
\end{description} 
Every topological representative has a maximal filtration. The filtration is not unique, and the ordering of the EG strata is not unique, but the \emph{set} of EG strata is uniquely determined independent of the choice of maximal filtration. We will use the terminology \emph{filtered topological representative} to refer to a topological representative $f \from T \to T$ equipped with a maximal filtration as denoted above.

Given a path $\gamma$ in some filtration element $T_r$, to say that $\gamma$ is \emph{$H_r$-legal (with respect to $f$)} means that for each turn $\{e,e'\}$ taken by $\gamma$, if $e,e' \subset H_r$ then $\{e,e'\}$ is legal. 

\subparagraph{Relative train track representatives.} Given $\phi \in \Out(\Gamma;\A)$ and a filtered topological representative $f \from T \to T$ of $\phi$, to say that $f$ is a  \emph{relative train track representative} (with respect to~$\A$) means that for each EG stratum $H_r$, the following properties hold:
\begin{description}
\item[(RTT-i)] For each $e \in \EH_r$ the path $f(e)$ begins and ends with edges of $H_r$.
\item[(RTT-ii)] For each nontrivial path $\gamma$ in $T_{r-1}$ with endpoints $\bdy\gamma = \{v,w\} \subset H_r$ we have $f(v) \ne f(w)$; equivalently, $f_\sharp(\gamma)$ is nontrivial.
\item[(RTT-iii)] For any $H_r$-legal path $\gamma \subset H_r$, the image path $f(\gamma)$ is $H_r$-legal.
\end{description}

\noindent
\emph{Remark 1:} The argument of \cite[Lemma 5.8]{\BHTag}, suitably adapted, shows that if (RTT-i) and (RTT-ii) both hold then the version of (RTT-iii) given here is equivalent to the statement ``For any $H_r$-legal path $\gamma \subset T_r$, the straightened image $f_\sharp(\gamma)$ is $H_r$-legal''.

\smallskip
\noindent
\emph{Remark 2:} If $H_r$ is an EG stratum of a relative train track representative $f \from T \to T$ then for every edge $e \in H_r$ there exists an $H_r$-legal path $\gamma$ beginning and ending with edges of $H_r$ such that some translate of $e$ is contained in the interior of $\gamma$. In fact, one can arrange that the number of translates of $e$ in the interior of $\gamma$ is $\ge m$ for any given integer $m \ge 1$: choose any edge $e' \subset H_r$, and use the EG property to choose an exponent $k$ such that $\gamma = f^k(e')$ crosses at least $m+2$ translates of $e'$; all but at most the first and last such crossings are contained in the interior of $\gamma$.


\smallskip


Lyman's construction of relative train track representatives, expressed in graphs--of--groups language, is found in \cite[Theorem 1.1]{\LymanRTTTag}, and her followup work \LymanCT\ adds extra information allowing free factor supports of filtration elements to be specified appropriately. Here is an expression of these results in our current language of Bass-Serre trees: 

\begin{theorem}[\protect{\cite[Proposition 1.3]{\LymanCTTag}}]
\label{TheoremLyman}
For every free factor system $\A$ of $\Gamma$, every $\phi \in \Out(\Gamma;\A)$, and every nested sequence $\B_1 \sqsubset \cdots \sqsubset \B_K$ of $\phi$-invariant free factor systems rel~$\A$, there exists a relative train track representative $f \from T \to T$ of $\phi$ rel~$\A$ such that for each $\B_k$ there is a filtration element $T_r$ such that $\B_k = \F[T_r]$. \qed
\end{theorem}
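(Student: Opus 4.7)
The strategy is to adapt the Bestvina--Handel algorithm \BH\ to the relative setting of $\Out(\Gamma;\A)$, developed first by Collins and Turner \cite{CollinsTurner:efficient} for groups of finite Kurosh rank and extended by Lyman \LymanRTT\ to our context, with the extra requirement that each element of the prescribed chain $\B_1 \sqsubset \cdots \sqsubset \B_K$ is realized by a filtration element at the end.

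First, I would construct an initial filtered topological representative that already realizes $\B_1, \ldots, \B_K$ as the free factor supports of certain invariant subforests. By iterated application of the visibility result \cite[Lemma 3.2 (3)]{\RelFSHypTag} one can choose a Grushko free splitting $T$ of $\Gamma$ rel~$\A$ together with nested $\Gamma$-invariant subforests $\tau_1 \subset \cdots \subset \tau_K \subset T$ satisfying $\B_k = \FF\tau_k$. Fix $\Phi \in \Aut(\Gamma;\A)$ representing $\phi$ and build a $\Phi$-twisted equivariant tight self-map of $T$ by the vertex-then-edge prescription recalled in Section~\ref{SectionRTT}. After collapsing the maximal pretrivial subforest, one obtains a topological representative $f \from T \to T$. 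Since each $\B_k$ is $\phi$-invariant, one may arrange (possibly after a further collapse and re-expansion, or after passing to a refined free splitting on which $f$ preserves the $\tau_k$) that each $\tau_k$ is $f$-invariant, and then refine $\{\tau_k\}$ to a maximal invariant filtration whose designated levels still carry the free factor supports $\B_k$.

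Next, I would iteratively apply a sequence of Bestvina--Handel style moves aimed at enforcing conditions (RTT-i), (RTT-ii), (RTT-iii) on each EG stratum: subdivision at $f$-preimages of vertices, collapsing of auxiliary invariant subforests that contribute nothing to stratum growth, folding of illegal turns, and retightening after each fold. When (RTT-i) fails one folds off the initial or terminal portion of $f(e)$ that lies in $T_{r-1}$; when (RTT-ii) fails one collapses the pretrivial configuration produced by an unintended vertex identification; when (RTT-iii) fails one folds at the newly created illegal turn. Crucially, every move is performed within the block of edges lying strictly between two consecutive designated levels $\tau_{k-1}$ and $\tau_k$, so that the equalities $\B_k = \FFT_{r_k}$ persist throughout.

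The main obstacle is termination, and this is where the relative setting adds genuine content beyond \BH. One needs a well-ordered complexity --- classically the tuple of Perron--Frobenius eigenvalues of the EG strata ordered from the top, with ties broken by edge counts and pretrivial-forest size --- which strictly decreases under each move while remaining bounded below on representatives satisfying the RTT conditions. Two subtleties must be handled: first, the presence of nontrivial vertex stabilizers coming from $\A$ restricts which vertex identifications and folds are admissible (one is only allowed to stay within the category of Grushko free splittings rel~$\A$, using collapsibility as defined in Section~\ref{SectionRTT}); second, the constraint that the levels $\tau_k$ be preserved must be reconciled with each move, which is achieved by confining each move to a single block. Once these two points are verified --- and this is precisely the content of \cite[Theorem~1.1]{\LymanRTTTag} together with the block-by-block enhancement recorded in \cite[Proposition~1.3]{\LymanCTTag} --- the conclusion follows.
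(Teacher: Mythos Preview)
The paper does not prove this statement at all: Theorem~\ref{TheoremLyman} is stated with a terminal \qed\ and attributed directly to \cite[Proposition~1.3]{\LymanCTTag}, so there is no ``paper's own proof'' to compare against. Your proposal is a reasonable high-level sketch of the Bestvina--Handel algorithm in the relative setting and correctly identifies the two delicate points (admissibility of moves in the presence of nontrivial vertex groups, and preservation of the prescribed filtration levels), but in the end you too defer the actual work to \cite[Theorem~1.1]{\LymanRTTTag} and \cite[Proposition~1.3]{\LymanCTTag}. So functionally your write-up and the paper's treatment agree: both take Lyman's result as given. If you intend this as a genuine proof rather than an outline, the termination argument and the verification that each move stays within a block while preserving the $\B_k$ would need to be carried out in full, not just asserted.
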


The most common way in which we shall apply Theorem~\ref{TheoremLyman} in this work is to choose $\B$ to be a maximal, non-filling, $\phi$-invariant free factor system rel~$\A$, where ``maximality'' of $\B$ refers to the partial ordering~$\sqsubset$. Applying Theorem~\ref{TheoremLyman}, there exists a relative train track representative $f \from T \to T$ of $\phi$ with top filtration element $T_R$ having \emph{some} filtration element $T_r$ such that $\F[T_r]=\B$. But since $\B=\F[T_r] \sqsubset \F[T_{R-1}]$, and since $\F[T_{R-1}]$ is a non-filling and $\phi$-invariant free factor system rel~$\A$, by maximality of $\B$ it follows that $\B=\F[T_{R-1}]$. We record this for multiple later uses:

\begin{corollary}\label{CorollaryMaximal}
For every free factor system $\A$ of $\Gamma$, every $\phi \in \Out(\Gamma;\A)$, and every maximal, non-filling, $\phi$-invariant free factor system~$\B$ rel~$\A$, there exists a relative train track representative $f \from T \to T$ of $\phi$ with penultimate filtration element $T_{R-1}$ such that $\B= \F[T_{R-1}]$. \qed
\end{corollary}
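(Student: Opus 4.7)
The plan is to invoke Theorem~\ref{TheoremLyman} directly, as foreshadowed in the paragraph preceding the corollary. Specifically, I would apply it to the single-term nested sequence $\B_1 = \B$, thereby obtaining a relative train track representative $f \from T \to T$ of $\phi$ rel~$\A$ with a maximal filtration $\emptyset = T_0 \subsetneq T_1 \subsetneq \cdots \subsetneq T_R = T$ such that some filtration element $T_r$ satisfies $\FFT_r = \B$.

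The filtration induces a (weakly) nested sequence $\FFT_0 \sqsubset \cdots \sqsubset \FFT_R = \{[\Gamma]\}$ of free factor systems rel~$\A$, so in particular $\B = \FFT_r \sqsubset \FFT_{R-1}$. It then remains to verify that $\FFT_{R-1}$ is itself non-full and $\phi$-invariant, for maximality of $\B$ will then force the equality $\B = \FFT_{R-1}$. Non-fullness is immediate from the strict nesting $T_{R-1} \subsetneq T$ together with Definition~\ref{DefinitionFFS}, which identifies $\FFT_{R-1} = \FFS{(T/T_{R-1})}$: the collapse $T/T_{R-1}$ is a genuine free splitting of $\Gamma$ (Lemma~\ref{LemmaCollapseFFNest}), and hence its $\FFS{}$ is nonfull by definition. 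For $\phi$-invariance, I would note that since $T_{R-1}$ is simultaneously $\Gamma$-invariant and $f$-invariant, the map $f$ descends to a $\Phi$-twisted equivariant self-map $\bar f \from T/T_{R-1} \to T/T_{R-1}$ representing $\phi$; hence the conjugacy classes of nontrivial vertex stabilizers, which form precisely $\FFS{(T/T_{R-1})} = \FFT_{R-1}$, are preserved by $\phi$.

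There is no substantive obstacle; the argument is essentially just the observation that maximality of $\B$ pins down the penultimate filtration element in the chain produced by Theorem~\ref{TheoremLyman}, extracting the corollary by a two-line post-processing of that theorem. The only steps warranting even minor care are the non-fullness and $\phi$-invariance of $\FFT_{R-1}$, and both fall out directly from the filtration structure provided by the relative train track representative.
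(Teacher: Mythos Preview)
Your proposal is correct and follows essentially the same argument as the paper's own proof, which appears in the paragraph immediately preceding the corollary: apply Theorem~\ref{TheoremLyman} to obtain some $T_r$ with $\FFT_r=\B$, note $\B \sqsubset \FFT_{R-1}$, and use maximality of $\B$ together with the non-fullness and $\phi$-invariance of $\FFT_{R-1}$ to conclude equality. You supply slightly more detail than the paper does for the non-fullness and $\phi$-invariance of $\FFT_{R-1}$, but these facts are already recorded earlier in the paper under the heading \emph{Filtrations and strata}, so the extra justification is harmless but not strictly needed.
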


\subsubsection{The correspondence between attracting laminations and EG strata.} 
\label{SectionLamStratumCorr}
Given $\phi \in \Out(\Gamma;\A)$, there is a close relation between the set of attracting laminations $\L(\phi)$ and the EG strata of relative train track representatives of $\phi$. For the classical case of $\Out(F_n)$ this relation is laid out in \cite[Definition 3.1.7 -- Lemma 3.1.15]{\BookOneTag}. For the general case of $\Out(\Gamma;\A)$ see \cite[Lemmas 4.3 -- 4.8]{\LymanCTTag}, summarized in Theorem~\ref{TheoremStrataLamCorr} below.

Consider $\phi \in \Out(F_n)$ and a relative train track representative $f \from T \to T$ with filtration $T_1 \subset \cdots \subset T_R = T$, strata $H_r = T_r \setminus T_{r-1}$, and transition matrices $M_r$. By definition, if $H_r$ is an EG stratum then the matrix $M_r$ is irreducible; and although $H_r$ and $M_r$ need not be aperiodic, if they are then we say in addition that that $H_r$ is an \emph{EG-aperiodic stratum}. If each EG stratum is aperiodic then we say that $f$ is \emph{EG-aperiodic}. From the general theory of irreducible matrices (see e.g.\ \cite{Senata:matrices}) it follows that for each EG stratum $H_r$ there is some least integer $p_r$ such that, after appropriately re-indexing the edges of $H_r$, the matrix $M_r$ has a $p_r \times p_r$ block structure so that the nonzero blocks of $M_r$ itself are the superdiagonal blocks and the lower left block, and so that the corresponding block diagonal structure of $(M_r)^{p_r}$ has EG-aperiodic blocks along the diagonal. Using this block diagonal structure we may partition $H_r$ into pairwise edge disjoint subgraphs called the \emph{aperiodic substrata of $H_r$}, denoted $H_r = H_{r,1} \union\cdots\union H_{r,p_r}$ such that 
$$f(H_{r,j}) \subset T_{r-1} \union H_{r,j+1} \quad\text{(for each $j=1,\ldots,p_r$ modulo $p_r$)}
$$
Letting $p$ be the least common multiple of the $p_r$'s as $H_r$ varies over the EG strata of $f$, the straightened iterate $f^p_\sharp \from T \to T$ is an EG-aperiodic relative train track representative of $\phi^p$ such that the EG strata of $f^p_\sharp$ are precisely the aperiodic substrata $H_{r,i}$ of $f$. We shall refer to $f^p_\sharp$ as the \emph{EG-aperiodic iterate} of~$f$. 

For each EG-aperiodic substratum $H_{r,j}$ of $f$, and for each $k \ge 0$, the \emph{$k$-tiles} of $H_{r,j}$ (with respect to $f^p_\sharp$) are certain paths in $H_{r,j} \union T_{r-1}$ defined inductively as follows: the $0$-tiles of $H_{r,j}$ are the edges of $H_{r,j}$; assuming that $k-1$ tiles are defined, the $k$-tiles of $H_{r,j}$ are the paths of the form $f^{p}_\sharp(\tau)$, as $\tau$ varies over the $k-1$-tiles of $H_{r,j}$. A concrete line $\ell \subset T$ is said to be \emph{exhausted by tiles of $H_{r,j}$} if every finite subpath of $\ell$ is contained in some $k$-tile of $H_{r,j}$ for some $k \ge 0$. Associated to $f$ and $H_{r,j}$ is a lamination denoted $\Lambda(f;H_{r,j})$, abbreviated $\Lambda_{r,j}$, equal to the closure of the subset of $\B(\Gamma;\A)$ represented by those lines in $T$ that are exhausted by tiles of $H_{r,j}$. In the case that $H_r$ is EG-aperiodic we abbreviate further to $\Lambda_r$.

As a very important special case, to say that $f \from T \to T$ is an \emph{EG-aperiodic train track map} means that $f$ has just one stratum $H_1 = T_1 = T$, and that stratum is EG-aperiodic.

\medskip

The following theorem is, for the most part, a summary statement of results from \LymanCT\ as listed, translated from graph-of-groups language into Bass-Serre tree language; but a few items are added 

 Compare also \cite[Fact~1.56]{HandelMosher:Subgroups} for a similar summary statement in the special case of $\Out(F_n)$, derived from \cite[Definition 3.1.7 -- Lemma 3.1.15]{\BookOneTag}. 



\begin{theorem}[\protect{\cite[Lemmas 4.3, 4.4, 4.6]{\LymanCTTag}}]
\label{TheoremStrataLamCorr}
For any $\phi \in \Out(\Gamma;\A)$, its set $\L(\phi)$ of attracting laminations is finite. More precisely, for any relative train track representative $f \from T \to T$, the following relation defines a bijection between the set of attracting laminations $\Lambda \in \L(\phi)$ and the set of EG-aperiodic substrata $H_{r,j} \subset T$ of $f$:
$$(*) \qquad \Lambda \leftrightarrow H_{r,j} \iff H_{r,j} \subset \tsp(\Lambda;T) \subset T_{r-1} \union H_{r,j} \iff \Lambda = \Lambda_{r,j} \qquad \hphantom{(*)}
$$
Furthermore, 
\begin{enumerate}
\item\label{ItemLymanEGAperiodic}
For each EG-aperiodic substratum $H_{r,j}$, 
\begin{enumerate}
\item\label{ItemNestedUnionOfTiles}
A concrete line $L \subset T$ is the realization of a generic leaf of $\Lambda_{i,j}$ if and only if~$L$ is exhausted by tiles of $H_{r,j}$. 
\item $\phi(\Lambda_{r,j}) = \Lambda_{r,j+1}$ (for $j$ modulo $p_r$). 
\item\label{ItemLamPeriod}
$\Lambda_{r,j} \in \L(\phi)$ has period $p_r$ under the action of $\phi$ on $\L(\phi)$. 
\item\label{ItemCorollaryThreeTwoTwo}
$H_{r,j}$ contains two edgelets in distinct orbits; in fact there exist two natural edges $E,E' \subset T$ in distinct orbits each containing an edgelet of $H_{r,j}$.
\end{enumerate} 
\item\label{ItemActsAsIdentity}
Letting $H_r$ vary over all EG-aperiodic substrata, the period of the action of $\phi$ on the whole set~$\L(\phi)$ is $\text{LCM}\,(p_r)$. 
\item\label{ItemLamSupports}
After passing to the EG-aperiodic iterate $f^p_\#$ (by refining the filtration of $f$ so that the EG strata of $f^p_\#$ are the same as the EG-aperiodic strata of $f$), let $r_1 < \cdots < r_{\abs{\L(\phi)}}$ be the indices of the EG-aperiodic strata of $f^p_\#$, let $\Lambda_i$ be the attracting lamination corresponding to $H_{r_i}$, and let $\F_i = \F T_{r_i}$. Then we have a sequence of nesting relations
$$\emptyset = \F_0 \sqsubset \F_1 \sqsubset \cdots \sqsubset \F_{\abs{\L(\phi)}}
$$
such that $\Lambda_i$ is carried by $\F_i$ but not by $\F_{i-1}$, hence $\F_i \ne \F_{i-1}$ (for each $1 \le i \le \abs{\L(\phi)})$. 
\item\label{ItemLamPeriodBound}
There is a uniform upper bound $\abs{\L(\phi)} \le 2 \corank(\A) + \abs{\A} - 1$, and so there is also a uniform bound on the period $\text{LCM}\,(p_r)$ of the action of $\phi$ on $\L(\phi)$.
\end{enumerate}
\textbf{As a special case:} If $f$ is an EG-aperiodic train track map then $\phi$ has a unique attracting lamination $\Lambda$, and a concrete line in $L \subset T$ is the realization of a generic leaf of $\Lambda$ if and only if $L$ is exhausted by tiles with respect to $f \from T \to T$. 
\end{theorem}

\subparagraph{Remark.} Recall from Definition~\ref{DefinitionAttrLam} that for each $\Lambda \in \L(\phi)$ there exists some integer $p(\Lambda)$ which witnesses that a generic leaf $\beta$ of $\Lambda$ has an attracting neighborhood with respect to the action of $\phi^{p(\Lambda)}$. Taking $p(\Lambda)$ to be the least such integer and applying Theorem~\ref{TheoremStrataLamCorr} with $\Lambda=\Lambda_{r,j}$, one sees that $p(\Lambda)=p_r$. 

\medskip

We can now prove Lemma~\ref{LemmaSupportDistinction}, saying that attracting laminations are distinguished by their free factor supports, using the same proof that works for the classical case of $\Out(F_n)$ (see \cite[Lemma 3.2.4]{\BookOneTag}; see also \cite[Fact 1.14 (3)]{\SubgroupsTag} and the following remarks).

\begin{proof}[Proof of Lemma~\ref{LemmaSupportDistinction}] Using the notation of conclusion~\pref{ItemLamSupports}, any two distinct attracting laminations of $\phi$ may be represented as $\Lambda_i,\Lambda_j$ with $1 \le i \le j-1 < j \le \abs{\L(\phi)}$. The lamination $\Lambda_i$ is carried by $\F_i$ and hence by $\F_{j-1}$, but the lamination $\Lambda_j$ is not carried by $\F_{j-1}$, and so the free factor supports of $\Lambda_i$ and $\Lambda_j$ are not equal. 

To obtain the bijection $\L(\phi) \leftrightarrow \L(\phi^\inv)$ one follows the inductive argument in \cite[Lemma 3.2.4]{\BookOneTag}, in this case inducting on the Kurosh rank $\KR(\Gamma;\A) = \corank(\Gamma;\A) + \abs{\A}$. Choose $\Lambda \in \L(\phi)$ with free factor support $\F\Lambda$, and let $F \subgroup \Gamma$ be a free factor realizing the unique nonatomic component $[F] \in \F\Lambda$. If $\Lambda$ is nonfilling, equivalently if $\KR(F;\A \,|\, F) < \KR(\Gamma;\A)$, then by applying induction to $\phi \restrict F \in \Out(F;\A \,|\, F)$ we obtain a unique $\Lambda' \in \L(\phi^\inv)$ with $\F\Lambda'=\F\Lambda$. We thus obtain a bijection between nonfilling elements of $\L(\phi)$ and $\L(\phi^\inv)$ such that corresponding laminations have the same free factor support. By the ``special case'' of Theorem~\ref{TheoremStrataLamCorr}, each of $\phi$ and $\phi^\inv$ has at most one filling lamination. By symmetry it suffices to assume that $\phi$ has a filling lamination and to prove that~$\phi^\inv$ alsh has a filling lamination. After passing to a power of $\phi$ we obtain a nonfilling free factor system $\F$ which is both $\phi$ and $\phi^\inv$-invariant, and such that there is free factor system strictly nested between $\F$ and $\{[\Gamma]\}$ that is either $\phi$-periodic or $\phi^\inv$-periodic. Let $f \from T \to T$ and $f' \from T' \to T'$ be relative train track representatives of $\phi$ and $\phi^\inv$ respectively, each having a filtration element that realizes $\F$. Let $H_R \subset T$, $H'_{R'} \subset T'$ be the top strata. Since $\phi$ has a filling lamination, $H_R$ is an EG-stratum. By Theorem~\ref{TheoremStrataLamCorr}~\pref{ItemCorollaryThreeTwoTwo}, $H_R$ contains edgelets of two natural edges $E_1,E_2 \subset T$ in different orbits, and it follows that $\DFF(\F) \ge 2$. If $H'_{R'}$ were not EG then it would be NEG, consisting of a union of natural edgelets whose orbits are permuted up to homeomorphism by $f'$, and we may assume that permutation is the identity (by passing to a further power); then, by removing any one of those orbits, we obtain a $\phi^\inv$-periodic free factor system $\F'$ with $\DFF(\F') \ge 1$, but no such free factor system exists. It follows that $H'_{R'}$ is an EG-stratum, and so there exists an attracting lamination $\Lambda' \in \L(\phi^\inv)$ which is not supported by the free factor system~$\F$. This lamination $\Lambda'$ must be filling, because we already have a support-preserving bijection between nonfilling laminations of $\phi$ and $\phi^\inv$, and all of them are supported by~$\F$.
\end{proof}

\begin{proof}[Proof of Theorem~\ref{TheoremStrataLamCorr}] 
Regarding conclusions~\pref{ItemLymanEGAperiodic} and~\pref{ItemActsAsIdentity}, all but item~\pref{ItemCorollaryThreeTwoTwo} follow from the citations given; we briefly put off the proof of item~\pref{ItemCorollaryThreeTwoTwo}.

Regarding conclusion~\pref{ItemLamSupports}, the sequence of nesting relations clearly follows from the equivalences~$(*)$, as does the inclusion $\tsp(\Lambda;T) \subset T_{r_i}$, hence $\Lambda$ is carried by $\F_i$. What remains is to show that $\Lambda_i$ is not carried by $\F_{i-1}$. Otherwise, letting $\ell$ be a generic leaf of $\Lambda_i$, there exists a free factor $F$ rel~$\A$ such that $[F] \in \F_{i-1}$ and such that $\ell \subset T_{F}$ (the minimal subtree of $T$ with respect to the action of $F$). Letting $\tau$ be the component of $T_{r_{i-1}}$ that is stabilized by $F$, it follows that $T_{F} \subset \tau$, hence $\ell \subset \tau \subset T_{r_{i-1}}$. But this is a contradiction, because a generic leaf of $\Lambda_i$ crosses a translate of every edge of $H_{r_i}$ and $H_{r_i} \subset T_{r_{i}} \setminus T_{r_{i-1}}$.

Turning now to item~\pref{ItemCorollaryThreeTwoTwo}, we first consider the special case that $H_{r,j}=H_r$ is the top stratum; afterwards we explain how to reduce to this case. 

In the special case we argue by contradiction: if item~\pref{ItemCorollaryThreeTwoTwo} fails then~$T$ has a natural edge $E$ such that $H_r \subset \Gamma \cdot E$. Consider the collapse map $T \mapsto U = T / (T \setminus \Gamma \cdot H_r)$. The free splitting $U$ has just one orbit of natural edges, represented by the image of~$E$. The map $f$ induces a $\Phi$-twisted equivariant train track representative $f_U \from U \to U$ of $\phi$ with respect to the free factor system $\F[T \setminus \Gamma \cdot H]$.
There is a factorization $f_U \from U \xrightarrow{g} V \xrightarrow{h} U \cdot \Phi$ such that $g$ is an equivariant foldable map, and $h$ is a $\Phi$-twisted equivariant homeomorphism, hence $V$ and $U$ represent the same orbit of vertices of $\FS(\Gamma;\A)$. But since $U$ has just one orbit of natural edges, we may apply conclusion~\pref{ItemOneSewingFold} of the Sewing Needle Lemma~\ref{LemmaSewingNeedleFold} to conclude that $U$ and $V$ represent different orbits.

To carry out the reduction, after passing to the appropriate power of $\phi$ and replacing $f$ by $f_\#$, we may assume that $H_{r,j}=H_r$ is EG-aperiodic. In the filtration element $T_r$, every line $L \subset T_r$ representing a generic leaf of $\Lambda_r$ crosses an edgelet in every natural edgelet orbit of $H_r$, hence the components of $T_r$ that contain edgelets of $H_r$ all fall into a single orbit under the action of $\Gamma$; choose $T''$ to be one such component of $T_r$. Letting $\Phi$ be the representative of $\phi$ such that $f$ is $\Phi$-twisted equivariant, after applying an appropriate element of $\Inn(\Gamma;\A)$ we may assume that $f(T'')=T''$; and then, letting $\Gamma_r \subgroup \Gamma$ be the free factor rel~$\A$ that stabilizes $T_r$, it follows that $\Phi_r$ preserves $\Gamma_r$ and that the restricted free factor system $\A_r = \A \restrict \Gamma_r$ is $\Phi_r$-invariant, hence $\Phi_r \restrict \Gamma_r$ is a representative of some $\phi_r \in \Out(\Gamma_r;\A_r)$. Letting $T' \subset T_r$ be the minimal subtree for the action of $\Gamma_r$ on $T_r$, it follows that $f' = f \restrict T' \from T' \to T'$ is a relative train track representative of $\phi_r$, that $H'_r = H_r \intersect T'$ is its top stratum, and that $H'_r$ is an EG-aperiodic stratum. Once having proved that $H'_r$ contains edgelets in two natural edges of $T'$ in distinct orbits under the action of $\Gamma_r$, it follows that $H_r$ contains edgelets in two natural edges of $T$ in distinct orbits under the action of $\Gamma$, completing the reduction. 

The bound in conclusion~\pref{ItemLamPeriodBound} comes from the upper bound to the length of any strictly nested chain of free factor systems of $\Gamma$ rel~$\A$, as proved in \cite[Lemma 2.14]{\RelFSOneTag}. 
\end{proof}

\subparagraph{Remarks on bounding $\abs{\L(\phi)}$.} The bound $\abs{\L(\phi)} \le 2 \corank(\Gamma;\A) + \abs{\A}  - 1$ in conclusion~\pref{ItemLamPeriodBound} is far from optimal. The origin of that bound is the optimal bound on ``free factor system depth'' $\DFF(\F) \le \DFF(\A) = 2 \corank(\Gamma;\A) + \abs{\A}  - 1$, where $\F$ is a free factor system of $\Gamma$ rel~$\A$ (see \cite[Section 2.5, Lemma 2.14]{\RelFSOneTag}). 

Here we describe a relative train track map for an element of $\Out(\Gamma;\A)$ which we believe achieves the optimum upper bound on $\abs{\L(\phi)}$. Our description is expressed using graph of groups language as in \cite{\LymanRTTTag}. 

Denote $C = \corank(\Gamma;\A)$ and $N = \abs{\A}$. Construct a graph $G$ with components as follows: start with $\lfloor C/2 \rfloor$ components which are rank~$2$ roses; in the case that $C$ is odd, add an additional component which is a rank~$1$ rose; and then add $N$ more single vertex components labelled by the elements of~$\A$. So far the graph has $\lfloor (C+1)/2 + N \rfloor$ components. The rank~$1$ rose component, if it exists, becomes a fixed edge stratum. Each rank~$2$ rose component becomes an EG-aperiodic stratum, by labelling its edges $a,b$ and using the formula $a \mapsto b$, $b \mapsto ab$. Now hook up the components of $G$ with a tree $T$: choose one component of $G$ and denote its vertex as $V$; the tree $T$ has an edge connecting $V$ to the vertex of every other component of $G$, with $E = \lfloor (C+1)/2  + N - 1 \rfloor$ edges in total. Partition the edges of $T$ into $\lfloor E/2 \rfloor$ subsets of 2 edges each, with one additional edge subset of $1$ edge in the case that $E$ is odd. The $1$ edge partition element, if it exists, becomes a fixed edge stratum. Each $2$ edge partition element becomes an EG-aperiodic stratum by denoting those $2$ edges as $c,d$, orienting them to have initial vertex~$V$, and applying formula of the form
$$c \, \mapsto \, c \, X \, \bar c \, d \, Y \, \bar d \, c \, \bar X \, \bar c \, Z \, c \qquad\qquad d \, \mapsto \, c \, X \, \bar c \, d
$$
In this formula, each of $X$, $Y$, $Z$ represents a nontrivial group element in the fundamental group of a component of $G$: a loop in a rose component; or an element of the group labelling an $\A$ component. 

We suspect that $\lfloor C/2 \rfloor + \lfloor E/2 \rfloor$, which is the number of EG-aperiodic strata in this construction, is the optimal bound for $\abs{\L(\phi)}$. In the case of $\Out(F_n)$ where $\A=0$, this number simplifies to $\lfloor 3n/4 \rfloor + D$ where $D=-1,0,1,-1$ depending respectively on $n \equiv 0,1,2,3$ mod~$4$. A proof of optimality would perhaps require analyzing the combinatorics of the filtrations that can occur in relative train track maps.

\bigskip

The following theorem shows that each attracting lamination of $\phi$ has a well-defined expansion factor, independent of the choice of relative train track representative. 

\begin{theorem}[\protect{\cite[Proposition 4.14]{\LymanCTTag}}]
\label{TheoremLamExpFac}
For each $\phi \in \Out(\Gamma;\A)$ and each $\Lambda \in \L(\phi)$ there exists a unique expansion factor $\lambda(\phi;\Lambda) > 1$ characterized by the following property: for each relative train track representative $f \from T \to T$ of $\phi$, letting $H_r \subset T$ be the EG stratum that contains the EG-aperiodic substratum $H_{r,j}$ associated to $\Lambda$ (see Theorem~\ref{TheoremStrataLamCorr}), $\lambda(\phi;\Lambda)$ is equal to the expansion factor of $f$ on~$H_r$.\hfill\qed
\end{theorem}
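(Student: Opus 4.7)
The plan is to prove Theorem~\ref{TheoremLamExpFac} by directly comparing any two relative train track representatives of $\phi$ and showing that their Perron--Frobenius eigenvalues on the corresponding EG strata must coincide.

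First, I would set up an equivariant comparison. Given RTT representatives $f_i \from T_i \to T_i$ of $\phi$ (for $i=1,2$) with EG strata $H_{r_i}^{(i)}$ associated to $\Lambda$ via Theorem~\ref{TheoremStrataLamCorr}, let $\lambda_i$ denote the Perron--Frobenius eigenvalue of the irreducible block $M_{r_i}^{(i)}$. Passing to a power of $\phi$ (which replaces each $\lambda_i$ by the same power), I may assume each $f_i$ is EG-aperiodic and that $\Lambda$ is $\phi$-fixed. After adjusting by inner automorphisms I may choose a common $\Phi \in \Aut(\Gamma;\A)$ representing $\phi$ such that each $f_i$ is $\Phi$-twisted equivariant. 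Construct an equivariant tight map $g \from T_1 \to T_2$; then $g \circ f_1$ and $f_2 \circ g$ are both $\Phi^2$-twisted equivariant, so by Lemma~\ref{LemmaLineBddCancellation} the straightenings of $g \circ f_1^n(L)$ and $f_2^n \circ g(L)$ differ only by a bounded cancellation error, independent of $n$, on any concrete line realization~$L$.

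Second, I would track growth along a generic leaf. Fix a generic leaf $\beta$ of $\Lambda$ with realizations $L_i \subset T_i$; by Theorem~\ref{TheoremStrataLamCorr}\pref{ItemNestedUnionOfTiles} each $L_i$ is a nested union of tiles of $H_{r_i}^{(i)}$, so sufficiently long sub-arcs of $L_i$ cross edges of $H_{r_i}^{(i)}$. Choose an $H_{r_1}^{(1)}$-edge $E$ lying on $L_1$. The Perron--Frobenius theorem applied to $M_{r_1}^{(1)}$ gives that the number of $H_{r_1}^{(1)}$-edges in $f_1^n(E)$ is asymptotic to $C_1 \lambda_1^n$. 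After enlarging $E$ to a finite sub-arc of $L_1$ whose image $g(E)$ crosses at least one $H_{r_2}^{(2)}$-edge, the Perron--Frobenius theorem applied in $T_2$ gives that the number of $H_{r_2}^{(2)}$-edges in $f_2^n(g(E))$ is asymptotic to $c_2 \lambda_2^n$. By the bounded cancellation control established above, the straightening of $g(f_1^n(E))$ agrees with $f_2^n(g(E))$ up to a bounded error, so these two counts are asymptotically equal.

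Third, I would bound the growth of $g(f_1^n(E))$ from above using that $g$ is simplicial: each edge of $T_1$ maps under $g$ to a path of uniformly bounded length in $T_2$, so the number of $H_{r_2}^{(2)}$-edges in $g(f_1^n(E))$ is at most a constant times the total edge count of $f_1^n(E)$ in $T_1$. Combining with the lower bound from the previous step yields $\lambda_2^n \le K \cdot (\text{total edges of } f_1^n(E))$; if this total grows at rate $\lambda_1^n$ then $\lambda_2 \le \lambda_1$, and symmetry completes the argument. The hard part will be establishing that the total edge count of $f_1^n(E)$ grows at rate exactly $\lambda_1^n$ rather than at some larger rate coming from a lower EG stratum of $f_1$ with PF eigenvalue exceeding $\lambda_1$. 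The resolution rests on the tight coupling between $\Lambda$ and the stratum $H_{r_1}^{(1)}$: if a lower EG stratum of $f_1$ had PF eigenvalue exceeding $\lambda_1$, then by the matching of laminations to strata in Theorem~\ref{TheoremStrataLamCorr} together with the uniqueness of free factor supports in Lemma~\ref{LemmaSupportDistinction}, $\Lambda$ itself would have to be the lamination associated to that lower stratum, contradicting the original identification of $\Lambda$ with $H_{r_1}^{(1)}$. Making this domination argument precise---essentially the content of Lyman's \cite[Proposition 4.14]{\LymanCTTag}---completes the proof of $\lambda_1 = \lambda_2 =: \lambda(\phi;\Lambda) > 1$.
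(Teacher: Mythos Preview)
The paper does not prove this theorem at all: it is stated as a citation of \cite[Proposition~4.14]{\LymanCTTag} and immediately closed with \qed. So there is no ``paper's own proof'' to compare against; the paper simply imports Lyman's result as a black box.

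Your sketch is in the right spirit --- compare two relative train track representatives via an equivariant map and bounded cancellation, and track growth along a generic leaf --- but the resolution you offer for the ``hard part'' is not correct. You argue that if some lower EG stratum $H_s$ of $f_1$ (with $s < r_1$) had Perron--Frobenius eigenvalue exceeding $\lambda_1$, then Theorem~\ref{TheoremStrataLamCorr} and Lemma~\ref{LemmaSupportDistinction} would force $\Lambda$ to be the lamination associated to $H_s$. That does not follow: the lamination $\Lambda_s$ associated to $H_s$ has free factor support contained in $\FF T_s \sqsubsetneq \FF T_{r_1}$, so $\Lambda_s \ne \Lambda$ regardless of how the eigenvalues compare. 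Nothing prevents a lower stratum from having a strictly larger expansion factor; this happens already for reducible elements of $\Out(F_n)$.

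What actually controls the argument is not a comparison of eigenvalues across strata but a more careful count: one tracks only the $H_{r_i}^{(i)}$-edges (not all edges) crossed by iterates of a tile, and uses that under the equivariant comparison map the $H_{r_1}^{(1)}$-edge count and the $H_{r_2}^{(2)}$-edge count along the realized generic leaf are each asymptotic to a constant times the respective $\lambda_i^n$. Bounded cancellation then equates the two growth rates. This is essentially how \cite[Proposition~4.14]{\LymanCTTag} (and its predecessor \cite[Proposition~3.3.3]{\BookOneTag}) proceed, and your sketch would need to be reorganized along these lines to close the gap.
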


\noindent
The following corollary will be the foundation of the proof of well-definedness in Proposition~\ref{PropAxisInFS}~\pref{ItemExpFactWD} to be carried out in Section~\ref{SectionPropAxisInFSProof}.

\begin{corollary} 
\label{CorollaryTopWhenFillingExists}
If $\phi \in \Out(\Gamma;\A)$ has a filling attracting lamination $\Lambda$ then for every relative train track representative $f \from T \to T$, its top stratum $H_R$ is EG-aperiodic, and $\Lambda$ is the attracting lamination associated to $H_R$. Furthermore, $\lambda(\phi;\Lambda)$ is the Perron-Frobenius eigenvalue of the transition matrix $M_R$ of $f$ on $H_R$.
\end{corollary}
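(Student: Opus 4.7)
My plan is to extract the result from Theorem~\ref{TheoremStrataLamCorr}, Lemma~\ref{LemmaFillingLamFillsT}, Lemma~\ref{LemmaSupportDistinction}, and Theorem~\ref{TheoremLamExpFac}, which together pin down $\Lambda$ as coming from the top stratum and force that top stratum to be not merely EG but EG-aperiodic.

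First, I would apply the bijection of Theorem~\ref{TheoremStrataLamCorr} to the filling lamination~$\Lambda$: there is a unique EG-aperiodic substratum $H_{r,j}$ of $f$ with $H_{r,j} \subset \tsp(\Lambda;T) \subset T_{r-1} \cup H_{r,j} \subset T_r$, and hence $\tsp(\Lambda;T) \subset T_r$. Lemma~\ref{LemmaFillingLamFillsT} then gives $\FF\Lambda \sqsubset \FF T_r$. Since $\Lambda$ fills, the left side equals $\{[\Gamma]\}$, so $\FF T_r = \{[\Gamma]\}$; the only filtration element whose free factor support is the full system is $T_R$ itself, so $r=R$. At this stage we know that $H_R$ is the EG stratum associated to $\Lambda$ and that the substratum $H_{R,j}$ of the aperiodic decomposition $H_R = H_{R,1} \cup \cdots \cup H_{R,p_R}$ corresponds to~$\Lambda$.

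Next I would show that the period satisfies $p_R = 1$, which is the crux. Suppose for contradiction $p_R \ge 2$. Theorem~\ref{TheoremStrataLamCorr} then produces $p_R$ pairwise distinct attracting laminations $\Lambda_{R,1},\ldots,\Lambda_{R,p_R} \in \L(\phi)$ cyclically permuted by $\phi$. The natural action of $\Out(\Gamma;\A)$ on free factor systems commutes with the formation of free factor support, so $\FF\Lambda_{R,j+1} = \FF(\phi(\Lambda_{R,j})) = \phi \cdot \FF\Lambda_{R,j}$; since $\phi$ fixes the full free factor system $\{[\Gamma]\}$ and one of the $\Lambda_{R,j}$ equals the filling lamination~$\Lambda$, induction on $j$ gives $\FF\Lambda_{R,j}=\{[\Gamma]\}$ for every~$j$. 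But Lemma~\ref{LemmaSupportDistinction} says that attracting laminations are determined by their free factor supports, so all $p_R$ of these would coincide, contradicting their distinctness. Hence $p_R=1$, meaning the matrix $M_R$ is already aperiodic and $H_R$ itself is the EG-aperiodic substratum associated to~$\Lambda$; in particular $\Lambda = \Lambda_R$.

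Finally, the statement about the expansion factor is immediate from Theorem~\ref{TheoremLamExpFac}: $\lambda(\phi;\Lambda)$ is by definition the expansion factor of $f$ on the EG stratum $H_r$ containing the corresponding aperiodic substratum, which we have just identified with $H_R$; and the expansion factor of $f$ on $H_R$ is by definition the Perron--Frobenius eigenvalue of $M_R$. The main obstacle in the plan is the EG-aperiodicity of $H_R$, and this is handled entirely by combining the uniqueness of the filling lamination (Lemma~\ref{LemmaSupportDistinction}) with the fact that $\phi$ acts on the aperiodic substrata as a cyclic permutation of size~$p_R$.
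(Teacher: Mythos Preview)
Your proof is correct and follows essentially the same approach as the paper's. Both arguments identify $r=R$ from the filling hypothesis, then use Lemma~\ref{LemmaSupportDistinction} to force $p_R=1$ (the paper shows directly that $\phi(\Lambda)=\Lambda$ and invokes Theorem~\ref{TheoremStrataLamCorr}~\pref{ItemLamPeriod}, while you phrase the same step as a contradiction via the cyclic permutation of the $\Lambda_{R,j}$), and both finish with Theorem~\ref{TheoremLamExpFac}.
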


\begin{proof} Using that $\Lambda$ fills, it follows that $H_R$ is the top stratum that is crossed by leaves of $\Lambda$ in~$T$. Again using that $\Lambda$ fills, $\F \phi(\Lambda)) = \phi(\F\Lambda) = \phi\{[\Gamma]\} = [\Gamma] = \F\Lambda$, and by applying Lemma~\ref{LemmaSupportDistinction} we have $\phi(\Lambda)=\Lambda$; it follows that $\Lambda$ has period~$1$ under the action of $\phi$. Combining these with Theorem~\ref{TheoremStrataLamCorr} it follows that $H_R$ is EG-aperiodic, and that $H_R$ is the EG-aperiodic substratum corresponding to $\Lambda$. The ``Furthermore'' clause follows from Theorem~\ref{TheoremLamExpFac}.
\end{proof}

\subsubsection{Irreducible train track representatives. Penultimate collapse.}
\label{SectionIrrTTCollapse}
Consider a continuous equivariant map $F \from T \to T$ representing $\phi$. We say that $F$ is a \emph{train track} representative if either of the following two equivalent statements hold:
\begin{description}
\item[Definition 1:] $F$ takes vertices to vertices, and for every $x \in T$ there exists an embedded arc $\gamma \subset T$ such that $x$ is in the interior of $\gamma$ and such that $F^k \restrict \alpha$ is injective for all~$k \ge 1$. 
\item[Definition 2:] $F$ is a topological representative, and there exists an $F$-invariant gate structure on $T$ having at least two gates at each vertex.
\end{description}
In Definition~2, a \emph{gate structure} is a $\Gamma$-invariant family of equivalence relations on the set of directions $D_v T$ for each vertex $v \in T$; the equivalence classes are called \emph{gates} at $v$. To say a gate structure is \emph{$F$-invariant} means that for any $d,d' \in D_v T$ in the same gate, their images $D_v(d),D_v(d') \in D_{F(v)} T$ are also in the same gate. Definition~2 is common in more recent literature: see for example \cite[Section 8.3]{FrancavigliaMartino:TrainTracks}. Clearly Definition~2 implies Definition~1. For the converse, assuming Definition~1 we obtain local injectivity of $F \restrict \interior(e)$ for any edge $e \subset T$. Using that $T$ is a tree, we next obtain local injectivity of $F \restrict e$, and then global injectivity of $F \restrict e$, and it follows that $F$ is a topological representative. A gate structure satisfying the requirements of Definition~2 is then obtained by requiring, for each vertex $v \in T$, that $d,d' \in D_v T$ be in the same gate if and only if there exists $k \ge 1$ such that $DF^k(d)=DF^k(d')$ \hbox{(see e.g.\ \cite[Lemma 8.11]{FrancavigliaMartino:TrainTracks}).}

Given a train track representative $F \from T \to T$ of $\phi \in \Out(\Gamma;\A)$, we next give two definitions of what it means for $F$ to be an \emph{irreducible} train track representative: 
\begin{description}
\item[Definition 3:] The transition matrix of $F$ is irreducible.
\item[Definition 4:] $F$ is a relative train track representative with exactly one stratum.
\end{description}
Clearly Definition 3 implies Definition 4. For the converse, the one stratum of Definition 4 cannot be a zero stratum because then there would exist another, lower stratum; that stratum is therefore irreducible. Further properties of that one stratum thus become properties of $F$ itself, such as NEG, EG, and EG-aperiodic. 

If $\phi$ has an EG-aperiodic train track representative $F \from T \to T$ then, using Definition~4, one may apply Theorem~\ref{TheoremStrataLamCorr} to conclude that $\phi$ has just one attracting lamination \emph{with respect to~$\F$}, which we shall denote $\Lambda(F;\F) \subset \Bline(\Gamma;\F)$. Furthermore, the concrete lines in $T$ representing generic leaves of $\Lambda(F;\F)$ are precisely those lines that are nested unions of tiles of~$F$.

\medskip

In the rest of this section we describe a construction called \emph{penultimate collapse} which produces irreducible train track representatives from relative train track representatives, at the expense of changing the free factor system. In brief, starting with $\phi \in \Out(\Gamma;\A)$ and a relative train track representative with respect to~$\A$ of $\phi$, and letting~$\F$ be the free factor rel~$\A$ that is associated to the penultimate filtration element of the given relative train track map, by collapsing that filtration element we obtain a train track representative of $\phi$ relative to~$\F$.

\begin{definition}[Penultimate collapse]
\label{DefPenColl}
Consider $\phi \in \Out(\Gamma;\A)$, and a relative train track representative $f \from S \to S$ rel~$\A$ with associated filtraiton of length $R$, its penultimate filtration element denoted as $S_{R-1}$ and its top stratum as $H_R = S \setminus S_{R-1}$. Let $\F = \F[S_{R-1}]$, a $\phi$-invariant free factor system rel~$\A$. Consider also the collapse map $\pi \from S \xrightarrow{\langle S_{R-1} \rangle} T$, and so $T$ is a free splitting of $\Gamma$ rel~$\F$. The map $\pi$ induces a $\Gamma$-equivariant bijection denoted $e \leftrightarrow e' = \pi(e)$, between all top stratum edges $e \subset \Edges(H_R)$ and all edges $e' \in \Edges(T)$. There is a unique induced map $\hat F \from T \to T$ such that $\hat F \circ \pi = \pi \circ f \from T \to T$. Note that for each corresponding pair of edges $e \leftrightarrow e'$ the restriction $\hat F \restrict e'$ is a hare's path that crosses at least one edge of $T$, which follows because $f(e)$ contains at least one edge of $H_R$. Let $F \from T \to T$ be the tight map obtained by tightening $\hat F$. The map $F \from T \to T$ is therefore a topological representative of $T$, and we obtain the following diagram which is commutative up to tightening:
$$\xymatrix{
S \ar[rr]^{\pi}_{\<S_{R-1}\>} \ar[d]_f && T \ar[d]^F \\
S \ar[rr]^{\pi}_{\<S_{R-1}\>}  && T
}$$ 
We shall refer to this as the \emph{penultimate collapse diagram} associated to the relative train track representative $f \from S \to S$.
\end{definition}


\begin{proposition}[Penultimate collapse]
\label{PropCollapseTTMap}
Given $\phi \in \Out(\Gamma;\A)$, a relative train track representative $f \from S \to S$ of $\phi \in \Out(\Gamma;\A)$, and the penultimate collapse diagram associated to $f$ as denoted above, the following hold: 
\begin{enumerate}
\item\label{ItemTransMatSame}
The transition matrix of $f$ on $H_R$ equals the transition matrix of $F$ on $T$ (once a choice of enumerated edge orbit representatives of $H_R$ has been projected to $T$ via $\pi$)
\item\label{ItemCollapseIrreducible}
$F$ is an irreducible train track representative of $\phi$ with respect to $\F$. 
\item\label{ItemEGnessCorresponds}
$f$ is NEG/EG/EG-aperiodic on $H_R$ if and only if $F$ is NEG/EG/EG-aperiodic on $T$ (respectively), and in each of these cases we have an additional conclusion:
\begin{enumerate}
\item\label{ItemUltimateNEG}
If NEG holds then $\phi$ fixes $T$ (as a vertex of $\FS(\Gamma;\A)$).
\item\label{ItemUltimateEGFactors}
If EG holds then the expansion factor of $F$ equals the expansion factor of~$f$ on~$H_r$.
\item\label{ItemUltimateEGLeaves}
If EG-aperiodicity holds then, letting $\Lambda_f$ be the unique attracting lamination of $\phi$ rel~$\A$ that is associated to the stratum $H_R$, and letting $\Lambda_F$ be the unique attracting lamination of $\phi$ rel~$\F$ (see Theorem~\ref{TheoremStrataLamCorr} for both of these laminations), the following hold:
\begin{enumerate}
\item\label{ItemTilesBiject} 
For each $k \ge 0$ the collapse map $\pi$ induces a bijection from $k$-tiles of $f$ on $H_R$ to $k$ tiles of $F$.
\item\label{ItemGenericsBiject}
The collapse map $\pi$ induces a bijection between the set of concrete lines in $S$ representing generic leaves of $\Lambda_f$ and the set of concrete lines in $T$ representing generic leaves of $\Lambda_F$.
\end{enumerate}
\end{enumerate}
\end{enumerate}
In particular if $f$ has a filling lamination then EG-aperiodicity holds (in the context of~\pref{ItemEGnessCorresponds}), hence the conclusions of~\pref{ItemUltimateEGLeaves} all hold, and $\Lambda_f$ is the filling lamination of $\phi$.
\end{proposition}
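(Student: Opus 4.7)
The plan is to proceed in the order of the enumerated conclusions, with~\pref{ItemTransMatSame} as the technical core from which the rest follow almost formally.

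For~\pref{ItemTransMatSame}, I would analyze the structure of $f(e)$ for an edge $e$ of $H_R$: by property (RTT-i) together with the filtration structure, $f(e)$ has the form $e_1 \gamma_1 e_2 \gamma_2 \cdots e_k$ with $e_1,\ldots,e_k$ edges of $H_R$ and each $\gamma_i$ a (possibly trivial) path in $S_{R-1}$. The collapse $\pi$ sends each $e_i$ bijectively to an edge $\pi(e_i)$ of $T$ and sends each $\gamma_i$ to a point of $T$; by the general fact that collapse maps send paths to nonbacktracking concatenations (recalled in Section~\ref{SectionRFSC}), the composition $\pi \circ f \restrict e$ is a nonbacktracking concatenation whose tightening yields exactly $\pi(e_1)\pi(e_2) \cdots \pi(e_k)$ as the restriction $F(\pi(e))$. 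Hence the crossing count $\<F(\pi(e)),\Gamma \cdot \pi(e')\>$ equals $\<f(e),\Gamma \cdot e'\>$ for each pair $e,e'$ in $H_R$, giving equality of transition matrices under $\pi$-transport of enumerated edge orbit representatives.

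From~\pref{ItemTransMatSame}, conclusion~\pref{ItemCollapseIrreducible} is short: $F$ is a topological representative of $\phi \in \Out(\Gamma;\B)$ by construction, $T$ is a Grushko free splitting rel~$\B$ by Lemma~\ref{LemmaCollapseFFNest}, the single stratum of $F$ has irreducible transition matrix $M_R$, and the train track property (RTT-iii) for $F$ transfers from (RTT-iii) for $f$ on $H_R$ via the $\pi$-identification of directions in $T$ with directions of $H_R$ at corresponding vertices of $S$. For~\pref{ItemEGnessCorresponds}, the NEG/EG/EG-aperiodic trichotomy transfers via~\pref{ItemTransMatSame} since it is expressed entirely in terms of the transition matrix. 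In the NEG case $M_R$ is a permutation matrix, so $F$ is a $\Phi$-twisted equivariant simplicial self-isomorphism of $T$, witnessing $\phi(T)=T$ and giving~\pref{ItemUltimateNEG}; in the EG case the expansion factor is the shared Perron--Frobenius eigenvalue, giving~\pref{ItemUltimateEGFactors}.

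For the EG-aperiodic case, I would prove tile bijection~\pref{ItemTilesBiject} by induction on $k$: the $0$-tiles biject via $e \leftrightarrow \pi(e)$, and inductively the commutation $\pi \circ f_\sharp = F_\sharp \circ \pi$ on paths in $S_{R-1} \union H_R$ (which follows from the diagram of Definition~\ref{DefPenColl} together with the general behavior of collapse and tightening) sends $k$-tiles of $H_R$ bijectively to $k$-tiles of $F$. Then~\pref{ItemGenericsBiject} follows from Theorem~\ref{TheoremStrataLamCorr}~\pref{ItemNestedUnionOfTiles}: concrete generic leaves of $\Lambda_f$ are nested unions of $H_R$-tiles in $S$, project under~$\pi$ to nested unions of $F$-tiles in $T$ (hence generic leaves of $\Lambda_F$), and injectivity on concrete lines holds because the $S_{R-1}$-subpaths between consecutive $H_R$-edges of any such line are uniquely determined by the tree structure of~$S$.

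The main obstacle lies less in any single step than in the careful bookkeeping for~\pref{ItemTilesBiject}: one must verify that $\pi \circ f_\sharp = F_\sharp \circ \pi$ on every path in $S_{R-1} \union H_R$ (not merely on $f$-images of $H_R$-edges), preserving the alternation of $H_R$-edges and $S_{R-1}$-subpaths through repeated tightening; once this commutation is established, the bijections of tiles and of generic leaves fall out directly, and the concluding assertion that the filling lamination of $\phi$ equals $\Lambda_f$ is immediate from Corollary~\ref{CorollaryTopWhenFillingExists}.
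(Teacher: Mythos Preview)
Your proposal is correct and follows essentially the same route as the paper: both rest on the observation that the sequence of $H_R$-edges crossed by $f^k_\sharp(e)$ is unchanged under straightening, so that $\pi(f^k_\sharp(e)) = F^k(\pi(e))$ without cancellation, and from this \pref{ItemTransMatSame}, \pref{ItemCollapseIrreducible}, and \pref{ItemEGnessCorresponds} all follow. One small caution on your argument for \pref{ItemCollapseIrreducible}: the claim that ``RTT-iii transfers via the $\pi$-identification of directions'' tacitly uses RTT-i (so that $Df$ carries $H_R$-directions to $H_R$-directions) and RTT-ii (to rule out identifications when two $T$-directions at a common vertex lift to $H_R$-directions based at \emph{distinct} vertices of a collapsed $S_{R-1}$-component); the paper sidesteps this by directly verifying that $F^k$ is injective on edges via exactly the commutation you flag in your final paragraph.
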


\begin{proof}
Choose enumerated edge orbit representatives $\{e_i\}_{i \in I} \subset \Edges(H_R)$ and $\{e'_i\}_{i \in I} \in \Edges(T)$ with $e'_i = \pi(e_i)$. Note for each $\gamma \in \Gamma$ that the path $f(e_j)$ crosses $\gamma \cdot e_i$ if and only if the path $F(e'_j)$ crosses $\gamma \cdot e'_j$. Item~\pref{ItemTransMatSame} and the main clause of~\pref{ItemEGnessCorresponds} follow immediately. Letting $M$ be the transition matrix, item~\pref{ItemUltimateEGFactors} also follows immediately. 

Assuming that $H_R$ is NEG, the matrix $M$ is an irreducible matrix of non-negative integers with Perron-Frobenius eigenvalue equal to $1$, from which it follows that $M$ is a cyclic permutation matrix of some order $p \ge 1$. The map $f$ therefore permutes the edges of $H_R$, hence $F$ permutes the edges of $T$, and so $F$ is a homeomorphism. This proves item~\pref{ItemCollapseIrreducible} in the NEG case, and furthermore since $F$ is $\Phi$-twisted equivariant for some $\Phi \in \Aut(\Gamma;\A)$ representing $\phi$, it follows that $\phi$ fixes $T$, proving item~\pref{ItemUltimateNEG}.

Assuming that $H_R$ is EG, and using that $f$ is a relative train track map, for each edge $e \in \Edges(H_R)$ with image $e' = \pi(e) \in \Edges(T)$, and for each $k \ge 1$, the act of straightening the restriction $f^k \restrict e$ to form the path $f^k_\sharp(e)$ is carried out by straightening each subpath in $T_{R-1}$ without altering the sequence of $H_R$ edges crossed by $f^k \restrict e$. It follows that the restriction $\pi \circ f^k \restrict e = \hat F^k \circ \pi \restrict e = \hat F^k \restrict e'$ is a hare's path, and that tightening that restriction is carried out by simply removing maximal constant subpaths to form the path; this tightened path is therefore equal to the restriction $F^k \restrict e'$, and hence that restriction has no cancellation at all. This proves item~\pref{ItemCollapseIrreducible} in the EG case. If EG-aperiodicity holds then 
item~\pref{ItemTilesBiject} follows from the equation $\pi(f^k_\sharp(e)) = F^k(e')$; and then, by applying Theorem~\ref{TheoremStrataLamCorr}~\pref{ItemNestedUnionOfTiles} to $f$ on $H_R$ and to $F$ on $T$, item~\pref{ItemGenericsBiject} follows immediately. 
\end{proof}

\paragraph{Remarks.} There are more general versions of Definition~\ref{DefPenColl} and Proposition~\ref{PropCollapseTTMap} where one starts from a relative train track representative $f \from S \to S$ and collapses an arbitrary filtration element $S_r$ (not necessarily the penultimate one). In that case Proposition~\ref{PropCollapseTTMap} generalizes with ease, producing a relative train track representative of $\phi$ with respect to~$\F = \F S_r$, although in addition to collapsing $S_r$ \emph{one must also} collapse any zero stratum edges $e \subset S$ such that $f^k(e) \subset S_r$ for some $k$; equivalently, after collapsing just $S_r$ itself, one must follow up by collapsing the maximal pretrivial subforest. In this case the collapse map $\pi \from S \to T$ induces a bijection between the set of irreducible strata of $f$ that lie above $S_r$ and the set of all irreducible strata of $g$; also, a version of conclusion~\pref{ItemEGnessCorresponds} holds for each corresponding pair of such strata.

\subsubsection{Proof of Theorem C.} 
\label{SectionThmCProof}

Given $\phi \in \Out(\Gamma;\A)$, we must prove that $\phi$ fixes some free splitting of $\Gamma$ rel~$\A$ if and only if its set $\L(\phi)$ of attracting laminations does not fill~$\Gamma$ rel~$\A$. Our proof here does not have any particularly new ideas, it is more-or-less an updating of the proof from \FSTwo\ obtained by applying the general relative train track theory of Lyman \cite{Lyman:RTT,Lyman:CT}.

\smallskip

For the ``if'' direction, assuming that $\L(\phi)$ does not fill~$\Gamma$ rel~$\A$, the finite union $\bigcup \L(\phi) = \bigcup_{\Lambda \in \L(\phi)} \Lambda$ is a $\phi$-invariant lamination of $\Gamma$ rel~$\A$ whose free factor support $\F\left(\bigcup\L(\phi)\right)$ is nonfilling and $\phi$-invariant. Choose $\F$ to be a maximal, nonfilling, $\phi$-invariant free factor system such that $\F\left(\bigcup\L(\phi)\right) \sqsubset \F$. Let $f \from S \to S$ be a relative train track representative with penultimate filtration element $S_{R-1}$ such that $\F = \F S_{R-1}$ (Corollary~\ref{CorollaryMaximal}). For each EG-aperiodic substratum $H_{r,j} \subset S$ with corresponding attracting lamination $\Lambda_{r,j} \in \L(\phi)$ (Theorem~\ref{TheoremStrataLamCorr}), knowing that $\Lambda_{r,j} \subset \bigcup \L(\phi)$ is supported by $\F = \F S_{R-1}$, it follows that $r \le R-1$. The stratum $H_R$ is therefore NEG, and so $\phi$ fixes the free splitting obtained from $T$ by collapsing~$T_{R-1}$ (Proposition~\ref{PropCollapseTTMap}~\pref{ItemUltimateNEG}).

\smallskip

For the ``only if'' direction, assume that $\phi(S)=S$ for some free splitting $S$ rel~$\A$, and so there is a $\Phi$-twisted equivariant homeomorphism $g \from S \to S$, where $\Phi \in \Aut(\Gamma;\A)$ represents~$\phi$. Let $\F = \FS S$, a $\phi$-invariant, non-filling free factor system rel~$\A$. Choosing a maximal filtration of $S$ by $\Gamma$-invariant and $g$-invariant subforests, the map $g$ becomes a relative train track representative with respect to~$\F$ in which every stratum is NEG, and therefore $\phi$ has no attracting laminations rel~$\F$ (by Theorem~\ref{TheoremStrataLamCorr}).

Let $\F'$ be a \emph{maximal} $\phi$-invariant, nonfilling free factor system of $\Gamma$ rel~$\A$ such that \hbox{$\F \sqsubset \F'$}. Choose a relative train track representative $g' \from S' \to S'$ of $\phi$ with respect to~$\F$ with penultimate filtration element $S'_{R-1}$ such that $\F S'_{R-1}=\F'$ (Corollary~\ref{CorollaryMaximal}). By penultimate collapse we obtain an irreducible train track representative $g'' \from S'' \to S''$ with respect to~$\F'$ (Proposition~\ref{PropCollapseTTMap}). Since $\phi$ has no attracting laminations rel~$\F$, the top stratum of $S'$ is NEG, and so the unique stratum of $S''$ is NEG, hence $\phi$ has no attracting laminations rel~$\F'$ (Theorem~\ref{TheoremStrataLamCorr} again). Choose a relative train track representative $h \from U \to U$ of $\phi$ with respect to~$\A$ with top stratum $U_L$ such that $\F\,U_{L-1} = \F'$. By penultimate collapse we obtain an irreducible train track representative $h' \from U' \to U'$ with respect to~$\F'$ (Proposition~\ref{PropCollapseTTMap}). Having shown that $\phi$ has no attracting laminations rel~$\F'$, it follows that $h'$ is NEG (Theorem~\ref{TheoremStrataLamCorr}), hence the top stratum $U_L \setminus U_{L-1}$ of $h$ is NEG (Proposition~\ref{PropCollapseTTMap}). Every EG stratum of $h \from U \to U$ is therefore contained in $U_{L-1}$, hence the realization in $U$ of every attracting lamination of $\phi$ rel~$\A$ is contained in $U_{L-1}$ (Theorem~\ref{TheoremStrataLamCorr}). This proves that $\L(\phi)$ is supported by~$\F'$. \qed

\subparagraph{Remarks.} In the last sentences of the proof above there are some circumlocutions that could probably be avoided if, as suggested in the \emph{Remarks} at the end of Section~\ref{SectionIrrTTCollapse}, one had a stronger ``relative train track'' version of Proposition~\ref{PropCollapseTTMap}.

\subsubsection{Suspending train track maps to obtain fold axes in $\FS(\Gamma;\A)$}
\label{SectionAxis}

In earlier work \cite[Section~7.1]{HandelMosher:axes} we described how to suspend an irreducible train track representative of an element $\phi \in \Out(F_n)$ to get a fold axis of $\phi$ in the outer space $\X(F_n)$. Although that construction was couched in the language of metric marked graphs, it is easily translated into free splitting language by lifting to universal covers, producing an axis in $\FS(F_n)$ having the given train track representative as a first return map. We generalize that construction here in the setting of $\FS(\Gamma;\A)$, and then we apply it (with other tools) to prove Proposition~\ref{PropLamCollapsed} which will itself be applied in the sequel \cite{HandelMosher:RelComplexHypIII}.

\paragraph{Suspension axes.} Consider $\phi \in \Out(\Gamma;\A)$, a nonfull $\phi$-invariant free factor system~$\F$ rel $\A$ and an EG-irreducible train track representative $F \from T \to T$ of~$\phi$ with respect to~$\F$. We shall construct a ``suspension axis'' of $F$, meaning a fold axis whose first return map is the given map~$F$; see Proposition~\ref{PropFoldAxisConstruction} and Definition~\ref{DefSuspensionAxis} below for a summary. There is a subtlety to the construction regarding how vertex sets fit together. The vertex set on $T=T_0$ is \emph{given} as part of the data of the train track representative $F \from T \to T$. The vertex set on $T_p$ is then \emph{determined} by the requirement that $h^0_p \from T_0 \to T_p$ be a simplicial isomorphism. One must then \emph{construct} the factorization $F \from T_0 \xrightarrow{f_1} T_1 \xrightarrow{f_2} \cdots\xrightarrow{f_p} T_p \xrightarrow{h^p_0} T_0$ and the vertex sets on each $T_i$ so that each fold factor $f_i$ takes vertices to vertices. This may be impossible if fold factors are chosen carelessly; the solution is to choose maximal fold factors.

To start the construction we note that $F$ is $\Phi$-twisted equivariant for some $\Phi \in \Aut(\Gamma;\A)$ representing~$\phi$. Let $U_k = T \cdot \phi^k$ for all $k \in \Z$, and so there exists a unique $\Phi$-twisted equivariant simplicial isomorphism $H \from U_1 = T \cdot \phi \to T = U_0$ (for uniqueness see Lemma~\ref{LemmaTwEqUnique}~\pref{ItemEqIsoUni}; for existence see the heading \emph{Equivalence, and action by $\Out(\Gamma;\A)$} in Section~\ref{SectionRFSC}). By composition (see Lemma~\ref{LemmaTwEqUnique}~\pref{ItemTwistComp}) we obtain a $\Gamma$-equivariant map $f \from U_0 = T \xrightarrow{F} T \xrightarrow{H^\inv} U_1$, and we note that 
\begin{align*}
f (\Vertices U_0) &= (H^\inv \circ F)(\Vertices T) = H^\inv(F(\Vertices(T)) \\
&\subset  H^\inv(\Vertices T) = \Vertices(U_1)
\end{align*}
Applying Theorem~\ref{TheoremStallings}, we may factor $f$ as a Stallings fold sequence
$$f \from U_0 = T_0 \xrightarrow{f_1} T_1 \xrightarrow{f_2} \cdots \xrightarrow{f_p} T_p = U_1
$$
such that the map $f_i \from T_{i-1} \to T_i$ is a maximal fold factor of the map $f^{i-1}_p = f_p \circ\cdots\circ f_i \from T_{i-1} \to T_p$, for each $i=1,\ldots,p$. For index values $0 < i < p$, we define the simplicial structure on $T_i$ by induction so that $\Vertices T_{i} = f(\Vertices T_{i-1}) \union \Vertices_\nat T_{i}$; inclusion $f(\Vertices T_{i-1}) \subset \Vertices T_i$ is then evident for that range of index values. We must still prove the inclusion $f(\Vertices T_{p-1}) \subset \Vertices T_p$, but for the moment we assume that inclusion is true and complete the rest of the construction. 

For each $l \in \Z$, taking $q \in \Z$ and $0 \le r \le p-1$ so that $l=qp+r$, we define $T_l = T_r \cdot \phi^q$, and we let $h^l_{l-p} \from T_{l} \to T_{l-p}$ be the $\Phi$-twisted equivariant simplicial isomorphism (unique by Lemma~\ref{LemmaTwEqUnique}). For each $i < j \in \Z$, from the identity
$$F^j = h^p_0 \circ \cdots \circ h^{jp}_{(j-1)p} \circ f^0_{jp}
$$
together with the train track property for $F^j$ and the fact that each $h$-term in that identity is a simplicial isomorphism, it follows that $f^0_{jp} \from T_0 \to T_{jp}$ is a foldable map, which implies more generally that each $f^{ip}_{jp} \from T_{ip} \to T_{jp}$ is foldable. Foldability of $f^l_m \from T_l \to T_m$ for arbitrary $l \le m$ then follows from Proposition \ref{PropFoldableProps}. Using that $\F=\FellT=\FellT_0$ it easily follows that $\F=\FellT_i$ for all~$i$. 

It remains to prove the inclusion $f_i(\Vertices T_{i-1}) \subset \Vertices T_i$ for all $i \in \Z$. We know this inclusion already for $0 < i < p$, and so by periodicity we know it for all $i$ not equal to a multiple of $p$. Again applying periodicity, it suffices prove the inclusion for $i=p$, that is, $f^{p-1}_p(\Vertices T_{p-1}) \subset \Vertices T_p$. By definition of train track map we have $F_0(\Vertices T_0) \subset \Vertices T_0$, and so from the composition expression 
$$f^0_p \from T_0 \xrightarrow{F_0} T_0 \xrightarrow{(h^p_0)^\inv} T_p
$$ 
together with the fact that $h^p_0$ is a simplicial isomorphism we have $f^0_p(\Vertices T_0) \subset \Vertices T_p$. Taking that as the basis step we proceed by induction: assuming for $1 \le i \le p-1$ that $f^{i-1}_p(\Vertices T_{i-1}) \subset \Vertices T_p$, we must prove that $f^i_p(\Vertices T_i) \subset \Vertices T_p$. Note first that, by applying the induction hypothesis, we have
$$(*) \qquad f^i_p(f_i(\Vertices T_{i-1})) = f^i_p \circ f^{i-1}_i(\Vertices T_{i-1}) = f^{i-1}_p(\Vertices T_{i-1}) \subset \Vertices T_p \qquad\hphantom{(*)}
$$
We now break the proof into two cases depending on whether $f_i$ is a full fold or a partial fold (see \cite[Section 4.1]{\RelFSOneTag}). Choose oriented natural edges $E,E' \subset T_{i-1}$ with common initial vertex $v$, and proper initial segments $\eta \subset E$, $\eta' \subset E'$ with endpoints $w$, $w'$ opposite~$v$. 

In the first case where $f_i$ is a \emph{full fold}, meaning that $\eta=E$ or $\eta'=E'$, it follows that $\Vertices T_i = f_i(\Vertices T_{i-1})$, and $f^i_p(\Vertices T_i) = f^i_p(f_i(\Vertices T_{i-1}))$; using $(*)$, we are done in that case. 

Consider now the remaining case where $f_i$ is a \emph{partial fold}, meaning $\eta \ne E$ and $\eta' \ne E'$, and so $w,w'$ are interior points of the edges $E,E'$ respectively. We note that $f_i$ is not a degenerate sewing needle fold (see Section~\ref{SectionSewingNeedle}), for in that case $\FellT_i$ is strictly larger than $\FellT_{i-1}$. It follows that $\hat w = f(w)=f(w') \in T_i$ is a valence~$3$ vertex with trivial stabilizer, at which the three directions $d_0,d_1,d_2$ are as follows: $d_0$ is the common image of the terminal directions of $\eta$ at $w$ and of $\eta'$ at $w'$; $d_1$ is the image of the initial direction of $E \setminus \eta$ at $w$; and $d_2$ is the image of the initial direction of $E' \setminus \eta'$ at $w'$. It follows that $\Vertices T_{i} = f_{i}(\Vertices(T_{i-1})) \union \Gamma \cdot \hat w$. Using~$(*)$, all that remains to be proved is that $W = f^{i}_p(\hat w)$ is a natural vertex of $T_p$, in fact $W$ has valence~$\ge 3$. To see why, the turn $\{d_0,d_1\}$ in $T_i$ is the image of the unique turn at the non-natural vertex $w$ of $T_{i-1}$, and so since $f^{i-1}_p \from T_i \to T_p$ is a foldable map it follows that the turn $\{d_0,d_1\}$ is not foldable with respect to $f^i_p$. Using the non-natural vertex $w'$ in a similar manner, the turn $\{d_0,d_2\}$ is not foldable with respect to $f^i_p$. Also, since $f_i$ is a maximal first fold factor of the map $f^{i-1}_p$, the turn $\{d_1,d_2\}$ is not foldable with respect to $f^i_p$. The three directions $d_0,d_1,d_2$ at $\hat w$ therefore map to three distinct directions at~$W$.

\smallskip

This completes the construction, which we summarize in the following proposition and definition:


\begin{proposition} 
\label{PropFoldAxisConstruction}
Given $\phi \in \Out(\Gamma;\A)$, a nonfull $\phi$-invariant free factor system~$\F$ rel $\A$ and an EG-irreducible train track representative $F \from T \to T$ of~$\phi$ with respect to~$\F$, the construction above yields a fold axis of~$\phi$ with respect to $\F$ having first return map $F$. \qed
\end{proposition}

\begin{definition}
\label{DefSuspensionAxis}
A fold axis constructed as in Proposition~\ref{PropFoldAxisConstruction} is said to be a \emph{suspension axis} of $F \from T \to T$.
\end{definition}

\paragraph{A fold axis dichotomy.} Proposition~\ref{PropLamCollapsed} to follow states a dynamical dichotomy for an element of $\phi \in \Out(\Gamma;\A)$ and a choice of maximal, $\phi$-invariant, nonfull free factor system~$\F$ rel~$\A$. It may be regarded as a partial result towards the proof of the implication \pref{ItemActLargeOrbit}$\implies$\pref{ItemFillingLamExists} of Theorem~B, and it will be applied in the sequel \cite{HandelMosher:RelComplexHypIII} where the full proof of that implication is given. 

While the statement of the proposition does not involve the ``penultimate collapse'' construction, nonetheless that construction is the underpinning of the proof: the proof will combine Propositions~\ref{PropCollapseTTMap} and~\ref{PropFoldAxisConstruction} with results of~\cite[Section 4.4]{\RelFSOneTag} that are concerned with the complexity of invariant subgraphs of free splittings.  

\begin{proposition}
\label{PropLamCollapsed}
For any $\phi \in \Out(\Gamma;\A)$ and any maximal, $\phi$-invariant, non-filling free factor system $\F$ of $\Gamma$ rel~$\A$, letting $m \ge 0$ be the number of attracting laminations of $\phi$ rel~$\A$ that are not supported by $\F$, exactly one of the following holds:
\begin{enumerate}
\item\label{ItemBoundedByFour}
$m \ne 1$ in which case there exists a Grushko free splitting $T$ of $\Gamma$ rel~$\F$ such that its $\phi$-orbit \break $\{T \cdot \phi^k\}_{k \in \Z} \subset \FS(\Gamma;\A)$ has diameter~$\le 4$ with respect to the simplicial metric on~$\FS(\Gamma;\A)$.
\item\label{ItemLamCollapsed}
$m=1$, i.e.\ there is a unique attracting lamination $\Lambda$ of $\phi$ rel~$\A$ not supported by~$\F$. 
In this case there exists a fold axis for $\phi$ with respect to $\F$, notated as in Definition~\ref{DefFoldAxes}, such that the following hold:
\begin{enumerate}
\item\label{ItemLamCollEGAperiodic}
The first return map $F_0 \from T_0 \to T_0$ is an EG-aperiodic train track representative of $\phi$~rel~$\F$. Let $\Lambda_\F$ be the unique attracting lamination of $\phi$ rel~$\F$ (obtained by applying the ``special case'' at the end of the statement of Theorem~\ref{TheoremStrataLamCorr}).
\item\label{ItemLamCollLeaves}
There exists a collapse map $\pi \from S \to T_0$ defined on some Grushko free splitting $S$ rel~$\A$, such that $\pi$ induces a bijection between generic leaves of $\Lambda$ realized in $S$ and generic leaves of $\Lambda_\F$ realized in $T_0$.
\end{enumerate}
\end{enumerate} 
\end{proposition}

\begin{proof}
Choose a relative train track representative $f \from S \to S$ of $\phi$ rel $\A$ with top stratum $H_R \subset S$ and with some filtration element $S_i$ ($i < R$) such that $\F = \F[S_i]$. By maximality of $\F$ it follows that $\F = \F[S_{R-1}]$. Applying Proposition~\ref{PropCollapseTTMap} we obtain:
\begin{itemize}
\item[$(*)$] A penultimate collapse diagram as in Definition~\ref{DefPenColl}, namely a collapse map $\pi \from S \to T$ and irreducible train track representative $F \from T \to T$ of $\phi$ rel~$\F$ such that $\pi \circ f$ and $F \circ \pi \from S \to T$ are the same up to tightening. 
\end{itemize}
We break the proof into cases depending on the dynamical behavior of $F$: first, whether $F$ is NEG or EG; and then, in the EG case, whether or not $F$ is EG-aperiodic. We note that by Proposition~\ref{PropCollapseTTMap}~\pref{ItemEGnessCorresponds}, each respective case hypothesis on $F$ applies as well to the dynamical behavior of $f$ on its top stratum $H_R$, allowing us to apply the appropriate sub-conclusions of Proposition~\ref{PropCollapseTTMap}~\pref{ItemEGnessCorresponds} in the various cases.

\medskip
\textbf{Case 1: $F$ is NEG.} It follows from Theorem~\ref{TheoremStrataLamCorr} that $m=0$. The conclusions of \pref{ItemBoundedByFour} therefore hold: applying Proposition~\ref{PropCollapseTTMap}~\pref{ItemUltimateNEG} we have $T \cdot \phi = T$, and so the orbit $\{T \cdot \phi^k\}$ has diameter~$0$.

\medskip
\textbf{Case 2: $F$ is EG.} The EG-irreducible stratum $T$ subdivides into an edge disjoint union of EG-aperiodic substrata $T = \tau_1 \union\cdots\union \tau_m$ where $m \ge 1$, and we may choose the enumeration so that $F(\tau_j)=\tau_{j+1}$ for each $j$ modulo~$m$, and so $F^m(\tau_j)=\tau_j$. Applying Theorem~\ref{TheoremStrataLamCorr}, $m$~is~also the number of attracting laminations of $\phi$ rel~$\A$ that are not supported by $\F$, one such lamination for each the EG-aperiodic substratum $\tau_1,\ldots,\tau_m$.

Applying Proposition~\ref{PropFoldAxisConstruction} and using the notation of Definition~\ref{DefFoldAxes}, we obtain a bi-infinite fold path which is a fold axis for $\phi$ of period $p$ with $T=T_0$ and with first return map \hbox{$F = F_0 \from T_0 \to T_0$.} We depict here a portion of this fold axis and its associated commutative diagram, together with some of its $\Phi$-twisted equivariant maps $h^i_{i-p}$ and first return maps $F_l$ (where $\Phi \in \Aut(\Gamma;\A)$ is a chosen representative of $\phi \in \Out(\Gamma;\A)$):
$$\xymatrix{
\protect{\hphantom{T_{-2p}}}&\cdots \ar[r] &T_{-p} \ar[r]^{f_{-p+1}} \ar[d]_{F_{-p}} 
& \cdots \ar[r]^{f_0}  
&T_0 \ar[r]^{f_1} \ar[dll]_{h^0_{-p}} \ar[d]_{F_0} & \cdots \ar[r]^{f_p} 
& T_p \ar[dll]_{h^p_0} \ar[r]^{f_{p+1}} \ar[d]_{F_p} & \cdots &
\\
&\cdots \ar[r] &T_{-p} \ar[r]_{f_{-p+1}} & \cdots \ar[r]_{f_0} 
&T_0 \ar[r]_{f_1} & \cdots \ar[r]^{f_p} 
& T_p \ar[r]_{f_{p+1}} & \cdots
}$$
As usual we have foldable maps denoted $f^i_j \from T_i \to T_j$. Also, for each $d \ge 1$ the same bi-infinite fold path is a fold axis for $\phi^d$ of period $dp$, with $\Phi^d$-twisted equivariant maps $h^i_{i-dp} \from T_i \to T_{i-dp}$ and with first return maps $F^d = f^{-dp}_0 \circ h^0_{-pd} \from T_0 \to T_0$. 

\smallskip
We now break into two subcases.

\paragraph{Case 2a: $F_0$ is EG-aperiodic, equivalently $m=1$.} The existence and uniqueness of $\Lambda_\F$ comes from  Proposition~\ref{PropCollapseTTMap}~\pref{ItemUltimateEGLeaves}. By applying Proposition~\ref{PropCollapseTTMap}~\pref{ItemGenericsBiject}, the collapse map $\pi \from S \to T=T_0$ witnesses that conclusion \pref{ItemLamCollLeaves} holds.

\paragraph{Case 2b: $F_0$ is not EG-aperiodic, equivalently $m \ge 2$.} In this case we must shall verify the conclusions of item~\pref{ItemBoundedByFour} of the proposition, showing that the set $\{T \cdot \phi^k\}$ has diameter~$\le 4$ by applying \cite[Lemma 4.13 (3a)]{\RelFSOneTag}. The hard work is to verify the hypotheses of that lemma, based on the decomposition $T=\tau_1 \union \cdots\union \tau_m$ of $T$ into edge-disjoint, nondegenerate subgraphs (in the sense of Section~\ref{SectionFreeSplittings}).

We recall from \cite[Section 4.3]{\RelFSOneTag} the following ``pullback'' operation on nondegenerate subgraphs, considered there only for equivariant maps, but extending in a straightforward way to twisted equivariant maps: given a twisted equivariant map $g \from U \to U'$ between two free splittings of $\Gamma$ rel~$\A$ such that $g$ is injective on each edge of $U$ (e.g.\ $g$ is a foldable map), and given a nondegenerate subgraph $\upsilon' \subset U'$, the pullback $\upsilon = g^*(\upsilon') \subset U$ is the nondegenerate subgraph obtained from $g^\inv(\upsilon')$ by removing all isolated points. For example, using that $F(\tau_{j-1})=\tau_{j}$ for all $j$ modulo $m$, and that the $\tau_j$'s are edge disjoint and have no isolated vertices, it follows that $\tau_{j-1} = F^*(\tau_j)$, and more generally that $(F^d)^*(\tau_j) = \tau_{j-d}$ (using indices modulo~$m$).

We claim that for each $i \in \Z$ there is an edge-disjoint decomposition into invariant subgraphs $T_i = \tau_{i,1} \union\cdots\union\tau_{i,m}$ (with $\tau_{0,j}=\tau_j \subset T_0=T$), such that the following hold:
\begin{description}
\item[Fold Invariance:] $\tau_{i,j} = (f_{i+1})^* (\tau_{i+1,j})$ ($i \in \Z$, $j=1,\ldots,m$); 
\item[$h$-Invariance:] $h^0_{dp}(\tau_{0,j}) = \tau_{dp,j-d}$ ($d \in \Z$, $j=1,\ldots,m$ mod $m$).
\end{description}
Here are details of proof for $i \le 0$ and $d \le 0$, which is the only case that we actually apply. For $i \le 0$ we use ``Fold Invariance'' as the inductive definition of $\tau_{i,j}$, starting from the base case $\tau_{0,j}=\tau_j$. It follows, by induction, that this defines an edge disjoint decomposition of $T_i$. 
For $d \le 0$ we prove ``$h$-Invariance'' as follows. We know that $(F^{-d})^*(\tau_{0,j-d}) = \tau_{0,j}$ and that $(f^{dp}_0)^*(\tau_{0,j-d}) = \tau_{dp,j-d}$. Together with the commutativity equation $F^{-d} = f^{dp}_0 \circ h^0_{dp}$ and the fact that $h^0_{dp}$ is a twisted equivariant simplicial isomorphism, the $h$-invariance equation follows for $d \le 0$. For $i \ge 0$, in brief, one may inductively define $\tau_{i,j} = f_i(\tau_{i-1,j})$ for $i \ge 1$, but one must still show that $T_i = \tau_{i,1} \union\cdots\union \tau_{i,m}$ is an edge-disjoint decomposition; we leave further details to the reader, including the proof of $h$-invariance for $d > 0$.

\medskip

To complete Case 2(a) we must prove for all $i<j$ that $d(T_i,T_j) \le 4$. We may assume that $j \le 0$, by first choosing $d \in \Z$ so that $i-dp < j-dp \le 0$ and then using that $\phi$ acts as an isometry of $\FS(\Gamma;\A)$ to conclude that $d(T_i,T_j) = d(T_i \cdot \phi^{-d},T_j \cdot \phi^{-d}) = d(T_{i-dp},T_{j-dp})$. Now choose $d \ge 1$ so that $-dmp \le i$, and let $q=mp$. We shall prove that the following fold subsequence (which contains $T_i$ and $T_j$) has diameter $\le 4$:
$$T_{-dq} \xrightarrow{f_{-dq+1}} T_{-dq+1} \xrightarrow{f_{-dq+2}} \cdots \xrightarrow{f_{-1}} T_{-1} \xrightarrow{f_0} T_0
$$
For each $k$ such that $-dq \le k \le 0$ consider the following $\Gamma$-invariant subgraph 
$$\beta_k = \tau_{k,1} \subset T_k
$$
This subgraph $\beta_k$ is proper, as a consequence of the case hypothesis $m \ge 2$. In \cite[Section 4.4.1]{\RelFSOneTag}, a natural number valued complexity $C(\beta)$ is defined for all pairs $\beta \subset T$ such that $\beta$ is a proper invariant subgraph of a free splitting $T$ of $\Gamma$ rel~$\A$. For any other such pair $\beta' \subset T'$, the following properties are proved in \cite[Section 4.4.1 and 4.4.2]{\RelFSOneTag}: 
\begin{description}
\item[\protect{\cite[Lemma 4.10(b)]{\RelFSOneTag}}] If there is a twisted equivariant simplicial isomorphism \hbox{$h \from T \to T'$} such that $h(\beta)=\beta'$ then $C(\beta)=C(\beta')$.
\item[\protect{\cite[Lemma 4.12]{\RelFSOneTag}}] If there is a fold map $f \from T \to T'$ such that $\beta = f^*(\beta')$ then \hbox{$C(\beta) \ge C(\beta')$.}
\end{description}
Applying the first property in conjunction with ``$h$-Invariance'' we conclude that $C(\beta_{-kq})$ is a constant for $0 \le k \le d$; in particular $C(\beta_{-dq}) = C(\beta_0)$. Applying the second property in conjunction with ``Fold Invariance'' it follows that the complexity values $C(\beta_k)$ are nonincreasing along the interval $-dq \le k \le 0$, but the first and last terms are equal and so the complexity values are constant on that interval. This is exactly the hypothess of \cite[Lemma 4.13 (3a)]{\RelFSOneTag}, and so the conclusion of that lemma applies: the entire vertex set $\{T_k \mid -dq \le k \le 0\}$ along the fold subsequence has diameter $\le 4$. 
\end{proof}

\subsubsection{Proof of Proposition~\ref{PropAxisInFS}}
\label{SectionPropAxisInFSProof}
We refer to reader to the original statement of the proposition, or to Proposition~\ref{PropAxisInFSEmbellished} below which is a fuller statement incorporating information about the filling attracting laminations.

Let $\phi \in \Out(\Gamma;\A)$ have a filling lamination $\Lambda \in \L(\phi)$. Fix $\F$ to be any maximal, non-filling, $\phi$-invariant free factor system of $\Gamma$ rel~$\A$. Fix $f \from S \to S$ to be a relative train track representative of $\phi$ with penultimate filtration element $S_{R-1}$ such that $\F=\F[S_{R-1}]$ (Corollary~\ref{CorollaryMaximal}). Since $\Lambda$ fills, the stratum $H_R$ is EG-aperiodic and $\Lambda$ is the attracting lamination associated to $H_R$ (Corollary~\ref{CorollaryTopWhenFillingExists}). Applying penultimate collapse to $f$ (Definition~\ref{DefPenColl} and Proposition~\ref{PropCollapseTTMap}), we obtain an EG-aperiodic train track representative $F \from T \to T$ of $\phi$ with respect to~$\F$ and a collapse map $\pi \from S \xrightarrow{\langle S_{R-1} \rangle} T$.  Applying Proposition~\ref{PropFoldAxisConstruction} and Definition~\ref{DefSuspensionAxis}, using $F$ we obtain an EG-aperiodic suspension axis for $F$ with respect to~$\F$, which completes the proof of Proposition~\ref{PropAxisInFS}~\pref{ItemFoldAxisExists}. 

To prove Proposition~\ref{PropAxisInFS}~\pref{ItemExpFactWD}, we start by using that $\Lambda \in \L(\phi)$ fills $\Gamma$ rel~$\A$ to conclude that $\lambda(\phi;\Lambda)>1$ is the expansion factor of $f$ on $H_R$ (Corollary~\ref{CorollaryTopWhenFillingExists}). Next, $\lambda(\phi;\Lambda)$ is the expansion factor of $F$ on $T$ (Proposition~\ref{PropCollapseTTMap}~\pref{ItemUltimateEGFactors}). Regarding $\phi$ as an element of the subgroup $\Out(\Gamma;\F) \subgroup \Out(\Gamma;\A)$, and using that $\phi$ possesses an EG-aperiodic train track representative with respect to~$\F$, namely $F \from T \to T$, by applying Theorem~\ref{TheoremLamExpFac} \emph{relative to~$\F$} we conclude that the expansion factor of any EG-aperiodic train track representative of $\phi$ \emph{relative to~$\F$} is well-defined, independent of the choice of such representative, and hence equals the expansion factor of $F$ which equals $\lambda(\phi;\Lambda)$. Finally, for any EG-aperiodic fold axis of $\phi$ with respect to~$\F$, any first return map of that axis is an EG-aperiodic train track representative of $\phi$ rel~$\F$, hence its expansion factor equals $\lambda(\phi;\Lambda)$.

This completes the proof of Proposition~\ref{PropAxisInFS}. 

\smallskip

It also completes the proof of the following expanded proposition, \emph{except} for a new conclusion \pref{ItemLamUnderCollapse} regarding generic leaves of $\Lambda$; that new conclusion will be applied as part of the proof of Theorem~B in Part~III of this work, near the end of \cite[Section 3.5]{\RelFSThreeTag}.


\begin{proposition}\label{PropAxisInFSEmbellished}
For each $\phi \in \Out(\Gamma;\A)$ which has a filling lamination $\Lambda$ the following hold:
\begin{enumerate}
\item\label{ItemFoldAxisExistsV2}
For each maximal, non-filling, $\phi$-invariant free factor system~$\F$ rel~$\A$ there exists an EG aperiodic fold axis of $\phi$ in $\FS(\Gamma;\A)$ with respect to~$\F$.
\item\label{ItemFixBAndAxis}
As one varies over all $\F$ as in item~\pref{ItemFoldAxisExistsV2}, and over all EG-aperiodic fold axes of $\phi$ in $\FS(\Gamma;\A)$ with respect to~$\F$, the following hold:
\begin{enumerate}
\item\label{ItemExpFactWDV2} The expansion factor of the axis is well-defined independent of $\F$ and of the axis;
\item\label{ItemLamUnderCollapse} The attracting lamination of the axis is also well-defined independent of $\F$ and of the axis, in the following sense. For any free splitting $T_i$ in the given axis with corresponding first return map $F_i \from T_i \to T_i$ (using notation from Definition~\ref{DefFoldAxes}), and for any Grushko free splitting $U$ of $\Gamma$ rel~$\A$ and any collapse map $\rho \from U \to T_i$, the map $\rho$ induces a bijection $L \leftrightarrow L' = \rho(L)$ between concrete lines $L \subset U$ representing generic leaves of $\Lambda$ and concrete lines $L' \subset T_i$ representing generic leaves of the attracting lamination $\Lambda_{F_i}$.
\end{enumerate}
\end{enumerate}
\end{proposition}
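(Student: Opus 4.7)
The plan is to prove the new conclusion~\pref{ItemLamUnderCollapse}, since the rest of Proposition~\ref{PropAxisInFSEmbellished} is already established. The strategy is two-fold: first handle a distinguished instance coming from the penultimate collapse, then propagate to the general case via naturality. For the distinguished instance, I choose a relative train track representative $f \from S \to S$ of $\phi$ rel~$\A$ with penultimate filtration element $S_{R-1}$ satisfying $\FF[S_{R-1}]=\B$ (Corollary~\ref{CorollaryMaximal}); since $\Lambda$ fills, Corollary~\ref{CorollaryTopWhenFillingExists} identifies $\Lambda$ with the attracting lamination of $f$ associated to its top EG-aperiodic stratum~$H_R$. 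Applying penultimate collapse (Definition~\ref{DefPenColl}), I obtain a collapse $\pi \from S \xrightarrow{\langle S_{R-1}\rangle} T$ and an EG-aperiodic train track representative $F \from T \to T$ of~$\phi$ rel~$\B$; Proposition~\ref{PropCollapseTTMap}~\pref{ItemGenericsBiject} then supplies exactly the bijection asserted in~\pref{ItemLamUnderCollapse} for the specific choices $U=S$, $T_i=T=T_0$ (the base point of the suspension axis of~$F$), and $\rho=\pi$, with $F_i=F$.

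To extend to an arbitrary $\B$, axis, $T_i$, $U$, and collapse $\rho \from U \to T_i$, I invoke two naturality principles. First, any two Grushko free splittings of~$\Gamma$ rel~$\A$ are linked by an equivariant map, and the unique equivariant boundary homeomorphism of Lemma~\ref{LemmaTEqBdyMaps} puts the sets of concrete realizations of elements of~$\Bline(\Gamma;\A)$ in canonical bijection, with bounded-cancellation control furnished by Lemma~\ref{LemmaLineBddCancellation}; the analogous statement holds for Grushko free splittings rel~$\B$. Second, for any EG-aperiodic fold axis of~$\phi$ with respect to~$\B$ and any first return map $F_i \from T_i \to T_i$ along it, $F_i$ is an EG-aperiodic irreducible train track representative of~$\phi$ viewed in~$\Out(\Gamma;\B)$, and its associated attracting lamination $\Lambda_{F_i}$ is filling rel~$\B$; applying Lemma~\ref{LemmaSupportDistinction} inside~$\Out(\Gamma;\B)$, the filling attracting lamination of~$\phi$ rel~$\B$ is unique, so the codomain lamination of~\pref{ItemLamUnderCollapse} is intrinsic to~$(\phi,\B)$. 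Combining these, any two competing realizations of the bijection are intertwined by canonical equivariant maps on each side, so it suffices to verify the statement for the distinguished instance above.

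Two technical points must be handled along the way. First, for a generic leaf~$L$ of~$\Lambda$ in~$U$, I must check that $\rho(L) \subset T_i$ is a bi-infinite line, not merely a ray or finite path. Writing $L = \rho_- \cup \rho_+$ as a union of subrays with $\sigma \subset U$ the collapse forest of~$\rho$, it suffices to show each $\rho_\pm$ contains infinitely many edges in $U \setminus \sigma$. Since $\Lambda$ fills, Lemma~\ref{LemmaAttLamSupp} yields $\FF\{\beta\}=\{[\Gamma]\}$ for any generic leaf~$\beta$, so by Lemma~\ref{LemmaFillingLamFillsT} the orbit $\Gamma \cdot L$ is not contained in the proper invariant subforest~$\sigma$; choosing an edge $e \in L \setminus \sigma$ and invoking birecurrence of~$L$ (Definition~\ref{DefinitionAttrLam}), each subray $\rho_\pm$ contains infinitely many $\Gamma$-translates of~$e$, each lying outside~$\sigma$. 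Second, I must verify that composing a collapse with an equivariant change-of-model map on either side produces the same induced map on concrete lines up to tightening; this reduces to uniqueness of parallel equivariant boundary maps (Lemma~\ref{LemmaTEqBdyMaps}) together with the fact that a concrete line is determined by its two boundary endpoints.

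The main obstacle I anticipate is to cleanly state and apply a variant of Lemma~\ref{LemmaTEqBdyMaps} accommodating collapse maps between Grushko models in \emph{distinct} deformation spaces (rel~$\A$ versus rel~$\B$), rather than only within a single deformation space; the requisite ingredients---continuity on ends, and tracking of infinite-valence vertices under collapse---parallel those used in \cite{GuirardelHorbez:Laminations}, but they are not explicitly recorded in the excerpt, so a short compatibility lemma of this flavor will need to be interpolated before the naturality reduction can be formally completed.
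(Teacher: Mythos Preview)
Your strategy matches the paper's: establish the distinguished case via penultimate collapse (Proposition~\ref{PropCollapseTTMap}~\pref{ItemGenericsBiject}), then propagate to arbitrary $\B$, axis, $T_i$, $U$, $\rho$. Your observation that $\Lambda_{F_i}$ is intrinsic to $(\phi,\B)$ via uniqueness of the filling lamination rel~$\B$ is exactly the mechanism the paper uses (there phrased as Theorem~\ref{TheoremStrataLamCorr} giving $\Lambda(F;\B)=\Lambda(F_i;\B)$).

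The difference is in how the propagation step is executed. You propose working at the level of boundary homeomorphisms (Lemma~\ref{LemmaTEqBdyMaps}) and then deducing the correspondence of concrete lines; as you yourself note, this forces you to formulate a cross-deformation-space version of Lemma~\ref{LemmaTEqBdyMaps} for the collapse $\rho \from U \to T_i$ linking a rel-$\A$ Grushko splitting to a rel-$\B$ one. The paper sidesteps this by staying entirely at the level of concrete lines and bounded cancellation: it builds the square
\[
\xymatrix{
U \ar[d]_{\rho} \ar[r]^{g} & S \ar[d]^{\pi} \\
T_i & T \ar[l]_{h}
}
\]
with $g,h$ arbitrary equivariant maps (existing because $U,S$ are both Grushko rel~$\A$ and $T_i,T$ are both Grushko rel~$\B$), observes that $\rho$ and $h\circ\pi\circ g$ differ by a bounded amount, and then chases concrete generic leaves around the square using Lemma~\ref{LemmaLineBddCancellation} three times. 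This yields that $\rho(L_U)$ and $L_{T_i}$ have bounded Hausdorff distance, whence $\rho(L_U)=L_{T_i}$. In particular the paper never needs your separate verification that $\rho(L)$ is bi-infinite---that falls out at the end from the Hausdorff-distance conclusion---and it never needs a boundary-map lemma across deformation spaces, since Lemma~\ref{LemmaLineBddCancellation} applies to any equivariant map between free splittings. Your route would work once the compatibility lemma is supplied, but the paper's bounded-cancellation chase is more direct.
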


\begin{proof}[Proof of Conclusion~\protect{\pref{ItemLamUnderCollapse}}] Following up on the proof and notation of conclusion~\pref{ItemFoldAxisExistsV2}, and fixing $\F$ and a fold axis as in~\pref{ItemFixBAndAxis} we well as $T_i$ and $U$ as in~\pref{ItemLamUnderCollapse}, we set up a bounded cancellation argument based by the following diagram:
$$\xymatrix{
U \ar[d]_{\rho} \ar[r]^g & S \ar[d]^{\pi} \\
T_i & T \ar[l]_h
}$$
In this diagram $\pi \from S \to T$ is the collapse map used in the proof of~\pref{ItemFoldAxisExistsV2}. From that proof, consider also the EG-aperiodic train track representative $F \from T \to T$ with respect to~$\F$. The maps $g \from U \to S$ and $h \from T_i \to T$ are any choices of $\Gamma$-equivariant maps, which exist since $U$ and $S$ are both Grushko free splittings rel~$\A$, and $T_i$ and $T$ are both Grushko free splittings rel~$\F$. The diagram above commutes up to bounded distance, because the maps $\rho$ and $h \circ \pi \circ g$ are both $\Gamma$-equivariant. 

Since each of $F \from T \to T$ and $F_i \from T_i \to T_i$ is an EG-aperiodic train track representative of $\phi$ with respect to~$\F$, it follows from Theorem~\ref{TheoremStrataLamCorr} that these two representatives produce the same attracting lamination of $\phi$ rel~$\F$, with the same generic leaves
$$\Lambda(F;\F) = \Lambda(F_i;\F) \subset \Bline(\Gamma;\F)
$$
Using this equation of laminations it follows, together with the bounded cancellation lemma for lines (Lemma~\ref{LemmaLineBddCancellation}), that we have a bijection denoted $L_T \leftrightarrow L_{T_i}$ between concrete lines $L_T \subset T$ representing generic leaves of $\Lambda(F;\F)$ and concrete lines in $L_{T_i} \subset T_i$ representing generic leaves of $\Lambda(F_i;\F)$, and this bijection is characterized by the property
$$L_{T_i} \subset h(L_T) \subset N_{C(h)}(L_{T_i})
$$
where $C(h) \ge 0$ is a cancellation constant for $h$. From Proposition~\ref{PropCollapseTTMap} we have a bijection denoted $L_S \leftrightarrow L_T = \pi(L_S)$ between concrete lines $L_S \subset S$ representing generic leaves of $\Lambda$ and concrete lines $L_T \subset T$ representing generic leaves of $\Lambda(F;\F)$. Applying Lemma~\ref{LemmaLineBddCancellation} again we have a bijection $L_U \leftrightarrow L_S$ between concrete lines $L_U \subset U$ representing generic leaves of the filling attracting lamination $\Lambda$ and concrete lines $L_S \subset S$ representing generic leaves of $\Lambda$, and this bijection satisfies the property
$$L_S \subset g(L_U) \subset N_{C(g)} L_S
$$
where $C(g) \ge 0$ is a cancellation constant for $g$. Putting these altogether, and using that equivariant maps between free splittings are Lipschitz, it follows that the composed bijection $L_U \leftrightarrow L_S \leftrightarrow L_T \leftrightarrow L_{T_i}$ has the property that
$$L_{T_i} \subset h_i \circ \pi \circ g(L_U) \subset N_{C'}(L_{T_i})
$$
for some constant $C' \ge 0$. These latest inclusions tells us that the line $L_{T_i}$ has bounded Hausdorff distance from the image $h_i \circ \pi \circ g(L_U)$. But the maps $h_i \circ \pi \circ g$ and $\rho$ differ by a constant, and hence the image $h_i \circ \pi \circ g(L_U)$ and the  path $\rho(L_U)$ have bounded Hausdorff distance. It follows that the line $L_{T_i}$ and the path $\pi(L_U)$ have bounded Hausdorff distance, and therefore $\pi(L_U)$ is a line equal to $L_{T_i}$, completing the proof of~\pref{ItemLamUnderCollapse}.
\end{proof}

\subparagraph{Remark.} The proofs of Propositions~\ref{PropLamCollapsed} and~\ref{PropAxisInFSEmbellished} raise the following question. Consider a nested pair of free factor systems $\A \sqsubset \F$ of $\Gamma$, and consider an outer automorphism $\phi \in \Out(\Gamma)$ which fixes both $\A$ and $\F$ and hence $\phi \in \Out(\Gamma;\A) \intersect \Out(\Gamma;\F)$. Is there a conceptual way in which one may identify the attracting laminations of $\phi$ rel~$\F$ with those attracting laminations of $\phi$ rel~$\A$ that are not supported by~$\F$\,? We take this topic up in the sequel \cite{HandelMosher:RelComplexHypIII}, applying tools of Dowdall and Taylor from \cite{DowdallTaylor:cosurface}.

\section{Proving the \TOAT}
\label{SectionTwoOverAllProof}

The \TOAT, in its full ``iterated'' form stated in Section~\ref{SectionTOAStatement}, gives a uniform exponential growth property satisfied by any foldable map $f \from S \to T$ between any two free splittings $S,T$ of $\Gamma$ rel~$\A$, with an integer constant $\Delta=\Delta(\Gamma;\A)$ independent of $S$, $T$ and $f$: for any integer $n \ge 1$, if the free splitting complex distance $d_\FS(S,T)$ is at least $n\Delta$ then in $S$ one can find two natural edges $E_1,E_2 \subset S$ in distinct orbits such that each of the images $f(E_1),f(E_2)$ contains $2^{n-1}$ nonoverlapping subpaths each of which crosses a translate of every natural edge of $T$.

The first two subsections carry out some preliminary work. Section~\ref{SectionUniteratedReduction} uses a simple exponential growth argument to reduce the general case of the \TOAT\ to the special ``uniterated'' case where $n=1$.   Section~\ref{SectionDistanceBounds} collects various distance bounds in $\FS(\Gamma;\A)$ needed in later sections. Section~\ref{SectionDistanceBounds} also contains Definition~\ref{DefPriority} regarding the ``fold priority construction'', which formalizes a method for exploiting the non-uniqueness of a Stallings fold path in order to find such paths that are particularly useful; this concept of ``fold priority'', which is already implicit in the proofs of \cite[Lemma 5.2 (3)]{\FSOneTag} and in \cite[Lemma 4.13 (3a)]{\RelFSOneTag}, will play a central role throughout the proof of the \TOAT.

Section~\ref{SectionTwoOverAllOutline} breaks the proof of the uniterated case into a three step outline, those steps being carried out in the three subsections~\ref{SectionOneNatOverAll}---\ref{SectionTwoOverAllStepThree}. 

\paragraph{A sketch of the special case $\FS(F_n)$.} Before launching into formal details of the proof of the \TOAT\ starting with Section~\ref{SectionUniteratedReduction}, here we sketch out some key ideas in a special case, using the same three step outline that is laid out for the general case in Section~\ref{SectionTwoOverAllOutline}. The case we consider is that $f \from S \to T$ is a foldable map between two Grushko free splittings of~$F_n$, and so $f$ descends to a foldable map $F \from G \to H$ between the quotient graphs-of-groups $G = S / F_n$, $H = T/F_n$, which we may regard as a homotopy equivalence between two marked graphs representing two vertices of $\FS(F_n)$.

In this sketch we shall work entirely downstairs in the realm of homotopy equivalences of marked graphs which preserve the marking. We assume that the reader is familiar with that realm; this is indeed the exact setting where we first perused the issues of the \TOAT. We focus solely on the conclusion of the ``uniterated'' case of the \TOAT, showing that if the distance in $\FS(F_n)$ between $G$ and $H$ is sufficiently large then there exist two natural edges of $G$ such that, when mapped over by $F$, each of the two resulting paths in $H$ crosses every natural edge of~$H$.

\smallskip
\textbf{Step 1: One natural edge over all edges (Section~\ref{SectionOneNatOverAll}).} We show that if the distance in $\FS(F_n)$ between $G$ and $H$ is sufficiently large then there exists a natural edge $E \subset G$ such that the map $F$ restricts to a path $F \from E \to H$ that crosses every edge of~$H$. While this conclusion is stronger than the equation $F(E)=H$, it is only slightly stronger. Ignoring this difference for purposes of this sketch, we assume for every natural edge $E \subset G$ that $F(E) \ne H$, and we find an upper bound to the distance between $G$ and $H$ in $\FS(F_n)$.

For the first idea (and see ``Folding to increase rank'' in Section~\ref{SectionOneNatOverAll}), consider the following \emph{much} stronger assumption:
\begin{itemize}
\item Every natural edge $E \subset G$ has distinct endpoints and the restriction $F \from E \to H$ is injective (hence the subgraph $F(E) \subset H$ has rank~$0$ and so the restriction is not surjective).
\end{itemize}
Enumerating the natural edges $E_1,\ldots,E_K \subset G$, we have $K \le 3n-3$ (for a general group $\Gamma$ and free factor system~$\A$, the general bound is $K \le \MaxEdges(\Gamma;\A)$). 

Under the much stronger assumption, and using that that $E_1 \union \ldots \union E_K=G$, it follows that every edge $e \subset H$ has at most one preimage edge in each $E_k$ and hence at most $K$ preimage edges in $G$. One then applies a result of Bestvina and Feighn \cite[Lemma 4.1]{BestvinaFeighn:subfactor} to obtain an upper bound on $d_\FS(G,H)$ that is a linear function of $K$ (the general bound in $\FS(\Gamma;\A)$ is found in Proposition~\ref{PropsEdgeInverseDiamBound}); in the present situation where $K \le 3n-3$ we thus obtain a bound on $d_\FS(G,H)$ depending only on~$n$.

And while the assumption above is indeed much too strong, it motivates the second main idea (and see the ``Jumping Bound'' in item~\pref{ItemFoldEstimate} of Section~\ref{SectionOneNatOverAll}): Consider how each natural edge $E_k$ evolves under a Stallings fold factorization of $F \from G \to H$. The edge $E_k$ starts out as a subgraph of rank~$0$ (or rank~$1$ if its endpoints are equal). As one moves along the factorization the ranks of the images of $E_k$ form a nondecreasing sequence bounded above by $n-1$; that rank sequence can therefore jump at most $n-1$ times. And then, since $K \le 3n-3$, any fold factorization $F \from G = G_0 \mapsto G_1 \mapsto \cdots \mapsto G_L = H$ breaks naturally into a concatenation of a uniformly bounded number of fold subsequences --- that bound being $(n-1) (n-3) = n^2-4n+3$ --- such that along each such subsequence, and for each $k=1,\ldots,K$, the ranks of the images of $E_k$ do not jump. 

The third idea is this (and see the heading ``Folding to injectivity'' in Section~\ref{SectionOneNatOverAll}). The problem is now reduced to bounding the distance between the marked graphs $G_i$, $G_j$ at the beginning and end of each ``nonjumping'' subsequence. By focussing on the foldable map $G_i \mapsto G_j$, one is now in a situation somewhat like the ``much stronger assumption'' above: one write $G_i = \beta_1 \union \cdots \union \beta_K$ where $\beta_k$ is the image of $E_k$, and such that the image of each $\beta_k$ in $G_j$ is a subgraph of the same rank as $\beta_k$. But there is a problem, in that the restricted map $\beta_k \to G_j$ need no longer be injective. This problem is resolved by carefully prioritizing the choice of a fold factorization of the map $G_i \mapsto G_j$: first fold $\beta_1$ until no more folding is possible; next fold $\beta_2$ until no more folding is possible; and so on, through the whole list $\beta_1,\beta_2,\ldots,\beta_K$. Once again we get a distance bound: for as long as one is prioritizing folds in (the image of) $\beta_k$, one moves at most distance~$2$ in the free splitting complex, because no edges in the complement of $\beta_k$ are ever folded (see Lemma~\ref{LemmaInjOverEdgeBound}).

\smallskip
\textbf{Step 2: One natural edge over all natural edges (Section~\ref{SectionOneNatOverAll}).} In this step, we strengthen Step 1 by showing that if $d_\FS(G,H)$ has a certain lower bound then there exists a natural edge $E \subset G$ such that its image $F(E) \subset H$ crosses every \emph{natural} edge of~$H$. By applying Step 1 we may assume that $f(E)$ crosses every edge (of the given subdivision) of $H$. Using that $f(E)$ has \emph{only two} endpoints, one proves that the conclusions of Step 2 fail for \emph{at most one} natural edge $E' \subset H$. One then further observes a very particular pattern that governs how $f(E)$ can overlap itself in the interior of~$E'$. First, the path $f(E)$ starts at an initial point $p$ in the interior of $E'$ and crosses one of the two subpaths into which $p$ subdivides $E'$. Next, the path continues by crossing full natural edges of $H$ distinct from $E'$. Finally, the path ends at a terminal point $q$ in the interior of $E'$, having just crossed one of the two subpaths into which $q$ subdivides $E'$. One then figures out how to slightly increment the distance bound from Step 1 in order to avoid this pattern, thus obtaining a distance bound that works for Step 2.

\subparagraph{Step 3: Another natural edge over all natural edges (Section~\ref{SectionTwoOverAllStepThree}).} Knowing that there is a natural edge $E^* \subset G$ such that $f(E^*)$ crosses every natural edge of $H$, and assuming that this property characterizes $E^*$ uniquely, one must deduce a bound on $d_\FS(G,H)$. 

For this problem we were inspired by \cite[Corollary 3.2.2]{\BookOneTag} which is concerned with a self-homotopy equivalence $F : G \to G$. The essence of that corollary if $E \subset G$ is a natural edge, if $F(E)$ crosses $E$, and if no other natural edge of $G$ has an $F$ image that crosses $E$, then $F(E)$ crosses $E$ exactly once. The proof uses a Stallings fold argument.

The trick that gets our proof going is to assume that $d_\FS(G,H)$ is \emph{at least as large as} the bound in Step 2, and then to use that assumption to obtain a foldable factorization of the form
$$F : G = G_0 \xrightarrow{F_1} G_1 \xrightarrow{F_2} G_2 = H
$$
such that $d_\FS(G_1,G_2)$ is \emph{exactly equal to} the bound in Step 2. Thus there exists a natural edge $E^\sharp \subset G_1$ such that $F_2(E^\sharp)$ crosses \emph{every} natural edge of $G_2=H$. But then, from the uniqueness of $E^*$ regarding its image under $F(E^*)$, we deduce a sharper uniqueness property regarding its image $F_1(E^*)$, saying that $E^*$ is the \emph{unique} natural edge of $G=G_0$ such that $F_1(E^*)$ crosses \emph{one specific} natural edge of the marked graph $G_1$, namely $E^\sharp$. And now --- despite that $F_1$ is not a \emph{self}-homotopy equivalence --- the situation is similar enough to \cite[Corollary 3.2.2]{\BookOneTag} that, inspired by its proof, we found a Stallings fold argument with a similar conclusion that $F_1(E^*)$ crosses $E^\sharp$ exactly once. From this we can deduce a bound on $d_\FS(G_0,G_1)$ and thus, adding to it the bound already found for $d_\FS(G_1,G_2)$, we obtain a bound on $d_\FS(G,H)$.

\subsection{Reducing to the uniterated case}
\label{SectionUniteratedReduction}

The general \TOAT\ can be thought of as an iterated version of the following special case in which we set the parameter $n$ to equal $1$. And in fact the general case follows quickly from this special case, as we shall show.

\begin{uniterated}
For any group $\Gamma$ and any free factor system~$\A$ of $\Gamma$ there exists an integer constant $\Delta=\Delta(\Gamma;\A) > 0$ such that for any free splittings $S,T$ of $\Gamma$ rel~$\A$, and any foldable map $f \from S \to T$, if $d_\FS(S,T) \ge \Delta$ then there exist two natural edges $E_1,E_2 \subset S$ in different $\Gamma$-orbits such that for each natural edge $E' \subset T$, each of the paths $f(E_1)$, $f(E_2)$ crosses some edge in the orbit $\Gamma \cdot E'$.
\end{uniterated}

\begin{proof}[Proof of the iterated version \TOAT, assuming the uniterated version] \quad\break
Using the same integer constant $\Delta \ge 1$ as in the uniterated version, we proceed by induction on $n \ge 1$. For the base case, given a fold map $f \from S \to T$ between two free splittings $S,T$ of $\Gamma$ rel~$\A$ such that $d_\FS(S,T) \ge \Delta$, the uniterated version gives us two natural edges $E_0,E_1 \subset S$ in distinct orbits such that each path $f(E_i)$ fully crosses $T$; this proves the conclusion of the iterated version for the case $n=1$.

Assuming by induction that $n \ge 2$ and that the iterated version holds for $n-1$, consider $f \from S \to T$ such that $d_\FS(S,T) \ge n\Delta$. Apply the Stallings Fold Theorem \ref{TheoremStallings} to obtain a fold factorization
$$f \from S=T_0 \mapsto\cdots\mapsto T_K=T
$$
such that that $d(T_{k-1},T_k) \le 1$ for all $k$.

Let $k \in \{1,\ldots,K-1\}$ be the first index such that $d_\FS(T_0,T_k) \ge \Delta$. Since $d(T_0,T_{k-1}) < \Delta$ and $d(T_{k-1},T_k) \le 1$ it follows that $d_\FS(T_0,T_k)=\Delta$, and by the triangle inequality we then have $d_\FS(T_k,T_K) \ge (n-1)\Delta$. 
%
%
%
%
By induction there exist two natural edges $E'_1,E'_2 \subset T_k$ in distinct $\Gamma$-orbits such that each of the two paths $f^k_K(E'_1), f^k_K(E'_2) \subset T_K$ has $2^{n-2}$ nonoverlapping subpaths, each of which fully crosses $T_K$. 

Since $d(T_0,T_k) \ge \Delta$ we can apply the uniterated version of the \TOAT, to obtain two natural edges $E_1,E_2 \subset T_0=S$ in distinct $\Gamma$-orbits such that for $i=1,2$ the path $f^0_k(E_i)$ fully crosses $T_k$; in particular, $f^0_k(E_i)$ crosses translates of $E'_1$ and $E'_2$, which are of course nonoverlapping subpaths of $f^0_k(E_i)$. In the path $f(E_i) = f^K_k \circ f^k_0(E_i)$ we therefore obtain $2$ nonoverlapping subpaths, namely translates of $f^k_K(E'_1)$ and $f^k_K(E'_2)$, each of which has $2^{n-2}$ nonoverlapping fully crossing subpaths, and so in $f(E_i)$ we obtain $2 \times 2^{n-2}=2^{n-1}$ nonoverlapping fully crossing subpaths.
%
%
\end{proof}


\subsection{Some distance bounds}
\label{SectionDistanceBounds}
In this section we collect some distance bounds that will be used in proving the uniterated case of the \TOAT, starting with the following basic bounds:
\begin{description}
\item[Free factor chain bound (\protect{Lemma~\ref{LemmaKrank}~\pref{ItemKRankLength}}):] The length $L$ of any strictly nested chain of free factors $F_0 < F_1 < \cdots < F_L$ of $\Gamma$ rel~$\A$ is bounded by
$$L \le \Krank(\Gamma) = \abs{\A} + \corank(\A)
$$
\item[Natural edge orbit bound (\protect{\cite[Proposition 3.4]{HandelMosher:RelComplexHyp}}):] For any free splitting $T$ of $\Gamma$ rel~$\A$, the number of natural edge orbits of $T$ is at most
$$\MaxEdges(\Gamma;\A) = 2 \abs{\A} + 3 \corank(\A) - 3
$$
\end{description}

\paragraph{Bounding diameters of fold paths.} The freedom of choice in the Stallings fold theorem (see Theorem~\ref{TheoremStallings}~\pref{ItemArbitraryFolds}) makes clear that fold factorizations are not unique. While in some applications this nonuniqueness is an undesirable feature to be avoided (see \cite[Section~4.1]{\RelFSOneTag}), here we take it as a feature to be exploited, giving us a flexibility one can use to obtain upper bounds on distances in $\FS(\Gamma;\A)$, as was done for instance in \cite{BestvinaFeighn:subfactor,\RelFSOneTag,\FSTwoTag}. In this section we review and formalize several such techniques. 

To fix some terminology and notation, when referring to diameter in $\FS(\Gamma;\A)$ we will regard ``foldable sequences'' in general and ``foldable paths'' in particular as subsets of the $0$-skeleton of $\FS(\Gamma;\A)$. So, for example,
$$\diam(T_0 \mapsto T_1 \mapsto\cdots\mapsto T_N) = \diam\{T_0,T_1,\ldots,T_n\}
$$

The following diameter bound is a straightforward generalization of the similar bound found in \cite[Lemma 5.5]{\FSHypTag}; see the proof of that lemma up to the top of \cite[Page 1630]{\FSHypTag}). Given a function $f \from X \to Y$ and a subset $B \subset Y$, to say that \emph{$f$ injective over $B$} means that $f$ is injective on $f^\inv(B)$, i.e.\ for any $p,q \in X$, if $f(p)=f(q) \in B$ then $p=q$.

\begin{lemma}
\label{LemmaInjOverEdgeBound}
For any foldable map $f \from S \to T$ which is a simplicial map, and for any edge $e' \in \ET$, if $f$ is injective over the interior $e'$ then, letting $e \in \ES$ be the unique edge of $S$ such that $f(e)=e'$, the map $f$ induces an equivariant homeomorphism $S / \Gamma \cdot e \approx T / \Gamma \cdot e'$. It follows that $d_\FS(S,T) \le 2$. \qed
\end{lemma}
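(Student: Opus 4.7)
My plan is to identify the uniquely determined edge $e$ first, then push $f$ down to a map of quotients $\bar f \from \bar S = S/\Gamma\cdot e \to \bar T = T/\Gamma\cdot e'$ and show $\bar f$ is an equivariant simplicial isomorphism. The distance bound will then follow from the two collapse relations $S \collapsesto \bar S$ and $T \collapsesto \bar T$ together with $\bar S \approx \bar T$.

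\textbf{Step 1 (uniqueness of $e$, and the orbits match up).} Since $f$ is foldable it is injective on every edge, and since it is simplicial each edge of $S$ maps to an edge of $T$. If $\tilde e_1, \tilde e_2 \subset S$ were two distinct edges both mapping to $e'$ then any interior point $x \in e'$ would admit two distinct preimages, contradicting injectivity of $f$ over the interior of $e'$. So there is a unique $e \in \ES$ with $f(e) = e'$. By equivariance $f(\gamma \cdot e) = \gamma \cdot e'$ for all $\gamma \in \Gamma$. Conversely, if $\tilde e \in \ES$ satisfies $f(\tilde e) \in \Gamma \cdot e'$, say $f(\tilde e) = \gamma \cdot e'$, then $f(\gamma^\inv \tilde e) = e' = f(e)$ and the uniqueness argument forces $\gamma^\inv \tilde e = e$, i.e.\ $\tilde e \in \Gamma \cdot e$. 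Hence $f^\inv(\Gamma\cdot e') = \Gamma \cdot e$ on the level of edges, and $f$ restricts to a $\Gamma$-equivariant bijection $\Gamma\cdot e \to \Gamma\cdot e'$.

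\textbf{Step 2 (the induced quotient map).} Let $\pi_S \from S \to \bar S$ and $\pi_T \from T \to \bar T$ be the simplicial collapse maps of the $\Gamma$-invariant subforests $\Gamma \cdot e$ and $\Gamma \cdot e'$. Each component $C$ of $\Gamma \cdot e$ is a union of translates of $e$ meeting at common vertices; by equivariance $f(C)$ is a connected subgraph of $\Gamma \cdot e'$, hence lies in a single component of $\Gamma\cdot e'$. Thus $\pi_T \circ f$ factors through $\pi_S$, yielding a well-defined equivariant map $\bar f \from \bar S \to \bar T$ with $\pi_T \circ f = \bar f \circ \pi_S$. By Step~1 the map $\bar f$ induces a $\Gamma$-equivariant bijection between edges of $\bar S$ (the edges of $S$ outside $\Gamma \cdot e$) and edges of $\bar T$ (the edges of $T$ outside $\Gamma \cdot e'$), and it carries each such edge to an edge by an isometry (as $f$ does so on the corresponding edge of $S$). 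Local injectivity at each vertex of $\bar S$ follows from the fact that $\pi_S$ collapses each component of $\Gamma \cdot e$ to a single point whose image under $\bar f$ is the $\pi_T$-image of the corresponding component of $\Gamma\cdot e'$; two distinct directions at such a vertex come from distinct edges outside $\Gamma\cdot e$, whose $f$-images are distinct edges outside $\Gamma \cdot e'$ with well-defined distinct directions in $\bar T$. Lemma~\ref{LemmaTreeMapInjective} then yields that $\bar f$ is injective. Surjectivity is automatic from the surjectivity of $f$ (which holds by minimality of the $\Gamma$-action on $T$), so $\bar f$ is an equivariant simplicial isomorphism.

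\textbf{Step 3 (distance bound).} We have simplicial collapse maps $S \xrightarrow{\<\Gamma\cdot e\>} \bar S$ and $T \xrightarrow{\<\Gamma\cdot e'\>} \bar T$, together with the equivalence $\bar S \approx \bar T$ provided by $\bar f$. Therefore in $\FS(\Gamma;\A)$ we have $d_\FS(S,\bar S) \le 1$ and $d_\FS(T,\bar T) \le 1$, while $\bar S$ and $\bar T$ represent the same vertex. The triangle inequality gives $d_\FS(S,T) \le 2$. The only mildly delicate point in this plan is verifying that $\bar f$ is locally injective at vertices that arise from collapsing a non-trivial component of $\Gamma \cdot e$; this is handled by the edge-orbit bijection together with $\Gamma$-equivariance.
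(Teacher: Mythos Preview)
Your Step~1 is correct and isolates the crucial fact: $f$ restricts to an equivariant bijection $\Gamma\cdot e \to \Gamma\cdot e'$. Step~2, however, has a real gap. You claim $\bar f$ is a bijection from edges of $\bar S=S/\Gamma\cdot e$ to edges of $\bar T=T/\Gamma\cdot e'$, i.e.\ from edges of $S\setminus\Gamma\cdot e$ to edges of $T\setminus\Gamma\cdot e'$; but injectivity of $f$ is only assumed over $\Gamma\cdot e'$, so nothing prevents $f$ from identifying two edges outside $\Gamma\cdot e$. Concretely: take $\Gamma=F_2=\langle a,b\rangle$, let $T$ be the Cayley tree, let $S$ be the Bass--Serre tree of the dumbbell graph (loop $a$ at a vertex $u$, loop $b$ at a vertex $w$, edge $c$ from $u$ to $w$), and define $f$ by $a\mapsto a$, $b\mapsto b$, $c\mapsto a$. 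This $f$ is simplicial, foldable (two gates at $u$, three at $w$), and injective over the $b$-orbit, yet $S/\Gamma b$ has two edge orbits and $T/\Gamma b$ has one. Your local-injectivity argument fails at $u$, where the directions $a$ and $c$ lie in the same gate.

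The fix is to collapse the \emph{complementary} forests $S\setminus\Gamma\cdot e$ and $T\setminus\Gamma\cdot e'$ rather than $\Gamma\cdot e$ and $\Gamma\cdot e'$. The quotients are then one-edge-orbit free splittings whose edge sets are precisely $\Gamma\cdot e$ and $\Gamma\cdot e'$, and your Step~1 bijection becomes the full edge map; an equivariant simplicial edge-bijection between trees is automatically an isomorphism (a nontrivial reduced edge path maps to a nontrivial reduced edge path, so no two vertices can be identified). Your Step~3 then gives $d_\FS(S,T)\le 2$ unchanged. This is the argument in the reference the paper cites; the homeomorphism clause as printed appears to have the collapse forest reversed, but only the distance bound is used downstream, and that survives.
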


\smallskip
The following lemma formalizes a method used for example in \cite[Lemma 4.2]{\FSHypTag} to verify distance bounds. See Lemma~\ref{PropPriorityBound} for the bounds we attain by using this method in conjunction with repeated application of~Lemma~\ref{LemmaInjOverEdgeBound}.


\begin{definition}[Prioritizing folds]
\label{DefPriority}
Consider a foldable map $f \from S \to T$, a nondegenerate subgraph $Z \subset S$, and a partial fold factorization
$$f \from S=T_0 \mapsto\cdots\mapsto T_J \mapsto T
$$
We denote $Z_j = f^0_j(Z)$ ($j=0,\ldots,J$). To say that this partial factorization \emph{prioritizes folding $Z$} means the following:
\begin{enumerate}
\item\label{ItemPriorityFolds}
For each $1 \le j \le J$ there exists a foldable turn $\{d,d'\}$ in $Z_{j-1} \subset T_{j-1}$ with respect to the map $T_{j-1} \mapsto T$, and initial segments $\eta,\eta'$ of the oriented natural edges of $T_{j-1}$ that represent $d,d'$ (resp.), such that $\eta,\eta' \subset Z_{j-1}$ and such that $f_j \from T_{j-1} \to T_{j}$ folds the segments $\eta,\eta'$. 
\item\label{ItemPriorityDone}
Every turn in $Z_J$ is nonfoldable with respect to the map $T_J \mapsto T$. 
\end{enumerate}
These defining properties \pref{ItemPriorityFolds}, \pref{ItemPriorityDone} of ``prioritization'' imply the following additional properties:
\begin{enumeratecontinue}
\item\label{ItemInjOnComps} The map $T_J \mapsto T$ restricts to an embedding of each component of $Z_J$.
\item For each $1 \le j \le J$, the map $f_j \from T_{j-1} \to T_j$ satisfies the following:
\begin{enumerate}
\item\label{ItemPriorityInjOverEdge} for each edge $e \subset T_j \setminus Z_j$, the map $f_j$ is injective over the interior of $e$.
\item\label{ItemCompBij}
The map $f_j$ induces a $\Gamma$-equivariant bijection from the set of components of $Z_{j-1}$ to the set of components of $Z_j$.
\end{enumerate}
\item\label{ItemStayApart}
For each $1 \le j \le J$, the following hold in $T_j$:
\begin{enumerate}
\item The two nondegenerate subgraphs $Z_j=f^0_j(Z)$ and $f^0_j(T_0 \setminus Z)$ have no edgelets in common.
\item\label{ItemNoneOutIDWithAnyIn}
No edgelet of $T_0 \setminus Z$ is identified with any other edgelet of $T_0$ by $f^0_j$.
\end{enumerate}
\end{enumeratecontinue}
Property~\pref{ItemInjOnComps} follows from Lemma~\ref{LemmaTreeMapInjective}. Property~\pref{ItemPriorityInjOverEdge} is an evident consequence of the definition of folds combined with Property~\pref{ItemPriorityFolds}. To prove item~\pref{ItemCompBij}, using that $f_j \from Z_{j-1} \to Z_j$ is a quotient map, it suffices to note that for all $x,x' \in Z_{j-1}$, if $f_j(x)=f_j(x')$ then $x,x' \in g \cdot (\eta\union\eta')$ for some $g \in \Gamma$, but $\eta\union\eta'$ is connected and so $x,x'$ are contained in the same component of $Z_{j-1}$. Property~\pref{ItemStayApart} follows by induction on $j$, using property~\pref{ItemPriorityFolds}.

This completes Definition~\ref{DefPriority} and the verification of its extra properties~\pref{ItemPriorityInjOverEdge} and~\pref{ItemCompBij}.
\end{definition}


\begin{proposition}
\label{PropPriorityExists}
For any foldable map $f \from S \to T$ and any nondegenerate subgraph $Z \subset S$, there exists a partial fold factorization of $f$ that prioritizes $Z$.
\end{proposition}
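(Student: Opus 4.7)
The plan is to build the required partial fold factorization inductively, at each stage exercising the freedom of Theorem~\ref{TheoremStallings}~\pref{ItemArbitraryFolds} to choose a foldable turn that lies inside the image of $Z$, and then invoking Theorem~\ref{TheoremStallings}~\pref{ItemFoldSequenceBound} to force the process to terminate.

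First I would set up the inductive construction. Starting from $T_0 = S$, $Z_0 = Z$, and $h^0 = f$, suppose we have already produced a partial fold factorization with maximal folds of length $j-1$,
$$f \from S = T_0 \xrightarrow{f_1} T_1 \xrightarrow{f_2} \cdots \xrightarrow{f_{j-1}} T_{j-1} \xrightarrow{h^{j-1}} T,$$
such that for every $i=1,\ldots,j-1$ the fold $f_i$ folds a turn lying in $Z_{i-1}$ with respect to $h^{i-1}$. Two cases arise. If there is no foldable turn in $Z_{j-1}=f^0_{j-1}(Z)$ with respect to $h^{j-1}$, then I stop and set $J=j-1$; property~\pref{ItemPriorityDone} of Lemma/Definition~\ref{DefPriority} holds by the stopping criterion and property~\pref{ItemPriorityFolds} holds by induction. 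Otherwise I pick any foldable turn $\{d,d'\}$ in $Z_{j-1}$ with respect to $h^{j-1}$ and apply Theorem~\ref{TheoremStallings}~\pref{ItemArbitraryFolds} to extend the factorization by the maximal fold of $\{d,d'\}$, producing the next free splitting $T_j$ together with the foldable map $h^j \from T_j \to T$, still with maximal folds.

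The one substantive point is termination. Here I would simply invoke Theorem~\ref{TheoremStallings}~\pref{ItemFoldSequenceBound}, which supplies an upper bound on the length of \emph{any} partial fold factorization of $f$ with maximal folds. Since every stage of our construction produces such a partial fold factorization, the induction cannot continue indefinitely; eventually the ``no foldable turn in $Z_{j-1}$'' alternative must occur, yielding the desired $J$ at which both properties~\pref{ItemPriorityFolds} and~\pref{ItemPriorityDone} hold.

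I do not expect a serious obstacle. The only subtlety worth double-checking is that the restriction of our choices to turns in $Z_{j-1}$ does not conflict with the hypotheses of Theorem~\ref{TheoremStallings}~\pref{ItemArbitraryFolds}, but this is fine: that theorem guarantees that \emph{any} foldable turn of $h^{j-1}$ may be chosen as the next fold while preserving maximality of the extended factorization, so insisting on foldable turns lying in $Z_{j-1}$ is permitted whenever such turns exist.
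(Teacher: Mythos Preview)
Your proposal is correct and follows essentially the same approach as the paper: invoke Theorem~\ref{TheoremStallings} to construct a partial fold factorization by maximal folds, always choosing a foldable turn in $Z_j$ when one exists, and stop at the first $J$ where none exists; termination comes from item~\pref{ItemFoldSequenceBound}. The paper's proof is simply a one-sentence version of what you wrote.
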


\begin{proof} Apply Theorem~\ref{TheoremStallings} to construct a fold factorization of $f$ by maximal folds, always choosing a foldable turn $d,d'$ in $Z_j$ represented by oriented natural edges $E,E'$, and always choosing to fold initial segments $\eta \subset E$, $\eta' \subset E'$ that are maximal subject to the requirement that $\eta,\eta' \subset Z_j$ and that $\eta,\eta'$ have the same image in $T$. Continue until the first moment $j=J$ that no foldable turn exists in~$Z_J$. 
\end{proof}


\begin{proposition}
\label{PropPriorityBound}
Consider a foldable map $f \from S \to T$ and a partial fold factorization 
$$f \from S=T_0 \mapsto\cdots\mapsto T_J \mapsto T
$$
If there exists a proper, nondegenerate subgraph $Z \subset S$ that is prioritized by this factorization then
$$\diam(T_0 \mapsto\cdots\mapsto T_J) \le 2
$$
\end{proposition}

\begin{proof}  Subdivide each $T_j$ at the inverse image of the vertex set of $T_J$. Adopting the notation of Definition~\ref{DefPriority}, applying properness of $Z=Z_0 \subset T_0$ as a basis step, and applying Definition~\ref{DefPriority}~\pref{ItemPriorityFolds} by induction, it follows that $Z_j \subset T_j$ is proper for $j=0,\ldots,J$. Choosing an arbitrary edge $e_J \subset T_J \setminus Z_J$, and applying Definition~\ref{DefPriority}~\pref{ItemPriorityInjOverEdge} by backwards induction, we obtain a sequence of edges $e_j \subset T_j \setminus Z_j$ for $0 \le j \le J$, such that if $j \ge 1$ then $e_{j-1}$ is the unique edge of $T_{j-1}$ mapping to $e_j$. Repeated application of Lemma~\ref{LemmaInjOverEdgeBound} produces a sequence of equivariant homeomorphisms
$$U \equiv T_0 / \Gamma \cdot e_0 \approx T_1 / \Gamma \cdot e_1 \approx\cdots\approx T_J / \Gamma \cdot e_J
$$
It follow that $d(T_j,U) \le 1$ for each $j$, hence $d(T_i,T_j) \le 2$ for all $i,j = 0,\ldots,J$.
\end{proof}

\begin{proposition}[c.f. \protect{\cite[Lemma 4.1]{BestvinaFeighn:subfactor}}]
\label{PropsEdgeInverseDiamBound}
Consider a foldable map $f \from S \to T$, and subdivide $S$ at $f^\inv(\VT)$ so that $f$ is a simplicial map. For any $e \in \Edges(T)$, letting $f^*(e) \subset S$ denote the subgraph of all edges of $S$ mapped to~$e$, and letting $\abs{f^*(e)}$ denote the number of such edges, the diameter in $\FS(\Gamma;\A)$ of any fold factorization of $f$ is bounded by $4 \abs{f^*(e)}$.
\end{proposition}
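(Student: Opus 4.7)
}

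The plan is to deduce the diameter bound from a stronger, uniform \emph{distance} bound: for every foldable map $g\from U \to T$ and every $e \in \Edges(T)$, one has $d_\FS(U,T) \le 2\abs{g^*(e)}$. Given any fold factorization $f\from S=S_0 \mapsto \cdots \mapsto S_L = T$ of $f$, each composed map $g_l\from S_l \to T$ is foldable (Proposition~\ref{PropFoldableProps}\pref{ItemFoldableFactorization}), and by following the effect of folds one checks that $\abs{g_l^*(e)}$ is nonincreasing in $l$, with initial value $\abs{f^*(e)} = m$. Applying the distance bound to every $g_l$ gives $d_\FS(S_l,T) \le 2m$, and the triangle inequality then yields $d_\FS(S_i,S_j) \le 4m$, which is the required diameter bound.

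Next I would prove the distance bound $d_\FS(U,T) \le 2\abs{g^*(e)}$ by induction on $m=\abs{g^*(e)}$. The base case $m=1$ is immediate from Lemma~\ref{LemmaInjOverEdgeBound}: the unique preimage edge gives an equivariant homeomorphism after collapsing $\Gamma\cdot e$, whence $d_\FS \le 2$. For the inductive step, the aim is to exhibit a partial fold factorization $S=U_0 \mapsto \cdots \mapsto U_J$ of $f$ of diameter at most $2$ in $\FS$, such that the induced foldable map $h\from U_J \to T$ satisfies $\abs{h^*(e)} \le m-1$; the inductive hypothesis applied to $h$ then gives $d_\FS(U_J,T) \le 2(m-1)$, and the triangle inequality closes the loop with $d_\FS(S,T) \le 2 + 2(m-1) = 2m$.

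To construct this partial factorization I would apply the priority framework of Lemma/Definition~\ref{DefPriority} together with Proposition~\ref{PropPriorityExists} and Proposition~\ref{PropPriorityBound}, choosing the proper $\Gamma$-invariant subforest $Z = f^{-1}(\Gamma\cdot e) \subset S$; the edge case in which $\Gamma\cdot e$ exhausts all edge orbits of $T$ can be reduced to the generic case by first collapsing an edge orbit of $T$ using Lemma~\ref{LemmaInjOverEdgeBound}. Priority folding of $Z$ contributes at most $2$ to the diameter, and conclusion \pref{ItemInjOnComps} of Lemma/Definition~\ref{DefPriority} guarantees that after folding, each component of $h^{-1}(\Gamma\cdot e) \subset U_J$ embeds into $T$, so each such component contributes at most one edge to $h^*(e)$.

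The main technical obstacle, and the place to be most careful, is verifying that $\abs{h^*(e)}$ strictly drops. In the \emph{generic} case, some vertex of $f^*(e)$ has degree $\ge 2$ in the subgraph $f^*(e)$. At such a vertex $v$, two edges of $f^*(e)$ share $v$; because each maps to $e$ and the unique direction at $f(v)\in\{a,b\}$ pointing along $e$ is the same for both, the two directions lie in a single gate of $f$ at $v$, producing a foldable turn whose prioritization strictly decreases the edge count in $h^{-1}(e)$. The delicate case is when $f^*(e)$ is a disjoint union of $m$ isolated edges sharing no common vertex. Here priority folding of $Z$ alone may do nothing, and one must enlarge $Z$ to include well-chosen connecting paths between two representative edges of $f^*(e)$, so that the prioritized folds bring these two edges into adjacency at a shared vertex mapping to an endpoint of $e$. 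Verifying that this enlarged $Z$ remains proper, that the prioritized folds can be arranged to produce a foldable turn between the two chosen edges, and that the diameter of the resulting partial factorization stays bounded by $2$, is the crux of the argument and the part that requires the most care.
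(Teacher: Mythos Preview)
Your opening reduction --- bounding $d_\FS(U,T) \le 2\abs{g^*(e)}$ for every foldable $g\from U \to T$ and then using monotonicity of $\abs{g_l^*(e)}$ along an arbitrary factorization --- is a clean way to obtain the ``any fold factorization'' conclusion, and the base case $m=1$ via Lemma~\ref{LemmaInjOverEdgeBound} is fine.

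The genuine gap is in the inductive step, precisely at the ``delicate case'' you flag. Prioritizing $Z = \Gamma\cdot f^*(e)$ makes $h$ injective on each component of $Z_J$, so $\abs{h^*(e)}$ is at most the number of components of $Z$ meeting $f^*(e)$; but when the $m$ edges of $f^*(e)$ lie in $m$ distinct components of $Z$ this bound is just $m$, and prioritization may effect no identifications among them at all. Your proposed fix --- enlarging $Z$ by connecting paths --- does not work as stated: once such a path and its $\Gamma$--orbit are adjoined you lose control over \emph{which} folds the prioritization performs, there is no mechanism forcing the two chosen preimages of $e$ into a common foldable turn, and the enlarged $Z$ can easily fail to be proper. (The aside about handling the case where $\Gamma\cdot e$ exhausts the edge orbits of $T$ by ``collapsing an edge orbit of $T$'' is also off: if $T$ has a single edge orbit there is no nontrivial collapse available.)

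The paper's proof resolves this by prioritizing the \emph{complementary} forest $Z' = S \setminus \Gamma\cdot f^*(e)$ instead. After that prioritization (diameter $\le 2$ by Proposition~\ref{PropPriorityBound}), every remaining foldable turn $\{d,d'\}$ has at least one direction in $\Gamma\cdot f^*(e)$; but since $d$ and $d'$ have the same image direction in $T$, the other direction lies in the same translate of $f^*(e)$. Hence the next fold necessarily identifies two preimages of (a translate of) $e$, dropping the count by one. Alternating ``prioritize the complement'' with ``fold one pair of preimages'' yields the bound. The moral: rather than trying to fold preimages of $e$ together directly, first fold everything \emph{else} until the only folds left are exactly the ones you want.
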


\begin{proof} Let $S_0 = S$ and let $g_0=f \from S_0 \to T$, so in this notation we have $f^*(e)=g_0^*(e)$. Also let $Z_0 = S_0 \setminus f^*(e)$. Define a partial fold factorization
$$S_0 \xrightarrow{f_1} S_1 \xrightarrow{g_1} T
$$
as follows: if there exist two oriented edges of $g_0^*(e)$ that form a foldable turn with respect to $g_0$, let $f_1$ fold that turn; otherwise let $f_1 \from S_0 \to S_1=S_0$ be the identity map (here we are abusing the concept of ``partial fold factorization'' by allowing identity maps). Thus we have $d(S_0,S_1) \le 2$. Now let $Z_1 = S_1 \setminus \Gamma \cdot \left((g_1)^*(e)\right)$. Extend the partial fold factorization by prioritizing folding $Z_1$ to obtain
$$S_{0} \mapsto S_{1} \mapsto\cdots\mapsto S_{i(1)} \xrightarrow{g_{i(1)}} T
$$
If $g_{i(1)}$ is a homeomorphism then the fold path is complete. Otherwise, let $Z_{i(1)} \subset S_{i(1)}$ be the image of $Z_1$, and note that $Z_{i(1)} = S_{i(1)} \setminus  \Gamma \cdot (g_{i(1)})^*(e)$. By Definition~\ref{DefPriority}~\pref{ItemInjOnComps} the map $g_{i(1)}$ restricts to an injection on $Z_{i(1)}$. For any pair of oriented edges in $S_{i(1)}$ that form a foldable turn with respect to $g_{i(1)}$, at least one of those edges is \emph{not} in $Z_{i(1)}$ and so is in $(g_{i(1)})^*(e)$; but then both of those edges map to $e$ and so both are in $(g_{i(1)})^\inv(e)$. Choose such a pair and fold them, to obtain the next fold map $f_{i(1)+1}$ and extending the partial fold factorization by one more term:
$$S_{0} \mapsto S_{1} \mapsto\cdots\mapsto S_{i(1)} \mapsto S_{i(1)+1} \xrightarrow{g_{i(1)+1}} T
$$
If $g_{i(1)+1}$ is a homeomorphism then the fold path is complete. Otherwise, let $Z_{i(1)+1} = S_{i(1)+1} \setminus \Gamma \cdot (g_{i(1)+1})^*(e)$ and extend the partial fold factorization by prioritizing $Z_{i(1)+1}$ obtaining
$$S_{0} \mapsto S_{1} \mapsto\cdots\mapsto S_{i(1)} \mapsto S_{i(1)+1} \mapsto \cdots \mapsto S_{i(2)} \xrightarrow{g_{i(2)}} T
$$
Note that the number of edges in the inverse image of $e$ along this fold path decreases strictly from $S_0$ to $S_{i(2)}$: that number does not increase from $S_0$ to $S_1$; it is constant from $S_1$ to $S_{i(1)}$; it decreases by $1$ from $S_{i(1)}$ to $S_{i(1)+1}$; and it is constant from $S_{i(1)+1}$ to $S_{i(2)}$. In summary:
$$\abs{g_{i(2)}^*(e)} < \abs{g_{i(1)}^*(e)} \le \abs{g_0^*(e)}
$$
We may continue extending the partial fold factorization by induction, alternating between prioritization of folds in the complement of the inverse image of $e$, and doing a single fold of two edges in the inverse image of $e$, until reaching a homeomorphism to $T$ and thus ending with a fold factorization of $f \from S \to T$ having the following form:
\begin{align*}
S_{0} \mapsto S_{1} &\mapsto\cdots\mapsto S_{i(1)} \mapsto S_{i(1)+1} \mapsto \cdots \mapsto S_{i(2)} \mapsto S_{i(2)+1} \mapsto\cdots\cdots \\ 
&\cdots\cdots \mapsto S_{i(N-1)} \mapsto S_{i(N-1)+1} \mapsto\cdots\mapsto S_{i(N)} \xrightarrow{g_{i(N)}} T
\end{align*}
where $g_{i(N)}$ is a homeomorphism (it is possible that any of the ``prioritization'' fold paths $S_{i(n-1)+1} \mapsto\cdots\mapsto S_{i(n)}$ can collapse to a homeomorphism, but this is inconsequential). We have
$$1 = \abs{g_{i(N)}^*(e)} < \abs{g_{i(N-1)}^*(e)} < \cdots < \abs{g_{i(2)}^*(e)} < \abs{g_{i(1)}^*(e)} \le \abs{g_0^*(e)} = \abs{f^*(e)}
$$
and so $N \le \abs{f^*(e)}$. Denoting $i(0)=0$, we have diameter bounds
$$\diam\{S_{i(n-1)} \mapsto\cdots\mapsto S_{i(n)}\} \le 2, \quad \diam\{S_{i(n)} \mapsto S_{i(n+1)}\} \le 2
$$
and so the diameter of the whole fold sequence is bounded above by $4N \le 4 \abs{f^*(E)}$.
\end{proof}

\subsection{Outline of the uniterated case in 3 steps}
\label{SectionTwoOverAllOutline}

In this section we formalize and generalize the outline used earlier for $\FS(F_n)$, obtaining a three step outline of the proof of the uniterated case of the \TOAT\ for $\FS(\Gamma;\A)$, to be carried out in the three subsections to follow. Fix free splittings $S,T$ of $\Gamma$ rel~$\A$ and a foldable map $f \from S \to T$. Assuming a lower distance bound, the conclusion we must prove is that there are two different natural edges of $S$ each of whose images in $T$ crosses some natural edge in every orbit of natural edges of $T$. The outline of the proof starts with a weak version of this conclusion in Step 1, which is successively strengthened to get the full conclusion in Step 3. In each Step $N$, there will be an integer constant $\Delta_N = \Delta_N(\Gamma;\A)$.

\begin{description}
\item[Step 1: One natural edge over all edges.] There is an integer $\Delta_1 = \Delta_1(\Gamma;\A) \ge 1$ such that if $d(S,T) \ge \Delta_1$ then there is a natural edge $E \subset S$ such that $f(E)$ crosses some edge in the orbit of every edge of $T$.
\item[Step 2: One natural edge over all natural edges.] There is an integer $\Delta_2 = \Delta_2(\Gamma;\A)$ such that $\Delta_2 \ge \Delta_1$ and such that if $d(S,T) \ge \Delta_2$ then there is a natural edge $E \subset S$ such that $f(E)$ crosses some natural edge in the orbit of every natural edges of $T$.
\item[Step 3: Another natural edge over all natural edges.] There is an integer $\Delta_3 = \Delta_3(\Gamma;\A) \ge \Delta_2$ 
such that if $d(S,T) \ge \Delta_3$ then there are natural edges $E_1,E_2 \subset S$ in different orbits such that for each natural edge $E \subset T$, each of the paths $f(E_1)$, $f(E_2)$ crosses some natural edge in the orbit of $E$.
\end{description}
The final constant needed to prove the \TOAT\ is then $\Delta=\Delta_3$.

\subsection{Step 1: One natural edge over all edges}
\label{SectionOneNatOverAll}

We fix $f \from S \to T$ to be a foldable map of free splittings of $\Gamma$ rel~$\A$ such that for each natural edge $E \subset S$, its image $f(E)$ does \emph{not} crosses every edge orbit in $T$. Using that assumption we shall derive an upper bound to the distance $d(S,T)$ in $\FS(\Gamma;\A)$; by adding $1$ to that upper bound we obtain an expression for our desired constant $\Delta_1$, completing the proof of Step~1. This expression will also involve other integer constants with notational format $\Delta_{1.(k)} = \Delta_{1.(k)}(\Gamma;\A)$ where ``$k$'' enumerates various steps of the proof, for example the \emph{Jumping Bound} $\Delta_{1.\pref{ItemFoldEstimate}}$ of Step~\pref{ItemFoldEstimate}.

\begin{definition}[The covering forest of a path]
\label{DefinitionCoveringForest}
For each free splitting $U$ of $\Gamma$ rel~$\A$ and each path $\alpha \subset U$, the \emph{covering forest} of $\alpha$ in $U$ is the following $\Gamma$-invariant subforest of $U$:
$$\beta(\alpha)=\beta(\alpha;U) = \bigcup_{g \in \Gamma} g \cdot \alpha
$$
Note that the group $\Gamma$ acts transitively on the set of components of $\beta(\alpha;U)$, because $\Gamma$ acts transitively on the set of translates $\{g \cdot \alpha \mid g \in \Gamma\}$ of $\alpha$. Note also that $\alpha$ crosses the $\Gamma$-orbit of every edge of $U$ if and only if $\beta(\alpha;U)=U$. Associated to the covering forest $\beta(\alpha)$ is the free factor system $\F[\beta(\alpha)]$ (Definition~\ref{DefinitionFFS}).
\end{definition}

For each natural edge $E \subset S$ consider its covering forest $\beta_E = \beta(E;S)$. The subforests $\beta_E$ are finite in number, one per natural edge orbit of~$S$, and we collect them into a finite set denoted $B_0$: letting $\{E_i\}_{i = 1}^{I}$ be a bijectively indexed set of representatives of the natural edge orbits of~$S$, and denoting $\beta_{0,i} = \beta_{E_i}$, we have
\begin{align*}
B_0 &= \{\beta_E \suchthat E \in \E_\nat(S)\} \\
        &= \{\beta_{0,i}\}_{i = 1}^{I} \\
\abs{B_0} &= I   \\
                  &\le \MaxEdges(\Gamma;\A) \, = \, 2\abs{\A} + 3 \corank(\A) - 3
\end{align*}

We record the following properties:
\begin{enumerate}
\item The set of subforests $B_0$ covers the tree $S$.
\item\label{ItemCompActTrans}
For each $i \in I$ the group $\Gamma$ acts transitively on the set of components of~$\beta_{0,i}$.
\end{enumerate} 
In the course of the proof we will inductively construct partial fold factorizations of the map $f \from S \to T$ of the form
$$\xymatrix{
(*)_J \qquad S=S_0 \ar[r]_-{f_1}  \ar@/^2pc/[rrr]^{f^0_J} & S_1 \ar[r]_{f_2} & \cdots \ar[r]_{f_J} & S_J \ar[r]_{h_J} & T 
}$$
When the fold factorization $(*)_J$ has been specified, for each $j=0,\ldots,J$ and each $i \in I$ we define the $\Gamma$-invariant forest $\beta_{j,i} \subset S_j$, where $\beta_{0,i}$ is as defined above and $\beta_{j,i} = f^0_j(\beta_{0,i}) = f_j(\beta_{j-1,i})$. We collect these subforests into a finite set $B_j = \{\beta_{j,i}\}_{i = 1}^{I}$ of $\Gamma$-invariant subforests of~$S_j$. These subforests satisfy the following properties:
\begin{enumeratecontinue}
\item
For each $j=0,\ldots,J$ and each $i =1,\ldots, I$ we have 
\begin{enumerate}
\item\label{ItemEdgeImageOrbit}
$\beta_{j,i} = \beta(f^0_j(E_i);S_j)$ is the covering forest of the path $f^0_j(E_i)$ in $S_j$.
\item\label{ItemBetaDoesntCover}
$\beta_{j,i} \ne S_j$
\end{enumerate}
\end{enumeratecontinue}
Item~\pref{ItemBetaDoesntCover} holds for $j=J$ because otherwise, by item~\pref{ItemEdgeImageOrbit}, we would have 
$$\beta(f(E_i);T) = \beta\bigl(h_J(f^0_J(E_i));T\bigr) = h_J\bigl(\beta(f^0_J(E_i));S_J\bigr) = h_J(\beta_{J,i}) = h_J(S_J) = T
$$
which contradicts the assumption that $f(E_i)$ does not cross every edge orbit of $T$. Item~\pref{ItemBetaDoesntCover} then holds more generally for all $j = 0,\ldots,J$ because $\beta_{j,i}=S_j$ implies $\beta_{J,i}=S_J=T$. 

We remark that while the indexing of $B_0$ is bijective, in general for $1 \le j \le J$ the indexing $B_j = \{\beta_{j,i}\}_{i \in I}$ need not be bijective: it is possible, for example, that although two natural edges $E_i,E_{i'} \subset S$ are in different orbits, their image paths $f_j(E_i)$, $f_j(E_{i'})$ cross the exact same edge orbits in $S_j$ in which case we obtain identical covering forests $\beta_{j,i}=\beta_{j,i'}$ in $S_j$. Nonetheless the equation $f^0_j(\beta_{0,i})=\beta_{j,i}$, together with $\Gamma$-equivariance of $f^0_j$, implies the same upper bound for the cardinality of $B_j$:
\begin{enumeratecontinue}
\item\label{ItemBjBound}
$\abs{B_j} \le \abs{B_0} = I \le \MaxEdges(\Gamma;\A)$
\end{enumeratecontinue} 
Further properties of the set of subforests $B_j$ include:
\begin{enumeratecontinue}
\item\label{ItemForestsCover}
The set of subforests $B_j$ covers the tree $S_j$.
\item\label{ItemBetaSurjection} The map $f_j \from S_{j-1} \to S_j$ induces a surjection $B_{j-1} \to B_j$ given by $f_j(\beta_{j-1,i}) = \beta_{j,i}$ for each $i \in I$.
\item\label{ItemTransOnB}
For each $j=0,\ldots,J$ and each $i = 1,\ldots, I$ the group $\Gamma$ acts transitively on the set of components of $\beta_{j,i}$. 
\end{enumeratecontinue}
Let $b_{j,i} \subset \beta_{j,i}$ be the component containing $f^0_j(E_i)$, so $\Stab(\beta_{j,i})$ is a free factor rel~$\A$, possibly trivial or atomic. We denote 
\begin{align*}
A_{j,i} &= \Stab(b_{j,i})
\end{align*}
Applying item~\pref{ItemBetaSurjection}, for each $i$, letting $j$ vary we obtain a nested sequence of free factors rel~$\A$:
$$A_{0,i} \le A_{1,i} \le \cdots \le A_{J,i}
$$
Applying Lemma~\ref{LemmaKrank}~\pref{ItemKRankIneqStat} we obtain for each $i=1,\ldots,I$ a nondecreasing sequence of relative Kurosh ranks denoted $\KR_{j,i} = \KR(A_{j,i})$:
\begin{enumeratecontinue}
\item\label{ItemRelKuroshSequence}
$0 \, \le \, \KR_{0,i} \le \KR_{1,i} \le \cdots\le \KR_{J,i} \, \le \, \abs{\A} + \corank(\A)$.
\end{enumeratecontinue} 
Furthermore,
\begin{enumeratecontinue}
\item\label{ItemNoJump}
For each $i = 1,\ldots, I$ and each $j=1,\ldots,J$ the following are equivalent: 
\begin{enumerate}
\item\label{ItemNoJumpRank}
$\KR_{j-1,i}=\KR_{j,i}$ 
\item\label{ItemNoJumpSubgroup}
 $A_{j-1,i}=A_{j,i}$
\item\label{ItemNoJumpComponents}
The restricted surjection $f_j \from \beta_{j-1,i} \to \beta_{j,i}$ induces a bijection of component sets.
\end{enumerate}
\end{enumeratecontinue}
The equivalence \pref{ItemNoJumpRank}$\iff$\pref{ItemNoJumpSubgroup} is an application of Lemma~\ref{LemmaKrank}~\pref{ItemKRankIneqEq}, and the implication \pref{ItemNoJumpSubgroup}$\impliedby$\pref{ItemNoJumpComponents}, follows from $\Gamma$-equivariance. The converse implication \pref{ItemNoJumpSubgroup}$\implies$\pref{ItemNoJumpComponents} follows by observing that for given components $b \ne b' \subset \beta_{j-1,i}$, applying \pref{ItemTransOnB} to choose $g \in \Gamma$ such that $g \cdot b = b'$, if $f_j(b)=f_j(b')$ then $g \in \Stab(f(b)) - \Stab(f)$. 

To say that \emph{$\KR_{j,i}$ jumps} means that a strict inequality holds, namely $\KR_{j-1,i} < \KR_{j,i}$. As an immediate consequence of~\pref{ItemNoJump}, jumping of $K_{j,i}$ can be detected topologically, for each $1 \le i \le I$ and $1 \le j \le J$:
\begin{enumeratecontinue}
\item\label{ItemDetectJumping}
$\KR_{j,i}$ jumps if and only if the restricted surjection $f \from \beta_{j-1,i} \to \beta_{j,i}$ does \emph{not} induce a bijection of component sets.
\end{enumeratecontinue} 
\textbf{Remark:} As a consequence of~\pref{ItemDetectJumping}, what happens when $\KR_{j,i}$ jumps is that in a pair of edges $e_1,e_2 \subset S_{j-1,i}$ that are folded by $f_j$, there exist points $x_1 \in e_1$, $x_2 \in e_2$ in distinct components of $\beta_{j-1,i}$ that are identified to a single point by $f_j$. Typically we may choose $x_1,x_2$ so that one of them is an endpoint of the arc $e_1 \union e_2$ and that one is the unique point on $e_1 \union e_2$ that lies in the corresponding component of $\beta_{j-1,i}$.

\smallskip

Knowing from~\pref{ItemBjBound} that $\abs{B_j} \le \MaxEdges(\Gamma;\A)$, and knowing from \pref{ItemRelKuroshSequence} that for each $i =1,\ldots, I$ there are at most $\abs{\A} + \corank(\A)$ values of $j = 1,\ldots,J$ for which $\KR_{j,i}$ jumps, we obtain:
\begin{enumeratecontinue}
\item\label{ItemFoldEstimate} \textbf{Jumping bound:} In any partial fold factorization $(*)_J$ of $f \from S \to T$, the number ordered pairs $(j,i) \in \{1,\ldots,J\} \times \{1,\ldots,I\}$ such that $\KR_{j,i}$ jumps is bounded above by 
\begin{align*}
\Delta_{1.\pref{ItemFoldEstimate}} &= \MaxEdges(\Gamma;\A) \cdot (\abs{\A} + \corank(\A)) = (2\abs{\A} + 3 \corank(\A) - 3) \cdot (\abs{\A} + \corank(\A)) \\
&= 2 \abs{\A}^2 - 3 \abs{\A} + 3 \corank(\A)^2 - 3 \corank(\A) 
  + 5 \abs{\A} \cdot \corank(\A)
\end{align*}
\end{enumeratecontinue}

\medskip

We now set up the induction step for construction of partial fold factorizations of~$f$, starting from the factorization $(*)_0$ which is just $S=S_0 \xrightarrow{f=h_0} T$. Suppose that the partial fold factorization $(*)_J$ of $f$ has already been constructed. If $h_J$ is not already homeomorphism, and so there is at least one fold still to do, we shall describe two types of fold processes to extend the partial fold factorization $(*)_J$ of $f$ by inserting a partial fold factorization $h_J$, thereby increasing the length $J$. We shall prove that each of these fold processes has a uniformly bounded finite diameter in the free splitting complex, and we shall find a uniform finite bound to the overall number of processes that need to be applied in order to finally reach~$T$. The~outcome of each fold process will be denoted as follows, using $M$ for \hbox{``the new $J$'',} i.e.~for the length of the extended partial fold factorization:
 $$\xymatrix{
(*)_M \qquad S=S_0 \ar[r]^-{f_1} \ar@/_2pc/[rrr]^{f^0_J}  \ar@/^3pc/[rrrrr]^{f^0_M}  & S_1 \ar[r]^{f_2} &\cdots \ar[r]^{f_J} 
 & S_J \ar[r]^{f_{J+1}} \ar@/_2pc/[rrr]^{h_J} &\cdots \ar[r]^{f_M} & S_M \ar[r]^{h_M} & T
}
$$
The two fold processes we shall describe are carried out under complementary hypotheses, so one of the two processes will always be applicable, as we now explain. For each $i \in I$, it follows from~\pref{ItemTransOnB} that $h_J$ is injective on \emph{some} component of $\beta_{J,i}$ if and only if $h_J$ is injective on \emph{every} component of $\beta_{J,i}$, and if this holds then we say that \emph{$h_J$ is injective on each component of $\beta_{J,i}$.} One carries out Process \#1 if for all $i \in I$ the map $h_J$ is injective on each component of $\beta_{J,i}$, and Process \#2 if there exists $i \in I$ such that $h_J$ is not injective on each component of $\beta_{J,i}$. 

\begin{description}
\item[Process \#1: Fold to increase rank.] Assuming that for each $i \in I$ the map $h_J$ is injective on each component of $\beta_{J,i}$, fold arbitrarily until the first moment $M > J$ that some rank $\KR_{M,i}$ jumps or the map $h_M$ is a homeomorphism. To be precise, extend $(*)_J$ to $(*)_M$ so that $M$ is the minimum value such that $\KR_{j,i}$ does not jump for $J < j < M$ and all $i \in I$ and so that one of two possible outcomes occurs:
\begin{description}
\item[Folding to increase rank is successful:] $\KR_{M,i}$ jumps for some $i \in I$; or
\item[Folding to increase rank is unsuccessful:] $\KR_{M,i}$ does not jump for each $i \in I$, and $h_M$ is a homeomorphism.
\end{description}
It follows that $\KR_{j,i}$ does not jump for all $J < j < M$ and all $i =1,\ldots, I$.
\end{description}
\noindent
Under Process \#1, the diameter of $S_J \mapsto\cdots\mapsto S_M$ is uniformly bounded, for the following reasons. For all $J<j<M$ and $i =1,\ldots, I$, since $\KR_{j,i}$ does not jump it follows from \pref{ItemDetectJumping} that the restricted map $f_j \from \beta_{j-1,i} \to \beta_{j,i}$ induces a bijection of component sets. Also, the hypothesis of Process \#1 implies that $f_j$ is injective on each component of $\beta_{j-1,i}$. Combining these, each restricted map $f_j \from \beta_{j-1,i} \to \beta_{j,i}$ is a homeomorphism. For each $i=1,\ldots,I$ the composed map $f^J_{M-1} = f_{M-1} \circ \cdots \circ f_J \from \beta_{J,i} \to \beta_{M-1,i}$ is therefore a homeomorphism. For each edge $e \subset S_{M-1}$, it follows that the entire subforest $\beta_{J,i}$ has at most one edge whose image under $f^J_{M-1}$ crosses~$e$. By item~\pref{ItemForestsCover} the forests $\beta_{J,1},\ldots,\beta_{J,I}$ cover $S_J$, and so the total number of edges of $S_J$ whose $f^J_{M-1}$ image crosses $e$ is bounded above by $I \le \MaxEdges(\Gamma;\A)$. Applying Proposition~\ref{PropsEdgeInverseDiamBound}, the diameter in $\FS(\Gamma;\A)$ of the fold sequence $S_J \mapsto\cdots\mapsto S_{M-1}$ is bounded above by $4 \cdot \MaxEdges(\Gamma;\A)$. Adding on one more fold $S_{M-1} \mapsto S_M$, we have proved that 
\begin{enumeratecontinue}
\item\label{ProcessOneBound}
For each ``fold to increase rank'' process, the diameter of the fold sequence $S_J \mapsto \cdots \mapsto S_M$ is bounded by 
$$\Delta_{1.\pref{ProcessOneBound}} = 4 \cdot \MaxEdges(\Gamma;\A) + 2
$$
\end{enumeratecontinue}

\begin{description}
\item[Process \#2: Fold to injection.] Assuming there exists $i=1,\ldots,I$ such that $h_J$ is not injective on each component of $\beta_{J,i}$, choose one such value $i_0$. One may always prioritize folding the subforest $\beta_{J,i_0}$ all the way until it becomes injective, however we will interrupt the process at the moment that some $\KR$ value jumps. To be precise, under process \#2 we prioritize folding $\beta_{J,i_0}$, extending $(*)_J$ to $(*)_M$, where $M$ is the minimum value such that $\KR_{j,i}$ does not jump for $J < j < M$ and all $i=1,\ldots,I$ and such that one of two possible outcomes occurs: 
\begin{description}
\item[Folding to injection is successful:] The map $h_M \from S_M \to T$ is injective on each component of $\beta_{M,i_0}$, and for all $i \in I$, $\KR_{M,i}$ does not jump; or 
\item[Folding to injection is unsuccessful:] There exists $i \in I$ such that $\KR_{M,i}$ jumps (the map $h_M$ may or may not be injective on each component of $\beta_{M,i}$).
\end{description}
\end{description}
Because we have prioritized folding the proper $\Gamma$-invariant subforest $\beta_{J,i_0} \subset S_J$, by applying Proposition~\ref{PropPriorityBound} we obtain:
\begin{enumeratecontinue}
\item\label{ProcessTwoBound}
For each fold to injection process, the diameter of the fold subsequence $S_J \to\cdots\to S_M$ is bounded by
$$\Delta_{1.\pref{ProcessTwoBound}} = 2
$$
\end{enumeratecontinue}

\subparagraph{Remark.} Since each fold map $S_{j-1} \mapsto S_{j}$ of Process \#2 is defined by folding edge pairs of the subforest $\beta_{j-1,i_0} \subset S_{j-1}$, the restricted map $\beta_{j-1,i_0} \to \beta_{j,i_0}$ induces a bijection of components, and hence $\KR_{j,i_0}$ does not jump (see \pref{ItemDetectJumping}); this is true even when $j=M$. It follows that if folding to injection is unsuccessful then the value of $i$ for which $\KR_{M,i}$ jumps satisfies $i \ne i_0$. 

\bigskip

To complete the proof, assuming that $f = h_0 \from S_0 \to T$ is not already a homeomorphism, from Theorem~\ref{TheoremStallings} it follows that iterative application of Processes \#1 and \#2 eventually produces a Stallings fold factorization that ends with a homeomorphism $h_M \from S_M \to T$. The final process will be either an unsuccessful ``Fold to increase rank'' process or a successful ``Fold to injection'' process. 

We now count processes of each type and form the weighted sum of diameter bounds of those processes, to get the desired upper bound for $d(S,T)$.

Each successful ``Fold to increase rank'' process and each unsuccessful ``Fold to injection'' process ends at $R_{M,i}$ jumping (for some $M$ and $i$), and so the total number of such processes is bounded above by the jumping bound $\Delta_{1.\pref{ItemFoldEstimate}}$. Also, there is at most one unsuccessful ``Fold to increase rank'' process because when a single such process occurs we have produced a homeomorphism $h_M \from S_M \to T$ and thus have completed the entire Stallings fold factorization. This proves:
\begin{enumeratecontinue}
\item\label{ItemCountingTwoTypes}
The total number of ``Fold to increase rank'' process plus the number of unsuccessful ``Fold to injection'' processes is bounded above by $1 + \Delta_{1.\pref{ItemFoldEstimate}}$
\end{enumeratecontinue}
Furthermore, each such process has upper diameter bound
$$\max\{\Delta_{1.\pref{ProcessOneBound}}, \Delta_{1.\pref{ProcessTwoBound}}\} = \max\{4 \cdot \MaxEdges(\Gamma;\A) + 2,2\} = 4 \cdot \MaxEdges(\Gamma;\A) = \Delta_{1.\pref{ProcessOneBound}}
$$
Therefore,
\begin{enumeratecontinue}
\item\label{ItemOtherProcessDiamBound}
The sum of the diameters of all ``Fold to increase rank'' processes and all unsuccessful ``Fold to injection'' processes is bounded above by 
$$(1 + \Delta_{1.\pref{ItemFoldEstimate}}) \cdot \Delta_{1.\pref{ProcessOneBound}}
$$
\end{enumeratecontinue}

It remains to bound the number of successful ``Fold to injection'' processes. Consider a \emph{maximally successful sequence of fold to injection processes}, meaning a maximal subsequence of the form
$$S_{J_0} \xrightarrow{f^{J_0}_{J_1}} S_{J_1} \xrightarrow{f^{J_1}_{J_2}} \cdots \xrightarrow{f^{J_{P-1}}_{J_P}} S_{J_P}
$$
where for each $p=1,\ldots,P$ the sequence $S_{J_{p-1}} \mapsto\cdots\mapsto S_{J_p}$ is a successful fold to injection process. By definition $\KR_{j,i'}$ does not jump for all $J_0 < j \le J_P$ and all $i'=1,\ldots,I$. Let $(i_0,\ldots,i_{P-1})$ be the sequence of indices in the set $\{1,\ldots,I\}$ such that the fold sequence $S_{J_p} \mapsto\cdots\mapsto S_{J_{p+1}}$ prioritizes folding $\beta_{J_p,i_p}$. We claim that this sequence is one-to-one: 
\begin{description}
\item[Claim:] For each maximally successful sequence of fold to injection processes as denoted above, if $0 \le p < q \le P-1$ then $i(p) \ne i(q)$. It follows that
$$P \le I \le \MaxEdges(\Gamma;\A)
$$
\end{description}
To prove this claim, note that $K_{j,i_p}$ does not jump for $J_p < j \le J_q$ and so the restricted map $f^{J_p}_{J_q} \from \beta_{J_p,i_p} \to \beta_{J_q,i_p}$ is injective on \emph{the component set} of $\beta_{J_p,i_p}$; but this restricted map is also injective on \emph{each individual component} of $\beta_{J_p,i_p}$, and therefore it is a homeomorphism. It follows that the map $h_{J_q} \from \beta_{J_q,i_p} \to T$ is injective on each component of $\beta_{J_q,i_p}$ proving that $i_p \ne i_q$.

Next we have:
\begin{description}
\item[Claim:] The total number of maximally successful sequences of fold to injection processes is bounded by $1 + \Delta_{1.\pref{ItemFoldEstimate}}$.
\end{description}
This follows because, by maximality, either $h_{J_P} \from S_{J_P} \to T$ is a homeomorphism or the very next process is either a ``fold to increase rank process'' or an unsuccessful ``fold to injection'' process, but from~\pref{ItemCountingTwoTypes} it follows that the number of such processes that do not end in a homeomorphism to $T$ is bounded by the quantity $\Delta_{1.\pref{ItemFoldEstimate}}$.

Putting these two claims together, the total number of successful ``fold to injection'' processes is bounded above by
$$(1 + \Delta_{1.\pref{ItemFoldEstimate}}) \cdot \MaxEdges(\Gamma;\A)  
$$
Combining this with~\pref{ProcessTwoBound} we have:
\begin{enumeratecontinue}
\item\label{ItemSuccFoldToInjBound}
The sum of the diameters of all successful ``Fold to injection'' processes is bounded by
$$ (1 + \Delta_{1.\pref{ItemFoldEstimate}}) \cdot 2 \MaxEdges(\Gamma;\A)
$$
\end{enumeratecontinue}

For our final conclusion, by combining~\pref{ItemOtherProcessDiamBound} and~\pref{ItemSuccFoldToInjBound} we have proved that the distance from $S$ to $T$ is bounded by the sum of the diameters of all processes which is bounded above by the constant
$$(1 + \Delta_{1.\pref{ItemFoldEstimate}}) \cdot (\Delta_{1.\pref{ProcessOneBound}} + 2 \MaxEdges(\Gamma;\A))
$$
Adding $1$ to this number we have therefore finished Step 1 with constant
\begin{align*}
\Delta_1 &=(1 + \Delta_{1.\pref{ItemFoldEstimate}}) \cdot (\Delta_{1.\pref{ProcessOneBound}} + 2 \MaxEdges(\Gamma;\A)) + 1
\end{align*}

\subsection{Step 2: One natural edge over all natural edges}
\label{SectionOneNatOverAllNat}

Using the integer constant $\Delta_1 \ge 1$ from Step 1, and defining the integer constant $\Delta_2 = \Delta_1 + 3$, we prove the statement \emph{One natural edge over all natural edges} as follows: Given free splittings $S,T$ and a foldable map $f \from S \to T$, and assuming that for each natural edge $E \subset S$ the path $f(E)$ does \emph{not} cross every natural edge orbit of $T=T_K$, we prove that $d(S,T) \le \Delta_1 + 2 < \Delta_2$.

Subdividing $S$ at $f^\inv(\VT)$, we may assume $f$ is a simplicial map, taking edges of $S$ to edges of~$T$, so every map in every foldable factorization of $f$ is simplicial, and the restriction of each such map to every subcomplex is simplicial. We use this silently in what follows.

Choose a Stallings fold factorization $f \from S = T_0 \to\cdots\to T_K = T$ using folds of length~$1$. Since $d(S,T) \ge \Delta_1$, there exists a minimum $J \in \{0,1,\ldots,K\}$ such that $d(T_0,T_J) \ge \Delta_1$; it follows that $d(T_0,T_{J-1}) \le \Delta_1 - 1$, and since $d(T_{J-1},T_J) \le 1$ we obtain $d(T_0,T_J) = \Delta_1$. We shall prove $d(T_J,T_K) \le 2$, and hence $d(S,T)=d(T_0,T_K) \le d(T_0,T_J) + d(T_J,T_K) \le \Delta_1 + 2$.

Consider the foldable factorization
$$f \from S = T_0 \xrightarrow{f^0_J} T_J \xrightarrow{f^J_K} T_K=T
$$
Since $d(T_0,T_J) \ge \Delta_1$, by Step~1 there exists a natural edge $E \subset T_0$ such that the path $A_J = f^0_J(E) \subset T_J$ crosses an edge in every edge orbit of $T_J$. It follows that the path $A_K = f^J_K(A_J) = f(E) \subset T_K$ crosses an edge in every edge orbit of $T_K$. By assumption, however, there exists a natural edge $E' \subset A_K$ such that $A_K$ does not cross any natural edge in the orbit of $E'$.  
%
%
%
%
Choose an orientation of $A_K$ with initial and terminal vertices $v_-,v_+$. 

Consider any $\gamma \in \Gamma$ such that the intersection $A_K \intersect (\gamma \cdot E')$ is a nontrivial path, and hence a common subpath of $A_K$ and $\gamma \cdot E'$. Because $\gamma \cdot E'$ is a natural edge of $T_K$, it follows that each endpoint of the subpath $A_K \intersect (\gamma \cdot E')$ is either an endpoint of $A_K$ or of $\gamma \cdot E'$ (or of both). On that basis, one of the following three options holds:
\begin{description}
\item[Full Path:] $A_K \intersect (\gamma \cdot E') = \gamma \cdot E'$; \, or
\item[Half Path:] There exists a choice of sign $\pm$ such that $v_\pm$ is an interior point of $\gamma \cdot E'$ and such that $\eta_\pm \equiv A_K \intersect (\gamma \cdot E')$ is (the closure of) one of the two components of $(\gamma \cdot E') - v_\pm$; \, or
\item[Interior Path:] $A_K = A_K \intersect (\gamma \cdot E')$ is contained in the interior of $\gamma \cdot E'$.
\end{description}
But only ``Half Path'' is possible: ``Full Path'' contradicts that $A_K$ crosses no natural edge in the orbit of $E'$; and ``Interior Path'' contradicts that $A_K$ crosses some edge in the orbit of every edge in $T_K$, in fact $A_K$ crosses no edge in the orbit of an edge $e \subset (\gamma \cdot E') \setminus A_K$. To check this, for any $\delta \in \Gamma$ one of two alternative holds, each leading to the same conclusion that $A_K$ does not cross $\delta \cdot e$: if $\delta \ne \Id$ then the natural edges $\delta \cdot (\gamma \cdot E')$ and $\gamma \cdot E'$ have no edges in common, hence $A_K \subset \gamma \cdot E'$ does not cross $\delta \cdot e$; whereas if $\delta = \Id$ then $A_K$ does not cross $\delta \cdot e = e$. 

Since only ``Half-Path'' occurs, and since $A_K$ has only the two endpoints $v_-$, $v_+$, it follows that there are exactly two values of $\gamma \in \Gamma$ such that $A_K \intersect (\gamma \cdot E')$ is nontrivial, and each of those two satisfies ``Half-Path''. We may therefore denote those two values as $\gamma_-$, $\gamma_+$ respectively, so that $v_\pm$ is an endpoint of $\eta_\pm = A_K \intersect E'_\pm$ where $E'_\pm = \gamma_\pm \cdot E'$. For later use we record:
\begin{description}
\item[Two values:] $A_K \intersect (\gamma \cdot E')$ is nontrivial if and only if $\gamma=\gamma_-$ or $\gamma_+$.
\end{description}
We orient $E'_\pm$ so that the restricted orientation on $\eta_\pm$ agrees with the orientation on $\eta_\pm$ restricted from $A_K$; it follows that $\eta_-$ is both a proper initial subsegment of $A_K$ and a proper terminal subsegment of $E'_-$; and similarly $\eta_+$ is both a proper terminal subsegment of $A_K$ and a proper initial subsegment of $E'_+$. We obtain a concatenation expression $A_K = \eta_- \, \alpha \, \eta_+$ where the subpath $\alpha$ is the union of all natural edges crossed by $A_K$.

Let $\zeta = \gamma_+ \gamma_-^\inv$, and so $\zeta \cdot E'_- = E'_+$. We show that the map $E'_- \mapsto E'_+$ induced by $\zeta$ preserves orientation. If not, consider the initial edge $e_-$ of $E'_-$. It follows that $\Id \cdot e_- = e_- \not\subset A_K$. Also $\zeta \cdot e_-$ is the terminal edge of $E'_+$ and it follows that $\zeta \cdot e_- \not\subset A_K$. But $A_K$ crosses \emph{some} edge in the orbit of $e_-$, so exists $\delta \in \Gamma$ such that $\delta \cdot e_- \subset A_K$, hence $A_K \intersect (\delta \gamma_- \cdot E')$ is nontrivial. Applying ``Two Values'' we have $\delta\gamma_- = \gamma_-$ or $\gamma_+$, and so $\delta = \Id$ or $\zeta$, a contradiction.

Let $e_K$ be the terminal edge of $E'_-$ (incident to the initial vertex of $\alpha$), and so $e_K \subset A_K$. Since $\zeta$ preserves orientation of $E'_-$ and $E'_+$ it follows that $\zeta \cdot e_K \not\subset A_K$. If there existed $\delta \ne \Id$ such that $\delta \cdot e_K \subset A_K$ then as in the previous paragraph it would follow $A_K \intersect (\delta \gamma_- \cdot E')$ is nontrivial, and hence by applying ``Two Values'' that $\delta = \Id$ or $\zeta$, which is again a contradiction. This shows that $A_K$ crosses no other edge in the orbit of $e_K$ except for $e_K$ itself. 

Since $f^J_K$ restricts to a simplicial isomorphism $A_J \mapsto A_K$, there is a unique edge $e_J$ in $A_J$ such that $f^J_K(e_J) = e_K$.
%
%
%
%
%

\begin{description}
\item[Claim:] $e_J$ is the unique edge in all of $T_J$ whose image under $f^J_K$ equals $e_K$. 
\end{description}
Arguing by contradiction, suppose there exists another edge $e'_J \ne e_J$ in $T_J$ such that \hbox{$f^J_K(e'_J)=e_K$.} Knowing that $A_J$ crosses some edge in the orbit of $e'_J$, there exists $\delta \in \Gamma$ such that $\delta \cdot e'_J \subset A_J$. If $\delta \ne \text{Id}$ then 
$$\delta \cdot e_K \, = \, \delta \cdot f^J_K(e'_J) \, = \, f^J_K(\delta \cdot e'_J) \, \subset \, f^J_K(A_J) \, = \, A_K
$$
contradicting that $A_K$ crosses no other edge in the orbit of $e_K$. If $\delta = \text{Id}$ then $e'_J \subset A_J$ and so $f^J_K(e_J)=e_K=f^J_K(e'_J)$, contradicting that $f^J_K$ is injective on~$A_K$. 

From the Claim together with Lemma~\ref{LemmaInjOverEdgeBound} it follows that $d(T_J,T_K) \le 2$, completing Step~2.

\subsection{Step 3: Another natural edge over all natural edges}
\label{SectionTwoOverAllStepThree}
After some preliminaries, the proof of Step 3 is laid out under the heading \emph{Outline of the method}.

Consider any foldable map $f \from S \to T$ between any free splittings $S,T$ of $\Gamma$ rel~$\A$. As in Step~2, after subdividing $S$ and $T$ we silently assume that $f$ is simplicial, as is every map in every foldable factorization of $f$. We may also assume the following lower bound: 
$$d(S,T) \ge 2 \Delta_2
$$
Using that $d(S,T) \ge \Delta_2$, we may apply Step~2 to obtain a natural edge $E^* \subset S$ such that $f(E^*)$ crosses an edge in every natural edge orbit of~$T$. To prove Step~3 we shall find an integer constant $\Delta_3(\Gamma;\A) \ge 2\Delta_2$ such that if $d(S,T) \ge \Delta_3$ then there exists another natural edge of~$S$, in a different orbit that $E^*$, whose image in $T$ also crosses some edge in every natural edge orbit of $T$. Arguing by contradiction, we may assume the following:
\begin{description}
\item[Uniqueness Property I:] The orbit of $E^*$ is the \emph{unique} orbit of natural edges of $S$ having the property that the image under $f$ of each edge in that orbit crosses a representative of every natural edge orbit of $T$.
\end{description}
Our goal is now to apply \emph{Uniqueness Property I} to derive an upper bound on $d(S,T)$.

\smallskip
We next sharpen Uniqueness Property~I. Applying Theorem~\ref{TheoremStallings} we obtain a fold factorization using folds of length~$\le 1$ as follows:
$$f \from S = T_0 \xrightarrow{f_1} T_1 \xrightarrow{f_2} \cdots \xrightarrow{f_K} T_K = T
$$
Since $d(T_0,T_K) \ge 2 \Delta_2$, we may choose $J \in \{0,\ldots,K\}$ so that $d(T_J,T_K) = \Delta_2$, and it follows that $d(T_0,T_J) \ge \Delta_2$. We shall rewrite our notation as
$$\xymatrix{
T_0 \ar[r]^{f^0_J} \ar@{=}[d] & T_J \ar[r]^>>>>>{f^J_K} \ar@{=}[d] & T_K=T \\
S \ar[r]^g & U
}$$
Applying Step~2 to the map $f^J_K$ we obtain a natural edge $E^\sharp \subset T_J=U$ such that $f^J_K(E^\sharp)$ crosses a representative of every natural edge orbit in $T_K$. Applying Step~2 again, this time to the map~$f^0_J$, we obtain a natural edge $E' \subset S=T_0$ such that $g(E')=f^0_J(E')$ crosses a representative of every natural edge orbit of $U=T_J$, including the orbit of $E^\sharp$. However, for any such edge $E'$ it follows that $f^0_K(E')=f^J_K(f^0_J(E'))$ crosses $f^J_K(E^\sharp)$ which crosses a representative of every natural edge orbit of $T_K=T$, and it then follows from Uniqueness Property~I that $E'$ is in the orbit of $E^*$. We have proved: 
\begin{description}
\item[Uniqueness Property II:] In any fold factorization of $f$ as described above, for every natural edge $E \subset S$, its image $g(E)$ crosses a translate of $E^\sharp \subset U$ if and only if $E$ is in the orbit of $E^*$.
\end{description}

In the remainder of the proof we use Uniqueness Property II to obtain an upper bound on $d(S,U)=d(T_0,T_J)$; see Fact~\ref{FactFinalBound} where that bound is summarized. Adding that bound to $\Delta_2 = d(T_J,T_K)$ then gives us the desired upper bound on $d(S,T) = d(T_0,T_K)$. 

\paragraph{Partial fold factorizations of $g \from S \to U$.} Consider any partial fold factorization of $g$ having maximal folds, denoted
$$\xymatrix{
\text{(A)} \qquad 
	S = U_0 \ar[r]_<<<<{g_0} \ar@/^3pc/[rrrrrr]^<<<<<<<<{g=g^0}& 
	\cdots \ar[r]_{g_i} &
	U_i \ar[r]_{g_{i+1}} \ar@/^1.5pc/[rrrr]^<<<<<<<<{g^i}&
	\cdots \ar[r]_{g_M} &
	U_M \ar[rr]^<<<<<<{g^M} &&
	U 
}$$
and with notations $g^i_j = g_j \circ\cdots\circ g_{i+1} \from U_i \to U_j$ and $g^i = g^M \circ g^i_M \from U_i \to U$. Pulling $\Gamma \cdot E^\sharp$ back to each $U_i$ we obtain $U_i^\sharp = (g^i)^*(\Gamma \cdot E^\sharp)$. Define a \emph{piece of $U_i^\sharp$} to be a nondegenerate subarc $\mu \subset U_i$ having the property that for some natural edge $E \subset U_i$ and group element $\gamma \in \Gamma$ we have $\mu = E \intersect (g^i)^*(\gamma \cdot E^\sharp)$; more specifically we say that $\mu$ is a \emph{piece of $U_i^\sharp$ in $E$}. It follows that $g^i \restrict \mu$ is an embedding of $\mu$ in $\gamma \cdot E^\sharp$, and we say that $\mu$ is a \emph{whole} piece if $g^i(\mu)=\gamma \cdot E_\sharp$, otherwise $\mu$ is a \emph{partial} piece. Given a vertex $v$ of $U_i^\sharp$, to say that $v$ is \emph{deep} means that $g^i(v)$ is an interior point of some translate of $\gamma \cdot E^\sharp$, equivalent $g^i(v)$ is \emph{not} a natural vertex of $U$. Note that a piece $\mu$ of $U_i^\sharp$ is whole if and only if neither of its endpoints is deep; and $\mu$ is partial if and only if at least one of its endpoints is deep.

In this new terminology, \emph{Uniqueness Property II} says that every whole piece of $U_0^\sharp$ is a subpath of some translate of $E^*$; there is no constraint (yet) on the partial pieces. 

Let \hbox{$\alpha \subset E^* \subset U_0$} be the smallest subpath of $E^*$ containing every whole piece of $U_0^\sharp$ in $E^*$. Let $\P(U_i)$ be the set of pieces of $U_i^\sharp$. In $U_0$ we define a $\Gamma$-invariant decomposition 
$$\P(U_0) = \P_\alpha(U_0) \sqcup \overline\P_\alpha(U_0)
$$
where $\P_\alpha(U_0)$ is the set of whole pieces, each a subpath of some translate of $\alpha$, and $\overline\P_\alpha(U_0) = \P(U_0) - \P_\alpha(U_0)$ is the set of partial pieces. In Fact~\ref{FactSharp} we shall describe how this decomposition evolves along a partial fold factorization~(A), obtaining decompositions \hbox{$\P(U_i) = \P_\alpha(U_i) \sqcup \overline\P_\alpha(U_i)$.} This will \emph{not} continue to be a decomposition of $U_i^\sharp$ into its whole pieces and partial pieces; in fact as the index~$i$ increases eventually all pieces are whole (see Fact~\ref{FactAlphaSeries}~\pref{ItemEveryKPieceWhole} and Fact~\ref{FactAlphaSubdivision}~\pref{ItemAlphaWhole}). 

\subparagraph{Outline of the method.} Our method of proof for Step~3 is to describe a fold factorization of $g \from S \to U$, broken into phases that are designed to produce certain bounds to be eventually used for the final bound on $d(S,U)$. Roughly speaking the first phase prioritizes folds at deep vertices, as long as such a fold exists. When the first phase stops, at some~$U_K$, every piece of $U_K^\sharp$ is whole (Fact~\ref{FactAlphaSeries}~\pref{ItemEveryKPieceWhole}); furthermore, that wholeness property continues to hold beyond $U_K$ in any fold factorization with maximal fold factors (Fact~\ref{FactAlphaSubdivision}~\pref{ItemAlphaWhole}). The second phase allows arbitrary folding \emph{except} that sewing needle folds disallowed, and continuing as long as possible until some $U_L$ in which one of two outcomes occurs: the descendents $\P_\alpha(U_L)$ of $\P_\alpha(U_0)$ have almost --- but not quite --- gone extinct; or the final foldable map $U_L \mapsto U$ is a sewing needle multifold. The proof is organized into certain statements of fact and descriptions of fold phases.

\smallskip

 For any set $X$ on which $\Gamma$ acts, in particular $X = \P_\alpha(U_i)$ or $\overline\P_\alpha(U_i)$, we use the notation $\abs{X}_\Gamma$ for the cardinality of the set of $\Gamma$-orbits of the action; see for example Fact~\ref{FactPartialPieces}~\pref{ItemXOverAlphaBound} just below. Our first fact is an analysis of pieces that leads to a bound on $\abs{\overline\P(U_0)}_\Gamma$.


\begin{StepThreeFact}[Analysis of partial pieces]
\label{FactPartialPieces}
Consider any partial fold factorization~(A), any natural edge $E \subset U_i$ \hbox{with $0 \le i \le M$,} and any partial piece $\mu$ of $U_i^\sharp$ in $E$. We have $g^i(\mu) \subset \gamma \cdot E^\sharp$ for a unique $\gamma \in \Gamma$. 
\begin{enumerate}
\item\label{ItemXEndpoint}
Some endpoint of $\mu$ is a deep vertex of $U_i^\sharp$. 
\item\label{ItemXInnerVertex}
For any endpoint $x$ of $\mu$ which is a deep vertex of $U_i^\sharp$, the following hold: 
\begin{enumerate}
\item\label{ItemXInterior}
$x$ is an interior point of the subforest $U_i^\sharp \subset U_i$.
\item\label{ItemXNatural}
$x$ is a natural vertex of $U_i$, with trivial stabilizer and valence~$\ge 3$.
\item\label{ItemXTurnFoldable}
Some turn of $U_i$ at $x$ is foldable with respect to $g_i$.
\end{enumerate}
\item\label{ItemXPartialNotAlpha}
Every partial piece of $U_0^\sharp$ has no overlap with $\Gamma \cdot \alpha$.
\end{enumerate}
It follows that
\begin{enumeratecontinue}
\item\label{ItemXOverAlphaBound}
$\displaystyle \abs{\overline\P_\alpha(U_0)}_\Gamma \le 6 \corank(\A) + 4\abs{\A} - 6$
\end{enumeratecontinue}
\end{StepThreeFact}

\begin{proof} Conclusion~\pref{ItemXEndpoint} was already mentioned in our earlier paragraph defining pieces. 

Conclusion~\pref{ItemXInterior} holds for any deep vertex $x$ of $U_i^\sharp$, because every direction of $U_i$ at $x$ is represented by a path that is mapped by $g^i$ to the interior of some translate of $E^\sharp$. We remark that the converse does not hold: a vertex contained in the interior of the subforest $U_i^\sharp$ (even a natural one) need not be an deep vertex of $U_i^\sharp$.

Consider now an endpoint $x$ of a piece $\mu$ such that $x$ is an deep vertex of $U_i^\sharp$. As with any interior point of a natural edge of a free splitting, the point $g^i(x) \in U$ has trivial stabilizer, and so $x$ also has trivial stabilizer. If $x$ did not satisfy conclusion~\pref{ItemXNatural} then it would follow that~$x$ has valence~$2$, and hence $x$ is contained in the interior of some natural edge $E' \subset U_i$. But both directions at $x$ are represented by paths mapped by $g^i$ to the interior of $\gamma \cdot E^\sharp$, contradicting that $\mu = E' \intersect (g_i)^*(\gamma \cdot E^\sharp)$. Conclusion~\pref{ItemXTurnFoldable} follows for $x$ because there are only two directions of $U$ at $g^i(x)$ and therefore, out of the three or more directions of $U_i$ at $x$, at least two of them have the same image direction at $g^i(x)$.

To prove conclusion~\pref{ItemXPartialNotAlpha}, suppose that some partial piece $\mu$ of $U_0^\sharp$ overlaps $\Gamma \cdot \alpha$. Replacing $\mu$ by a translate, we may assume that $\mu$ overlaps $\alpha$.  By definition of $\alpha$, each endpoint of $\alpha$ is incident to a whole piece of $U_0^\sharp$ in $E^*$ that is entirely contained in $\alpha$. It follows the partial piece $\mu$ is entirely contained in the interior of $\alpha$ and so is also contained in the interior of the natural edge $E^*$, which contradicts conclusions~\pref{ItemXEndpoint} and~\pref{ItemXNatural}.

From conclusions~\pref{ItemXEndpoint} and~\pref{ItemXNatural} it follows that each natural edge of $U_0$ can contain at most two partial pieces, at most one incident to each endpoint of that natural edge. The bound~\pref{ItemXOverAlphaBound} then follows because the cardinality $\abs{\overline\P_\alpha(U_0)}_\Gamma$ --- which equals the total number of orbits of partial pieces of $U_0^\sharp$ --- is bounded above by two times the maximum number of natural edge orbits of free splittings of $\Gamma$ rel~$\A$, that number being $3 \corank(\A) + 2\abs{\A} - 3$ as given in \cite[Proposition 3.4~(1)]{\RelFSOneTag}.
\end{proof}

The next fact provides a tool for setting up and studying the evolution of pieces along a fold factorization. 

 
\begin{StepThreeFact}[Evolution of pieces]
\label{FactSharp}
Along any partial fold factorization~(A), for each $1 \le i \le M$ there exists a unique function $g_i^\sharp \from \P(U^\sharp_{i-1}) \to \P(U_i^\sharp)$ with the following properties: 
\begin{enumerate}
\item\label{ItemSharpOverlap}
 For each $\mu \in \P(U^\sharp_{i-1})$, the path $g_i(\mu) \subset U_i$ has nontrivial overlap with the piece $g_i^\sharp(\mu)$;
\item\label{ItemSharpSurjectivity}
 $g_i^\sharp$~is surjective.
\end{enumerate}
Furthermore,
\begin{enumeratecontinue}
\item\label{ItemSharpEquivariant}
$g_i^\sharp$ is $\Gamma$-equivariant; 
\item\label{ItemSharpWhole}
For each $i=1,\ldots,M$, if each piece of $U^\sharp_{i-1}$ is whole then each piece of $U^\sharp_i$ is whole.
\end{enumeratecontinue}
\end{StepThreeFact}

\begin{proof} Consider $\mu \in \P(U^\sharp_{i-1})$ and let $E \subset U_{i-1}$ denote the natural edge containing~$\mu$. The description of $g_i^\sharp(\mu)$ will proceed in cases depending on the behavior of $\mu$. In each case we will see that properties \pref{ItemSharpOverlap} and  \pref{ItemSharpSurjectivity} will force a uniquely assigned value for $g_i^\sharp(\mu)$, which is how we define that value \emph{and} how we prove uniqueness of $g_i^\sharp$. In $U$ we have $g^{i-1}(\mu) = g^i(g_i(\mu)) \subset \gamma \cdot E^\sharp$ for a unique $\gamma \in \Gamma$. It will be evident that each case hypothesis is $\Gamma$-invariant, and that $g_i^\sharp(\mu)$ is $\Gamma$-invariantly defined, which is property~\pref{ItemSharpEquivariant}.

\smallskip

\emph{Case I: $g_i$ is one-to-one over the interior of $\mu$,} and so its image $g_i(\mu)$ is contained in some natural edge $E' \subset U_i$ and $g_i(\mu)$ is contained in some piece $\nu$ of $U_i^\sharp$ in $E'$. Furthermore $\nu$ is the unique piece of $U_i^\sharp$ that overlaps $g_i(\mu)$. We are therefore forced to define $g_i^\sharp(\mu) = \nu$. For proving surjectivity later on, note that $\mu$ is the unique piece of $U_i^\sharp$ such that $g_i(\mu)$ overlaps $\nu$.

\smallskip

\emph{Case II: $g_i$ is not one-to-one over the interior of $\mu$.} It follows that there are oriented natural edges $E,E'$ with common initial vertex~$v$ and with maximal nondegenerate initial segments $\eta \subset E$, $\eta' \subset E'$ satisfying $g^{i-1}(\eta)=g^{i-1}(\eta') \subset U$, such that $g_i(\eta)=g_i(\eta)$, and such that $\mu$ has nontrivial overlap with $\eta$. Let $E_1 \subset U_i$ be the natural edge that contains $g_i(\eta)=g_i(\eta')$. Let $\gamma \cdot E^\sharp$ be the unique translate of $E^\sharp$ such that $g^{i-1}(\mu \intersect \eta) \subset g^{i-1}(\mu) \subset \gamma \cdot E^\sharp$. Let $\beta \subset \eta'$ be the unique subpath such that $g^{i-1}(\mu \intersect \eta) = g^{i-1}(\beta) \subset \gamma \cdot E^\sharp$, and so $g_i(\mu \intersect \eta) = g_i(\beta) \subset E_1$. It follows that $\beta$ is contained in a piece of $U_{i-1}^\sharp$ in $E'$ that we denote $\mu'$, and that $g_i(\mu)$ and $g_i(\mu')$ both overlap the same piece of $U_i^\sharp$ in $E_1$ that we denote $\nu_1$. The pieces $\mu,\mu' \subset U_{i-1}$ and $\nu_1 \subset U_i$ all map to subpaths of $\gamma \cdot E^\sharp$ under respective maps to $U$.

\smallskip

\emph{Case IIa:} Suppose that $w$ is not an interior point of $\mu$. It follows that $\mu \subset \eta$, and $g_i(\mu) \subset g_i(\eta) \subset E_1$, and $g_i(\mu) \subset \nu_1$. As in Case~1, $\nu_1$ is the unique piece of $U_i^\sharp$ that overlaps $g_i(\mu)$, and so we are forced to define $g_i^\sharp(\mu) = \nu_1$. 

\smallskip

\emph{Case IIb:} Suppose that $w$ is an interior point of $\mu$ and hence also of~$E$, so $w$ subdivides $E$ and $\mu$ into nondegenerate subpaths 
$$E = e_1 \, e_2, \qquad \mu = \mu_1 \, \mu_2, \qquad \mu_i = \mu \intersect e_i
$$
The path $\mu_1$ is a terminal segment of $\eta$, and there is a corresponding terminal segment $\mu'_1 = \mu' \intersect \eta'$ of $\eta'$ such that $g_i(\mu_1)=g_i(\mu'_1)$. In $U$, note that $g^{i-1}(\mu_1) = g^{i-1}(\mu'_1)$ is a proper subsegment of $\gamma \cdot E^\sharp$ having an endpoint $g^{i-1}(w)=g^{i-1}(w')$ in the interior of $\gamma \cdot E^\sharp$. It follows that $w'$ is a natural vertex of $E'$ and hence that $E'=\eta'$ and $\mu'=\mu'_1$, because otherwise the terminal segments $\mu_1 \subset \eta$ and $\mu'_1 \subset \eta'$ can be extended beyond $w$ and $w'$ to longer subsegments of $E$ and $E'$ having the same image in $U$ under $g^{i-1}$, contradicting maximality of the fold $g_i$. 

Note that orbits $\Gamma \cdot e_1$, $\Gamma \cdot e_2$ have trivial overlap in $U_i$, for otherwise the initial direction of $e_1$ and the terminal direction of $e_2$ would be in the same orbit and hence $g_i$ would be a sewing needle fold; but in that case $\eta'$ would be a proper initial segment of $E'$, contradicting that $\eta' = E'$. It follows that there is a natural edge $E_2 \subset U_i$ such that $E_1$, $E_2$ have distinct orbits, and such that 
$$g_i(\mu_1)=g_i(\mu'_1) \subset g_i(e_1) \subset E_1 \quad\text{and}\quad g_i(\mu_2) \subset g_i(e_2) \subset E_2
$$
We already know that $\nu_1$ is the unique piece of $U_i^\sharp$ in $E_1$ that contains $g_i(\mu_1)=g_i(\mu'_1)$. There is also a unique piece of $U_i^\sharp$ in $E_2$ that contains $g_i(\mu_2)$; denote it as $\nu_2$. Note that $\mu' = \mu'_1$ is a piece of $U_{i-1}^\sharp$ in $E'$ that falls under Case IIa, and that according to the Case IIa definition we were forced to define $g_i^\sharp(\mu') = \nu_1$. Furthermore, $\mu$ is the \emph{unique} piece of $U_{i-1}^\sharp$ whose image $g_i(\mu)$ overlaps $\nu_2$. The surjectivity requirement of $g_i^\sharp$ therefore forces us to define $g_i^\sharp(\mu) = \nu_2$. 

We note one further outcome of Case IIb, which is relevant to conclusion~\pref{ItemSharpWhole}, namely that $\mu' = \mu'_1 \in \P(U_{i-1}^\sharp)$ is a partial piece of $U_{i-1}^\sharp$: this holds because its endpoint $w'$ has image $g^{i-1}(w')=g^{i-1}(w)$ which is an interior point of $\gamma \cdot E^\sharp$, and so $w'$ is deep.

\smallskip

This completes the construction of $g_i^\sharp$ and the verification of conclusions~\pref{ItemSharpOverlap} and~\pref{ItemSharpEquivariant}. 

To prove the surjectivity conclusion~\pref{ItemSharpSurjectivity}, consider any piece $\nu \in \P(U_i^\sharp)$. Since $U_{i-1}^\sharp = (g^{i-1})^*(\Gamma \cdot E^\sharp)$ maps surjectively to $U_i^\sharp = g_i^*(\Gamma \cdot E^\sharp)$, there is at least one piece $\mu$ of $U_{i-1}^\sharp$ whose image $g_i(\mu)$ overlaps $\nu$. If $\mu$ falls into case I or IIa then $g_i^\sharp(\mu) = \nu$. If $\mu$ falls into Case IIb then (following the notation of that case) we have either $\nu = \nu_1 = g_i^\sharp(\mu')$ or $\nu=\nu_2 = g_i^\sharp(\mu)$. 

Finally, to prove property~\pref{ItemSharpWhole}, suppose that each piece of $U_{i-1}^\sharp$ is whole. For any piece $\mu \in \P(U_{i-1}^\sharp)$, it follows that $\mu$ does \emph{not} fall into case~IIb, because one further outcome of case~IIb is the existence of a partial piece $\mu'$ of $U_{i-1}^\sharp$. Therefore $\mu$ falls into cases I or IIa where $g_{i-1}$ is injective on the interior of $\mu$, and so $g_{i-1}(\mu)=g_{i-1}^\sharp(\mu) \in \P(U_{i}^\sharp)$ is whole. By surjectivity of $g_i^\sharp$ it follows that every piece of $U_i^\sharp$ is whole.
\end{proof}

\paragraph{First Fold Phase:} Choose a partial fold factorization of $g \from S \to U$ of the form (A), such that each fold factor $g_i \from U_{i-1} \to U_i$ folds a turn located at a deep vertex of $U_{i-1}^\sharp$, and such that the partial fold factorization has maximal length with respect to this property. We rewrite this factorization in the form
$$g \from S=U_0 \xrightarrow{g_1} \cdots \xrightarrow{g_K} U_K \xrightarrow{g^K} U
$$
By maximality, when the factorization stops at $U_K$, no turn at a deep vertex of $U_K^\sharp$ is foldable with respect to~$g^K$.


\begin{StepThreeFact}[Properties of the first fold phase] 
\label{FactAlphaSeries}
In the \emph{First Fold Phase} the following hold:
\begin{enumerate}
\item\label{ItemEveryKPieceWhole}
Every piece of $U_K^\sharp$ is whole.
\item\label{ItemFirstAlphaSequence}
There exists a sequences $\alpha^{\vphantom *}_i$, $E^*_i$ defined for $0 \le i \le K$ such that the following hold:
\begin{enumerate}
\item\label{ItemAlphaFirst}
$E^*_0 = E^*$ and $\alpha_0=\alpha$;
\item $E^*_i$ is a natural edge of $U_i$ and $\alpha_i$ is a subpath of $E^*_i$;
\item The set $U_i^\sharp \intersect \alpha_i$ is a union of whole pieces of $U_i^\sharp$ in $E^*_i$ with one such piece incident to each endpoint of $\alpha_i$; 
\item\label{ItemAlphaInjectiveInductive}
If $i \ge 1$ then the map $g_i$ is injective over the interior of $\alpha_i$, and $\alpha_i = g_i(\alpha_{i-1})$. 
\item\label{ItemAlphaPieceBijection}
$g_i$ induces a bijection between pieces of $U_{i-1}^\sharp$ in $\alpha_{i-1}$ and pieces of $U_i^\sharp$ in $\alpha_i$.
\end{enumerate}
\item\label{ItemAlphaInjectsAllTheWay}
$g^0_K \from U_0 \to U_K$ is injective at each point of the interior of $\alpha$, and so $d(U_0,U_K) \le 2$. 
\end{enumerate}
\end{StepThreeFact}

\begin{proof} To prove~\pref{ItemEveryKPieceWhole}, if $U_K^\sharp$ had a partial piece $\mu$ then, by Fact~\ref{FactPartialPieces}, some endpoint of $\mu$ would be a deep vertex of $U_K^\sharp$ at which $g^K$ has a foldable turn, which is impossible when the factorization stops at $U_K$ in the \emph{First Fold Phase}.

To prove~\pref{ItemFirstAlphaSequence}, suppose by induction that such a sequence is defined for $0 \le i \le k < K$ (the base case with $k=0$ is evident). Since each endpoint of $\alpha_k$ is incident to a whole piece contained in $\alpha_k$, it follows that neither endpoint of $\alpha_k$ is a deep vertex of $U_k^\sharp$, and so in the \emph{First Fold Phase} there is no turn that is folded by $g_{k+1}$ that involves an ending direction of $\alpha_k$. Together with the fact that $\alpha_k$ is contained in some natural edge $E^*_k \subset U_k$, it follows that the fold map $g_{k+1}$ is injective on the interior of $\alpha_k$. We define $\alpha_{k+1}=g_{k+1}(\alpha_k)$, and it follows that $\alpha_{k+1}$ is a subpath of some uniquely determined natural edge that we define to be $E^*_{k+1} \subset U_{k+1}$. With these definitions, the remaining portions of~\pref{ItemFirstAlphaSequence} needed to complete the induction quickly follow.

Conclusion~\pref{ItemAlphaInjectsAllTheWay} follows from \pref{ItemAlphaFirst} and~\pref{ItemAlphaInjectiveInductive}.
\end{proof}

Consider now any partial fold factorization of $g \from S \to U$ with maximal folds that extends the First Fold Phase, thus having the following form,
$$(B) \qquad\qquad g \from S=U_0 \xrightarrow{g_1} \cdots \xrightarrow{g_K} U_K \xrightarrow{g_{K+1}} \cdots \xrightarrow{g_{L-1}} U_{L-1} \xrightarrow{g_L} U_L \xrightarrow{g^L} U \qquad\qquad\hphantom{(B)}
$$
and we assume also that none of the folds $g_{K+1},\ldots,g_L$ is a sewing needle fold. 

\smallskip

For $0 \le l \le L$ we have notation $\P(U_l^\sharp)$ for the set of all pieces of $U_l^\sharp$, on which $\Gamma$ acts. We shall describe for each $l$ a $\Gamma$-invariant subdivision 
$$\P(U_l^\sharp) = \overline{\P}_\alpha(U_l^\sharp) \sqcup  \P_\alpha(U_l^\sharp)
$$
starting with the base case $l=0$ which is already defined. To extend this by induction, 
assuming that $0 < l \le L$ and that the decomposition is defined for $l-1$, define
$$\overline \P_\alpha(U_{l}^\sharp) = g_{l}^\sharp(\overline \P_\alpha (U_{l-1}^\sharp)) \qquad\qquad \P_\alpha(U_{l}^\sharp) = \P(U_{l}^\sharp) -  \overline \P_\alpha(U_{l}^\sharp) 
$$


\begin{StepThreeFact}[Beyond the first fold phase]
\label{FactAlphaSubdivision}
In any partial fold factorization of the form (B) the following hold:
\begin{enumerate}
\item\label{ItemAlphaWhole}
For $K \le i \le L$, every piece of $U_i^\sharp$ is whole.
\item\label{ItemAlphaCompRestrict}
For $1 \le i \le L$ we have
$$6 \corank(\A) + 4\abs{\A} - 6 \ge \abs{\overline{\P}_\alpha(U_0^\sharp)}_\Gamma \ge \cdots \ge \abs{\overline{\P}_\alpha(U_K^\sharp)}_\Gamma \ge \cdots \ge  \abs{\overline{\P}_\alpha(U_L^\sharp)}_\Gamma
$$
\item\label{ItemAlphaRestrict}
For each $1 \le i \le K$ the map $g_i^\sharp$ restricts to an equivariant bijection $\P_\alpha(U_{i-1}^\sharp) \mapsto \P_\alpha(U_i^\sharp)$; and for each $K < i \le L$, we have an (equivariant) inclusion $\P_\alpha(U_i^\sharp) \subset g_i^\sharp\bigl(\P_\alpha(U_{i-1}^\sharp)\bigr)$. It follows that
$$\abs{\P_\alpha(U_0^\sharp)}_\Gamma = \cdots = \abs{\P_\alpha(U_K^\sharp)}_\Gamma \ge \cdots \ge \abs{\P_\alpha(U_L^\sharp)}
$$
\item\label{ItemAlphaMore}
If $\abs{\P_\alpha(U_L^\sharp)}_\Gamma \ge 1$ then the sequences $\alpha^{\vphantom *}_i$, $E^*_i$ of Fact~\ref{FactAlphaSeries}~\pref{ItemFirstAlphaSequence}, already defined for $0 \le i \le K$, extend over the whole interval $0 \le i \le L$ with the following additional properties for each $K+1 \le i \le L$:
\begin{enumerate}
\item $E^*_i$ is a natural edge of $U_i$ and $\alpha_i$ is a subpath of $E^*_i$;
\item The set $U_i^\sharp \intersect \alpha_i$ is a union of one representative of each $\Gamma$-orbit of $\P_\alpha(U_i^\sharp)$, with each endpoint of $\alpha_i$ incident to a piece in $\P_\alpha(U_i^\sharp)$;
\item\label{ItemAlphaInjective}
the map $g_i$ is injective over the interior of $\alpha_i$ and $\alpha_i \subset g_i(\alpha_{i-1})$. 
\item $g_i$ induces a bijection from the subset of $\P_\alpha(U_{i-1}^\sharp)$ consisting of translates of pieces contained in $g_i^*(\alpha_{i-1})$ to the set $\P_\alpha(U_i^\sharp)$.
\end{enumerate}
\item\label{ItemAlphaBound}
If $\abs{\P_\alpha(U_L^\sharp)} \ge 1$ then $d(U_0,U_L) \le 2$.
\end{enumerate}
\end{StepThreeFact}

\begin{proof} Item~\pref{ItemAlphaWhole} follows by induction, using Fact~\ref{FactAlphaSeries}~\pref{ItemEveryKPieceWhole} for the base case, and Fact~\ref{FactSharp}~\pref{ItemSharpWhole} for the induction step. In item~\pref{ItemAlphaCompRestrict}, the first inequality is Fact~\ref{FactPartialPieces}~\pref{ItemXOverAlphaBound}; the remaining inequalities are evident from surjectivity of each map $g_l^\sharp \from \overline\P_\alpha(U_{l-1}^\sharp) \to \overline\P_\alpha(U_l^\sharp)$. 

The first sentence of item~\pref{ItemAlphaRestrict} follows from the description of the sequence $\alpha_i$ in Fact~\ref{FactAlphaSeries}~\pref{ItemFirstAlphaSequence}. The rest of~\pref{ItemAlphaRestrict} is evident from the definition of $\P_\alpha(U_l^\sharp)$.

We turn to the proof of item~\pref{ItemAlphaMore} which, by Fact~\ref{FactAlphaSeries}~\pref{ItemFirstAlphaSequence}, is already known for $0 \le i \le K$. Proceeding by induction, we assume that $K < I \le L$ and that \pref{ItemAlphaMore} is already known for $0 \le i \le I-1$; we must prove it for $i=I$. We break into cases depending on the behavior of $E^*_{I-1}$.

\smallskip
\emph{Case 1:} Suppose that no direction of $E^*_{I-1}$ is part of a turn folded by $g_I$. It follows that $g_I$ is injective on the interior $\alpha_{I-1}$. The method of proof of Fact~\ref{FactAlphaSeries}~\pref{ItemFirstAlphaSequence} applies to construct $E^*_{I}$ and $\alpha_{I}$, with stronger conclusions: in (c) we have $\alpha_{I} = g_{I}(\alpha_{I-1})$; and in (d), the domain of the bijection is the whole of $\P_\alpha(U_{I-1}^\sharp)$. 

\smallskip
\emph{Case 2:} Suppose that $g_I$ folds some turn $\{d,d'\}$ where $d$ is a direction of $E^*_{I-1}$. We orient $E^*_{I-1}$ so that $d$ is its initial direction, located at its initial vertex $v \in U_{I-1}$. Let $E' \subset U_{I-1}$ be the oriented natural edge with initial vertex $v$ and initial vertex $d'$. Let $\eta \subset E^*_{I-1}$ and $\eta' \subset E'$ be the maximal initial segments that are identified under $g^{I-1}$, and so $g_I(\eta)=g_I(\eta')$. Having assumed that $g_l$ is not a sewing needle fold, it follows that $E^*_{I-1}$ and $E'$ are in different orbits. Let $w$ be the terminal vertex of $\eta$. We denote the subdivision $E^*_{I-1}$ at $w$ as $E^*_{I-1} = \eta \, \zeta$; it is possible that $\zeta$ degenerate to a point. We break into subcases depending on how $\alpha_{I-1}$ is related to this subdivision of $E^*_{I-1}$.

\smallskip
\emph{Case 2a:} $\alpha_{I-1} \subset \zeta$. In this case $g_I$ is injective on the interior of $\alpha_{I-1}$, and the proof proceeds as in Case 1.

\smallskip
\emph{Case 2b:} $\alpha_{I-1} \subset \eta$. In this case, for each piece $\mu$ of $U_{I-1}^\sharp$ such that $\mu \subset \alpha_{I-1}$, the piece $\mu$ is folded together by $g_I$ with a piece contained in $E'$, the latter of which is in the set $\overline\P_\alpha(U_{I-1}^\sharp)$. It follows that the $g_I(\mu) \in \overline\P_\alpha(U_I^\sharp)$, and that $\P_\alpha(U_I^\sharp) = \emptyset$. But then, from item~\pref{ItemAlphaRestrict}, it follows that $\P_\alpha(U_L^\sharp) = \emptyset$ and so $\abs{\P_\alpha(U_L^\sharp)}_\Gamma = 0$, contradicting the hypothesis of item~\pref{ItemAlphaMore}.

\emph{Case 2c:} $\alpha_{I-1} \not\subset \zeta$ and $\alpha_{I-1} \not\subset \eta$. In this case $w$ is an interior point of $\alpha_{I-1}$, subdividing it into two subpaths $\alpha_{I-1} = \alpha' \, \alpha''$. The point $w$ cannot be in the interior of any piece of $U_{I-1}^\sharp$, for then $U_I^\sharp$ would have a partial piece. It follows that each piece of $U_{I-1}^\sharp$ in $\alpha_{I-1}$ is contained in one of $\alpha'$ or $\alpha''$, and each of those two contains at least one piece of $U_{I-1}^\sharp$. For every piece $\mu$ contained in $\alpha'$, the map $g_L$ folds $\mu$ together with a piece contained in $E'$, and so as in Case 2b it follows that $g_I(\mu) \in \overline\P_\alpha(U_l^\sharp)$. The map $g_I$ is injective over the interior of $\zeta$, and so $g_I(\zeta)$ is contained in a natural edge which we take to be $E^*_I$. Let $\hat\alpha''$ be the smallest subpath of $\zeta$ containing every whole piece in $\hat\alpha''$, and so $\hat\alpha''$ is a nondegenerate path. We then take $\alpha_I = g_I(\hat\alpha'')$. The remaining conclusions of item~\pref{ItemAlphaMore} are now straightforward to prove.

To prove~\pref{ItemAlphaBound}, by combining Fact~\ref{FactAlphaSeries}~\pref{ItemAlphaInjectiveInductive} and Fact~\ref{FactAlphaSubdivision}~\pref{ItemAlphaInjective} it follows that the map $g^0_L \from U_0 \to U_L$ is injective over the interior of $\alpha_L$, and so we may apply Lemma~\ref{LemmaInjOverEdgeBound} to obtain the bound $d(U_0,U_L) \le 2$.
\end{proof}

\emph{Remark.} The upper bound $\bigl| \overline{\P}_\alpha(U_i^\sharp)\bigr| \le 6 \corank(\A) + 4\abs{\A} - 6$ for $i=K$, and hence for $i=L$, can be cut in half by more carefully counting of orbits of end directions of pieces (rather than orbits of pieces themselves) and using the Case~1 conclusion that every piece of $U^\sharp_K$ is whole. While that would improve our ultimate distance bound on $d(S,U)$ needed to complete Step 3, we shall not pursue this issue further.

\paragraph{Second Fold Phase:} Choose a partial fold factorization of $g \from S \to U$ of the form (B) so that $\bigl| \P_\alpha(U_i^\sharp)\bigr| \ge 1$ for all $i$, and of maximal length with respect to this property. From the description of (B), each $g_i$ is a maximal fold factor of $g^{i-1}$, and no $g_i$ is a sewing needle fold if $K < i \le L$. When the factorization stops at $U_L$, one of two possible endings occurs:
\begin{description}
\item[Case 1:] $\bigl|\P_\alpha(U_L^\sharp)\bigr|_\Gamma = 1$; 
\item[Case 2:] Every first fold factor of $g^L \from U_L \to U$ is a sewing needle fold.
\end{description} 
A priori there is one other somewhat plausible case allowed by the Stallings Fold Theorem~\ref{TheoremStallings}, namely that the Second Fold Phase can be continued all the way to a full factorization, without every using sewing needle folds, and with $\bigl|\P_\alpha(U_L^\sharp)\bigr|_\Gamma \ge 2$. But this is impossible, because it contradicts the fact that 
$$\abs{\P_\alpha(U_L^\sharp)}_\Gamma  \le \abs{\P(U_L^\sharp)}_\Gamma =  \abs{\{\gamma \cdot E^\sharp \suchthat \gamma \in \Gamma\}}_\Gamma = 1
$$

%
%
%
%

\medskip

We now put the pieces together to conclude the proof of Step 3:

\begin{StepThreeFact}[The final bound]
\label{FactFinalBound}
There is a constant $C=C(\Gamma;\A)$ such that, assuming the Second Fold Phase, we have $d(S,U) \le C$.
\end{StepThreeFact}

\begin{proof} Using $d(S,U) = d(U_0,U) \le d(U_0,U_L) + d(U_L,U)$ it suffices to give separate bounds on $d(U_0,U_L)$ and $d(U_L,U)$.

In the Second Fold Phase we have $\bigl|\P_\alpha(U_L^\sharp)\bigr|_\Gamma \ge 1$ and so from Fact~\ref{FactAlphaSubdivision}~\pref{ItemAlphaBound} we obtain the bound 
$$d(U_0,U_L) \le 2
$$
The bound on $d(U_L,U)$ is the maximum of two bounds depending on the two cases for ending the Second Fold Phase. In Case~2 we obtain $d(U_L,U) \le 2$ from the Sewing Needle Lemma~\ref{LemmaSewingNeedleFold}. In Case~1, by combining Fact~\ref{FactAlphaSubdivision}~\pref{ItemAlphaCompRestrict} and~\pref{ItemAlphaRestrict} we obtain
\begin{align*}
\abs{\P(U_L^\sharp)}_\Gamma &= \abs{\overline\P_\alpha(U_L^\sharp)}_\Gamma + \abs{\P_\alpha(U_L^\sharp)}_\Gamma \\
& \le (6 \corank(\A) + 4 \abs{\A} - 6)+1 = 6 \corank(\A) + 4 \abs{\A} - 5
\end{align*}
The number $|\P(U_L^\sharp)|_\Gamma$ is equal to the number of pre-images under $g^L \from U_L \to U$ of any point in the interior of $E^\sharp$, because $(g^L)^*(\E^\sharp)$ is the union of exactly one representative of each $\Gamma$-orbit of $\P(U_L^\sharp)$. By applying Proposition~\ref{PropsEdgeInverseDiamBound} we obtain
$$d(U_L,U) \le 4 \cdot (6 \corank(\A) + 4\abs{\A} - 5) = 24 \corank(\A) + 16 \abs{\A} - 20
$$ 
\end{proof}

\bibliographystyle{amsalpha} 


\bibliography{/Users/Lee/Dropbox/Handel_Lyman_Mosher_shared/Lee_bibtex_file/mosher.bib} 

\def\cprime{$'$} \def\cprime{$'$}
\providecommand{\bysame}{\leavevmode\hbox to3em{\hrulefill}\thinspace}
\providecommand{\MR}{\relax\ifhmode\unskip\space\fi MR }
\providecommand{\MRhref}[2]{%
  \href{http://www.ams.org/mathscinet-getitem?mr=#1}{#2}
}
\providecommand{\href}[2]{#2}
\begin{thebibliography}{GHKL13}

\bibitem[AesBC]{Aesop:TortoiseAndHare}
Aesop, \emph{The tortoise and the hare ({Perry} index 226)}, Aesopica, A Series
  of Texts Relating to Aesop or Ascribed to Him or Closely Connected with the
  Literal Tradition that Bears His Name (B.E. Perry, ed.), circa 560 BC.

\bibitem[Bas93]{Bass:covering}
H.~Bass, \emph{Covering theory for graphs of groups}, J. Pure and Appl. Alg.
  \textbf{89} (1993), 3--47.

\bibitem[Bes11]{Bestvina:BersLike}
M.~Bestvina, \emph{A {Bers}-like proof of the existence of train tracks for
  free group automorphisms}, Fund. Math. \textbf{214} (2011), no.~1, 1--12.

\bibitem[BF91]{BestvinaFeighn:bounding}
M.~Bestvina and M.~Feighn, \emph{Bounding the complexity of simplicial group
  actions on trees}, Invent. Math. \textbf{103} (1991), no.~3, 449--469.

\bibitem[BF92]{BestvinaFeighn:combination}
\bysame, \emph{A combination theorem for negatively curved groups}, J. Diff.
  Geom. \textbf{35} (1992), no.~1, 85--101.

\bibitem[BF14a]{BestvinaFeighn:FFCHyp}
\bysame, \emph{Hyperbolicity of the complex of free factors}, Adv. Math.
  \textbf{256} (2014), 104--155.

\bibitem[BF14b]{BestvinaFeighn:subfactor}
\bysame, \emph{Subfactor projections}, J. Topology \textbf{7} (2014), no.~3,
  771--804.

\bibitem[BFH97]{BFH:laminations}
M.~Bestvina, M.~Feighn, and M.~Handel, \emph{Laminations, trees, and
  irreducible automorphisms of free groups}, Geom. Funct. Anal. \textbf{7}
  (1997), 215--244.

\bibitem[BFH00]{BFH:TitsOne}
\bysame, \emph{{The Tits alternative for ${\rm Out}(F\sb n)$. I. Dynamics of
  exponentially-growing automorphisms.}}, Ann. of Math. \textbf{151} (2000),
  no.~2, 517--623.

\bibitem[BH92]{BestvinaHandel:tt}
M.~Bestvina and M.~Handel, \emph{Train tracks and automorphisms of free
  groups}, Ann. of Math. \textbf{135} (1992), 1--51.

\bibitem[BHS17]{BHS:Hierarchy}
J.~Behrstock, M.~Hagen, and A.~Sisto, \emph{{Hierarchically hyperbolic spaces
  I: Curve complexes for cubical groups}}, Geom. Topol. \textbf{21} (2017),
  no.~3, 1731--1804.

\bibitem[BHS19]{BHS:HierarchyII}
\bysame, \emph{{Hierarchically hyperbolic spaces II: Combination theorems and
  the distance formula}}, Pacific J. Math. \textbf{299} (2019), no.~2,
  257--338.

\bibitem[Bow08]{Bowditch:tight}
B.~Bowditch, \emph{Tight geodesics in the curve complex}, Invent. Math.
  \textbf{171} (2008), no.~2, 281--300.

\bibitem[BV95]{BridsonVogtmann:GeometryOfAut}
M.~Bridson and K.~Vogtmann, \emph{On the geometry of the automorphism group of
  a free group}, Bull. London Math. Soc. \textbf{27} (1995), no.~6, 544--552.

\bibitem[CM87]{CullerMorgan:Rtrees}
M.~Culler and J.~W. Morgan, \emph{Group actions on {${\bf R}$}-trees}, Proc.
  London Math. Soc. (3) \textbf{55} (1987), no.~3, 571--604.

\bibitem[Coo87]{Cooper:automorphisms}
D.~Cooper, \emph{Automorphisms of free groups have finitely generated fixed
  point sets}, J. Algebra \textbf{111} (1987), no.~2, 453--456.

\bibitem[CT94]{CollinsTurner:efficient}
D.~Collins and E.~Turner, \emph{Efficient representatives for automorphisms of
  free products}, Michigan Math. J. \textbf{41} (1994), no.~3, 443--464.

\bibitem[CV86]{CullerVogtmann:moduli}
M.~Culler and K.~Vogtmann, \emph{Moduli of graphs and automorphisms of free
  groups}, Invent. Math. \textbf{84} (1986), 91--119.

\bibitem[DT17]{DowdallTaylor:cosurface}
S.~Dowdall and S.~Taylor, \emph{{The co-surface graph and the geometry of
  hyperbolic free group extensions}}, J. Topol. \textbf{10} (2017), no.~2,
  447--482.

\bibitem[Fek23]{Fekete}
M.~Fekete, \emph{{\"Uber die Verteilung der Wurzeln bei gewissen algebraischen
  Gleichungen mit ganzzahligen Koeffizienten}}, Math. Z. \textbf{17} (1923),
  no.~228-249.

\bibitem[FM11]{FrancavigliaMartino:MetricOuterSpace}
S.~Francaviglia and A.~Martino, \emph{Metric properties of outer space},
  Publicacions Matem\`atiques \textbf{55} (2011), no.~2, 433--473.

\bibitem[FM15]{FrancavigliaMartino:TrainTracks}
\bysame, \emph{Stretching factors, metrics and train tracks for free products},
  Illinois J. Math. \textbf{59} (2015), no.~4, 859--899.

\bibitem[For02]{Forester:Deformation}
M.~Forester, \emph{Deformation and rigidity of simplicial group actions on
  trees}, Geometry and Topology \textbf{6} (2002), 219--267.

\bibitem[GH19]{GuirardelHorbez:Laminations}
V.~Guirardel and C.~Horbez, \emph{Algebraic laminations for free products and
  arational trees}, Algebr. Geom. Topol. \textbf{19} (2019), no.~5, 2283--2400.

\bibitem[GHKL13]{GHKL:Lipschitz}
V.~Gadre, E.~Hironaka, R.~Kent, and C.~Leininger, \emph{Lipschitz constants to
  curve complexes}, Math. Res. Lett. \textbf{2} (2013), no.~4, 647--656.

\bibitem[GL07a]{GuirardelLevitt:DefSpaces}
V.~Guirardel and G.~Levitt, \emph{Deformation spaces of trees}, Groups Geom.
  Dyn. \textbf{1} (2007), 135--181.

\bibitem[GL07b]{GuirardelLevitt:outer}
\bysame, \emph{The outer space of a free product}, Proc. London Math. Soc. (3)
  \textbf{94} (2007), no.~3, 695--714.

\bibitem[GW19]{GuptaWigglesworth:Centralizers}
R.~Gupta and D.~Wigglesworth, \emph{Loxodromics for the cyclic splitting
  complex and their centralizers}, Pac. J. Math. \textbf{301} (2019), no.~1,
  107--142.

\bibitem[HM11]{HandelMosher:axes}
M.~Handel and L.~Mosher, \emph{Axes in outer space}, Mem. Amer. Math. Soc.
  \textbf{213} (2011), no.~1004, vi+104.

\bibitem[HM13a]{HandelMosher:distortion}
\bysame, \emph{{Lipschitz retraction and distortion for subgroups of
  $\mathsf{Out}(F\sb n)$}}, Geom. Topol. \textbf{17} (2013), no.~3, 1535--1579.

\bibitem[HM13b]{HandelMosher:FreeSplittingHyperbolic}
\bysame, \emph{{The free splitting complex of a free group I: Hyperbolicity}},
  Geom. Topol. \textbf{17} (2013), 1581--1670.

\bibitem[HM14]{HandelMosher:RelComplexHyp}
\bysame, \emph{{Relative free splitting and free factor complexes I:
  Hyperbolicity}}, arXiv:1407.3508, 2014.

\bibitem[HM19]{HandelMosher:FreeSplittingLox}
\bysame, \emph{{The free splitting complex of a free group II: Loxodromic outer
  automorphisms}}, Trans. AMS \textbf{372} (2019), no.~6, 4053--4105.

\bibitem[HM20]{HandelMosher:Subgroups}
\bysame, \emph{{Subgroup decomposition in $\mathsf{Out}(F_n)$}}, Memoirs AMS
  \textbf{264} (2020), no.~1280, {vii+276}.

\bibitem[HM25]{HandelMosher:RelComplexHypIII}
\bysame, \emph{{Relative free splitting and free factor complexes III: Stable
  translation lengths and filling arcs}}, arXiv:2503.07532, 2025.

\bibitem[HM26]{HandelMosher:RelHypComplexIntro}
\bysame, \emph{Relative free splitting and free factor complexes: Overview},
  arXiv:2607.19249, 2026.

\bibitem[KR14]{KapovichRafi:HypImpliesHyp}
I.~Kapovich and K.~Rafi, \emph{On hyperbolicity of free splitting and free
  factor complexes}, {Groups Geom. Dyn.} \textbf{8} (2014), no.~2, 391--414.

\bibitem[LM25]{LymanMosher:exponentialDehn}
R.~A. Lyman and L.~Mosher, \emph{{Exponential Dehn functions for deformation
  groups of free splittings}}, (in preparation), 2025.

\bibitem[Lym22a]{Lyman:CT}
R.~A. Lyman, \emph{{CTs for free products}}, \textsc{arXiv:2203.08868}, 2022.

\bibitem[Lym22b]{Lyman:RTT}
\bysame, \emph{Train track maps on graphs of groups}, {Groups Geom. Dyn.}
  \textbf{16} (2022), no.~4, 1389--1422.

\bibitem[Man14]{Mann:Hyperbolicity}
B.~Mann, \emph{Hyperbolicity of the cyclic splitting graph}, Geom. Dedicata
  \textbf{173} (2014), 271--280.

\bibitem[Mei15]{Meinert:TrainTrackMaps}
S.~Meinert, \emph{{The Lipschitz metric on deformation spaces of $G$-trees}},
  Algebr. Geom. Topol. \textbf{15} (2015), no.~2, 987--1029.

\bibitem[MM96]{McCulloughMiller:symmetric}
D.~McCullough and A.~Miller, \emph{Symmetric automorphisms of free products},
  Mem. Amer. Math. Soc. \textbf{122} (1996), no.~582, viii+97.

\bibitem[MM99]{MasurMinsky:complex1}
H.~Masur and Y.~Minsky, \emph{{Geometry of the complex of curves, I.
  Hyperbolicity}}, Invent. Math. \textbf{138} (1999), no.~1, 103--149.

\bibitem[MM00]{MasurMinsky:complex2}
\bysame, \emph{{Geometry of the complex of curves II: Hierarchical structure}},
  Geom. Funct. Anal. \textbf{10} (2000), no.~4, 902--974.

\bibitem[Sen81]{Senata:matrices}
E.~Senata, \emph{{Nonnegative matrices and Markov chains}}, 2nd edition ed.,
  Springer-Verlag, New York, 1981.

\bibitem[Ser80]{Serre:trees}
J.~P. Serre, \emph{Trees}, Springer, New York, 1980.

\bibitem[SW79]{ScottWall}
P.~Scott and C.~T.~C. Wall, \emph{Topological methods in group theory},
  Homological group theory, Proceedings of Durham symposium, Sept. 1977, London
  Math. Soc. Lecture Notes, vol.~36, 1979, pp.~137--203.

\bibitem[Vog02]{Vogtmann:OuterSpaceSurvey}
K.~Vogtmann, \emph{Automorphisms of free groups and outer space}, Proceedings
  of the Conference on Geometric and Combinatorial Group Theory, Part I (Haifa,
  2000), vol.~94, 2002, pp.~1--31.

\bibitem[Whi91]{White:GeometryOfOuterSpace}
T.~White, \emph{The geometry of outer space}, Ph.D. thesis, UCLA, 1991.

\end{thebibliography}

\end{document}